\documentclass[11 pt,letterpaper]{amsart} 
\usepackage{amssymb}
\usepackage{amsmath}
\usepackage{amsthm}
\usepackage[pdfpagelabels,hyperindex,hidelinks]{hyperref} 
\usepackage{amsfonts} 
\usepackage{amsmath, amssymb} 
\usepackage{graphicx} 
\usepackage[font=small,labelfont=bf]{caption} 
\usepackage{epstopdf} 
\usepackage[pdfpagelabels,hyperindex,hidelinks]{hyperref}
\usepackage{xcolor} 
\usepackage{amsthm} 
\usepackage{float}
\usepackage{pgfplots}
\usepackage{listings}
\usepackage{longtable} 
\usepackage{mathrsfs}
\usepackage[T1]{fontenc}
\usepackage{lmodern}
\usepackage{microtype} 
\usepackage[margin=1.3in]{geometry} 
\usepackage{mathtools}
\usepackage{float}
\usepackage{bm}
\makeatletter
\renewcommand{\normalsize}{%
	\@setfontsize\normalsize\@xiipt{14}
	\abovedisplayskip 12pt plus 3pt minus 7pt
	\abovedisplayshortskip \z@ plus 3pt
	\belowdisplayshortskip 6.5pt plus 3.5pt minus 3pt
	\belowdisplayskip \abovedisplayskip
}
\makeatother

\theoremstyle{plain}
\newtheorem{theorem}{Theorem}\numberwithin{theorem}{section}
{}
\newtheorem{main}{Main~Theorem}{}
\newtheorem{lemma}{Lemma}\numberwithin{lemma}{section}
\newtheorem{proposition}{Proposition}\numberwithin{proposition}{section}
\numberwithin{proposition}{section}
\numberwithin{corollary}{section}
\newtheorem{claim}{Claim}\numberwithin{claim}{section}
\theoremstyle{definition}
\newtheorem{definition}{Definition}\numberwithin{definition}{section}
\theoremstyle{remark} \theoremstyle{exam} \theoremstyle{ob}
\newtheorem{remark}{Remark}\numberwithin{remark}{section}
\numberwithin{app}{section}
\newtheorem{question}{Question}\numberwithin{question}{section}
\newtheorem{exam}{Example}\numberwithin{exam}{section}
\newtheorem{ob}{Observation}\numberwithin{ob}{section}
\numberwithin{nt}{section}
\numberwithin{equation}{section}

\usepackage{etoolbox}

\renewcommand{\contentsnamefont}{\bfseries}

\makeatletter

\patchcmd{\@starttoc}
{\centering\contentsnamefont}
{\raggedright\contentsnamefont}
{}{}

\def\@tocline#1#2#3#4#5#6#7{%
	\relax
	\ifnum #1>\c@tocdepth
	\else
	\par
	\addpenalty\@secpenalty
	\addvspace{#2}%
	\begingroup
	\hyphenpenalty\@M
	\@ifempty{#4}
	{%
		\@tempdima
		\csname r@tocindent\number#1\endcsname
		\relax
	}
	{%
		\@tempdima#4\relax
	}%
	\parindent\z@
	\leftskip#3\relax
	\advance\leftskip\@tempdima\relax
	\rightskip\@pnumwidth plus 4em
	\parfillskip-\@pnumwidth
	#5%
	\leavevmode
	\hskip-\@tempdima
	#6\nobreak
	\ifnum #1=2
	\leaders\hbox to .8em{\hss.\hss}\hfill
	\else
	\hfil
	\fi
	\nobreak
	\hbox to\@pnumwidth{\@tocpagenum{#7}}%
	\par
	\nobreak
	\endgroup
	\fi
}

\def\l@section{%
	\@tocline{1}{6pt plus 1pt}{0pt}{2.5pc}{\bfseries}%
}

\def\l@subsection{%
	\@tocline{2}{0pt}{1pc}{5pc}{}%
}

\makeatother

\begin{document}
\title[Multi-Parameter Exponential Sums with Product Hilbert Kernels]
{Multi-Parameter Exponential Sums with Product Hilbert Kernels}

\author
{Joonil Kim,  Hoyoung Song}

\address{Joonil Kim, Department of Mathematics \\
	Yonsei University \\
	Seoul 120-729, Republic of Korea}
\email{jikim7030@yonsei.ac.kr}
\address{Hoyoung Song, Department of Mathematics \\
	Yonsei University \\
	Seoul 120-729, Republic of Korea}
\email{nonspin0070@gmail.com}

\keywords{Multi-parameter, Circle method, Exponential sum, Hilbert transform, Discrete Hilbert transform}

 \begin{abstract}
 We establish necessary and sufficient conditions for the uniform boundedness of the multi-parameter exponential sums with product Hilbert kernels
$$
\sum_{1\le|t_1|\le N_1,\cdots,1\le|t_k|\le N_k} \frac{e^{2\pi i P(t_1,\dots,t_k)}}{t_1\cdots t_k},
$$
where $P:\mathbb{Z}^k\to\mathbb{R}$ is a polynomial of the form
$
P(t)=\sum_{\mathfrak{m}\in \Lambda} c_{\mathfrak{m}}\, t^{\mathfrak{m}},
$
with real coefficients. The resulting bound is uniform in both the coefficients
$c_{\mathfrak m}$ and the truncation parameters $N_1,\ldots,N_k$.  To this end, we develop a higher-dimensional version of the multi-parameter circle method.  Under the sufficient condition, we further prove $\ell^p$-boundedness of the associated discrete multiple Hilbert transform.
	\end{abstract}

\date{\today}

\maketitle

\setcounter{tocdepth}{2}
\tableofcontents

  \section{Introduction}
The exponential sum
\[
m_N^{\rm disc}(\xi)
:=
\sum_{1\le |t|\le N}
\frac{e^{2\pi i\xi t}}{t}
\]
is one of the fundamental objects in classical Fourier analysis. It is
the Fourier multiplier associated with the truncated discrete Hilbert
transform and lies at the heart of the classical theory of convergence
and divergence of conjugate Fourier series. A cornerstone of the classical theory is the estimate
\[
\sup_{N\in\mathbb N,\ \xi\in\mathbb R}|m_N^{\rm disc}(\xi)|<\infty.
\]
A key step in its proof is the uniform comparison
\begin{align}\label{cdp1}
	m_N^{\rm disc}(\xi)
	=
	m_N^{\rm cont}(\xi)+O(1),
\end{align}
where
\[
m_N^{\rm cont}(\xi)
:=
\int_{1\le |t|\le N+1}
\frac{e^{2\pi i\xi t}}{t}\,dt
\]
is the Fourier multiplier associated with the truncated continuous
Hilbert transform. The comparison \eqref{cdp1} follows from the
Euler--Maclaurin summation formula, while the boundedness of the
continuous multiplier is a direct consequence of oscillatory
cancellation, whose limiting multiplier is
$-i\pi\, \mathrm{sgn}(\xi)$, the Fourier multiplier of the Hilbert
transform.

The study of multi-parameter analogues began with the development of
multiple Fourier series by Hardy and his collaborators. The
corresponding conjugate Fourier series naturally lead to the multiplier
\[
m_{\vec N}^{\rm disc}(\xi)=
\sum_{1\le |t_1|\le N_1}\cdots
\sum_{1\le |t_k|\le N_k}
\frac{e^{2\pi i\langle\xi,t\rangle}}
{t_1\cdots t_k},
\]
which is associated with the simplest discrete multiple Hilbert
transform.  Owing to its product structure, the corresponding theory
parallels the classical one-parameter theory of the Hilbert
transform. In particular, the analogue of the comparison \eqref{cdp1} remains valid in this product setting.

A natural generalization is to replace the linear phase
$\langle\xi,t\rangle$ by the polynomial phase
\[
\sum_{\mathfrak m\in\Lambda}
\xi_{\mathfrak m}t^{\mathfrak m},
\]
where $\Lambda\subset\mathbb Z_+^k$ is a finite set,
$\xi=(\xi_{\mathfrak m})_{\mathfrak m\in\Lambda}\in\mathbb R^{|\Lambda|}$,
and
$t^{\mathfrak m}=t_1^{m_1}\cdots t_k^{m_k}$.

In the one-parameter setting $(k=1)$, Arkhipov and Oskolkov \cite{AO}
proved that for every finite
$\Lambda\subset\mathbb Z_+$,
there exists a constant $C_\Lambda>0$ such that
\begin{align}\label{11}
	\sup_{N\in\mathbb N,\,
		\xi=(\xi_{\mathfrak m})_{\mathfrak m\in\Lambda}\in\mathbb R^{|\Lambda|}}
	\left|
	\sum_{1\le |t|\le N}
	\frac{
		e^{2\pi i
			\sum_{\mathfrak m\in\Lambda}
			\xi_{\mathfrak m}t^{\mathfrak m}}
	}{t}
	\right|
	\le C_\Lambda.
\end{align}

In the multi-parameter setting $k\ge 2$, the theory of multi-parameter Weyl sums began with the work of
Arkhipov, Chubarikov, and Karatsuba \cite{AC} and was developed
systematically in the monograph \cite{ACK}.  They investigated the
exponential sum 
\[
\sum_{1\le |t_1|\le N_1}\cdots
\sum_{1\le |t_k|\le N_k}
e^{2\pi i \sum_{\mathfrak{m}\in \Lambda} \xi_{\mathfrak{m}} t^{\mathfrak{m}} },
\]
for each finite $\Lambda\subset \mathbb{Z}^k_+$,  establishing fundamental results on the uniform distribution of polynomial phases over arbitrary rectangular domains.   However, their theory concerns exponential sums without the singular
kernel
\[
\frac1{t_1\cdots t_k}.
\]
It is therefore natural to ask whether the one-parameter theorem of
Arkhipov and Oskolkov admits a multi-parameter analogue, namely  for every finite
$\Lambda\subset\mathbb Z_+^k$   whether 
there exists a constant
$C_\Lambda>0$ such that
\begin{align}\label{1tt}
	\sup_{\substack{
			\vec N\in\mathbb N^k\\
			\xi=(\xi_{\mathfrak m})_{\mathfrak m\in\Lambda}\in\mathbb R^{|\Lambda|}
	}}
	\left|
	\sum_{t\in R(\vec N)\cap \mathbb{Z}^k}
	\frac{e^{2\pi i\sum_{\mathfrak m\in\Lambda}
			\xi_{\mathfrak m}t^{\mathfrak m}}}{t_1\cdots t_k}
	\right|
	\le C_\Lambda 
\end{align}
where
\[
R(\vec N)
=
\prod_{\nu=1}^k
\{t_\nu\in \mathbb{R}:1\le |t_\nu|\le N_{\nu}\}\ \text{for}\ \vec N=(N_1,\ldots,N_k)\in\mathbb N^k.
\]

Unlike the linear-phase case, polynomial phases generally destroy the
product structure of the kernel, giving rise to genuinely new
phenomena.  In the two-parameter setting, Garaev \cite{G} proved that, for the monomial phase
$\xi t_1t_2$, the corresponding multiplier diverges for a suitable choice of $\xi$. In particular,
$$
\sup_{\substack{
		\vec N\in\mathbb N^2,,
		\xi\in\mathbb R}}
\left|
\sum_{t\in R(\vec N)\cap\mathbb Z^2}
\frac{e^{2\pi i\xi t_1t_2}}{t_1t_2}
\right|
=\infty.
$$
This implies that (\ref{1tt}) fails if $\Lambda =\{(1,1)\}$.  Thus, unlike the  one parameter case,
oscillation alone is insufficient to guarantee uniform boundedness. Hence, the central question of this paper is to determine those finite sets $\Lambda\subset\mathbb Z_+^k$
for which there exists a constant $C_\Lambda>0$ such that (\ref{1tt}) holds. Those $\Lambda$ are determined by the configuration of monomials appearing in
$\sum_{\mathfrak m\in\Lambda}\xi_{\mathfrak m}t^{\mathfrak m}$.
This configuration interacts with the product singular kernel in a subtle way
and determines whether the oscillation provides sufficient cancellation.

Since the corresponding multiple Hilbert transform is the continuous
analogue of the above discrete operator, we compare the discrete estimate \eqref{1tt} with the following continuous counterpart:
\begin{align}\label{1tth}
	\sup_{\substack{
			\vec N\in\mathbb N^k\\
			\xi=(\xi_{\mathfrak m})_{\mathfrak m\in\Lambda}\in\mathbb R^{|\Lambda|}
	}}
	\left|
	\int_{R(\vec N+\vec{\bf{1}}) }
	\frac{e^{2\pi i\sum_{\mathfrak m\in\Lambda}
			\xi_{\mathfrak m}t^{\mathfrak m}}}{t_1\cdots t_k}\,
	dt
	\right|
	\le C_\Lambda,
\end{align}where $\vec{\bf{1}}=(1,\cdots,1)\in \mathbb{N}^{k}.$
Although the estimates \eqref{1tt} and \eqref{1tth} are closely related,
somewhat surprisingly, the families of index sets $\Lambda$ for which these estimates hold
do not coincide.
Before stating our main theorem, we recall the condition on $\Lambda$
under which the continuous estimate \eqref{1tth} holds.  

\subsection{Evenness Conditions for the Continuous Hilbert Transform}
For $k=1$, Stein and Wainger established $L^p$ bounds under a finite-type
condition \cite{SW1}.
The study of multi-parameter singular Radon transforms associated with
kernels adapted to multi-parameter dilation structures was initiated by
Nagel and Wainger~\cite{NW}. Their work extended the classical
Calderón--Zygmund theory to kernels whose singularities may occur along
subvarieties, including the coordinate hyperplanes, and encompassed a rich
family of examples beyond the multiple Hilbert kernel.

Building on this framework, Ricci and Stein~\cite{RS1} developed a general
$L^p(\mathbb R^d)$ theory for multi-parameter singular Radon transforms
associated with general dilation structures. As a special case, their
theory yields the following characterization for multiple Hilbert
transforms, stated here in a form adapted to the present paper.
\begin{theorem}[Ricci and Stein \cite{RS1}]\label{cho4}
	Let  ${\bf e}_\nu = (0, \cdots, 0, 1, 0, \cdots, 0) \in \mathbb{Z}^k$ be the standard unit vector with $1$ in the $\nu^{\text{th}}$ coordinate.  Suppose  $\{{\bf e}_1,\cdots,{\bf e}_k\}\subset \Lambda$. \footnote{The condition
		$\{{\bf e}_1,\ldots,{\bf e}_k\}\subset\Lambda$
		corresponds to the graph case
		$
		(t,P(t))=(t_1,\ldots,t_k,P(t_1,\ldots,t_k)).
		$
		In this setting, the corresponding estimate holds uniformly with respect to the coefficients of $P$.}
	Then,
	\begin{align*} 
		& \sup_{\vec{N}\in \mathbb{N}^k \   \text{and}\  \xi\in\mathbb{R}^{|\Lambda|}  }\left|\int_{  R(\vec{N}+\vec{\bf{1}})}e^{2\pi i \sum_{\mathfrak{m}\in\Lambda  } \xi_{\mathfrak{m}}t^\mathfrak{m}} \frac{ dt_1\cdots dt_k}{t_1\cdots t_k}\right|<\infty\ \text{if and only if}\nonumber\\
		& \text{ every $\mathfrak{m} \in \Lambda$ has at most one odd component.  }
	\end{align*}
\end{theorem}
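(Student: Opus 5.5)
Both directions reduce to one structural fact. Since $t_1\cdots t_k$ is odd in each variable, the integral over $R(\vec N+\vec{\bf 1})$ equals, after symmetrizing over the $2^k$ sign patterns $t_\nu\mapsto \epsilon_\nu t_\nu$,
\[
\int_{\prod_\nu[1,N_\nu+1]}\ \sum_{\epsilon\in\{\pm1\}^k}\epsilon_1\cdots\epsilon_k\, e^{2\pi i\sum_{\mathfrak m}\xi_{\mathfrak m}\epsilon^{\mathfrak m}t^{\mathfrak m}}\ \frac{dt}{t_1\cdots t_k}.
\]
If a monomial $t^{\mathfrak m}$ has at most one odd component, then it is even in all variables except possibly one, say $t_\nu$. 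Substituting $t_\nu=e^{s_\nu}$ turns the problem into a multi-parameter oscillatory integral in the $s$-variables with product kernel $ds_1\cdots ds_k$.

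\emph{Sufficiency.} The plan is an induction on $k$ using a dyadic (multi-parameter Littlewood--Paley) decomposition $t_\nu\sim 2^{j_\nu}$. On each dyadic box the phase is $\sum_{\mathfrak m}\xi_{\mathfrak m}2^{\langle j,\mathfrak m\rangle}\tau^{\mathfrak m}$ with $\tau_\nu\in[1,2]$.

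1. Split the $j$-lattice according to which monomial dominates, i.e.\ which $|\xi_{\mathfrak m}|2^{\langle j,\mathfrak m\rangle}$ is largest. Where all of these are $\lesssim 1$, replace the exponential by $1$ plus an error of size $\sum_{\mathfrak m}|\xi_{\mathfrak m}|2^{\langle j,\mathfrak m\rangle}$. The main term cancels by oddness of the kernel in each variable, and the error sums geometrically.
2. Where some monomial is large, van der Corput in the variable $t_\nu$ with nonzero exponent gives decay $(|\xi_{\mathfrak m}|2^{\langle j,\mathfrak m\rangle})^{-\delta}$. This comes from the graph condition $\{\mathbf e_\nu\}\subset\Lambda$ together with the nondegeneracy of distinct monomials.
3. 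The difficulty is the directions in $j$-space where decay occurs only in some coordinates, namely a ``flat'' face of the Newton polytope. Along these we must sum without decay, and there I would use the evenness. For a monomial even in $t_\mu$ for all $\mu\ne\nu$, the sum over $j_\mu$ of the odd kernel in $t_\mu$ against a phase even in $t_\mu$ vanishes exactly. Hence only one odd direction survives, and the remaining sum is a one-parameter Hilbert integral, bounded uniformly by the $k=1$ result (Stein--Wainger).

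Organizing this bookkeeping over the Newton polytope of $\Lambda$ — that is, showing that every region of the $j$-lattice either has geometric decay or is handled by oddness cancellation — is the main obstacle. I would first do it for $k=2$ by cases on which monomial dominates, then induct on $k$.

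\emph{Necessity.} Suppose some $\mathfrak m_0\in\Lambda$ has two odd components, say $m_1,m_2$ odd. Set all coefficients except $\xi_{\mathfrak m_0}=\lambda$ to zero. Integrating out the variables $t_\nu$, $\nu\ge3$, over suitable ranges, or choosing $N_\nu=1$ for them, reduces the problem to an integral comparable to
\[
\int\!\!\int_{1\le|t_1|,|t_2|\le N}\frac{\sin(2\pi\lambda t_1^{m_1}t_2^{m_2}c)}{t_1t_2}\,dt_1dt_2 .
\]
Positivity of the sine-kernel sum over the region $|\lambda t^{\mathfrak m}|\gtrsim1$ yields growth of order $\log N$, by the same mechanism as Garaev's example. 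Concretely, changing variables $u=\lambda t_1^{m_1}t_2^{m_2}$ for fixed $t_2$ gives an inner integral close to $\frac{\pi}{2m_1}$ whenever $\lambda t_2^{m_2}N^{m_1}\gg1$ and $\lambda t_2^{m_2}\ll1$. Integrating $dt_2/t_2$ over that range then gives $\sim\log N$. Taking $\lambda=N^{-(m_1+m_2)/2}$ makes this range of logarithmic length $\sim\log N$, so the supremum is infinite.
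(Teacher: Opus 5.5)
\textbf{Necessity.} This direction has a genuine hole when $k\ge 3$. Suppose $\mathfrak m_0$ has an even (possibly zero) component $m_\nu$, and you switch off every coefficient except $\xi_{\mathfrak m_0}$. Then $e^{2\pi i\lambda t^{\mathfrak m_0}}/(t_1\cdots t_k)$ is odd in $t_\nu$, while $R(\vec N+\vec{\bf 1})$ is symmetric, so the integral is identically $0$. No choice of ranges avoids this, including $N_\nu=1$; it is just part (i) of Theorem~\ref{thji} applied to $\{\mathfrak m_0\}$.

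Note that your argument never uses $\{{\bf e}_1,\ldots,{\bf e}_k\}\subset\Lambda$, and it must: otherwise it would equally ``prove'' divergence for $\Lambda=\{(1,1,0)\}$, where the integral vanishes.

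The repair is as follows. Let $S$ be the set of odd coordinates of $\mathfrak m_0$, and switch on $\xi_{{\bf e}_\nu}=1/4$ for $\nu\notin S$, so that $\{\mathfrak m_0\}\cup\{{\bf e}_\nu\}_{\nu\notin S}$ is odd. This is the continuous counterpart of the role of $\mathfrak m_2,\ldots,\mathfrak m_n$ in Section~\ref{sec10}. After symmetrization, the integral equals $2^k i^{1+k-|S|}$ times
\[
\int\sin(2\pi\lambda t^{\mathfrak m_0})\prod_{\nu\notin S}\sin(\pi t_\nu/2)\,\frac{dt}{t_1\cdots t_k}
\]
over the positive orthant. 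Take $N_\nu=1$ for $\nu\notin S$ and for all but two indices of $S$. Then the extra sine factors are nonnegative on $[1,2]$, and the factors $t_\nu^{m_\nu}\in[1,2^{m_\nu}]$ only rescale $\lambda$ by a bounded amount. Your two-variable $\log N$ lower bound is sound and uniform under such rescalings (compare Lemma~\ref{propnec}), so it then finishes the proof. For $k=2$, or whenever all components of $\mathfrak m_0$ are odd, your argument is fine as written.

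\textbf{Sufficiency.} The sketch does not work as written. Step~1 fails. Take $k=2$, $\xi_{{\bf e}_1}=\xi_{{\bf e}_2}=\delta$, and all other coefficients $0$. The region where every rescaled coefficient is $\lesssim1$ is the box $j_1,j_2\le\log_2(1/\delta)$, and your error $\delta 2^{j_1}+\delta 2^{j_2}$ sums over it to $\approx\log(1/\delta)$, which is unbounded. The true size of each piece is the product $\delta^2 2^{j_1+j_2}$. It comes from $e^{i(a+b)}-e^{ia}-e^{ib}+1=(e^{ia}-1)(e^{ib}-1)$ together with the exact vanishing of the three lower-order terms against $1/(t_1t_2)$.

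Step~3 has the same defect. The phase generally contains monomials odd in $t_\mu$ (e.g.\ ${\bf e}_\mu$), so nothing cancels exactly until you have replaced the phase by its active monomials, and that replacement is precisely the non-summable step.

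The missing device is the multi-parameter inclusion--exclusion in the paper's proof of Lemma~\ref{po3}. For each $J$, split the monomials into small ones ($U$) and large ones ($V$), and write the dyadic multiplier as $\sum_{A\subset U}\prod_{\mathfrak n\in A}D^{\mathfrak n}\,\mathcal H_J^{A\cup V}$, where $D^{\mathfrak n}$ means (with $\mathfrak n$) minus (without $\mathfrak n$).
\begin{itemize}
\item If $\operatorname{rank}(A\cup V)<k$, every multiplier in that term vanishes identically. Under your hypothesis every odd subset has full rank, since it must contain elements with parity patterns ${\bf e}_1,\ldots,{\bf e}_k$.
\item Otherwise the term is bounded by $\min\{|\xi_{\mathfrak m}2^{J\cdot\mathfrak m}|,\,|\xi_{\mathfrak n}2^{J\cdot\mathfrak n}|^{-c}:\mathfrak m\in A,\ \mathfrak n\in V\}$ over a spanning set. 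This is summable in $J$ uniformly in $\xi$.
\end{itemize}
This needs no Newton-polytope case analysis, no induction on $k$, and no graph hypothesis. The graph condition is needed only for necessity, which is the reverse of how your proposal uses it.
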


To formulate the condition on $\Lambda$ governing the continuous
estimate, we introduce the following notion of parity for subsets of
$\Lambda$.
\begin{definition}[Even or Odd Set]\label{14a}
	Let $\Omega\subset \mathbb{Z}^k$. 
	
	\begin{itemize}
		\item
		$\Omega$ is \emph{odd} if every component of
		$\sum_{\mathfrak m\in\Omega}\mathfrak m$
		is odd;
		
		\item
		$\Omega$ is \emph{even} if at least one component of
		$\sum_{\mathfrak m\in\Omega}\mathfrak m$
		is even.
	\end{itemize}
	
	For $\mathfrak m=(m_1,\dots,m_k)\in\mathbb Z^k$, define
	\[
	[\mathfrak m]_2=([m_1]_2,\dots,[m_k]_2),
	\]
	where $[a]_2$ denotes the remainder of $a\in\mathbb Z$ modulo $2$.
	Then $\Omega$ is odd if and only if
	\[
	\Big[\sum_{\mathfrak m\in\Omega}\mathfrak m\Big]_2=(1,\dots,1).
	\]
\end{definition}

\begin{exam}\label{eexx1}
	Let
	\[
	\Lambda=\{(1,1,0),(0,1,1),(0,1,0)\}.
	\]
	Every singleton in $\Lambda$ has at least one even component $0$.
	Moreover, the sum of any two vectors in $\Lambda$ has at least one even component equal to $2$.
	However,
	\[
	(1,1,0)+(0,1,1)+(0,1,0)=(1,3,1).
	\]
	Thus the only odd subset of $\Lambda$ is $\Lambda$ itself.
\end{exam}

Let $\Lambda\subset\mathbb Z_+^k$ be a finite set. For
$\vec N\in\mathbb N^k$ and
$\xi\in\mathbb R^{|\Lambda|}$, define
\[
\mathcal H_{\vec N}^{\Lambda}(\xi)
:=
\int_{R(\vec N+\vec{\bf{1}})}
e^{2\pi i\sum_{\mathfrak m\in\Lambda}
	\xi_{\mathfrak m}t^{\mathfrak m}}
\frac{dt_1\cdots dt_k}{t_1\cdots t_k}.
\]
We extend Theorem~\ref{cho4} by removing the graph
assumption
$
\{{\bf e}_1,\ldots,{\bf e}_k\}\subset\Lambda.
$

\begin{theorem}\label{thji}
	Let $\Lambda\subset \mathbb{Z}_+^k$ be a finite set.
	
	\medskip
	\noindent
	{\rm (i)} Suppose that $\Lambda$ contains no odd subset. Then
	\[
	\sup_{\vec{N}\in \mathbb{N}^{k},\ \xi\in\mathbb{R}^{|\Lambda|}}
	|\mathcal{H}^{\Lambda}_{\vec{N}}(\xi)| =0.
	\]
	
	\medskip
	\noindent
	{\rm (ii)} Suppose that $\Lambda$ contains an odd subset. Then  a constant $C_{\Lambda}>0 $ exists such that
	\begin{align*}
		\sup_{\vec{N}\in \mathbb{N}^k,\ \xi\in\mathbb{R}^{|\Lambda|}}
		\left| \mathcal{H}^{\Lambda}_{\vec{N}}(\xi)\right|
		\le C_{\Lambda}
	\end{align*}
	if and only if 
	\begin{align}\label{v16}
		\ \text{every odd subset $\Omega\subset \Lambda$ satisfies}\ 
		\mathrm{rank}(\Omega)=k .
	\end{align}
	Here $\operatorname{rank}(\Omega)=\dim(\operatorname{span}(\Omega))$.
	Combining {\rm(i)} and {\rm(ii)},  we  conclude that 
	$\mathcal H^\Lambda_{\vec N}(\xi)$
	is uniformly bounded  if and only if either
	\begin{align*}
		&\text{$\Lambda$ contains no odd subset},
		\ \text{or every odd subset of }\Lambda\text{ has full rank }k.
	\end{align*}

\end{theorem}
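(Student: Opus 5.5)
The plan is to expand the exponential in the even part and reduce everything to parity bookkeeping. Since the domain $R(\vec N+\vec{\bf 1})$ is symmetric under each reflection $t_\nu\mapsto -t_\nu$ and the kernel is odd in each variable, we first write
\[
\mathcal H^\Lambda_{\vec N}(\xi)=\int_{R(\vec N+\vec{\bf 1})\cap(0,\infty)^k}\Big(\sum_{\epsilon\in\{\pm1\}^k}\epsilon_1\cdots\epsilon_k\, e^{2\pi i\sum_{\mathfrak m}\xi_{\mathfrak m}\epsilon^{\mathfrak m}t^{\mathfrak m}}\Big)\frac{dt}{t_1\cdots t_k}.
\]
We then expand $e^{2\pi i\sum\xi_{\mathfrak m}\epsilon^{\mathfrak m}t^{\mathfrak m}}=\prod_{\mathfrak m}\big(\cos(2\pi\xi_{\mathfrak m}t^{\mathfrak m})+i\epsilon^{\mathfrak m}\sin(2\pi\xi_{\mathfrak m}t^{\mathfrak m})\big)$. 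This gives a sum over subsets $\Omega\subset\Lambda$ of $\prod_{\mathfrak m\notin\Omega}\cos\cdot\prod_{\mathfrak m\in\Omega}i\sin$, each weighted by $\epsilon^{\sum_{\Omega}\mathfrak m}$. Summing $\epsilon_1\cdots\epsilon_k\,\epsilon^{\sum_\Omega\mathfrak m}$ over $\epsilon$ yields $2^k$ if $\Omega$ is odd and $0$ otherwise. Hence
\[
\mathcal H^\Lambda_{\vec N}(\xi)=2^k\sum_{\Omega\subset\Lambda\ \text{odd}} i^{|\Omega|}\int_{[1,N+1]}\prod_{\mathfrak m\in\Omega}\sin(2\pi\xi_{\mathfrak m}t^{\mathfrak m})\prod_{\mathfrak m\in\Lambda\setminus\Omega}\cos(2\pi\xi_{\mathfrak m}t^{\mathfrak m})\frac{dt}{t}.
\]
Part (i) follows at once, since the sum is empty.

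For sufficiency in (ii), pass to logarithmic variables $t_\nu=e^{s_\nu}$, so that $t^{\mathfrak m}=e^{\langle\mathfrak m,s\rangle}$ and $dt/t=ds$ over a box $[0,L]$. Products of sines and cosines rewrite as linear combinations of $\exp(2\pi i\sum_{\mathfrak m}\pm\xi_{\mathfrak m}e^{\langle \mathfrak m,s\rangle})$. It is cleaner, however, to keep the odd-subset structure. We decompose the box $[0,L]$ into regions according to which monomial terms $|\xi_{\mathfrak m}|e^{\langle\mathfrak m,s\rangle}$ are $\lesssim1$ (where we Taylor expand: $\sin u\approx u$, $\cos u\approx 1$) and which are $\gg1$ (where oscillation gives decay via van der Corput / integration by parts in a direction along which that phase dominates). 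The regions are polyhedral, cut out by the hyperplanes $\langle\mathfrak m,s\rangle=-\log|\xi_{\mathfrak m}|$. On the region where all terms of an odd $\Omega$ are small, the integrand is bounded by $\prod_{\Omega}|\xi_{\mathfrak m}|e^{\langle\mathfrak m,s\rangle}=e^{\langle\sum_\Omega\mathfrak m,s\rangle+c}$ times bounded factors. The full-rank hypothesis (applied to each odd $\Omega$ surviving in the region) is what guarantees that the polyhedron $\{\langle\mathfrak m,s\rangle\le c_{\mathfrak m},\ \mathfrak m\in\Omega\}$ on which this is integrated has exponentially decaying weight in every direction of $\mathbb R^k$, so its integral is $O(1)$ uniformly in $\xi$ and $L$. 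In regions where some term is large, we integrate by parts in the variable in which the dominant phase grows. The standard multi-parameter decomposition (as in Ricci--Stein) handles a region with some large phase and some small ones: we treat the small ones as amplitude and show the remaining lower-dimensional integral again reduces to the same structure with $\Lambda$ replaced by a subset.

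For necessity, suppose an odd $\Omega$ has rank $r<k$. Choose a nonzero $v\in\Omega^\perp$ and pick $\xi_{\mathfrak m}$ for $\mathfrak m\in\Omega$ and $\xi_{\mathfrak m}=0$ (or tiny) otherwise, tuned so that along the tube $s\in\mathbb R v+O(1)$ the phases of $\Omega$ stay constant of size $\sim1$. We choose a minimal odd $\Omega$ so that the other terms cancel or are negligible. Then the integrand $\prod_\Omega\sin(\cdot)$ has a fixed nonzero sign on a tube of length $\sim L$, giving growth $\gtrsim L$. The minimality and the parity selection must ensure that other odd subsets' contributions do not cancel it. I expect this, together with the uniform treatment of the mixed large/small regions in sufficiency, to be the main obstacle. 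The key combinatorial point is showing that the region geometry is controlled exactly by ranks of odd subsets.
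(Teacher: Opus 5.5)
Part (i) and your reflection identity $\mathcal H^\Lambda_{\vec N}(\xi)=2^k\sum_{\Omega\ \mathrm{odd}}i^{|\Omega|}I_\Omega$ are correct; this is the same parity cancellation the paper uses for \eqref{10099}.

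\textbf{Sufficiency.} Your route differs from the paper's. The paper's proof is Lemma~\ref{po3}. On each dyadic box it splits the monomials into small ones $U$ and large ones $V$ via $P^{\mathfrak n}_J,Q^{\mathfrak n}_J$, and telescopes the small ones with $D^{\mathfrak n},R^{\mathfrak n}$. It then discards every configuration $A\cup V$ of rank $<k$: under \eqref{v16} such a set contains no odd subset, so its integral vanishes identically. In your version the factors $\sin(2\pi\xi_{\mathfrak m}t^{\mathfrak m})$, $\mathfrak m\in\Omega$, already carry the small-phase gain. So the mixed regions you flag need no induction over subsets or lower-dimensional reduction. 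Let $I_{\Omega,J}$ be the piece of $I_\Omega$ on the dyadic box $J$, and set $U_J=\{\mathfrak m:|\xi_{\mathfrak m}|2^{J\cdot\mathfrak m}\le 1\}$ and $V_J=\Lambda\setminus U_J$. Keep the $U_J$-factors in the amplitude; on the rescaled box their gradient is controlled by the same product. Expand the $V_J$-factors into exponentials and apply the multivariate van der Corput lemma. This gives, for some $0<c\le 1$,
\[
|I_{\Omega,J}|\lesssim\prod_{\mathfrak m\in\Omega\cap U_J}|\xi_{\mathfrak m}|2^{J\cdot\mathfrak m}\cdot\min\Bigl(1,\bigl(\max_{\mathfrak n\in V_J}|\xi_{\mathfrak n}|2^{J\cdot\mathfrak n}\bigr)^{-c}\Bigr)\le 2^{-c\max_{\mathfrak m\in\Omega}\bigl|J\cdot\mathfrak m+\log_2|\xi_{\mathfrak m}|\bigr|}.
\]
The right side is summable over $J\in\mathbb Z^k$ uniformly in $\xi$ precisely because $\mathrm{rank}(\Omega)=k$. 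This is shorter than the paper's argument, but it yields only the uniform multiplier bound. The paper's Littlewood--Paley organization also gives the $L^p$ estimate \eqref{poo4}, which is used later for Main Theorem~\ref{mt2}.

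\textbf{Necessity: the genuine gap.} The paper itself does not prove this direction for this theorem; it refers to \cite{K}. Your reduction is fine: pass to a minimal odd $\Omega_0$ inside a rank-deficient odd set, so $r:=\mathrm{rank}(\Omega_0)<k$, and set $\xi_{\mathfrak m}=0$ off $\Omega_0$. This gives $2^k i^{|\Omega_0|}\int_{[0,L]^k}G(s)\,ds$ with $G(s)=\prod_{\mathfrak m\in\Omega_0}\sin(2\pi\xi_{\mathfrak m}e^{\langle\mathfrak m,s\rangle})$. But a fixed sign of $G$ on the tube where all phases are $\sim1$ proves nothing. $G$ depends only on the projection $w$ of $s$ onto $W=\mathrm{span}(\Omega_0)$, so the integral is $\int_W G(w)\,L^{k-r}\phi(w/L)\,dw$, where $\phi$ is a slice volume of a cube. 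The part of $W$ outside your $O(1)$-neighbourhood therefore contributes at the same order $L^{k-r}$, with no sign control. Changing $\xi$ only translates $G$ in $W$, so it cannot make the tube dominate.

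What you need is $\int_W G\neq0$. The missing idea is that a minimal odd set is linearly independent. Suppose $\sum_{\mathfrak m\in S}[\mathfrak m]_2=0$ with $\emptyset\neq S\subseteq\Omega_0$. If $S\neq\Omega_0$, then $\Omega_0\setminus S$ is a smaller odd set; if $S=\Omega_0$, this contradicts oddness. So the residues are independent over $\mathbb F_2$, hence $\Omega_0$ is independent over $\mathbb R$ and $|\Omega_0|=r$. The functionals $a_{\mathfrak m}=\langle\mathfrak m,w\rangle$ are then coordinates on $W$ and $G$ factorizes. For $\xi_{\mathfrak m}>0$ this gives $\int_W G=c_{\Omega_0}\prod_{\mathfrak m\in\Omega_0}\int_{\mathbb R}\sin(2\pi\xi_{\mathfrak m}e^{a})\,da=c_{\Omega_0}(\pi/2)^r>0$. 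Finally, center the phases at $s_0=\frac L2(1,\dots,1)$ and integrate by parts in the large-phase directions to replace $\phi(w/L)$ by $\phi(0)>0$. This yields $|\mathcal H^\Lambda_{\vec N}(\xi)|\gtrsim L^{k-r}\to\infty$.
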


Although Theorem \ref{thji} follows from Main Theorem 3.2 of \cite{K}, we prove its sufficient part in Section \ref{sec2}, since
several estimates used in the proof will also be needed later in
establishing the main theorem.

\subsection{Statement of Main Theorems}

We now state our main  theorems. Let   $\Lambda\subset \mathbb{Z}_+^k$ be a finite set. For  any $\vec{N}\in \mathbb{N}^k$ and any $\xi\in \mathbb{R}^{|\Lambda|}$, we define
\begin{align}\label{hm11}
	H^{\Lambda}_{\vec{N}}(\xi):= \sum_{t\in  R(\vec{N})\cap \mathbb{Z}^k}\frac{  e^{2\pi i \sum_{\mathfrak{m}\in\Lambda} \xi_{\mathfrak{m}}t^\mathfrak{m}} }{t_1\cdots t_k}.
\end{align}
\begin{main}\label{mt1}
	Let $\Lambda\subset \mathbb{Z}_+^k$ be a finite set. Then the following dichotomy holds.
	
	\medskip
	\noindent
	{\rm (i)} Suppose that $\Lambda$ contains no odd subset. Then
	\begin{align}
		\sup_{\vec{N}\in \mathbb{N}^{k},\ \xi\in\mathbb{R}^{|\Lambda|}}
		|H^{\Lambda}_{\vec{N}}(\xi)|=0.
		\label{10099}
	\end{align}
	
	\medskip
	\noindent
	{\rm (ii)} Suppose that $\Lambda$ contains an odd subset. Then  a constant $C_{\Lambda}>0$ exists such that
	\begin{align*}
		\sup_{\vec{N}\in \mathbb{N}^{k},\ \xi\in\mathbb{R}^{|\Lambda|}}
		\left| H^{\Lambda}_{\vec{N}}(\xi)\right|
		\le C_{\Lambda}
	\end{align*}
	if and only if
	\begin{align}
		\text{every }\mathfrak{m}=(m_1,\dots,m_k)\in\Lambda
		\text{ has at most one odd component.}
		\label{1009}
	\end{align}
	Combining (i) and (ii),   we conclude that
	$H^{\Lambda}_{\vec N}(\xi)$
	is uniformly bounded if and only if either
	\begin{align*}
		&\text{$\Lambda$ contains no odd subset,}\\
		&\text{or every odd subset of }\Lambda\ \text{is}\ 
		\{{\bf e}_1,\dots,{\bf e}_k\}\ \text{or}\ \{{\bf e}_1,\dots,{\bf e}_k, {\bf 0}\}
		\pmod 2.
	\end{align*}
\end{main}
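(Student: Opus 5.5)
Part (i) follows from a symmetry argument. The plan is to expand $e^{2\pi i\sum\xi_{\mathfrak m}t^{\mathfrak m}}=\prod_{\mathfrak m\in\Lambda}e^{2\pi i\xi_{\mathfrak m}t^{\mathfrak m}}$ and write each factor as $\cos+i\sin$. Each $\cos(2\pi\xi_{\mathfrak m}t^{\mathfrak m})$ is invariant under every sign flip $t_\nu\mapsto -t_\nu$. Each $\sin(2\pi\xi_{\mathfrak m}t^{\mathfrak m})$ changes by the factor $(-1)^{m_\nu}$. A product term in which the sines come from a subset $\Omega\subset\Lambda$ therefore transforms under $t_\nu\mapsto-t_\nu$ by $(-1)^{(\sum_{\Omega}\mathfrak m)_\nu}$. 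The kernel $1/(t_1\cdots t_k)$ changes sign under every such flip. Since $R(\vec N)\cap\mathbb Z^k$ is symmetric in each coordinate, a term survives only if $(\sum_\Omega\mathfrak m)_\nu$ is odd for all $\nu$, that is, only if $\Omega$ is odd. If $\Lambda$ has no odd subset (the empty set counts as even), every term vanishes and $H^\Lambda_{\vec N}\equiv0$.

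For the necessity in (ii), suppose some $\mathfrak m_0\in\Lambda$ has two odd components, say $m_1,m_2$. The plan is to force divergence by a Garaev-type construction. Set all coefficients $\xi_{\mathfrak m}=0$ except for those in a chosen odd subset, and choose these so that the sum restricted to the $(t_1,t_2)$ structure reproduces $\sum e^{2\pi i\xi t_1^{m_1}t_2^{m_2}\cdots}/(t_1t_2)$ with rational $\xi=a/q$. On the major arc the sum factors through Gauss-type sums $S(a/q)$ times a logarithmic divergence. Concretely, restricting to $t_1,t_2\equiv$ residues mod $q$ and summing $1/(t_1t_2)$ produces $\log N_1\log N_2$ weighted by the incomplete sums $\sum_{r}\frac{1}{r}e(ar^{m_1}s^{m_2}/q)$. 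One then chooses $q$ and $N$ so that this weight is nonzero, yielding growth in $\log N$. The other variables $t_\nu$, $\nu\ge3$, must supply a nonzero bounded factor; this requires care when $\mathfrak m_0$ has further odd components, and I would handle that by keeping additional elements of an odd subset to produce one-dimensional Hilbert factors bounded below. An alternative route through Theorem~\ref{thji} handles the case where some odd subset has rank $<k$, since the discrete sum then inherits continuous divergence via Euler--Maclaurin comparison near $\xi=0$ scales.

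For sufficiency in (ii), assume every $\mathfrak m$ has at most one odd component. Then every odd subset contains, mod $2$, exactly one vector of type ${\bf e}_\nu$ for each $\nu$, hence has full rank $k$. Theorem~\ref{thji}(ii) therefore gives uniform boundedness of $\mathcal H^\Lambda_{\vec N}$. I would then run the multi-parameter circle method, decomposing $t$ dyadically, $|t_\nu|\sim 2^{j_\nu}$. On minor arcs, multi-parameter Weyl bounds (Arkhipov--Chubarikov--Karatsuba) give a power saving in $\min_\nu 2^{j_\nu}$, which is summable. On major arcs $\xi_{\mathfrak m}=a_{\mathfrak m}/q+\eta_{\mathfrak m}$, splitting $t=q s+r$ gives
\[
H\approx \sum_{r\in(\mathbb Z/q)^k}e(P_a(r)/q)\,\frac{1}{q^k}\,\mathcal H_{\vec N/q}(\eta\text{-rescaled}),
\]
up to the weight $1/t$. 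The key point is that the complete sum weighted by parity should vanish unless the odd-subset structure is compatible. Under the condition, $r\mapsto -r$ in a single coordinate acts on $P_a(r)$ only through monomials odd in that coordinate. This gives cancellation of the residue sum $\sum_r e(P_a(r)/q)\,\mathrm{sgn}$, or boundedness after matching against $\mathcal H$.

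The main obstacle is exactly this major-arc analysis: controlling the singular series-like sums $\sum_r e(P_a(r)/q)/(r_1\cdots r_k)$ uniformly in $q$ and $a$, and patching the various regimes where some $j_\nu$ are small and others large. In those mixed regimes one variable sits on the major arc while another is essentially free. I would attack this by induction on $k$: freeze the short variables and apply the lower-dimensional result to the resulting polynomial, which still satisfies \eqref{1009}.
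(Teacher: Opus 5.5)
Your argument for (i) is the same as the paper's and is correct. Both directions of (ii) have genuine gaps.

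\textbf{Necessity.} The logarithmic growth you propose does not occur. If all coefficients are rational with denominator $q$, the phase depends only on $t \bmod q$. On the symmetric ranges, for $1\le r\le q-1$,
\[
\sum_{\substack{1\le |t|\le N\\ t\equiv r\ (\mathrm{mod}\ q)}}\frac1t
=\sum_{\substack{1\le t\le N\\ t\equiv r}}\frac1t-\sum_{\substack{1\le t\le N\\ t\equiv -r}}\frac1t
=O\Bigl(\frac{1}{\min(r,q-r)}\Bigr)
\]
uniformly in $N$, while the class $r\equiv 0$ contributes $0$. The two $\frac1q\log N$ terms cancel. Hence $H^\Lambda_{\vec N}(a/q)=O\bigl((1+\log q)^k\bigr)$ uniformly in $\vec N$, and no choice of residues produces $\log N_1\log N_2$, or even $\log N$.

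The frequency has to move with $N$. The paper puts $\xi_1=N^{\epsilon}/N^{m_{11}+\cdots+m_{1s}}$ on a monomial $\mathfrak m_1$ that is odd exactly in its first $s\ge2$ coordinates. Lemma~\ref{propnec} then gives a $(\log N)^{s-1}$ lower bound for the resulting oscillatory integral.

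The real difficulty is the \emph{even} coordinates of $\mathfrak m_1$, not further odd ones. ``One-dimensional Hilbert factors bounded below'' cannot handle them, because the other members of an odd subset typically couple several variables, so nothing factors. The paper proceeds as follows:
\begin{itemize}
\item it takes a \emph{minimal} odd $\Omega=\{\mathfrak m_1,\dots,\mathfrak m_n\}$ containing a vector with two odd components;
\item it sets $N_{s+1}=\cdots=N_k=1$ and puts coefficient $1/3$ on $\mathfrak m_2,\dots,\mathfrak m_n$;
\item it splits $t_1,\dots,t_s$ modulo $3$ and applies Poisson summation.
\end{itemize}
This gives a product $A\cdot B$ with $|B|\gtrsim\log N$ and $A$ a finite sum over $\{0,\pm1\}^k$. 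By \eqref{105}, which comes from minimality, every term of $A$ except the full product of sines is odd in some $\ell_\nu$, so $A\neq 0$.

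Your fallback via Theorem~\ref{thji} covers only rank-deficient odd subsets. It misses $\{(1,1,0),(0,1,1),(0,1,0)\}$, where $\mathcal H^\Lambda_{\vec N}$ is bounded but $H^\Lambda_{\vec N}$ is not. Even in the rank-deficient case, the discrete--continuous comparison at the divergent frequencies would still need proof.

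\textbf{Sufficiency.} A Weyl saving of $2^{-c\min_\nu j_\nu}=2^{-cj_k}$ is not summable over $Z(k)$, since for fixed $j_k$ there are infinitely many $(j_1,\dots,j_{k-1})$. In the unbalanced regime $j_\ell\gg_\Lambda j_{\ell+1}$, neither Weyl sums nor averaged Gauss sums give decay in the large scales (Subsection~\ref{sec340}).

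Freezing the short variables and applying the $\ell$-parameter result does not repair this. It yields only
\[
\sum_{J_1}|H^\Lambda_J(\xi)|\lesssim 2^{-(j_{\ell+1}+\cdots+j_k)}\sum_{|\mathfrak t_2|\sim2^{J_2}}\ \sum_{J_1}\bigl|H^{\Lambda_1}_{J_1}(\xi(\mathfrak t_2))\bigr|\lesssim1
\]
for each $J_2$. There is no decay, so the sum over $J_2$ diverges.

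What is missing is a bound $2^{-cj_{\ell+1}}$ for each $J_2$-slice off the major arc. The paper obtains it through major-arc rigidity. Suppose $\xi(\mathfrak t_2)$ lies $2^{-10Kj_{\ell+1}}$-close to small-denominator rationals for at least $2|\Lambda|$ values of a short variable. Inverting the generalized Vandermonde system (Proposition~\ref{prop6100}) then puts $\xi$ itself in a coarse major arc with denominator $2^{O(j_{\ell+1})}$. This step is iterated one variable at a time. The coarse arc is then merged with the sliced arcs into the true major arc, using averaged Gauss sums (Konyagin-type counting), lattice sublevel sets and oscillatory integrals. A Gauss-weighted Weyl sum handles the monomials with $m=\mathbf{0}$.

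By contrast, the major-arc piece you single out as the main obstacle is routine. For $q\le 2^{j_k/10}$, Poisson summation on a dyadic block gives $S^\Lambda(a/q)\,\mathcal H^\Lambda_J(\xi-a/q)$ with the \emph{unweighted} complete sum, so there are no $1/(r_1\cdots r_k)$ weights. The bound $|S^\Lambda(a/q)|\lesssim q^{-\delta_\Lambda}$ together with Lemma~\ref{po3} then closes that case.
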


\begin{proof}[Proof of (\ref{10099})]
	
	Suppose  that $\Lambda$ contains no odd subset. Then by definition, every subset $\Omega\subset \Lambda$ is even, and  there exists at least one $\nu\in \{1,\cdots,k\}$ such that the $\nu^{th}$ component of $  \sum_{\mathfrak{m}\in \Omega} \mathfrak{m}  $ is an even number, which implies that the function
	\begin{align}\label{337}
		t_{\nu}\rightarrow\frac{ \prod_{\mathfrak{m}\in \Lambda\setminus\Omega} \cos(2\pi \xi_{\mathfrak m} t^{\mathfrak{m}}) \prod_{\mathfrak{m}\in \Omega}  i\sin(2\pi \xi_{\mathfrak m} t^{\mathfrak{m}})   }{t_1\cdots t_k}
	\end{align}
	is  odd in variable $t_\nu$.
	Therefore, one can obtain that
	\begin{align*}
		H^{\Lambda}_{\vec{N}}(\xi)
		&=\sum_{(t_1,\cdots,t_k)\in R(\vec{N})\cap   \mathbb{Z}^k} \frac{\prod_{\mathfrak{m}\in \Lambda} \left(\cos (2\pi   \xi_{\mathfrak m}t^{\mathfrak{m}})+i\sin (2\pi \xi_{\mathfrak m}t^{\mathfrak{m}} ) \right) }{t_1\cdots t_k}=0,
	\end{align*}
	since each term in the product expansion is of the form \eqref{337} and is odd in $t_\nu$ for some $\nu$, hence canceling out when summed over a symmetric range.
\end{proof}

To exclude  the vanishing cases in Main Theorem \ref{mt1} and Theorem \ref{thji},  assume that
\begin{align}\label{111}
	\Lambda \text{ contains an odd subset}
\end{align}
and
compare the two conditions (\ref{v16}) and (\ref{1009}) for  odd   $\Omega\subset \Lambda\subset \mathbb{Z}^k$:
\begin{itemize}
	\item[(C)]  Continuous case (\ref{v16}):   $\text{rank}(\Omega)=k$ 
	\item[(D)] Discrete case (\ref{1009}):    $\Omega \equiv
	\{{\bf e}_1,\dots,{\bf e}_k\}\ \text{or}\ \{{\bf e}_1,\dots,{\bf e}_k, {\bf 0}\}
	\pmod 2.$
\end{itemize}
\begin{remark}\label{rk423}
	Assume   (\ref{111}). Then   $  (C)\Leftarrow  (D) $ is obvious.
	If $k=2$, then $ (C)\Rightarrow (D)$  also holds,  since  $(odd,odd)\notin \Lambda$ implies that  every odd $\Omega$ in  (C) after  $(even,even)$ deleted,  must be $\{(odd,even),(even,odd)\}  \equiv
	\{{\bf e}_1, {\bf e}_2\}
	\pmod 2 $ satisfying (D).  
\end{remark}
Therefore, the uniform boundedness of $H^{\Lambda}_{\vec{N}}(\xi)$ implies that of $\mathcal{H}^{\Lambda}_{\vec{N}}(\xi)$. 
For $k=2$, the uniform boundedness of $\mathcal{H}^{\Lambda}_{\vec{N}}(\xi)$ and $H^{\Lambda}_{\vec{N}}(\xi)$ is equivalent. 
However, as the following example shows, this equivalence can fail
when $k\ge 3$.
\begin{exam}
	The equivalence between {\rm(C)} and {\rm(D)} fails in dimensions
	$k\ge3$. Indeed, when $k=3$, let 
	\[
	\Lambda=\{(1,1,0),\ (0,1,1),\ (0,1,0)\}.
	\]
	Then the only odd subset of $\Lambda$ is $\Lambda$ itself, as we see in Example \ref{eexx1}. Observe that $\operatorname{rank}(\Lambda)=3$, so condition~$(C)$
	holds. However,
	\[
	[\Lambda]_2
	\neq
	\{\mathbf e_1,\mathbf e_2,\mathbf e_3\}
	\quad\text{and}\quad
	[\Lambda]_2
	\neq
	\{\mathbf e_1,\mathbf e_2,\mathbf e_3,\mathbf 0\}.
	\]
	Thus, condition~$(D)$ fails.
	For this $\Lambda$, it follows from Theorem \ref{thji} together with  Main Theorem \ref{mt1} that  
	\[
	\sup_{\vec{N},\xi} \bigl| \mathcal{H}^{\Lambda}_{\vec{N}}(\xi) \bigr| \le C < \infty, 
	\qquad\text{whereas}\qquad  \sup_{\vec{N},\xi} \bigl| H^{\Lambda}_{\vec{N}}(\xi) \bigr|=\infty.
	\]
	Consequently,
	\[
	\sup_{\vec N,\xi}
	\left|
	H^\Lambda_{\vec N}(\xi)
	-
	\mathcal H^\Lambda_{\vec N}(\xi)
	\right|
	=\infty.
	\]
	This shows that the classical discrete-to-continuous comparison formula
	\[
	m_N^{\rm disc}(\xi)
	=
	m_N^{\rm cont}(\xi)+O(1)
	\]
can	fail in the genuinely multi-parameter polynomial setting.
\end{exam}

We next state an $\ell^p$-boundedness result for the discrete
multiple Hilbert transforms associated with the multipliers
$H^\Lambda_{\vec N}(\xi)$. Let
\[
c_{00}(\mathbb{Z}^d)
:=
\left\{
f:\mathbb{Z}^d\to\mathbb{C}
:
\operatorname{supp}(f)\ \text{is finite}
\right\},
\]
which  is dense in
$\ell^p(\mathbb{Z}^d)$ for every $1\leq p<\infty$.  For  
$f\in c_{00}(\mathbb{Z}^{|\Lambda|}),$ define
${\bf H}^{\Lambda}_{\vec N}f$  by
\begin{align}\label{18p}
	{\bf H}^{\Lambda}_{\vec N}f(x)
	:=
	\sum_{t\in R(\vec N)\cap\mathbb{Z}^k}
	\frac{
		f\left(
		(x_{\mathfrak m}-t^{\mathfrak m})_{\mathfrak m\in\Lambda}
		\right)
	}{
		t_1\cdots t_k
	}\ \text{where $x=(x_{\mathfrak m})_{\mathfrak m\in\Lambda}
		\in\mathbb{Z}^{|\Lambda|}$ }.
\end{align}
Its Fourier multiplier is $H_{\vec N}^{\Lambda}(\xi)$ defined in
\eqref{hm11}. 
When considering the limit of  \eqref{18p} in $\vec{N}$,
it suffices to assume that
\begin{align*}
	\text{for every $\nu\in[k]$, there exists $\mathfrak m\in\Lambda$
		such that $\mathfrak m\cdot{\bf e}_\nu\neq0$.}
\end{align*}
Indeed, if $\mathfrak m\cdot{\bf e}_\nu=0$ for every
$\mathfrak m\in\Lambda$, then the summand, apart from the factor
$1/t_\nu$, is independent of $t_\nu$, and the corresponding sum in \eqref{18p} 
vanishes by symmetry.

Under this assumption, the finite support of $f$ guarantees that the
pointwise limit of \eqref{18p} exists as
$\min\{N_1,\ldots,N_k\}\to\infty$.
 We therefore define the discrete
multiple Hilbert transform initially on
$c_{00}(\mathbb{Z}^{|\Lambda|})$ by
\begin{align}\label{s2}
	{\bf H}^{\Lambda}f(x)
	:=
	\lim_{\min\{N_1,\ldots,N_k\}\to\infty}
	{\bf H}^{\Lambda}_{(N_1,\ldots,N_k)}f(x),
	\qquad
	x\in\mathbb{Z}^{|\Lambda|}.
\end{align}
If, for every $p\in(1,\infty)$, one can prove that
\[
\|{\bf H}^{\Lambda}f\|_{\ell^p(\mathbb{Z}^{|\Lambda|})}
\leq
C_{\Lambda,p}
\|f\|_{\ell^p(\mathbb{Z}^{|\Lambda|})}
\]
for all $f\in c_{00}(\mathbb{Z}^{|\Lambda|})$, then, by density,
${\bf H}^{\Lambda}$ extends uniquely to a bounded operator on
$\ell^p(\mathbb{Z}^{|\Lambda|})$. The following main theorem concerns the $\ell^p$-boundedness of the discrete multiple Hilbert transform.

\begin{main}\label{mt2} 
Let $\Lambda \subset \mathbb{Z}_+^k$ be a finite set. Then, one of the following two cases occurs:
\begin{enumerate}
	\item[(i)] Suppose that every subset of $\Lambda$ is even. Then
	${\bf H}^{\Lambda} \equiv 0.$

	\item[(ii)] Suppose that $\Lambda$ contains an odd subset. 
	If every $\mathfrak{m} \in \Lambda$ has at most one odd component as in (\ref{1009}), then for any
	$p \in (1,\infty)$ there exists a constant $C_{\Lambda,p} > 0$ such that
	\[
	\| {\bf H}^{\Lambda} \|_{\ell^p(\mathbb{Z}^{|\Lambda|}) \to \ell^p(\mathbb{Z}^{|\Lambda|})}
	\le C_{\Lambda,p}.
	\]
\end{enumerate}
Here, the constant $C_{\Lambda,p}$ depends only on $\Lambda$ and $p$.
\end{main}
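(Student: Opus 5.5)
Part (i) is immediate from the proof of \eqref{10099}. For every truncation $\vec N$ the multiplier $H^\Lambda_{\vec N}(\xi)$ vanishes identically, so ${\bf H}^\Lambda_{\vec N}f=0$ for all $f\in c_{00}$, and hence so does the limit \eqref{s2}. The substance is (ii). Under \eqref{1009} we may split $\Lambda=\Lambda_0\cup\Lambda_1\cup\cdots\cup\Lambda_k$. Here $\Lambda_0$ consists of exponents with all components even, and $\Lambda_\nu$ of those whose only odd component is the $\nu$-th. Expanding each exponential into $\cos+i\sin$ as in \eqref{337}, only the terms that are odd in every $t_\nu$ survive. The plan is to treat ${\bf H}^\Lambda_{\vec N}$ through a decomposition of $\frac{1}{t_1\cdots t_k}$ into dyadic pieces. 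Write $\frac{1}{t_\nu}\chi_{1\le|t_\nu|\le N_\nu}=\sum_{j_\nu}\psi_{j_\nu}(t_\nu)$, where $\psi_{j}$ is an odd bump at scale $2^{j}$. This gives a multi-parameter sum over $\vec j=(j_1,\dots,j_k)$ of averaging operators with multipliers $m_{\vec j}(\xi)=\sum_t\prod_\nu\psi_{j_\nu}(t_\nu)e^{2\pi i\sum\xi_{\mathfrak m}t^{\mathfrak m}}$. We then run the multi-parameter circle method developed for Main Theorem \ref{mt1}, together with the Ionescu--Wainger style $\ell^p$ machinery (in its multi-parameter form, as in Mirek--Stein--Trojan type arguments).

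The key steps, in order, are as follows. \textbf{Step 1 (minor arcs).} Use the multi-parameter Weyl estimates of Arkhipov--Chubarikov--Karatsuba. For $\xi$ in the minor arcs at scale $\vec j$ (no rational approximation with denominator $\lesssim 2^{\delta|\vec j|}$), these give $|m_{\vec j}(\xi)|\lesssim 2^{-\delta'\min_\nu j_\nu}$ or better. Interpolating this $\ell^2$ decay with the trivial $\ell^1,\ell^\infty$ bounds yields summable $\ell^p$ bounds for the minor-arc pieces. Some care is needed when the $j_\nu$ are very unbalanced. There we only gain in the variables along which the phase genuinely oscillates, and we treat the nonoscillating variables by a lower-dimensional induction, since a small scale $j_\nu$ behaves like a fixed-$t_\nu$ slice. \textbf{Step 2 (major arcs).} Near a rational $a/q$ we approximate $m_{\vec j}(\xi)\approx G(a/q)\,\Phi_{\vec j}(\xi-a/q)$. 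Here $G(a/q)=q^{-k}\sum_{r\in(\mathbb Z/q)^k}e^{2\pi i\sum (a_{\mathfrak m}/q)r^{\mathfrak m}}$ is a complete Gauss sum and $\Phi_{\vec j}$ is the continuous dyadic multiplier. The crucial point, and where \eqref{1009} enters, concerns the summed-over-$\vec j$ version. Its arithmetic factor is really a weighted sum $\sum_{r}\prod_\nu(\text{odd part in }r_\nu)\,e(\dots)$, and it does not vanish for $q>1$ in general. The parity hypothesis ensures that every phase monomial is even in all but one variable. Consequently, after reduction modulo $q$ the product kernel splits into one-variable pieces in which each $t_\nu$ enters only through $\Lambda_\nu$ and $\Lambda_0$. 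This mirrors why the comparison \eqref{cdp1} survives, and it lets the $\vec j$-sum of the continuous parts be controlled by Theorem \ref{thji} / Ricci--Stein (Theorem \ref{cho4}) multiplier bounds, uniformly in $\vec N$. \textbf{Step 3 (assembling in $\ell^p$).} For each dyadic denominator level $2^s$ we build the Ionescu--Wainger multiplier projection. We show that the major-arc operator at level $s$ has $\ell^p$ norm $\lesssim 2^{-\delta s}$ times a bound $O(s^C)$. The decay comes from the Gauss-sum decay $|G(a/q)|\lesssim q^{-\delta}$, valid because the relevant odd subset has full rank. The factor $O(s^C)$ comes from the multi-parameter Ionescu--Wainger theorem combined with the $L^p$ boundedness of the continuous multiple Hilbert transform along the polynomial surface (Ricci--Stein/Nagel--Wainger). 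Summing over $s$ gives the claim for ${\bf H}^\Lambda_{\vec N}$ uniformly in $\vec N$. Passing to the limit \eqref{s2} is by Fatou.

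The main obstacle is Step 2--3 in the genuinely multi-parameter regime. We must sum over the dyadic scales $\vec j$ without losing, while transferring the continuous multiple Hilbert transform bound to $\ell^p$ through a multi-parameter Ionescu--Wainger projection. This requires a multi-parameter square function/Littlewood--Paley argument adapted to the non-isotropic dilations $t^{\mathfrak m}$, and a careful handling of the regions where some $j_\nu$ are small. I would first try to reduce to the product case by inducting on $k$: freeze the small-scale variables, which turns the problem into a lower-dimensional instance with a modified $\Lambda$ that still satisfies \eqref{1009}, and apply the full circle method only to the variables at large scale.
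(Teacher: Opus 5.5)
Part (i) is fine, but part (ii) has a genuine gap exactly where the paper does its main work: the unbalanced scales, which your Step 1 defers with ``some care is needed.''

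\textbf{The unbalanced regime.} You face a dilemma here.
\begin{itemize}
\item With major arcs of denominator $q\lesssim 2^{cj_k}$, the Weyl bound (essentially Proposition~\ref{ws1}) decays only like $2^{-cj_k}$. For fixed $j_k$ it cannot be summed over the infinitely many larger $j_1,\dots,j_{k-1}$, so it closes the argument only on $Z_0(k)$.
\item With your larger arcs, $q$ up to $2^{\delta|\vec j|}\gg 2^{j_k}$, Step 2 fails instead. The Poisson summation behind $m_{\vec j}\approx G(a/q)\Phi_{\vec j}(\xi-a/q)$ needs $q\lesssim 2^{cj_k}$. Moreover, the averaged Gauss sum over the short variables genuinely fails for $q\gg 2^{Cj_k}$ (Obstacle (ii) in Subsection~\ref{sec340}).
\end{itemize}
Freezing the short variables $\mathfrak t_2$ and inducting does not rescue this. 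For each $\mathfrak t_2$ it gives an $O(1)$ bound after summing over $J_1$ (Remark~\ref{yu1}), but no decay in $j_{\ell+1}$. The bad frequencies are precisely those where $\xi$ is minor-arc for the full problem while many slices $\xi(\mathfrak t_2)$ sit on lower-dimensional major arcs.

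Such an $O(1)$ bound is affordable in $\ell^p$ only for $j_k<h\approx\lambda^{\epsilon}$ (the paper's $B^1_{\lambda,\epsilon}$, via Lemma~\ref{ii11}). For $j_k\ge h$ you need the asymptotic formula of Theorem~\ref{IW3} with error $O(h^{-D})$. That formula rests on Theorem~\ref{th41}, i.e.\ $\sum_{J_1}|H^\Lambda_J(\xi)(1-\Psi^{\Lambda,\mathrm{major}}_J(\xi))|\lesssim 2^{-cj_{\ell+1}}$ on $Z_\ell(k)$, and nothing in your proposal yields it.

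The paper proves it by major-arc rigidity. If $\xi(t_{s+1},\dots,t_k)$ lies on a coarse major arc for at least $2|\Lambda|$ values of $t_{s+1}$, inverting a generalized Vandermonde matrix places $\xi(t_{s+2},\dots,t_k)$ on a slightly coarser one (Proposition~\ref{prop6100}, Lemma~\ref{vari61}). Iterating puts $\xi_{\Lambda'}$ on a coarse arc with $q\le 2^{N^{4(k-\ell)}j_{\ell+1}}$, a range where the averaged Gauss sum bound (Proposition~\ref{lem22p}) does hold. The coarse and layered arcs are then merged into the true major arc using that bound, lattice sublevel sets (Lemma~\ref{eep1}) and oscillatory integrals. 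The exponents $(\mathbf 0,n)$ need an additional Gauss-weighted Weyl sum with summation by parts (Proposition~\ref{prop7449}).

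\textbf{The role of \eqref{1009}.} Your account of where \eqref{1009} enters is wrong. The phase does not split into one-variable pieces modulo $q$: $(1,2)$ satisfies \eqref{1009}, yet $t_1t_2^2$ couples both variables. In the paper, \eqref{1009} matters because, unlike \eqref{v16}, it survives two operations the proof needs:
\begin{itemize}
\item adjoining coordinate vectors: the Poisson error terms $\widehat{F_{j,\beta}}(w/q)$, $w\neq0$, carry extra linear phases, and summing them over the unbounded scales uses \eqref{po4} for $\Lambda\cup\{\mathbf e_\nu:w_\nu\neq0\}$ (see \eqref{kp2});
\item projecting onto the first $\ell$ coordinates, which the induction uses (Remark~\ref{re3.1}).
\end{itemize}
The Gauss sum decay itself needs no rank condition (Lemma~\ref{el22}).

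\textbf{Ionescu--Wainger step.} No multi-parameter Ionescu--Wainger theorem is needed. Once Theorem~\ref{IW3} collapses all scales $j_k\ge h$ into one major-arc multiplier at scale $2^{h/10}$, its continuous part is $L^p$-bounded by Lemma~\ref{po3}. The classical Theorem~\ref{lem82} then suffices inside the decomposition of Lemma~\ref{IW2}.
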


\subsection{More General Problem}
To extend our result in its natural context, we  consider a general polynomial mapping
\[
\vec P=(P_1,\ldots,P_d):\mathbb Z^k\to\mathbb Z^d,
\qquad
P_\nu\in\mathbb Z[t_1,\ldots,t_k].
\]
Given $\vec P$ and $\vec N\in\mathbb N^k$, define the truncated discrete
multiple Hilbert transform by
\begin{align}\label{mtp2}
	{\bf H}^{\vec P}_{\vec N}f(x)
	:=
	\sum_{t\in R(\vec N)\cap\mathbb Z^k}
	\frac{f\bigl(x-\vec P(t)\bigr)}{t_1\cdots t_k},
\end{align}for $x\in\mathbb Z^d$ and $f:\mathbb Z^d\to\mathbb C.$
Its Fourier multiplier is
\begin{align*}
	H^{\vec P}_{\vec N}(\xi)
	:=
	\sum_{t\in R(\vec N)\cap\mathbb Z^k}
	\frac{e^{2\pi i\xi\cdot\vec P(t)}}{t_1\cdots t_k}.
\end{align*}

A more general question is the following.

\medskip
\noindent
\begin{question}\label{q11}
	For an appropriate class of functions $f$, for instance rapidly decreasing
	functions, does the limit
	\[
	\lim_{\min\{N_1,\ldots,N_k\}\to\infty}
	{\bf H}^{\vec P}_{\vec N}f(x)
	\]
	exist for every $x\in\mathbb Z^d$? Denoting the resulting operator by
	${\bf H}^{\vec P}$, can one characterize the polynomial mappings $\vec P$
	for which
	\begin{align*}
		\|{\bf H}^{\vec P}f\|_{\ell^p(\mathbb Z^d)}
		\le
		C_{p,\vec P}\|f\|_{\ell^p(\mathbb Z^d)}
	\end{align*}
	holds for some $1<p<\infty$?
\end{question}

\noindent \textbf{Discrete Singular Radon Transforms.}
The case $k=1$ corresponds to discrete singular Radon transforms
along polynomial curves, and Question~1.1 is completely resolved
when $k=1$. 

 From this perspective, we briefly review the historical development of discrete singular Radon transforms.
Let $K$ be a Calder\'on--Zygmund kernel on $\mathbb{R}^k$, and let 
$\vec{P}=(P_1,\dots,P_d):\mathbb{Z}^k\to \mathbb{Z}^d$ be a polynomial mapping with 
$P_\nu \in \mathbb{Z}[t_1,\dots,t_k]$ for $\nu=1,\dots,d$. 
For $N\in\mathbb{N}$, define the truncated discrete singular Radon transform
\[
R_N f(x)
:=
\sum_{\substack{t\in \mathbb{Z}^k\\ 1\le |t|\le N}}
f\bigl(x-\vec P(t)\bigr)\,K(t),
\qquad x\in \mathbb{Z}^d,
\]
initially for finitely supported functions $f:\mathbb{Z}^d\to\mathbb{C}$. 
When the limit exists, we define $Rf(x):=\lim_{N\to\infty}R_N f(x)$.

In the case \(k=1\) with kernel \(K(t)=1/t\), the operator \(R_N\) coincides with \(H_{\vec{N}}^{\vec{P}}\) defined in \eqref{mtp2}.
 The uniform boundedness estimate \eqref{11}, proved by Arkhipov and
 Oskolkov~\cite{AO} in 1987, implies uniform boundedness of the
 associated Fourier multipliers. By Plancherel, this yields the $\ell^2(\mathbb{Z}^d)$ boundedness of the corresponding discrete singular Radon transform $R$.

In the 1990s, Stein and Wainger~\cite{SW2,SW3} established the $\ell^p(\mathbb{Z}^d)$-boundedness of the discrete singular Radon transforms $R$ for the range $3/2< p < 3$ in arbitrary dimensions $d\ge 1$.

In 2006, Ionescu and Wainger~\cite{IW} resolved this problem by developing a general framework for estimating the associated Fourier multipliers via exponential sums, thereby establishing the $\ell^p(\mathbb{Z}^d)$-boundedness of $R$ for the full range $1<p<\infty$.

The development of Ionescu and Wainger~\cite{IW}  was foreshadowed by the method of Magyar, Stein, and Wainger~\cite{MSW} for handling discrete maximal operators, including the discrete spherical maximal operator. For surveys, see~\cite{IMW}, and for results on jump inequalities for discrete Radon transforms, see~\cite{MSZ}.  \\

\noindent\textbf{Double Hilbert Transform.}
In contrast to the case $k=1$, the corresponding boundedness problem
for individual polynomial mappings remains open even in the
two-parameter graph case.  To resolve Question~1.1 in the case of two or more parameters, it appears necessary to understand the $L^p$-boundedness criteria for the corresponding continuous Hilbert transforms. Unlike in the one-parameter setting, however, these criteria are formulated in terms of geometric conditions associated with Newton polyhedra. We therefore begin by describing the relevant continuous Hilbert transforms in  detail and then introduce the definition of the associated Newton polyhedra.
 Let
\[
\Lambda=(\Lambda_1,\ldots,\Lambda_d),
\qquad
\Lambda_\nu\subset\mathbb Z_+^k,
\]  
where we assume that the sets $\Lambda_1,\ldots,\Lambda_d$ are finite and mutually disjoint in this paper. For
each $\nu\in[d]$, define
\[
\mathbb R_{\Lambda_\nu}[t]
:=
\left\{
\sum_{\mathfrak m\in\Lambda_\nu}
c_{\mathfrak m}t^{\mathfrak m}
:
c_{\mathfrak m}\in\mathbb R
\right\},
\]
and set
\[
\vec{\mathbb R}_{\Lambda}[t]
:=
\left\{
\vec P(t)=\bigl(P_1(t),\ldots,P_d(t)\bigr)
:
P_\nu\in\mathbb R_{\Lambda_\nu}[t]
\text{ for every }\nu\in[d]
\right\}.
\]

To treat local and global oscillatory integrals in a unified framework,
let
\[
\vec b=(b_1,\ldots,b_k)\in \{-1,0,1\}^k.
\]
For $\vec P\in\vec{\mathbb R}_{\Lambda}[t]$, define the multiple Hilbert
transform $ \bm{\mathcal{H}}^{\vec P}_{\vec b}$ by the Fourier multiplier
\begin{align}\label{os11}
	\mathcal H^{\vec P}_{\vec b}(\xi)
	:=p.v.
	\int_{\prod_{\nu=1}^k
		\{ |t_\nu|^{b_\nu}\le 1\}}
	e^{2\pi i\langle\xi,\vec P(t)\rangle}
	\frac{dt_1}{t_1}\cdots\frac{dt_k}{t_k}.
\end{align}
Here the case $b_\nu=0$ is understood as
\[
\{|t_\nu|^0\le1\}
=
\{\,t_\nu:0<|t_\nu|<\infty\,\},
\]
since $|t_\nu|^0$ is undefined at $t_\nu=0$.
Consequently,
\[
\{|t_\nu|^{b_\nu}\le1\}
=
\begin{cases}
	\{\,|t_\nu|\ge1\,\}, & b_\nu=-1,\\
	\{\,0<|t_\nu|<\infty\,\}, & b_\nu=0,\\
	\{\,|t_\nu|\le1\,\}, & b_\nu=1.
\end{cases}
\]
Moreover, the principal value is taken by letting
$\epsilon_\nu\rightarrow0$ independently for each
$\nu=1,\ldots,k$, where the integral is truncated to the region
\[
\epsilon_\nu<|t_\nu|<1/\epsilon_\nu,
\qquad
\nu=1,\ldots,k.
\]
In particular, when $k=2$,
$\bm{\mathcal{H}}^{\vec P}_{(1,1)}$
is a local singular integral operator, whereas
$\bm{\mathcal{H}}^{\vec P}_{(0,0)}$
is the corresponding global singular integral operator.
The geometry governing the operator depends on whether each variable is
integrated near the origin, near infinity, or over the whole line.
This motivates the following generalized Newton polyhedron.
For $\vec b\in\{-1,0,1\}^k$ and $\Lambda_{\nu}\subset \mathbb{Z}_+^k$, 
\[
{\bf N}_{\Lambda_\nu}(\vec b)
:=
\operatorname{conv}
\left(
\Lambda_\nu+
\operatorname{cone}
\left( b_1{\bf e}_1,
\ldots,
b_k{\bf e}_k 
\right)
\right),
\]
and  denote the collection of all faces of
${\bf N}_{\Lambda_\nu}(\vec b)$ by
\[
\mathcal F\bigl({\bf N}_{\Lambda_\nu}(\vec b)\bigr).
\]

\begin{theorem}[Double Hilbert transforms]
	\label{DCP}
	The following theorem is due to
	Carbery--Wainger--Wright~\cite{CWW} in the local case and
	Patel~\cite{P2} in the global case, reformulated using the framework
	introduced in~\cite{K}.
	Let
	\[
	\vec b=(1,1)
	\quad\text{or}\quad
	(0,0),
	\]
	and let
	\[
	\Lambda=(\{{\bf e}_1\},\{{\bf e}_2\},\Lambda_3),
	\qquad
	\vec P(t)\in\vec{\mathbb R}_{\Lambda}[t].
	\]
Then, for every $1<p<\infty$,
$\mathcal H_{\vec b}^{\vec P}$ is bounded on $L^p(\mathbb R^3)$
	if and only if
	$\mathbb F\cap\Lambda_3$
	contains no odd subset whenever
	$
	\mathbb F\in
	\mathcal F({\bf N}_{\Lambda_3}(\vec b))
	$
	has     $\text{rank}(\mathbb{F})\le 1$.
\end{theorem}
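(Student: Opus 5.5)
The plan is to derive Theorem~\ref{DCP} from the original criteria of Carbery--Wainger--Wright (local case, $\vec b=(1,1)$) and Patel (global case, $\vec b=(0,0)$). All the work is in translating their conditions, which are stated for a single polynomial $P_3$ in terms of its Newton diagram, into the face condition on ${\bf N}_{\Lambda_3}(\vec b)$. I read the theorem as a statement about the whole class $\vec{\mathbb R}_\Lambda[t]$, with bounds uniform in the coefficients. The first step is to recall the model criterion. After the linear change of variables absorbing $P_1=c_1t_1$ and $P_2=c_2t_2$, the operator is the double Hilbert transform along the surface $(t_1,t_2,P_3(t_1,t_2))$. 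In the local case it is $L^p$ bounded iff no ``bad'' face of the Newton diagram $\operatorname{conv}(\operatorname{supp}P_3+\mathbb R_+^2)$ carries a nonvanishing odd part; in the global case the same holds with the diagram replaced by the one built from the cone $\operatorname{cone}(\pm{\bf e}_1,\pm{\bf e}_2)$-type directions encoded by $\vec b=(0,0)$. Here a bad face is a vertex, or an edge lying on a line through the origin, and a nonvanishing odd part means the restriction $\sum_{\mathfrak m\in\mathbb F}c_{\mathfrak m}t^{\mathfrak m}$ contains an odd-odd monomial, equivalently an odd subset. I will check that these bad faces are exactly the faces $\mathbb F$ with $\operatorname{rank}(\mathbb F)\le1$ in the language of \cite{K}: a vertex $\mathfrak m\neq 0$ spans a line, and an edge on a ray through the origin also has rank $1$.

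\textbf{Sufficiency.} Fix $\vec P\in\vec{\mathbb R}_\Lambda[t]$. Its Newton polyhedron ${\bf N}_{\operatorname{supp}P_3}(\vec b)$ may differ from ${\bf N}_{\Lambda_3}(\vec b)$, because some coefficients may vanish. Write $S=\operatorname{supp}P_3\subset\Lambda_3$, and let $\mathbb G$ be a rank-$\le1$ face of ${\bf N}_S(\vec b)$ with supporting normal $\omega$. The point to prove is this: if $\mathbb G\cap S$ has an odd subset, then the face $\mathbb F$ of ${\bf N}_{\Lambda_3}(\vec b)$ cut out by the same normal direction also has rank $\le1$ and contains that odd subset. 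To do this I will perturb $\omega$ inside the normal cone of $\mathbb G$ so as to isolate a minimal face of ${\bf N}_{\Lambda_3}(\vec b)$ meeting the odd set. The parity argument used in the proof of \eqref{10099} shows that only faces containing an odd subset contribute non-trivially, which lets me discard the even parts. This combinatorial lemma is the step I expect to be the main obstacle. The difficulty is that the rank-$1$ faces of the smaller polygon need not be faces of the larger one, so I must track how an odd-odd vertex of $S$ sits relative to ${\bf N}_{\Lambda_3}(\vec b)$. My first attempt will argue by contradiction. If the odd subset lies on no rank-$\le1$ face of ${\bf N}_{\Lambda_3}(\vec b)$, then it lies in the interior of a rank-$2$ edge or of the polyhedron itself. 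In that case the point can only be extremal for $S$ if the other points of $\Lambda_3$ nearby are absent from $S$. I would then show that the CWW/Patel dyadic decomposition still gives uniform decay there, because the relevant Hessian-type quantity is controlled by two monomials that are linearly independent.

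\textbf{Necessity.} Suppose some $\mathbb F\in\mathcal F({\bf N}_{\Lambda_3}(\vec b))$ with $\operatorname{rank}(\mathbb F)\le1$ contains an odd subset $\Omega$. Choose $P_3=\sum_{\mathfrak m\in\Omega}c_{\mathfrak m}t^{\mathfrak m}$ with generic coefficients. After the anisotropic dilation $(t_1,t_2)\mapsto(\delta^{\omega_1}t_1,\delta^{\omega_2}t_2)$, with $\omega$ normal to $\mathbb F$, $P_3$ becomes a function $\phi(t_1^at_2^b)$ of a single monomial, since $\Omega$ spans a line. Its odd part is nonzero because $\Omega$ is odd. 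The multiplier $\mathcal H^{\vec P}_{\vec b}(\xi)$ evaluated at $\xi=(0,0,\xi_3)$ then contains an integral over the curves $t_1^at_2^b=\text{const}$ that produces a factor $\log$ of the free dilation parameter, exactly as in Garaev's example for $t_1t_2$. This makes the multiplier unbounded, so the operator is not bounded on $L^2$, and by interpolation and duality it is not bounded on any $L^p$ either.
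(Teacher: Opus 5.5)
\textbf{There is a genuine gap: the reading of the statement you adopt is false, and so is the sufficiency lemma built on it.}

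Your overall strategy is the right one: reduce to the criteria of Carbery--Wainger--Wright and Patel, and identify their ``bad faces'' with the faces of rank at most $1$. That is all that stands behind this statement; the paper gives no proof and simply cites \cite{CWW}, \cite{P2} and \cite{K}.

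The theorem is an \emph{individual} statement. Here $\Lambda_3$ is the exponent set of $P_3$, i.e.\ every $c_{\mathfrak m}$ with $\mathfrak m\in\Lambda_3$ is nonzero. Otherwise the ``only if'' part already fails for $P_3\equiv0$.

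If you let coefficients vanish, the claim is false, and so is the combinatorial lemma you single out as the main obstacle. Take $\Lambda_3=\{(2,0),(1,1),(0,2)\}$:
\begin{itemize}
\item For $\vec b=(1,1)$, the point $(1,1)$ lies in the relative interior of the edge $[(2,0),(0,2)]$.
\item For $\vec b=(0,0)$, the whole polyhedron $\operatorname{conv}\Lambda_3$ is that segment.
\item In both cases this face has rank $2$. The rank-$\le1$ faces ($\{(2,0)\}$, $\{(0,2)\}$ and, locally, the unbounded edges on the axes) contain no odd subset.
\end{itemize}
Yet $S=\{(1,1)\}$, i.e.\ $P_3=t_1t_2$, gives an unbounded operator by the very criteria of \cite{CWW} and \cite{P2}, since $(1,1)$ is then an odd vertex. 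Your proposed rescue (two linearly independent monomials still controlling the Hessian) fails precisely because those monomials have zero coefficients.

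Uniformity in the coefficients also fails, even when all coefficients are nonzero. For $P_3=\epsilon(t_1^2+t_2^2)+t_1t_2$, the multiplier at $(0,0,\xi_3)$ with $\xi_3\approx1/(\epsilon\log(1/\epsilon))$ has size $\gtrsim\log(1/\epsilon)$.

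Under the correct reading, no perturbation lemma is needed and the proof is only a dictionary. ${\bf N}_{\Lambda_3}(\vec b)$ is literally the Newton diagram $\operatorname{conv}(\Lambda_3+\mathbb R_+^2)$ of $P_3$ in the local case. In the global case it is the Newton polygon $\operatorname{conv}\Lambda_3$, because $\operatorname{cone}(0\cdot{\bf e}_1,0\cdot{\bf e}_2)=\{{\bf 0}\}$; nothing is built from $\pm{\bf e}_\nu$ directions.
\begin{itemize}
\item \textbf{Local case.} A bounded edge has a strictly positive supporting normal, so it cannot lie on a line through the origin. The rank-$\le1$ faces are the vertices and possibly unbounded edges on the axes, whose exponents have a zero coordinate. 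So the condition is exactly ``every vertex has an even coordinate''.
\item \textbf{Global case.} One adds the edges lying on a line $\mathbb R(a,b)$ through the origin, with $(a,b)$ primitive. Such a face contains an odd subset iff $a,b$ are odd and some exponent $k(a,b)$ with $k$ odd occurs. Equivalently, $a,b$ are odd and $P_{\mathbb F}=\phi(t_1^at_2^b)$ with $\phi$ not even.
\end{itemize}

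Correspondingly, your necessity argument \emph{chooses} $P_3$ supported on $\Omega$, so it proves the wrong statement. Here $P_3$ is given. You have to localize to the dyadic scales where the monomials of $\mathbb F$ dominate, and show the other monomials of $\Lambda_3$ are negligible there, as is done in \cite{CWW} and \cite{P2}.
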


Recently, the authors \cite{KS}   studied the  discrete
analogue of Theorem~\ref{DCP} in the graph case for $k=2$.
Let
$\Lambda=(\{{\bf e}_1\},\{{\bf e}_2\},\Lambda_3).$
The authors proved that if every vertex of
${\bf N}_{\Lambda_3}(-1,-1)$
is not $(\mathrm{odd},\mathrm{odd})$, 
\begin{align}\label{yt1}
\|\textbf{H}^{\vec P}\|_{\ell^p(\mathbb Z^3)\rightarrow\ell^p(\mathbb Z^3)}
\le C_{p,\vec P}.	
\end{align}

However, the exact necessity condition of \eqref{yt1} remains unknown. Moreover, for general two-parameter polynomial mappings $\vec{P}=\vec{P}(t_1,t_2)=(P_1(t_1,t_2),\cdots,P_{d}(t_1,t_2))$, the problem of determining the necessary and sufficient conditions for   the associated operator to be bounded on $\ell^p$, corresponding to the case $k=2$ of Question~\ref{q11}, remains open.
\\

\noindent\textbf{Multiple Hilbert Transform.}
To consider the three-parameter case, let
\[
\Lambda=(\{{\bf e}_1\},\{{\bf e}_2\},\{{\bf e}_3\},\Lambda_4),
\qquad
\vec P\in\vec{\mathbb R}_{\Lambda}[t].
\]
Carbery, Wainger, and Wright~\cite{CWW2}  observed that the
$L^p$ boundedness of the associated triple Hilbert transform
$\bm{\mathcal{H}}^{\vec P}_{(1,1,1)}$
for an individual polynomial mapping $\vec P$ is not determined solely
by the associated Newton polyhedron, but also depends on the
coefficients of the polynomials. This naturally led to the problem of
finding conditions on $\Lambda$ under which
$\bm{\mathcal{H}}^{\vec P}_{(1,1,1)}$
is bounded for every
$\vec P\in\vec{\mathbb R}_{\Lambda}[t]$.
In~\cite{CWW2}, the authors established both universal and individual
$L^2$ boundedness results under additional assumptions on $\Lambda_4$.
See also~\cite{CHKY} for  the   universal $L^p$ boundedness  for general $\Lambda_4$.

However, these universal boundedness results do not resolve the
corresponding problem for an individual polynomial mapping.
For $k\ge3$, a complete characterization of the $L^p$ boundedness of
$\bm{\mathcal{H}}^{\vec P}_{(1,\ldots,1)}$
for an individual polynomial mapping
$\vec P\in\vec{\mathbb R}_{\Lambda}[t]$
remains unknown even in the graph case.

Motivated by the universal boundedness problem, one may ask for a
satisfactory criterion in arbitrary dimensions.
In~\cite{K}, the first author proved the following theorem.

\begin{theorem}[Multiple Hilbert transforms \cite{K}]\label{MCP}
	Fix $\vec b=(b_\nu)_{\nu=1}^d\in\{0,1\}^d$, and let 
	$\Lambda=(\{{\bf e}_1\},\cdots,\{{\bf e}_d\},\Lambda_{d+1})\footnote{In \cite{K}, the characterization of the non-graph case
$\Lambda=(\Lambda_1,\ldots,\Lambda_d)$ for \eqref{4232n}
necessarily involves dual faces; see \cite{F} for the relevant
polyhedral background.}$.
	Then  
	\begin{align}\label{4232n}
		\|\bm{\mathcal{H}}^{\vec P}_{\vec b}\|_{L^p(\mathbb R^{d+1})\rightarrow L^p(\mathbb R^{d+1})}
		\le C_{p,\vec P}\  \ (1<p<\infty) \ \text{for every
			$\vec P\in\vec{\mathbb R}_{\Lambda}[t]$ }
	\end{align}
	if and only if
	\[
	\mathbb F\cap\Lambda_{d+1}
	\text{ contains no odd set whenever}\ 
	\mathbb F\in\mathcal F({\bf N}_{\Lambda_{d+1}}(\vec b))\
	\text{has rank at most $d-1$}.
	\]
\end{theorem}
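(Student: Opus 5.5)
The plan is to work at the level of the multiplier $\mathcal H^{\vec P}_{\vec b}(\xi)$ for necessity, and through a multi-parameter Littlewood--Paley decomposition for sufficiency. I would organise both directions by the dual-cone (normal fan) decomposition of the generalized Newton polyhedron ${\bf N}_{\Lambda_{d+1}}(\vec b)$.

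\emph{Decomposition and the fan.} Write $\frac{dt_\nu}{t_\nu}=\sum_{j_\nu}\psi_{j_\nu}(t_\nu)\,dt_\nu$, with $\psi_{j_\nu}$ odd and supported where $|t_\nu|\sim 2^{j_\nu}$. Here $j_\nu$ ranges over $\mathbb Z_{\le 0}$, $\mathbb Z$ or $\mathbb Z_{\ge0}$ according as $b_\nu=1,0,-1$; the sign convention is that $\operatorname{cone}(b_\nu{\bf e}_\nu)$ matches the admissible directions of $j$. For a fixed scale $\vec j$ and a fixed $\xi$, rescale $t_\nu=2^{j_\nu}s_\nu$. The monomial $\xi_{\mathfrak m}t^{\mathfrak m}$ becomes $\xi_{\mathfrak m}2^{\langle \vec j,\mathfrak m\rangle}s^{\mathfrak m}$. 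Since the coefficients are arbitrary, the natural quantity is the linear functional $\mathfrak m\mapsto\langle\vec j,\mathfrak m\rangle+\log_2|\xi_{\mathfrak m}|$. The set where it is maximal over $\Lambda_{d+1}$, up to a bounded error, is $\mathbb F\cap\Lambda_{d+1}$ for some face $\mathbb F\in\mathcal F({\bf N}_{\Lambda_{d+1}}(\vec b))$. Accordingly I partition the $\vec j$-lattice into regions $\mathcal C_{\mathbb F}(\xi)$, which are translates of the dual cones of the faces. The dimension of the dual cone of $\mathbb F$ is $d-\operatorname{rank}(\mathbb F)$, up to the affine-hull correction. This is the mechanism by which rank enters the criterion.

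\emph{Sufficiency.} On $\mathcal C_{\mathbb F}$, replace the phase $\xi\cdot\vec P$ by its $\mathbb F$-part, meaning the linear terms plus the monomials in $\mathbb F\cap\Lambda_{d+1}$. The error is controlled by van der Corput and sublevel estimates, with geometric decay $2^{-\epsilon\,\mathrm{dist}(\vec j,\partial\mathcal C_{\mathbb F})}$, which is summable. For the main term, if $\mathbb F\cap\Lambda_{d+1}$ contains no odd set, the parity argument in the proof of \eqref{10099} applies to the $\mathbb F$-phase, using the oddness of each $\psi_{j_\nu}$. Expanding the exponential into products of $\cos$ and $i\sin$, each term is odd in some $s_\nu$, so every dyadic piece vanishes identically. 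If $\mathbb F$ does contain an odd set, the hypothesis forces $\operatorname{rank}\mathbb F= d$. Then the dual cone is at most one-dimensional, and the sum over $\vec j\in\mathcal C_{\mathbb F}$ reduces to a one-parameter family of dyadic pieces. That family behaves like a one-parameter singular Radon transform along a polynomial curve, which is bounded by the Stein--Wainger theory \cite{SW1}. For $L^p$ rather than $L^2$, I would control the pieces by the multi-parameter maximal functions along $\vec P$ together with Duoandikoetxea--Rubio de Francia square-function bootstrapping. The graph components $\{{\bf e}_\nu\}$ provide the Littlewood--Paley projections needed to localise $\xi_\nu\sim 2^{-j_\nu}$. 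The constants depend only on $\Lambda$, since all estimates are uniform in the coefficients.

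\emph{Necessity.} Suppose some face $\mathbb F$ of rank at most $d-1$ contains an odd set $\Omega$. Choose $\vec P$ whose $\Lambda_{d+1}$ part is supported on $\Omega$, and choose $\xi$ so that the $\vec j$ in a two-dimensional (or larger) piece of the dual cone of $\mathbb F$, of size about $L^2$, all see essentially the phase $\sum_{\Omega}\xi_{\mathfrak m}t^{\mathfrak m}$ with comparable coefficients. Because $\Omega$ is odd, the term $\prod_{\mathfrak m\in\Omega}i\sin(\cdot)$ survives the symmetrisation, and each such dyadic piece contributes a fixed nonzero constant $c_\Omega$ of constant sign. Summing, $|\mathcal H^{\vec P}_{\vec b}(\xi)|\gtrsim L\to\infty$ (at least logarithmic growth in the truncations), so the operator is unbounded on $L^2$. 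By interpolation and duality it is then unbounded on $L^p$ for $p$ in a neighbourhood of $2$, and a standard transference argument extends this to all $p$.

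\emph{Main obstacle.} The delicate step is sufficiency near the boundaries between cones, that is, the transition regions where two faces compete. There the replacement error does not decay, and a lower-rank face with no odd set sits adjacent to a full-rank face with an odd set. I would handle this first by a refined induction on face dimension: treat the phase restricted to $\mathbb F$ as a new polynomial, to which the same argument applies with the faces of $\mathbb F$ as its own Newton polyhedron.
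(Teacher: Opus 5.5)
There is no proof of this statement in the paper to compare with; it is quoted from \cite{K} as background. Judged on its own terms, your sufficiency argument fails at the step ``if $\mathbb F\cap\Lambda_{d+1}$ contains no odd set, the parity argument of \eqref{10099} applies to the $\mathbb F$-phase, so every dyadic piece vanishes.'' By your own definition the $\mathbb F$-phase includes the linear graph terms $\xi_\nu t_\nu$. Its index set is therefore $\{{\bf e}_1,\dots,{\bf e}_d\}\cup(\mathbb F\cap\Lambda_{d+1})$, and the argument of \eqref{10099} needs \emph{this} set to have no odd subset. It always has one. The set $\{{\bf e}_1,\dots,{\bf e}_d\}$ is itself odd. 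Also, for any $\mathfrak m\in\mathbb F\cap\Lambda_{d+1}$ with an odd component, the set $\{{\bf e}_\nu: m_\nu\ \text{even}\}\cup\{\mathfrak m\}$ is odd of rank $1+\#\{\nu: m_\nu\ \text{even}\}$, which is less than $d$ as soon as $\mathfrak m$ has two odd components.

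For $d\ge 3$ this cannot be patched, because the ``if'' direction, read with Definition~\ref{14a}, is false. Take $d=3$, $\vec b=(1,1,1)$ or $(0,0,0)$, $\Lambda_4=\{(0,1,1)\}$ and $\vec P(t)=(t_1,t_2,t_3,t_2t_3)$. Every face meets $\Lambda_4$ in the even set $\{(0,1,1)\}$, so the stated condition holds. Yet the operator is the tensor product of the one-dimensional Hilbert transform in $x_1$ with the double Hilbert transform along $(s,t,st)$. The latter has multiplier $\iint e^{2\pi i\lambda st}\,\frac{ds\,dt}{st}$ at $(0,0,\lambda)$, which grows like $\log|\lambda|$ in the local case and diverges in the global case. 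So the operator is unbounded on $L^2$, hence on every $L^p$. The responsible odd set is $\{{\bf e}_1,(0,1,1)\}$, of rank $2$. This agrees with Theorem~\ref{cho4}: for a single monomial the coefficient can be absorbed into $\xi$, and that theorem forbids two odd components. For $d=2$ such mixed odd sets automatically have full rank, as in Remark~\ref{rk423}, which is why Theorem~\ref{DCP} is unaffected. You need the precise criterion of \cite{K}, which must control odd subsets of $\{{\bf e}_1,\dots,{\bf e}_d\}\cup(\mathbb F\cap\Lambda_{d+1})$ relative to the dual cone of $\mathbb F$, before a proof can be attempted.

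Three further steps also need repair.
\begin{itemize}
\item \emph{Uniformity in the coefficients.} Your face decomposition of the $\vec j$-lattice works only because $P_{d+1}$ carries a single frequency $\xi_{d+1}$ and fixed coefficients. Then the weights $\log|\xi_{d+1}c_{\mathfrak m}|$ differ from one another by $O_{\vec P}(1)$, which is exactly why the constant is $C_{p,\vec P}$. Your claim that the bounds depend only on $\Lambda$ is false. For $d=2$, $\vec b=(1,1)$ and $\Lambda_3=\{(2,0),(0,2),(3,3)\}$ the face condition holds. But as the coefficients of $t_1^2$ and $t_2^2$ tend to $0$, the monomial $t_1^3t_2^3$ dominates on an unbounded range of scales, and the norms blow up (cf.\ \cite{CWW}).
\item \emph{Necessity.} The same example shows that $\vec{\mathbb R}_\Lambda[t]$ must be read with nonzero coefficients on $\Lambda_{d+1}$. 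So you may not choose $\vec P$ supported on $\Omega$. You must keep every monomial and localize to the dual cone of $\mathbb F$. You must also rule out cancellation among the several odd subsets of $\mathbb F\cap\Lambda_{d+1}$, for instance via a minimal odd set as in Section~\ref{sec10}, before claiming a constant-sign contribution from each dyadic piece.
\item \emph{Rank-$d$ facets.} On such a facet, the sum along its normal ray is a one-parameter non-isotropic family of $d$-dimensional pieces with free linear frequencies. It is not a Radon transform along a curve. The transition regions you identify as the main difficulty are also left without an argument.
\end{itemize}
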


	The Newton polyhedron
	${\bf N}_{\Lambda_\nu}(\vec b)$
	is introduced to identify the dominant monomials of the coordinate
	polynomial
	$P_\nu\in\mathbb R_{\Lambda_\nu}[t]$
	appearing in the phase of the oscillatory integral~\eqref{os11}.
	In contrast, Main Theorems~1 and~2, as well as
	Theorems~\ref{cho4} and~\ref{thji}, concern only monomial mappings, namely
	\[
	\Lambda_\nu=\{\mathfrak m_\nu\},
	\qquad
	1\le\nu\le d,
	\]
	so that
	\[
	P_\nu(t)=t^{\mathfrak m_\nu}.
	\]
	Since each coordinate polynomial consists of a single monomial, there are
	no competing monomials, and hence the Newton polyhedron
	${\bf N}_{\Lambda_\nu}(\vec b)$
	plays no role in these results. 
	
However, we expect that, for the general polynomial mappings
appearing in Question~1.1, the Newton polyhedron will play a crucial
role, in contrast to the monomial setting considered in our main
theorems. Even the continuous analogue of Question~\ref{q11} remains open because of the difficulty arising from the dependence on the coefficients of the coordinate polynomials. These considerations indicate that Question~1.1 lies beyond the primary scope of the present paper, which is devoted mainly to the development of the multi-parameter circle method. We therefore leave its investigation for future work.

\subsection{A Related Problem: Multi-Parameter Pointwise Ergodic Theorems}We now  turn from discrete Hilbert transforms to pointwise ergodic theorems, where the circle method has played an important role.
We shall provide an overview of several subsequent developments in this area, with particular emphasis on multi-parameter pointwise ergodic theorems.\\\\
\noindent \textbf{Historical Developments in Pointwise Ergodic Theory.}  
Following foundational developments in ergodic theory, including Birkhoff's pointwise ergodic theorem \cite{Bi}, Furstenberg's ergodic-theoretic proof of Szemerédi's theorem \cite{F0,Sz} marked the beginning of modern ergodic Ramsey theory.
 In the early 1980s, Bellow \cite{Bel} and Furstenberg \cite{F1} posed the question of whether, for a polynomial $P \in \mathbb{Z}[t]$ satisfying $P(0)=0$ and an invertible measure-preserving transformation $T$ on a probability space $(X,\mathcal{B}(X),\mu)$, the limit \begin{align}\label{2}
	\lim_{N \to \infty}\frac{1}{N}\sum_{t=1}^{N} f(T^{P(t)}x)
\end{align}
exists for almost every $x \in X$ and every $f \in L^\infty(X)$.   
Bourgain resolved this problem affirmatively in a series of groundbreaking works \cite{B1,B2,B3}, establishing the almost everywhere convergence of polynomial ergodic averages for every $f\in L^p(X)$, $1<p<\infty$, on probability measure-preserving systems. A central ingredient in his approach was the establishment of the $\ell^p(\mathbb{Z})$-boundedness of the discrete polynomial maximal operator
$$x\in\mathbb{Z}\rightarrow\sup_{N\in \mathbb{N}}\left|\frac{1}{N}\sum_{t=1}^{N} f(x-P(t))\right|.$$

 A far-reaching generalization of the
pointwise convergence problem in \eqref{2}, arising from this circle of
ideas, is commonly known as the Furstenberg--Bergelson--Leibman conjecture.

\medskip
\noindent
\textbf{Conjecture 1.1} (Furstenberg--Bergelson--Leibman).  
Let $d,k,n \in \mathbb{N}$, and let $T_1,\dots,T_d : X \to X$ be invertible measure-preserving transformations on a probability space $(X,\mathcal{B}(X),\mu)$ generating a nilpotent group of step $\ell \in \mathbb{N}$. Suppose that
$P_{i,j} \in \mathbb{Z}[t_1,\dots,t_k]$ for $ 1 \le i \le d$ and  $1 \le j \le n.$
Then for any $f_1,\dots,f_n \in L^\infty(X)$, the nonconventional polynomial averages
\begin{align}\label{o1}
	\frac{1}{N_1\cdots N_k}
	\sum_{t_1=1}^{N_1}\cdots\sum_{t_k=1}^{N_k}
	\prod_{j=1}^n
	f_j\left(
	T_1^{P_{1,j}(t)} \cdots T_d^{P_{d,j}(t)} x
	\right)
\end{align}
converge for $\mu$-almost every $x \in X$ as $\min\{N_1,\dots,N_k\}\to\infty$.

\medskip

At the level of pointwise convergence, Birkhoff’s theorem covers the linear, single-parameter case $d=k=n=1$ with $P(t)=t$.  In the commuting case ($\ell=1$), Mirek and Trojan~\cite{MT} verified Conjecture~1.1 for $n=1$ under the diagonal condition. More precisely, in proving both pointwise and $L^{p}(X)$ convergence for all $1<p<\infty$, they showed that the averages
\begin{align}\label{80}
	\frac{1}{N^{k}}
	\sum_{t_1=1}^{N}\cdots\sum_{t_k=1}^{N}
	f\left(
	T_1^{P_1(t)}\cdots T_d^{P_d(t)}x
	\right)
\end{align}
converge for $\mu$-almost every $x \in X$ as $N \to \infty$.

Magyar, Stein, and Wainger \cite{MSW2}, along with Ionescu, Magyar, Stein, and Wainger \cite{IMSW}, considered specific instances of (\ref{o1}) involving polynomial mappings of degree at most two in step-two nilpotent group settings. More recently, Ionescu, Magyar,  Mirek and Szarek \cite{IMMS1} confirmed Conjecture 1.1 in cases where $d \in \mathbb{Z}_+$, $n= k = 1$, and $P_{1,1}, \ldots, P_{d,1} \in \mathbb{Z}[m]$, with $T_1, \ldots, T_d: X \to X$ being invertible, measure-preserving transformations generating a step-two nilpotent group on a $\sigma$-finite measure space.

For the multilinear scenario of Conjecture 1.1, Bourgain \cite{B44} demonstrated pointwise convergence when $P_{1,1}(t) = at$ and $P_{1,2}(t) = bt$ for integers $a, b \in \mathbb{Z}$. Subsequently, Krause, Mirek, and Tao \cite{KMT} extended these results by proving pointwise convergence  for $P_{1,1}(t) = t$ and $P_{1,2}(t) = P(t)$ with $P \in \mathbb{Z}[t]$ of degree at least two.
In a further advance, Kosz, Mirek, Peluse, Wan and Wright \cite{KMPW} established pointwise almost-everywhere convergence for the  multilinear averages
\[
\frac{1}{N}\sum_{t=1}^{N}
\prod_{j=1}^{n} f_j\bigl(T_j^{P_j(t)}x\bigr),
\]
where \(T_1,\dots,T_n\) are commuting invertible measure-preserving transformations and the polynomials \(P_1,\dots,P_n\in\mathbb Z[t]\) have pairwise distinct degrees. Their proof is based on the multilinear circle method. In the notation of \eqref{o1}, this corresponds to the case \(k=1\), \(d=n\),
$
P_{i,j}(t)=0 \ (i\neq j)$  and $ 
P_{j,j}(t)=P_j(t).
$\\\\
\noindent \textbf{Multi-Parameter Pointwise Ergodic Theorem.}
In connection with further generalizations and extensions of the
preceding ergodic theorems, it is natural to ask whether a
multi-parameter extension of \eqref{80} can be obtained without
imposing the diagonal condition; namely, whether the averages
\begin{align*}
	\frac{1}{N_1\cdots N_k}
\sum_{t_1=1}^{N_1}\cdots\sum_{t_k=1}^{N_k}
	f\bigl(T_1^{P_1(t_1, \cdots, t_k)} \cdots T_d^{P_d(t_1, \cdots, t_k)} x\bigr)
\end{align*}
converge for $\mu$-almost every $x \in X$ as $\min\{N_1,\dots,N_k\}\to\infty$.

Bourgain, Mirek, Stein, and Wright \cite{B4} and Mirek, Szarek, and Wright \cite{MSW1}, posed the following problem concerning general polynomial mappings. See Conjecture~1.22 in \cite{B4} and Conjecture~1.29 in \cite{MSW1}.

\medskip
\noindent
\textbf{Conjecture 1.2.}
Let $d,k\in\mathbb{N}$ be given, and let $(X,\mathcal{B}(X),\mu)$ be a probability measure space endowed with a family of invertible commuting measure-preserving transformations $T_1,\dots,T_d:X\to X$. Assume that $P_1,\dots,P_d\in\mathbb{Z}[t_1,\dots,t_k]$. Then for any $f\in L^\infty(X)$, the linear multi-parameter polynomial ergodic averages
\begin{align*}
	\frac{1}{N_1\cdots N_k}
\sum_{t_1=1}^{N_1}\cdots\sum_{t_k=1}^{N_k}
	f\bigl(T_1^{P_1(t_1,\dots,t_k)}\cdots T_d^{P_d(t_1,\dots,t_k)}x\bigr)
\end{align*}
converge for $\mu$-almost every $x\in X$ as $\min\{N_1,\dots,N_k\}\to\infty$.

\medskip

In 1951, Dunford  \cite{Dun} and Zygmund \cite{Zyg}  initiated the  multi-parameter problem by proving that the following limit
$$
\lim_{\min\{N_1,\dots,N_k\}\to\infty}	\frac{1}{N_1\cdots N_k}
\sum_{t_1=1}^{N_1}\cdots\sum_{t_k=1}^{N_k} f(T_1^{t_1} \cdots T_k^{t_k} x)
$$
exists for almost every $x \in X$ and for any $f \in L^p(X)$ with $p \in(1, \infty)$, where $T_i:X\to X$, $i=1,\ldots,k$, are invertible measure-preserving transformations  on a probability space $(X, \mathcal{B}(X), \mu)$.  In 1979, Arkhipov, Chubarikov and Karatsuba \cite{AC}  showed that for any nonconstant polynomial $P \in \mathbb{Z}[t_1, \cdots, t_{k}]$, any irrational $\theta \in \mathbb{R}$, and any interval $[a, b) \subseteq[0,1)$,
$$\lim _{\min \{N_1, \cdots, N_k\} \rightarrow \infty} \frac{\#\{(t_1, \cdots, t_k) \in \prod_{s=1}^{k}[N_s]:\{\theta P(t_1, \cdots, t_k)\} \in[a, b)\}}{N_1 \cdots N_k}=b-a.$$ 

More recently, in 2023, Bourgain, Mirek, Stein and Wright \cite{B4}  proved that for every
$f\in L^p(X)$ with $p\in(1,\infty)$, the limit
\begin{align*}
\lim_{\min\{N_1,\dots,N_k\}\to\infty}	\frac{1}{N_1\cdots N_k}
\sum_{t_1=1}^{N_1}\cdots\sum_{t_k=1}^{N_k}
	f\bigl(T_1^{P_1(t_1,\dots,t_k)}x\bigr)
\end{align*}
exists for $\mu$-almost every $x\in X$.
They also established $L^p(X)$ convergence for all $1<p<\infty$ in this case, together with
the corresponding oscillation seminorm estimates. In \cite{B4}, they established the $\ell^p$-boundedness of the following discrete maximal operator in the one-dimensional case $d=1$, defined by
\begin{align*}
	x_1\in\mathbb{Z}\rightarrow \sup_{N_1,\dots,N_k\in\mathbb{N}}
	\frac{1}{N_1\cdots N_k}\left|
\sum_{t_1=1}^{N_1}\cdots\sum_{t_k=1}^{N_k}
	f\bigl(x_1 - P_1(t_1,\dots,t_k)\bigr)\right|.
\end{align*}
However, when $d>1$, it remained an open problem until recently whether the discrete maximal operator
\begin{align*}
	&(x_1,\dots,x_d)\in\mathbb{Z}^{d}\longmapsto\\
	&\sup_{N_1,\dots,N_k\in\mathbb{N}}
	\frac{1}{N_1\cdots N_k}
	\left|\sum_{t_1=1}^{N_1}\cdots\sum_{t_k=1}^{N_k}
	f\bigl(x_1-P_1(t_1,\dots,t_k),\dots,
	x_d-P_d(t_1,\dots,t_k)\bigr)\right|
\end{align*}
is bounded on $\ell^p(\mathbb{Z}^d)$ and whether the corresponding multi-parameter ergodic theorems hold.

A key ingredient in the proof for $d=1$ is a multi-parameter circle method developed to analyze exponential sums of the form
\[
	\frac{1}{N_1\cdots N_k}
\sum_{t_1=1}^{N_1}\cdots\sum_{t_k=1}^{N_k}
e^{2\pi i \xi_1 P_1(t_1,\dots,t_k)},
\]
which involve a single frequency $\xi_1$ and a single polynomial $P_1$.
In contrast, for $d\ge 2$, one must deal with multiple frequencies. For instance, when $d=2$, the relevant multiplier takes the form
\begin{align}\label{ddff1}
	\frac{1}{N_1 N_2}
	\sum_{\substack{|t_1|\sim N_1\\ |t_2|\sim N_2}}
	e^{2\pi i\bigl(\xi_1 P_1(t_1,t_2)+\xi_2 P_2(t_1,t_2)\bigr)}.
\end{align}
In \cite{B4},  they  illustrate the difficulty of (\ref{ddff1}) by  pointing out the example :
\begin{align}\label{tt}
P_1(t)=t_1,\qquad P_2(t)=t_1 t_2^2	
\end{align}
which share the common factor $t_1$.
For this case, the phase can be written as
\[
\xi_1 P_1(t_1,t_2)+\xi_2 P_2(t_1,t_2)
= t_1\bigl(\xi_1 + \xi_2 t_2^2\bigr).
\]
In the regime $N_1\gg N_2$, even when  both $\xi_1$ and $\xi_2$ are approximated rationals with large denominators, the coefficient $\xi_1+\xi_2 t_2^2$ may be well-approximated by rationals with small denominators, thereby obstructing uniform minor arc estimates. This structure of the phase prevents one from obtaining satisfactory Weyl-type decay (or any suitable variant) even when both $\xi_1$ and $\xi_2$ lie in the minor arcs.  This interaction between different scales and frequencies constitutes a fundamental barrier to extending the multi-parameter circle method of  \cite{B4} to the multi-frequency setting. A detailed explanation of the difficulties arising in this case is given in Subsection~\ref{sec340}.

Very recently, however, Hejna--Łyżwa, Langowski, Mirek, Wright, and the authors of the present paper resolved Conjecture~1.2 in \cite{HKLMSW} by overcoming the above difficulties through new ideas related to the multi-parameter circle method, at the heart of which lies the observation of a major-arc rigidity phenomenon. Their principal contributions include the introduction of the
major-arc rigidity principle and the development of a multi-parameter
Ionescu--Wainger multiplier theory, which may be viewed as a culmination of
the Ionescu--Wainger framework in the multi-parameter setting.

\subsection{Organization}
In Section \ref{sec34}, we examine the main difficulties in resolving the problem and provide an overview of the key ideas used to overcome them. In Section~\ref{sec2}, we introduce preliminary lemmas. In Section~\ref{sec3}, we handle the major-arc contribution and divide the
summation ranges into balanced and unbalanced cases by comparing
the sizes of the parameters. In Section~\ref{sec4}, we treat the two-parameter case under a simplifying assumption on $\Lambda$.
In Section \ref{sec5}, we extend the arguments from the two-parameter case to the general multi-parameter setting, thereby establishing the sufficiency part of Main Theorem \ref{mt1}.
In Section \ref{sec10}, we prove the necessity part of Main Theorem~\ref{mt1}.
In Section~\ref{sec6}, we establish Main Theorem~\ref{mt2} by adopting the Ionescu–Wainger $\ell^{p}$ theory.
In  Section \ref{PPL}, we provide proofs of preliminary lemmas stated in Section~\ref{sec2}.

\subsection{Thanks}
Hoyoung Song would like to thank Professor Mariusz Mirek for his careful discussions of the central ideas of this paper at a stage when they were still preliminary and had not yet been fully refined.
Joonil Kim and Hoyoung Song were supported by the National Research Foundation of Korea (NRF) under Grant No. RS-2026-25482410.

\section{Overview of Multi-Parameter Circle Method and Main Idea}\label{sec34}In this section, we introduce our key method.
We begin by briefly outlining the framework for proving \eqref{11} in the single-parameter setting, based on the one-parameter circle method and the associated exponential-sum estimates developed in \cite{AO,B1,B2,B3,HR,SW2,V}.
  Consider the following dyadic multiplier with $j\in\mathbb{Z}_{+}$
$$ H^{\Lambda}_j(\xi)=\sum_{|t|\sim 2^j} \frac{e^{2\pi i \sum_{m\in \Lambda}\xi_mt^m}}{t} \ \text{for $\xi=(\xi_m)_{m\in\Lambda}\in \mathbb{R}^{|\Lambda|}$}.$$  
There are two key estimates:
\begin{itemize}
	\item[(1)] \textbf{Major arc estimate.} Let $\xi$ lie  in the major arc region:
	\[
	I^{\mathrm{major}}_j
	:=
	\bigcup_{\substack{a/q : (a,q)=1 \\ 1\le q \le 2^{j/10}}}
	\left\{
	\xi \in \mathbb{T}^{|\Lambda|} :
	|\xi_{\mathfrak{m}} - a_{\mathfrak{m}}/q| \le \frac{2^{j/10}}{2^{mj}}
	\ \text{for all } \mathfrak{m} \in \Lambda
	\right\}.
	\]
	Each dyadic piece $H_j^{\Lambda}(\xi)$ is approximated by the product  $S^{\Lambda}(a/q)\mathcal{H}^{\Lambda}_j(\xi-a/q)$ 
	of  Gauss-sum  and  integral Hilbert transform. Hence, the following properties  $$S^{\Lambda}(a/q)=O(q^{-c})\ \text{and}\  \sum_j|\mathcal{H}^{\Lambda}_j(\xi-a/q) |\lesssim 1$$ imply $\sum_{j\in\mathbb{Z}_+} |H_j^{\Lambda}(\xi)\chi_{I_j^{\rm{major}}}(\xi) |\lesssim 1$.
	\item[(2)] \textbf{Minor arc estimate.}
	Let $\xi$ lie  in the complementary minor arc region:
	\[
	I^{\mathrm{minor}}_j := \mathbb{T}^{|\Lambda|} \setminus I^{\mathrm{major}}_j.
	\]
	By Dirichlet approximation, the frequency vector $\xi$ can be approximated by $a/q$ with a large $q\ge 2^{j/10}$. Hence, the Weyl sum estimate gives  $$H_j^{\Lambda}(\xi)=O(2^{-cj}),$$ which implies
	$\sum_{j} |H_j^{\Lambda}(\xi)\chi_{I_j^{\rm{minor}}}(\xi) |\lesssim 1$.
\end{itemize}
\bigskip
Here, $\chi_D$ denotes the characteristic function of a set $D$.
 \subsection{The Multi-Parameter Circle Method in One Dimension}In this subsection, we recall the one-dimensional version of the multi-parameter circle method developed in \cite{KS}. We illustrate the underlying philosophy of this method through the model case $P(t_1,t_2)=t_1t_2^{2}+t_1$. 
 
  Consider  the  dyadic multiplier \begin{align*} H^{P}_{j_1,j_2}(\xi) := \sum_{|t_2|\sim 2^{j_2}} \sum_{|t_1|\sim 2^{j_1}} \frac{e^{2\pi i \xi P(t_1,t_2)}}{t_1t_2}, \qquad \xi\in\mathbb{T}, \end{align*} where $(j_1,j_2)\in\mathbb{Z}_+^2$ satisfies $j_1\geq j_2$. For $\Lambda=\{(1,2),(1,0)\},$ the Newton polyhedron ${\bf N}_{\Lambda}(-1,-1)$ has the vertex $(1,2)$.\\
  
 \noindent\textbf{Major Arc.} Let $\xi$ lie  in the major arc region:
	\begin{align}\label{rt}
			I^{\mathrm{major}}_{j_1,j_2}
		:=
		\bigcup_{\substack{a/q : (a,q)=1 \\ 1\le q \le 2^{j_2/10}}}
		\left\{
		\xi \in \mathbb{T} :
		|\xi - a/q| \le \frac{2^{j_2/10}}{2^{(1,2)\cdot(j_1,j_2)}}
		\right\}.
	\end{align}
	In this arc, each dyadic piece $H_{j_1,j_2}^{P}(\xi)$ is approximated by the product  $$S(a/q)\mathcal{H}^{P}_{j_1,j_2}(\xi-a/q)$$ 
	of two-parameter Gauss-sum  and  integral Hilbert transform. Note that $S(a/q)=O(q^{-c}).$ Moreover, since the vertex set of ${\bf N}_{\Lambda}(-1,-1)$ consists solely of $(1,2)$, which has an even component, we have $\sum_{j_1, j_2} \left| \mathcal{H}^{P}_{j_1,j_2}(\xi-a/q) \right| \lesssim 1.$ So, we have $$ \sum_{j_1\ge j_2} |H^{P}_{j_1,j_2}(\xi)\chi_{I_{j_1,j_2}^{\rm{major}}}(\xi) |\lesssim 1.$$
		
\noindent\textbf{Other Arcs.}
	Let $\xi$ lie  in the complementary arc region:
	$
 \mathbb{T} \setminus I^{\mathrm{major}}_{j_1,j_2}.
	$ To handle this arc, it suffices to consider the following three arcs:
\begin{align}
	I^{\mathrm{major^{\sharp}}}_{j_1,j_2}
	:=
	\bigcup_{\substack{a/q : (a,q)=1 \\ 1\le q \le 2^{j_2/10}}}
	\left\{
	\xi \in \mathbb{T} : \frac{2^{j_2/10}}{2^{(1,2)\cdot(j_1,j_2)}}\le
	|\xi - a/q| \le \frac{2^{j_1/10}}{2^{(1,2)\cdot(j_1,j_2)}}
	\right\},\label{y}\\
		I^{\mathrm{major-minor}}_{j_1,j_2}
	:=
	\bigcup_{\substack{a/q : (a,q)=1 \\ 2^{j_2/10}\le q \le 2^{j_1/10}}}
	\left\{
	\xi \in \mathbb{T} :
	|\xi - a/q| \le \frac{2^{j_1/10}}{2^{(1,2)\cdot(j_1,j_2)}}
	\right\},\label{y1}\\
		I^{\mathrm{minor}}_{j_1,j_2}
	:=\mathbb{T}\setminus
	\bigcup_{\substack{a/q : (a,q)=1 \\ 1\le q \le 2^{j_1/10}}}
	\left\{
	\xi \in \mathbb{T} :
	|\xi - a/q| \le \frac{2^{j_1/10}}{2^{(1,2)\cdot(j_1,j_2)}}
	\right\}.\label{y2}
\end{align}
In  $major^{\sharp}$ and $major-minor$ regimes,   $H_{j_1,j_2}^{P}(\xi)$ is approximated by  $$\sum_{|t_2|\sim 2^{j_2}}\frac{1}{t_2}S^{t_2}(a/q)\mathcal{H}^{t_2}_{j_1}(\xi-a/q),$$ where \begin{align*}
	\mathcal{H}_{j_1}^{t_2}(\xi-a/q)&:=\int_{|x_1|\sim 2^{j_1}} e^{-2\pi i[(\xi-a/q)(t_2^{2}+1) x_1]}\frac{dx_1}{x_1},\\
	S^{t_2}\left(\frac{a}{q}\right)&:=\frac{1}{q}\sum_{\ell_1=1}^q e^{-2\pi i \left(\frac{a}{q}(t_2^{2}+1)\ell_1\right)}.
\end{align*} In the  $major^{\sharp}$ arc regime, we apply oscillatory integral estimates to the Hilbert transforms $\mathcal{H}_{j_1}^{t_2}(\xi-a/q)$ and obtain a decay of the form $2^{-cj_2}$ from the high-frequency condition   $ |(\xi-a/q) 2^{(1,2)\cdot(j_1,j_2)}| \ge 2^{ j_2/10}$.	
 In  the  $major-minor$ arc regime, we instead use the averaged Gauss sum estimate: when  $2^{j_2/10}\le q$, there exists $c>0$ such that \begin{align}\label{u1}
	\frac{1}{2^{j_2}}\sum_{|t_2|\sim 2^{j_2}}\left|\frac{1}{q}\sum_{\ell_1=1}^q e^{2\pi i\left(\frac{a}{q}(t_2^{2}+1)\ell_1\right)}\right|\lesssim q^{-c}2^{-cj_2}.\end{align} Finally, in the minor-arc regime, we invoke the multi-parameter Weyl-sum estimate \begin{align}\label{u2}
	\left|\sum_{|t_2|\sim 2^{j_2}}\sum_{|t_1|\sim 2^{j_1}}e^{2\pi i \xi( t_1t_2^{2}+t_1)}\right|\lesssim 2^{j_1+j_2}/2^{cj_1}.
	\end{align} 
Therefore, combining the preceding estimates with the one-parameter continuous integral estimates, we obtain
\[
\sum_{j_1,j_2}
\left|
H_{j_1,j_2}^{(1,2)}(\xi)
\chi_{\mathbb{T} \setminus I^{\mathrm{major}}_{j_1,j_2}}(\xi)
\right|
\lesssim
\sum_{j_2}2^{-cj_2}
\lesssim1.
\]
The scheme for  the above four arcs is summarized in the following
table.  \begin{table}[H]
	\centering
	\caption{Layout of Two-Parameter Circle Method.}
	\label{t2}
	\begin{tabular}{c|c|c|c}
		\noalign{\smallskip}\noalign{\smallskip}
		\hline\hline
		Variation& Range of $q$ & Frequency $\beta:=\xi-\frac{a}{q} $ &Outcome \\
		\hline
		$\text{Major}$  & $ q<2^{ j_2/10} $ & Low $|\beta|2^{j\cdot \mathfrak{m}}<2^{ j_2/10} $ & $\int \frac{dt_1}{t_1}\frac{dt_2}{t_2} $\\
		\hline
		$\text{Major}^{\sharp}$&$q<2^{ j_2/10} $  &High $| \beta|2^{j\cdot \mathfrak{m}}\ge 2^{ j_2/10} $ & $\sum_{t_2}\frac{1}{t_2} \int \frac{1}{t_1}dt_1$ \\
		\hline
		Major-Minor & $2^{ j_2/10}\le q\le 2^{j_1/10} $ & $|\beta|2^{j\cdot \mathfrak{m}}<2^{j_1/10}$ &$\sum_{t_2}\frac{1}{t_2}\int \frac{1}{t_1}dt_1 $ \\
		\hline
		Minor &  $2^{j_1/10}  <q$ & $|\beta|2^{j\cdot \mathfrak{m}}<2^{j_1/10}$ &$\sum_{t_1,t_2} \frac{1}{t_1t_2} $\\
		\hline
		\hline
	\end{tabular}
\end{table}

\subsection{Difficulties in the Unbalanced Case}\label{sec340}
We now examine the difficulties arising in the case of (\ref{tt}). The following explanation will make clear why the preliminary method introduced in the previous subsection does not extend directly to general polynomial mappings $(P_1(t_1,\cdots,t_k),\ldots,P_d(t_1,\cdots,t_k))$  with $d\ge 2$.   The main difficulties in handling the exponential sum in
\eqref{ddff1} arise in the highly unbalanced regime
$j_1\gg j_2$.  When we use the above preliminary multi-parameter circle method, the following two ranges present the main difficulties:
\begin{itemize}
	\item[(i)] $q \in [2^{j_1/10},\infty)$:  
	The Weyl sum need not exhibit decay $2^{-c j_2}$.
	
	\item[(ii)] $q \in [2^{j_2/10}, 2^{j_1/10}]$:  
	The averaged Gauss sum need not exhibit decay  $2^{-c j_2}$.
\end{itemize}

\noindent\textbf{Obstacle (i). The Weyl sum estimate fails.}
Consider the phase
$$\xi_{1} t_1+\xi_{2} t_1t_2^2=t_1(\xi_{1}+\xi_{2}t_2^2).$$
For $\xi_2$ in the minor arc  with  \begin{align}\label{w1}
	q \in [2^{j_1/10},\infty), 
\end{align}in view of \eqref{u2}, one might expect the analogous Weyl sum estimate:
\begin{align}\label{ww11}
	\left|\sum_{t_2\sim 2^{j_2}}\sum_{t_1\sim 2^{j_1}} e^{2\pi  i\left(\xi_{1}t_1+\xi_{2}t_1t_2^2\right)}\right|\lesssim 2^{j_1+j_2}/2^{c j_1}.
\end{align}
However, we shall see that (\ref{ww11}) fails for the following case of $\xi=(\xi_1,\xi_2)$:
\begin{align*} 
	\text{ $(\xi_{1},\xi_2)=(-\frac{2^{2j_2}}{2^{j_1/2}},\frac{1}{2^{j_1/2}}) $ where  }  j_1\gg j_2.
\end{align*}
 Then, one has
\begin{align*}
	\left|\sum_{t_2\sim 2^{j_2}}\sum_{t_1\sim 2^{j_1}} e^{2\pi  i\left(\xi_{1}t_1+\xi_{2}t_1t_2^2\right)}\right|
	&=\left|\sum_{t_2=2^{j_2-1}}^{2^{j_2}}\sum_{t_1=2^{j_1-1}}^{2^{j_1}} e^{2\pi  i\left(\frac{(t_2^2-2^{2j_2})t_1}{2^{j_1/2}}\right)}\right|\\
	&\ge \left|\sum_{t_2=2^{j_2}} \sum_{t_1=2^{j_1-1}}^{2^{j_1}} 1\right|-\sum_{2^{j_2-1}\le t_2\le 2^{j_2}-1}\left|\sum_{t_1=2^{j_1-1}}^{2^{j_1}}  
	e^{2\pi  i\left(\frac{(t_2^2-2^{2j_2})t_1}{2^{j_1/2}}\right)}\right|\\
	&= 2^{j_1-1}-O(2^{j_1/2}2^{j_2})\approx 2^{j_1}\gg 2^{j_1} 2^{j_2}/2^{c j_1},
\end{align*}
which means (\ref{ww11}) fails. This failure  occurs due to the scale imbalance   $j_1\gg j_2$.\\

\noindent\textbf{Obstacle (ii). Averaged Gauss sum estimate  fails.} For $\xi_2$ in the major-minor arc  
 with \begin{align}\label{w2}
 	2^{Cj_2}\le q\le 2^{j_1/10}  \ \text{where $C\gg 1$},
 \end{align}  in view of \eqref{u1}, one might expect the analogous averaged Gauss sum estimate 
 \begin{align}\label{4g4}
 \frac{1}{2^{j_2}} \sum_{|t_2|\sim 2^{j_2}}\left|\frac{1}{q}\sum_{t_1=1}^q e^{2\pi i  (\frac{a_1}{q}+\frac{a_2}{q}t_2^2) t_1}\right| & \lesssim  q^{-c}\ \text{for some $c>0$}.
 \end{align}
However, there is  no    such estimate for some large $q\ge 2^{Cj_2}$  with $C\gg 1$. Indeed, if there exists   $t_2\sim 2^{j_2}$ such that   $a_1+a_2t_2^2\equiv 0\ \text{mod} \ q$, then one has
\begin{align*}
	\frac{1}{2^{j_2}} \sum_{|t_2|\sim 2^{j_2}}\left|\frac{1}{q}\sum_{t_1=1}^q e^{2\pi i  (\frac{a_1}{q}+\frac{a_2}{q}t_2^2) t_1}\right| &\ge \frac{1}{2^{j_2}}\left|\{t_2:  (a_1+a_2t_2^2)\equiv 0\  \text{mod}\ q\}\right| \ge\frac{1}{2^{j_2}}.
\end{align*}
This shows that a uniform bound of the form \eqref{4g4} cannot hold in general once $q$ is much larger than $2^{Cj_2}$ with $C\gg 1$.

Prior to the present work, the difficulties described above had precluded the treatment of higher-dimensional, multi-parameter polynomial mappings.
 However, due to  the new idea described below, we overcome these obstacles and ultimately establish our main result.

\subsection{The Central Idea: Layered Arcs and Major-Arc Rigidity} 
In a strongly
disparate regime $j_1\gg j_2$, estimates may fail to be summable with
respect to the smaller scale parameter.     The main method is designed precisely to overcome these difficulties.   Set $\Lambda=\{(1,0),(1,2)\}.$ Consider the dyadic multiplier with $\xi=(\xi_1,\xi_2)\in\mathbb{T}^{2}$ and $j=(j_1,j_2)\in \mathbb{Z}_{+}^{2}$ satisfying $j_1\gg j_2$:  $$H^{\Lambda}_{j}(\xi)=\sum_{|t_2|\sim 2^{j_2}}\sum_{|t_1|\sim 2^{j_1}} \frac{e^{2\pi i(\xi_1t_1+ \xi_2 t_1t_2^{2})}}{t_1t_2}.$$  
If $\xi\in I_j^{\mathrm{major}}$ where
\begin{align*}
	I_j^{\mathrm{major}}
	:=
	\bigcup_{\substack{a/q:\,(a,q)=1\\1\leq q\leq 2^{j_2/10}}}
	\left\{
	\xi\in\mathbb{T}^2:
	\left|\xi_1-\frac{a_1}{q}\right|
	\leq
	\frac{2^{j_2/10}}{2^{(1,0)\cdot(j_1,j_2)}} \ \text{and} \ \left|\xi_2-\frac{a_2}{q}\right|
	\leq
	\frac{2^{j_2/10}}{2^{(1,2)\cdot(j_1,j_2)}}
	\right\},
\end{align*}
then, as in \eqref{rt}, the dyadic multiplier $H^{\Lambda}_{j}(\xi)$  can be approximated by the corresponding two-parameter continuous Hilbert transform. Consequently,  by the continuous theory, we have  $$ \sum_{j_1\ge j_2} |H^{\Lambda}_{j}(\xi)\chi_{I_{j}^{\rm{major}}}(\xi) |\lesssim 1.$$\\

\noindent\textbf{Layered Arcs.}
Fix $\xi\in
\mathbb{T}^2 \setminus I^{\mathrm{major}}_{j}.$ To handle this arc, rewrite the phase
$\xi_{1} t_1+\xi_{2} t_1t_2^2=\xi(t_2)t_1,$ where $\xi(t_2)=\xi_{1}+\xi_{2}t_2^2.$ Then, rewrite $$|H^{\Lambda}_{j}(\xi)|=\left|\sum_{|t_2|\sim 2^{j_2}}\sum_{|t_1|\sim 2^{j_1}}\frac{e^{2\pi i\xi(t_2)t_1}}{t_1t_2}\right|\lesssim \frac{1}{2^{j_2}}\sum_{t_2\sim 2^{j_2}}\left|\sum_{|t_1|\sim 2^{j_1}}\frac{e^{2\pi i\xi(t_2)t_1}}{t_1}\right|.$$ As in \eqref{y}, \eqref{y1}, and \eqref{y2}, consider the following three arcs \begin{align*}
I^{\mathrm{major^{\sharp}}}_{j}:= 	\bigcup_{\substack{b/p : (b,p)=1 \\ 1\le p \le 2^{j_2/10}}}
	\left\{
	\tilde{\xi} \in \mathbb{R} : \frac{2^{j_2/100}}{2^{j_1}}\le
	|\tilde{\xi} - b/p| \le \frac{2^{j_1/10}}{2^{j_1}}
	\right\},\\
I^{\mathrm{major-minor}}_{j}:=	\bigcup_{\substack{b/p : (b,p)=1 \\ 2^{j_2/10}\le p \le 2^{j_1/10}}}
	\left\{
	\tilde{\xi} \in \mathbb{R} :
	|\tilde{\xi} - b/p| \le \frac{2^{j_1/10}}{2^{j_1}}
	\right\},\\
 I^{\mathrm{minor}}_{j}:=	\mathbb{R}\setminus
	\bigcup_{\substack{b/p : (b,p)=1 \\ 1\le p \le 2^{j_1/10}}}
	\left\{
	\tilde{\xi} \in \mathbb{R} :
	|\tilde{\xi} - b/p| \le \frac{2^{j_1/10}}{2^{j_1}}
	\right\}.
\end{align*}  For a fixed  $t_2$, suppose that $\xi(t_2)\in I^{\mathrm{major^{\sharp}}}_{j}\cup I^{\mathrm{major-minor}}_{j}\cup   I^{\mathrm{minor}}_{j}.$ Then, viewing $\xi(t_2)$ as a frequency variable $\widetilde{\xi}\in N+\mathbb{T}$ for some $N\in\mathbb{N}$, 
we can obtain
\begin{align}\label{y77}
\left|
\sum_{|t_1|\sim 2^{j_1}}
\frac{e^{2\pi i \xi(t_2)t_1}}{t_1}
\right|
\lesssim 2^{-c j_2}.
\end{align}
Indeed, this follows by applying the same arguments used to treat the $\mathrm{major}^{\sharp}$, major--minor, and minor arc regions in \eqref{y}, \eqref{y1}, and \eqref{y2}, respectively.

Consequently, in order to estimate 
\begin{align*} \frac{1}{2^{j_2}} \sum_{t_2\sim 2^{j_2}} \left| \sum_{|t_1|\sim 2^{j_1}} \frac{e^{2\pi i \xi(t_2)t_1}}{t_1} \right|, 
\end{align*} 
it remains only to consider those $t_2$ for which 
\begin{align*} \xi(t_2) &\in \mathbb{R}\setminus \Bigl(  I^{\mathrm{major^{\sharp}}}_{j}\cup I^{\mathrm{major-minor}}_{j}\cup   I^{\mathrm{minor}}_{j} \Bigr) \\ &= \bigcup_{\substack{b/p:\,(b,p)=1\\ 1\le p\le 2^{j_2/10}}} \left\{ \tilde{\xi}\in\mathbb{R}: \left|\tilde{\xi}-\frac{b}{p}\right| \le \frac{2^{j_2/100}}{2^{j_1}} \right\}. 
\end{align*}

\noindent\textbf{Major-Arc Rigidity.}
Fix $\xi\in \mathbb{T}^2\setminus I^{\mathrm{major}}_{j}$.  In the disparity condition $j_1\gg j_2$, it remains to count those $t_2 \sim 2^{j_2}$ for which  	\begin{align}\label{rt3} 
	\xi(t_2) \in \bigcup_{\substack{b/p:\,(b,p)=1\\ 1\le p\le 2^{j_2/10}}} \left\{ \widetilde{\xi}\in\mathbb{R}: \left| \widetilde{\xi}-\frac{b}{p} \right| \le \frac{2^{j_2/100}}{2^{j_1}} \ll 2^{-Cj_2} \right\}.
\end{align}
Accordingly, we define \begin{align*} 
	A(\xi,j_2) := \left\{ t_2\sim 2^{j_2}: t_2 \text{ satisfies \eqref{rt3}} \right\}. 
	\end{align*} Fix $\xi\in\mathbb{T}^2\setminus I_j^{\mathrm{major}}$ and $j_2\in\mathbb{Z}_+$, and suppose that $|A(\xi,j_2)|\leq 2^{j_2/2}.$ Using this assumption and  applying  \eqref{y77} on the complementary region in \eqref{rt3}, we obtain $$\frac{1}{2^{j_2}}\sum_{t_2\sim 2^{j_2}}\left|\sum_{|t_1|\sim 2^{j_1}}\frac{e^{2\pi i\xi(t_2)t_1}}{t_1}\right|\lesssim 2^{-cj_2}+ \frac{1}{2^{j_2}}\sum_{t_2\in A(\xi,j_2)}\lesssim  2^{-cj_2}.$$

We now consider the remaining case $|A(\xi,j_2)|>2^{j_2/2}.$
For each fixed $t_2$ and $b/p$, the set of frequencies
$\xi=(\xi_1,\xi_2)\in\mathbb{T}^2$ satisfying
\begin{align*}
	\left|
	\xi(t_2)-\frac{b}{p}
	\right|
	=
	\left|
	\xi_1+\xi_2t_2^2-\frac{b}{p}
	\right|
	\le 2^{-Cj_2}
\end{align*}
is contained in a thin strip, which may be viewed, up to a linear
change of coordinates, as a rectangle of dimensions
$2^{-Cj_2}\times 1$.
The assumption $|A(\xi,j_2)|>2^{j_2/2}$ therefore yields a large
collection of such thin rectangles passing through the fixed point
$
\xi\in\mathbb{T}^2\setminus I_j^{\mathrm{major}}.$
Their directions are determined by the distinct values of $t_2$,
whereas their locations are determined by the corresponding rational
numbers $b/p$. The existence of such a large collection of intersecting
rectangles gives rise to a major-arc rigidity phenomenon: although
$\xi$ lies outside the original major-arc region
$I_j^{\mathrm{major}}$, it must in fact belong to a larger and coarser
major-arc region.

 More precisely, in the case where $|A(\xi,j_2)|>2^{j_2/2}$, we may choose two distinct elements  $s_1,s_2\in A(\xi,j_2)$. By the definition of $A(\xi,j_2)$, there exist reduced rational numbers $b_1/p_1$ and $b_2/p_2$ such that
\begin{align*}
	\left(\begin{matrix}1&s_1^{2}\\
		1&s_2^{2}
	\end{matrix}\right) \left(\begin{matrix}\xi_{1}\\
		\xi_{2}
	\end{matrix}\right)=\left(\begin{matrix}\frac{b_1}{p_1}\\
		\frac{b_2}{p_2}
	\end{matrix}\right)+O(2^{-Cj_2}).
\end{align*}
Then, we have \begin{align*}
	 \left(\begin{matrix}\xi_{1}\\
		\xi_{2}
	\end{matrix}\right)=\left(\begin{matrix}1&s_1^{2}\\
	1&s_2^{2}
	\end{matrix}\right)^{-1}\left(\begin{matrix}\frac{b_1}{p_1}\\
		\frac{b_2}{p_2}
	\end{matrix}\right)+O(2^{-Cj_2/2}).
\end{align*}
Surprisingly, this means that, although
$\xi \in \mathbb{T}^2 \setminus I_j^{\mathrm{major}}$
appears to lie in the  minor-arc region, it actually belongs to the following coarser major-arc region:
\begin{align*}
	I_{j_2}^{\mathrm{coarse\text{-}major}}
	:=
	\bigcup_{\substack{a/q:\,(a,q)=1\\1\leq q\leq 2^{3j_2}}}
	\left\{
	\xi\in\mathbb{T}^2:
	\left|\xi-\frac{a}{q}\right|
	\lesssim 2^{-Cj_2/2}
	\right\}.
\end{align*}Here, we write $a=(a_1,a_2)$.

As we saw in the discussion of obstacles~(i) and~(ii) in Subsection~\ref{sec340}, in the unbalanced case $j_1\gg j_2$, when $\xi$ admits a rational approximation $a/q$ whose denominator satisfies \begin{align}\label{uu}
	 q\in[2^{j_1/10},\infty) \qquad\text{or}\qquad 2^{Cj_2}\leq q\leq 2^{j_1/10}, 
\end{align} as in \eqref{w1} and \eqref{w2}, neither the Weyl sum  nor the averaged Gauss sum estimate yields any decay in $j_2$. However, the major-arc rigidity phenomenon described above rules out the possibility that $\xi$ admits such a rational approximation with a denominator satisfying \eqref{uu}.  Consequently, the principal difficulties associated with these two denominator ranges do not arise, and the analysis reduces to the coarser major-arc region $I_{j_2}^{\mathrm{coarse\text{-}major}}$, which is considerably more tractable than either the minor arc region or the major--minor arc region.

The key observation is that a heavy  concentration of the major arcs of the sliced coefficient $\xi(t_2)$ necessarily forces the corresponding original frequency $\xi$ to lie in a coarse major arc which is a natural pathway to an original major arc:
\[
\text{Sliced Major arcs of $\xi(t_2)$}
\Longrightarrow
\text{Coarse Major Arc of $\xi$.}
\]
This mechanism lies at the heart of our method and constitutes its principal novelty and we call this phenomenon  major arc rigidity.

For the multi-parameter case, we regard the coefficient vector
\[
\xi(t_{s+1},\ldots,t_k)
=
\left(
\sum_{m_{s+1}}
\xi_{m_1,\ldots,m_{s+1}}(t_{s+2},\ldots,t_k)
t_{s+1}^{m_{s+1}}
\right)_{(m_1,\ldots,m_s)}
\]
as a vector-valued polynomial in $t_{s+1}$.
Our method consists in applying this principle consecutively, one variable
at a time. Starting from the major-arc localization of
\(\xi(t_{\ell+1},\ldots,t_k)\), we successively pass
\[
\xi(t_{\ell+1},\ldots,t_k)
\longrightarrow
\xi(t_{\ell+2},\ldots,t_k)
\longrightarrow \cdots \longrightarrow
\xi(t_k)
\longrightarrow
\xi.
\]
At each stage, the remaining outer variables are frozen, and the
major-arc information for many values of the current variable is converted
into a coarse, or connecting, major-arc localization for the coefficient
vector at the next level. In this way, the variables
\(t_{\ell+1},\ldots,t_{k-1}\) are peeled off successively, while the
major-arc structure is propagated from one layer of the phase to the next.

  \section{Preliminaries}\label{sec2}
  \noindent
{\bf Notation.} 
We denote by $\mathbb{Z}_+$ the set of non-negative integers, namely
$\mathbb{Z}_+ = \mathbb{N} \cup \{0\}.$
For $\mathfrak{m}=(m_1,\ldots,m_k)\in\mathbb{Z}_+^k$, set $ |\mathfrak{m}|:=m_1+\cdots+m_k.$ Given $q \in \mathbb{Z}_+ \setminus \{0\}$ and
$a = (a_{\mathfrak m})_{\mathfrak m \in \Omega} \in \mathbb{Z}^{|\Omega|}$,
where $\Omega \subset \mathbb{Z}_+^{k}$, we define
\[
(q,a) := \gcd\bigl(q, a_{\mathfrak m} : \mathfrak m \in \Omega\bigr).
\]
For $n \in \mathbb{N}$, we denote $$[n] := \{1,\dots,n\}.$$
Unless otherwise stated, constants $C,c>0$ may vary from line to line.
For any finite subset $\Omega \subset \mathbb{Z}_+^{k}$,
we write
\begin{equation*}
\xi_{\Omega} := (\xi_{\mathfrak m})_{\mathfrak m \in \Omega}
\in \mathbb{R}^{|\Omega|}.
\end{equation*}
For example, if $\Omega = \{(1,2),(2,5),(6,1)\}$, then
\[
\xi_{\Omega}
= (\xi_{(1,2)}, \xi_{(2,5)}, \xi_{(6,1)}) \in \mathbb{R}^{3}.
\]
 Let $j = (j_1,\dots,j_k) \in \mathbb{Z}_+^{k}$.
We  define the following non-negative cutoff functions:
\begin{itemize}
\item[(1)] $\psi$ is a smooth function supported in the unit ball
$\{u \in \mathbb{R} : |u| \le 1\}$ and satisfies
$\psi(u) \equiv 1$ for $|u| < 1/2$.
\item[(2)] For a finite set $\Omega \subset \mathbb{Z}_+^{k}$, define
\[
\psi^{\Omega}_{j}(\xi_\Omega)
:= \prod_{\mathfrak m \in \Omega}
\psi\!\left( \frac{\xi_{\mathfrak m}}{2^{-j \cdot \mathfrak m}} \right),
\]
which is supported in the region
\[
\left\{
\xi_{\Omega} : |\xi_{\mathfrak m}| \le 2^{-j \cdot \mathfrak m}
\ \text{for all } \mathfrak m \in \Omega
\right\}.
\]
\item[(3)] $\chi_D$ denotes the characteristic function of a set
$D$.
\end{itemize}
We employ the following dyadic decomposition.
Let $\mathfrak{t}=(t_1,\ldots,t_k)\in\mathbb {N}^k$ and
$j=(j_1,\ldots,j_k)\in\mathbb Z_+^k$.
We use the following dyadic size notation. Then we set the following size restriction.
\begin{itemize}
\item For $\nu \in [k]$, we write $t_\nu \sim 2^{j_\nu}$ if
$2^{j_\nu-1} < t_\nu \le 2^{j_\nu}$.
\item We write $\mathfrak t \sim 2^{j}$ if
$t_\nu \sim 2^{j_\nu}$ for all $\nu \in [k]$.
\item We write $|\mathfrak t| \sim 2^{j}$ if
$|t_\nu| \sim 2^{j_\nu}$ for all $\nu \in [k]$.
\end{itemize}

Fix $k\geq 1$ and a finite index set
$\Lambda\subset\mathbb{Z}_+^k$, which will remain fixed throughout the proofs of
Main Theorems~\ref{mt1} and~\ref{mt2}.
We may assume throughout that $\mathbf{0} \notin \Lambda$,
since $|e^{2\pi i \xi_{\mathbf{0}}}| = 1$. For two positive scalars $a,b>0$, we write $a \lesssim b$ if
$a \le C b$ for some constant $C>0$ depending only on $\Lambda$.
We write $a\approx b$ if both $a\lesssim b$ and $b\lesssim a$ hold.
Throughout the paper, the implicit constants in the notation $\lesssim$
are independent of $\xi \in \mathbb{R}^{|\Lambda|}$ with
$\Lambda \subset \mathbb{Z}_+^{k}$ and of $j \in \mathbb{Z}_+^{k}$.
Moreover, we write $a \ll b$ if 
$a \leq cb$ for some sufficiently small constant $1>c>0$ depending only on $\Lambda$. Define  the positive number $K$ depending on $\Lambda$ as
   \begin{align}\label{00gg}
   	K=N^{10k}\ \text{and}\ N=\lvert\Lambda\rvert!\sum_{\mathfrak{m}\in \Lambda} (|\mathfrak{m}|+k).
   \end{align}
  Given  $K$  in (\ref{00gg}) and two consecutive entries $j_\nu\ge j_{\nu+1}$,
\begin{align}\label{balance}
	\begin{cases}
		j_\nu>(20K)j_{\nu+1} &\text{: imbalanced, } j_\nu\gg_\Lambda j_{\nu+1},\\
		(20K)j_{\nu+1}\ge j_\nu &\text{: balanced, } j_\nu\approx_\Lambda j_{\nu+1}.
	\end{cases}
\end{align}
Throughout this paper, the notations $\gg_{\Lambda}$ and $\approx_{\Lambda}$ have meanings different from $\gg$ and $\approx$, respectively.

 \noindent
{\bf Basic Reduction.} For any $\ell\in \mathbb{Z}_{+}$, to  extend the domain $\mathbb{Z}$ of the characteristic function   $\chi_{(2^{\ell-1},2^{\ell}]}(|s|)$   to $\mathbb{R}$,  we
define a smooth even cutoff function   by taking a value for $s\in \mathbb{R}$ as
\begin{align}\label{aria}
	\chi_\ell(s):=\begin{cases} 1\ \text{if $2^{\ell-1}+1/2 \le |s|\le 2^{\ell}$}\\
		0 \ \text{if $|s|\le  2^{\ell-1}$ or $ |s|\ge 2^{\ell}+1/2$}
	\end{cases} 
\end{align}
Then one can show the following properties:
\begin{itemize}
	\item[(1)] $\text{supp}(\partial_s\chi_{\ell})\subset \{s\in \mathbb{R}:|s|\in (2^{\ell-1},2^{\ell-1}+1/2)\cup (2^{\ell},2^{\ell}+1/2)\}$ and $|\partial_s\chi_{\ell}|\lesssim 1$. 
	\item[(2)]  Let $s\in \mathbb{Z}$. Then
	$\chi_\ell(s)=1$  if $2^{\ell-1}< |s|\le 2^{\ell} $, and $\chi_\ell(s)=0$ otherwise.
\end{itemize}
Taking products, for
$j=(j_1,\ldots,j_k)\in\mathbb Z_+^k$ and
$t=(t_1,\ldots,t_k)\in\mathbb R^k$, we define
\[
\chi_j(t):=\prod_{\nu=1}^k\chi_{j_\nu}(t_\nu).
\]
Given $\Lambda \subset \mathbb{Z}_+^k$ and $j= (j_1, \ldots, j_k) \in \mathbb{Z}_+^k$, we consider the dyadic pieces of both the discrete and continuous versions of the Hilbert transforms, defined respectively by
\begin{align*}
	H^{\Lambda}_j(\xi):= \sum_{t\in \mathbb{Z}^k}  \frac{e^{2\pi i \sum_{\mathfrak{m}\in\Lambda} \xi_{\mathfrak{m}} t^{\mathfrak{m}}}\chi_j(t)}{t_1\cdots t_k}\  \text{and}\  \mathcal{H}^{\Lambda}_j(\xi):= \int_{\mathbb{R}^k} \frac{e^{2\pi i \sum_{\mathfrak{m}\in\Lambda} \xi_{\mathfrak{m}} t^{\mathfrak{m}}}\chi_j(t)}{t_1\cdots t_k}dt,
\end{align*}
where $\xi=\xi_{\Lambda}=(\xi_{\mathfrak{m}})_{\mathfrak{m}\in \Lambda}\in \mathbb{R}^{|\Lambda|}$.  In order to prove the sufficiency part of the Main Theorem 1, we shall prove that under the assumption (\ref{1009}), there exists a constant $C > 0$, independent of $\xi=\xi_{\Lambda}\in \mathbb{R}^{|\Lambda|}$, such that
\begin{align}\label{se11}
\sum_{j\in \mathbb{Z}_+^k} |H_j^{\Lambda}(\xi)| \le C.
\end{align}
To establish this bound, it suffices to restrict the summation over $\mathbb{Z}_+^k$ to the subset
 \begin{align}
Z(k)=\{j\in \mathbb{Z}_{+}^{k}: j_1\ge \cdots \ge j_{k-1}\ge j_k\},\label{joor}
\end{align} 
since there are only $k!$ permutations of the components $(j_1, \ldots, j_k)$.

We now  review the following four types of preliminary estimates that are necessary for proving \eqref{se11}.

\begin{itemize}
	\item[3.1.] Continuous analogues for Hilbert transforms,
	
	\item[3.2.] Multi-parameter Weyl sum estimates,
	
	\item[ 3.3.] Multi-parameter Gauss sum estimates and their averaged versions,
	
	\item[3.4.] Multi-parameter sublevel set estimates for lattice points.
\end{itemize}

\subsection{Continuous Multiple Hilbert Transform}
We introduce a fundamental estimate for the continuous $k$-parameter multiple Hilbert transform. It provides both the uniform summability of the associated oscillatory singular integrals and the $L^p$-boundedness of arbitrary partial sums. We defer the proof of Lemma~\ref{po3} to Subsection~\ref{MHT}.

\begin{lemma} [Multiple Hilbert Transform]\label{po3}
	Let $\Omega\subset \mathbb Z_+^k$ be a finite set. For    $\xi_{\Omega}=(\xi_{\mathfrak{m}})_{\mathfrak{m}\in\Omega}$ and  $J=(j_1,\cdots,j_k)\in \mathbb{Z}_+^k$,  denote the $k$-parameter oscillatory singular integrals as 
	\begin{align*} 
		\mathcal{H}^{\Omega}_{J}(\xi_{\Omega})&:= \int e^{2\pi i\sum_{\mathfrak{m}\in\Omega} \xi_{\mathfrak{m}} x^{\mathfrak{m}} }\prod_{\nu=1}^k\chi_{j_\nu}(x_\nu) \frac{dx_1\cdots dx_k }{x_1\cdots x_k}
	\end{align*}
	where $\chi_{j_\nu}$ is defined in (\ref{aria}). Assume that condition \eqref{v16} holds.
	  Then for any $\epsilon>0$, there is $C_{\Omega,\epsilon} >0$ depending  only on $\Omega$ and $\epsilon$ such that
	\begin{align}\label{po4}
		\sum_{J=(j_1,\cdots,j_k)\in \mathbb{Z}_+^k} \left( |\mathcal{H}^{\Omega}_{J}(\xi_{\Omega}) |+ |\mathcal{H}^{\Omega}_{J}(\xi_{\Omega}) |^{\epsilon}\right) \le C_{\Omega,\epsilon}. 
	\end{align}
	For any $1<p<\infty$ and any subset $E\subset\mathbb{Z}_+^k$, there is $C_{p,\Omega}>0$  such that
	\begin{align}\label{poo4}
		\| \sum_{J\in \mathbb{Z}_+^k\cap E}  [\mathcal{H}^{\Omega}_{J}]^{\vee}*f \|_{L^p(\mathbb{R}^{|\Omega|})}\le C_{p,\Omega}\| f \|_{L^p(\mathbb{R}^{|\Omega|})}.
	\end{align}The constant $C_{p,\Omega}$ may depend only on  $p$ and $\Omega$.
	
\end{lemma}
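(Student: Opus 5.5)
We prove Lemma~\ref{po3} in two parts: first the uniform summability \eqref{po4} by a parity-and-scaling analysis of each dyadic piece, and then the $L^p$ bound \eqref{poo4} by a product-type Littlewood--Paley argument.

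\textbf{Reduction to parity.} Write $\chi_{j_\nu}(x_\nu)/x_\nu$, which is odd in $x_\nu$. Expanding $e^{2\pi i\sum \xi_{\mathfrak m}x^{\mathfrak m}}$ into products of cosines and sines, as in the proof of \eqref{10099}, the only surviving terms are those indexed by odd subsets $\Omega'\subset\Omega$. Rescale $x_\nu=2^{j_\nu}y_\nu$. The piece $\mathcal H^{\Omega}_J(\xi)$ then becomes an oscillatory integral over the fixed annular region $\{|y_\nu|\sim1\}$ with coefficients $\lambda_{\mathfrak m}=\xi_{\mathfrak m}2^{J\cdot\mathfrak m}$.

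\textbf{Two bounds per piece.} We aim for
\[
|\mathcal H^\Omega_J(\xi)|\lesssim \min\Big\{\max_{\mathfrak m\in\Omega'}|\lambda_{\mathfrak m}|\,:\,\Omega'\ \text{odd}\Big\}\wedge\Big(\max_{\mathfrak m\in\Omega}|\lambda_{\mathfrak m}|\Big)^{-\delta}.
\]
The first factor is the small-coefficient bound. If every monomial of some set $\Lambda'$ has small $\lambda$, we Taylor-expand those monomials, and every term of degree-zero part in the odd sets hitting $\Lambda'$ cancels by parity.

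The second factor is the van der Corput / sublevel set (Phong--Stein--Sturm type) decay in the largest coefficient on the compact annulus. Since the monomials $y^{\mathfrak m}$ are linearly independent there, this decay is uniform.

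\textbf{Summing over $J$.} The main obstacle is summing these bounds over $J\in\mathbb Z_+^k$. Fix $\xi$ and consider the linear forms $J\mapsto J\cdot\mathfrak m+\log_2|\xi_{\mathfrak m}|$.

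The pieces where all $\lambda_{\mathfrak m}$ with $\mathfrak m\in\Omega'$ are $\le 1$ for some odd $\Omega'$ and the pieces where some $\lambda_{\mathfrak m}\ge 1$ have to be balanced by a geometric decay. Using full rank \eqref{v16}, the map $J\mapsto (J\cdot\mathfrak m)_{\mathfrak m\in\Omega'}$ is injective with finite-index image on lattice points. Hence $\sum_J\min\{\max_{\Omega'}2^{J\cdot\mathfrak m}|\xi_{\mathfrak m}|,(\max_\Omega 2^{J\cdot\mathfrak m}|\xi_{\mathfrak m}|)^{-\delta}\}$ is a sum over a $k$-dimensional cone-like arrangement of a function decaying exponentially in every direction. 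This is bounded, with a constant depending on $\Omega$ only.

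More carefully, I would partition $\mathbb Z_+^k$ according to which odd subset is "active" (the Newton-polyhedron/dual-face decomposition of \cite{K}). On each region one needs a linear combination of coordinates of $J$ that is controlled, and full rank ensures that all $k$ directions are controlled. This combinatorial step is where I expect the work. The $|\cdot|^\epsilon$ version follows from the same estimates, since the bounds are exponential and raising them to the power $\epsilon$ only shrinks the rates.

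\textbf{The $L^p$ bound.} For \eqref{poo4}, I would write $\sum_{J\in E}[\mathcal H_J]^\vee$ as a singular Radon-type multiplier along the monomial surface $x\mapsto(x^{\mathfrak m})$ and use the Ricci--Stein/Nagel--Wainger multi-parameter theory. Concretely, take a product Littlewood--Paley decomposition adapted to the dilations $\delta_J\xi=(2^{J\cdot\mathfrak m}\xi_{\mathfrak m})$. The decay bounds above give square-function $\ell^2$ almost-orthogonality. The multi-parameter maximal function along the monomial surface, bounded by iterated one-parameter maximal functions, then gives the $L^p$ bound via a Duoandikoetxea--Rubio de Francia bootstrap. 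Because the estimates for each $J$ are uniform, restricting to $E$ is harmless, and the constants depend only on $p$ and $\Omega$.
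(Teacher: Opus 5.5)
\textbf{Gap in the per-piece bound.} Your small-coefficient factor has its quantifiers reversed, and this breaks the summation as written. Deleting the monomials of a set $\Lambda'$ changes $\mathcal H^\Omega_J$ by $O(\sum_{\mathfrak m\in\Lambda'}|\lambda_{\mathfrak m}|)$, since $|e^{iB}-1|\le|B|$. This holds whatever the sizes of the other coefficients. What remains is $\mathcal H^{\Omega\setminus\Lambda'}_J$, and it vanishes by parity only when $\Omega\setminus\Lambda'$ contains no odd subset, i.e.\ when $\Lambda'$ meets \emph{every} odd subset. Optimizing over such $\Lambda'$ gives
$|\mathcal H^\Omega_J(\xi)|\lesssim\max_{\Omega'\ \text{odd}}\min_{\mathfrak m\in\Omega'}|\lambda_{\mathfrak m}|$,
not $\min_{\Omega'}\max_{\mathfrak m\in\Omega'}$.

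Your version is false. Take $k=1$ and $\Omega=\{1,3\}$, whose odd subsets are $\{1\}$ and $\{3\}$. With $\xi_1=0$ your bound vanishes, while $\mathcal H_j(0,\xi_3)=\int e^{2\pi i\xi_3x^3}\chi_j(x)\,dx/x\neq0$ for suitable $\xi_3\approx2^{-3j}$.

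Even where your majorant holds, it does not sum. In the product case $\Omega=\{{\bf e}_1,{\bf e}_2\}$ with $\xi=(1,2^{-L})$, it equals $1$ at $J=(0,j_2)$ for every $0\le j_2\le L$. Summing it over $J$ therefore gives at least $L+1$, not a bound uniform in $\xi$. A maximum of exponentials does not decay in every direction. What is missing is that a large coefficient and a small one must produce decay \emph{simultaneously}.

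\textbf{With the corrected bound, and comparison with the paper.} Your plan then closes, and more simply than the dual-face partition you anticipated. Put $u_{\mathfrak m}=J\cdot\mathfrak m+\log_2|\xi_{\mathfrak m}|$. If $\min\{2^{\max_{\Omega'}\min_{\Omega'}u},\,2^{-\delta\max_{\Omega}u}\}\ge2^{-T}$, then some odd $\Omega'$ has $u_{\mathfrak m}\in[-T,T/\delta]$ for all $\mathfrak m\in\Omega'$. Since $\operatorname{rank}(\Omega')=k$ by \eqref{v16}, only $O_\Omega((T+1)^k)$ lattice points $J$ qualify, uniformly in $\xi$. Summing over $T$ gives \eqref{po4}, including the $\epsilon$-power.

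The paper reaches the same mechanism at the operator level. It uses $P^{\mathfrak n}_J,Q^{\mathfrak n}_J$ to split coefficients into small and large. For small coefficients it uses $D^{\mathfrak n}$ (a difference, gaining $|\lambda_{\mathfrak n}|$) or $R^{\mathfrak n}$ (deletion of the monomial). A piece survives only if the retained set $A\cup V$ has rank $k$, and it is then bounded by $\min\{|\lambda_{\mathfrak m}|,|\lambda_{\mathfrak n}|^{-c}:\mathfrak m\in A,\ \mathfrak n\in V\}$.

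\textbf{What this means for \eqref{poo4}.} Your $L^p$ sketch lacks that operator-level bookkeeping. A monomial not needed for parity yields no decay as its coefficient tends to $0$. For $k=1$ and $\Omega=\{1,2\}$ the corrected bound is $\min\{|\lambda_1|,\max(|\lambda_1|,|\lambda_2|)^{-\delta}\}$, and indeed $\mathcal H_j(\xi)\to\mathcal H_j(\xi_1,0)$ as $\xi_2\to0$. So a coordinatewise Littlewood--Paley decomposition in every $\xi_{\mathfrak m}$ cannot be summed by almost orthogonality. Such low frequencies must stay under a single low-pass cutoff with the monomial removed, which is exactly what $P^{\mathfrak n}_J$ and $R^{\mathfrak n}$ do.

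Separately, the maximal function along a curved monomial surface is not pointwise dominated by iterated one-parameter maximal functions, already for $(t,t^2)$. That step of your bootstrap needs its own argument.
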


 \begin{remark}\label{re3.1}
Suppose that 	$\Omega\subset\mathbb{Z}_+^{k}$ satisfies the condition \eqref{1009}. Then, for every $\ell\in [k-1]$, its projection $\Omega_1$, defined by
	$$
	\left\{
	(m_1,\ldots,m_\ell):
	(m_1,\ldots,m_\ell,m_{\ell+1},\ldots,m_{k})
	\in\Omega \ \text{for some $(m_{\ell+1},\ldots,m_{k})\in \mathbb{Z}_{+}^{k-\ell}$}
	\right\},
	$$
	  satisfies the condition~\eqref{1009} with $k=\ell$, which directly implies that the condition~\eqref{v16} with $k=\ell$ also holds for $\Omega_1$. Consequently, for every $\ell\in[k-1]$, the projected set $\Omega_1$ satisfies the continuous estimates stated in Lemma~\ref{po3}.
\end{remark}

\subsection{Multi-Parameter Weyl Sums}
 Multi-parameter Weyl exponential sums have been studied extensively. See, for instance, Lemma~5.3 of~\cite{ACK} for a representative result, as well as Section~5 of~\cite{B4} and Sections~4--5 of~\cite{KS}.

\begin{definition}\label{df23}
	Let $\Omega$ be a finite set of indices in $\mathbb{Z}^{k}$.
	Then we  define  the set  $\mathbb{Q}^{\Omega}[2^{\alpha},2^{\beta}]$
	of  rational vectors  $b/p=(b_\mathfrak{m})_{\mathfrak{m}\in \Omega} /p$,  which is given by
	\begin{align*} 
		\ \ \mathbb{Q}^{\Omega}[2^{\alpha},2^{\beta}]=  \left\{\frac{b}{p}\in   \mathbb{Q}^{|\Omega|}: 2^{\alpha}\le p \le 2^{\beta}\ \text{and}\  \text{gcd}(p,b)=1\right\}.
	\end{align*}
\end{definition}

\begin{definition}
	Let $\Omega$ be a finite subset of $\mathbb{Z}_+^k \setminus \{\mathbf{0}\}$, and let 
$\xi_{\Omega} = (\xi_{\mathfrak{m}})_{\mathfrak{m}\in\Omega} \in \mathbb{R}^{|\Omega|}$. 
For any $j \in Z(k)$ and any  pair of parameters $0 < c_1, c_2 \le 1$, we define the major arc cutoff by
\begin{align}\label{t12}
\Psi_{j,(c_1,c_2)}^{\Omega,\mathrm{major}}(\xi_{\Omega})
:= \sum_{a/q=(a_{\mathfrak{m}})/q \in \mathbb{Q}^{\Omega}[1,2^{c_1 j_k/10}]}
\prod_{\mathfrak{m}\in \Omega} 
\psi\!\left(
\frac{\xi_{\mathfrak{m}} - a_{\mathfrak{m}}/q}
{2^{-\mathfrak{m}\cdot j}\, 2^{c_2 j_k/10}}
\right).
\end{align}
We introduce this family of cutoff functions to allow flexibility in the parameters $c_1$ and $c_2$, which will be chosen according to the needs of different estimates.
\end{definition}
\begin{definition}\label{defi33}
For a multi-index $j := (j_1, \dots, j_k) \in Z(k)$ and a vector $r = (r_1, \dots, r_k)$ with $r_i \in (0, 2^{j_i}] \cap \mathbb{Z}$ for each $i = 1, \dots, k$, we define the Weyl sum $W_{j,r}^{\Omega}(\xi_\Omega)$ by
\[
W_{j,r}^{\Omega}(\xi_\Omega) := \sum_{t_1=r_1}^{2^{j_1}} \cdots \sum_{t_k=r_k}^{2^{j_k}} e^{2\pi i \sum_{\mathfrak{m} \in \Omega} \xi_{\mathfrak{m}} t^{\mathfrak{m}}}.
\]
\end{definition}

The following lemma is an immediate consequence of Lemma~5.3 in~\cite{ACK}.
\begin{lemma}[Arkhipov, Chubarikov, Karatsuba \cite{ACK}]\label{lem:ACK_bound}
Let $j=(j_1,\cdots,j_k) \in Z(k) $.	Let $\Omega$ be a finite subset of $\mathbb{Z}_+^k \setminus \{\mathbf{0}\}$, and let 
$\xi_{\Omega} = (\xi_{\mathfrak{m}})_{\mathfrak{m}\in\Omega} \in \mathbb{R}^{|\Omega|}$. Set $d_{\Omega}:=\max\{|\mathfrak{m}|:\mathfrak{m}\in \Omega\}$.  Suppose that for each $\mathfrak{m} \in \Omega$, there exists a   rational number $c_{\mathfrak{m}}/q_{\mathfrak{m}}$ in lowest terms, i.e., $(q_{\mathfrak{m}}, c_{\mathfrak{m}}) = 1$, satisfying 
\[
1 \le q_{\mathfrak{m}} \le 2^{j \cdot \mathfrak{m}} 2^{-j_k/10}
\]
and
\[
\left| \xi_{\mathfrak{m}} - \frac{c_{\mathfrak{m}}}{q_{\mathfrak{m}}} \right| \le \frac{2^{j_k/10}}{q_{\mathfrak{m}} 2^{j \cdot \mathfrak{m}}}.
\]
 Assume, in addition, that \begin{align} \label{dmm1} \operatorname{lcm} \bigl(q_{\mathfrak m}:\mathfrak m\in\Omega,\ |\mathfrak m|\geq 2\bigr) \geq 2^{j_k/30}. \end{align} Then, for every $r=(r_1,\ldots,r_k)$ satisfying \[ r_i\in(0,2^{j_i}]\cap\mathbb{Z}, \qquad i=1,\ldots,k, \] there exist constants $C,c>0$, depending only on $k$ and $d_{\Omega}$, such that \begin{align} \label{wop} \left| W_{j,r}^{\Omega}(\xi_{\Omega}) \right| \leq C2^{j_1+\cdots+j_k}2^{-cj_k}. \end{align} \end{lemma}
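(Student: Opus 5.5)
The plan is to reduce the statement directly to Lemma~5.3 of \cite{ACK}, which treats sums over a box $\prod_i[1,P_i]$ of $e^{2\pi i F(t)}$ with $F(t)=\sum_{\mathfrak m}\xi_{\mathfrak m}t^{\mathfrak m}$. It gives a power saving $P_1\cdots P_k\,Q^{-c}$ as soon as some coefficient of a monomial of total degree at least two admits a rational approximation with denominator $q_{\mathfrak m}$ in the range $[Q,\,P^{\mathfrak m}Q^{-1}]$, where $P^{\mathfrak m}=P_1^{m_1}\cdots P_k^{m_k}$. The first step is to take $P_i=2^{j_i}$ and $Q=2^{j_k/30}$ (up to $\Lambda$-dependent constants). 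The lower box endpoints $r_i$ are handled by writing $\sum_{t_i=r_i}^{2^{j_i}}$ as a difference of two initial-segment sums. Each initial segment $[1,r_i]$ with $r_i\ge 2^{j_i-1}$ is a box of comparable size. If some $r_i$ is smaller, the box is still admissible, and Lemma~5.3 of \cite{ACK} is stated uniformly for boxes with sides at most $P_i$. The resulting error is absorbed by monotonicity of the bound in the side lengths. Alternatively one can shift $t_i\mapsto t_i+r_i-1$, which changes the polynomial but preserves its top-degree coefficients in each monomial direction that matters.

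The second step converts the hypothesis on $\operatorname{lcm}(q_{\mathfrak m})$ into a hypothesis on a single $q_{\mathfrak m}$. Since $|\Omega|$ is bounded by a constant depending on $\Lambda$, the inequality $\operatorname{lcm}\ge 2^{j_k/30}$ forces some $\mathfrak m_0$ with $|\mathfrak m_0|\ge2$ to satisfy $q_{\mathfrak m_0}\ge 2^{j_k/(30|\Omega|)}$. Together with the assumed upper bound $q_{\mathfrak m_0}\le 2^{j\cdot\mathfrak m_0}2^{-j_k/10}$ and the approximation $|\xi_{\mathfrak m_0}-c/q|\le 2^{j_k/10}/(q\,2^{j\cdot\mathfrak m_0})$, this is exactly the single-coefficient major/minor dichotomy input that ACK's lemma requires, with $Q=2^{c'j_k}$. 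If ACK's formulation instead uses the lcm of all denominators, no pigeonholing is needed and one applies it directly.

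The last step checks that the saving is uniform. The constant $c$ depends only on $k$ and $d_\Omega$. The saving appears as $Q^{-c}$ in terms of the smallest scale parameter, namely $2^{-cj_k}$, precisely because ACK's bound does not require the sides to be balanced. This is the main point to verify: in the unbalanced regime $j_1\gg j_k$, Weyl differencing in the variable $t_{\nu}$ where $\mathfrak m_0$ has a positive exponent only yields a saving in terms of that side. I would therefore apply the differencing argument, for example Vinogradov-type $k$-fold van der Corput, in a variable $t_\nu$ with $m_{0,\nu}\ge1$. This gives a saving of at least a power of $2^{j_\nu}\ge 2^{j_k}$, treating the remaining variables trivially. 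I expect this to be the main obstacle, together with aligning the slightly different normalizations ($q\le P^{\mathfrak m}Q^{-1}$ versus $2^{-j_k/10}$) with ACK's constants. If needed, I would reduce to the one-variable Weyl inequality applied to the slice polynomial in $t_\nu$, whose leading coefficient is $\xi_{\mathfrak m_0}$ times a fixed monomial in the frozen variables, and then average over the frozen variables.
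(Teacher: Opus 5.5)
Your main line is the paper's. The paper gives no argument beyond declaring the lemma an immediate consequence of Lemma~5.3 of \cite{ACK}. Your choices $P_i=2^{j_i}$, $Q\approx 2^{j_k/30}$, the pigeonholing of \eqref{dmm1} to a single $\mathfrak m_0$ with $|\mathfrak m_0|\ge 2$, and inclusion--exclusion in the endpoints $r_i$ are a sensible way to make that citation precise. The pigeonholing is legitimate because $|\Omega|\le\binom{d_\Omega+k}{k}$.

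For the endpoints, the issue is not monotonicity of the bound. The Diophantine hypotheses are stated at the scales $2^{j\cdot\mathfrak m}$, so they must be re-checked for each sub-box $\prod_i[1,s_i]$. Bound trivially any box with some $s_i\le 2^{j_i-\epsilon j_k}$. For the remaining boxes, replacing $2^{j\cdot\mathfrak m}$ by $\prod_i s_i^{m_i}$ costs only a factor $2^{\epsilon d_\Omega j_k}$ in the hypotheses, which is harmless for small $\epsilon$.

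The genuine gap is the fallback in your last paragraph. It is how you propose to secure the point you yourself call crucial, namely a saving $2^{-cj_k}$ when $j_1\gg j_k$. Freezing all variables but one $t_\nu$ with $m_{0,\nu}\ge1$ and applying a one-variable Weyl inequality cannot give a saving uniform in the frozen variables, whichever such $\nu$ you choose.

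Take $\Omega=\{(1,0),(1,2)\}$ with $j_1\gg j_2$ and $j_1$ even. Set $\xi_{(1,2)}=2^{-j_1/2}$, so its Dirichlet approximation is forced to have denominator $2^{j_1/2}$ and \eqref{dmm1} holds. Set $\xi_{(1,0)}=-2^{2j_2}\xi_{(1,2)}$.
\begin{itemize}
\item For $\nu=1$, the $t_1$-slice at $t_2=2^{j_2}$ has coefficient $\xi_{(1,0)}+\xi_{(1,2)}2^{2j_2}=0$; this is the paper's Obstacle~(i).
\item For $\nu=2$, the $t_2$-slice at $t_1=2^{j_1}$ has coefficient $2^{j_1}\xi_{(1,2)}=2^{j_1/2}\in\mathbb Z$ on $t_2^2$.
\end{itemize}
Neither slice has any cancellation. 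Moreover, the slice coefficient is in general not $\xi_{\mathfrak m_0}$ times a monomial. For $\Omega=\{(1,1),(1,0),(0,1)\}$ it is $\xi_{(1,1)}t_2+\xi_{(1,0)}$ or $\xi_{(1,1)}t_1+\xi_{(0,1)}$.

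So ``average over the frozen variables'' hides the real step. You must show that the frozen values for which the coefficient of $t_\nu^{m}$ lies within $2^{Ccj_k}2^{-mj_\nu}$ of a rational with denominator at most $2^{Ccj_k}$ form a proportion $\lesssim 2^{-cj_k}$. This needs a polynomial-recurrence (inverse Weyl) lemma in the frozen variables, applied inductively: if $\|Q(\mathfrak s)\|$ is tiny on a $2^{-cj_k}$-proportion of a box, then all coefficients of $Q$ admit a common approximation with denominator $\lesssim 2^{Ccj_k}$. Comparing that common approximation with the given Dirichlet approximations forces $q_{\mathfrak m_0}\le 2^{Ccj_k}$, contradicting \eqref{dmm1}.

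This multi-parameter inverse step is exactly the content of the result you cite. If Lemma~5.3 of \cite{ACK} does not deliver the unbalanced uniformity directly, your fallback does not replace it.
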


 \begin{proposition}[Weyl Sum Estimate]\label{ws1}
Let $\Omega \subset \mathbb{Z}_+^k \setminus \{\mathbf{0}\}$ be a finite set of multi-indices, and let $\xi_{\Omega} := (\xi_{\mathfrak{m}})_{\mathfrak{m} \in \Omega} \in \mathbb{R}^{|\Omega|}$.   Then, for any $j \in  Z(k)$ and any $r = (r_1, \dots, r_k)$ with $r_i \in (0, 2^{j_i}] \cap \mathbb{Z}$ for each $i = 1, \dots, k$, there exist positive constants $C$ and $c$, depending only on $\Omega$, such that
\begin{equation}\label{wop22}
\left| W_{j,r}^{\Omega}(\xi_\Omega) \left( 1 - \Psi_{j,(1/2,1)}^{\Omega,\mathrm{major}}(\xi_{\Omega}) \right) \right| \le C 2^{j_1 + \dots + j_k} 2^{-c j_k}.
\end{equation}
\end{proposition}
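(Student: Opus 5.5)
We reduce Proposition~\ref{ws1} to Lemma~\ref{lem:ACK_bound} via Dirichlet approximation, splitting according to whether $\xi_\Omega$ lies in the support of the major arc cutoff $\Psi^{\Omega,\mathrm{major}}_{j,(1/2,1)}$. Since $0\le 1-\Psi^{\Omega,\mathrm{major}}_{j,(1/2,1)}\le 1$ (the bumps for distinct reduced $a/q$ with $q\le 2^{j_k/20}$ have essentially disjoint supports once $j_k$ is large, and for bounded $j_k$ the estimate is trivial by enlarging $C$), it suffices to prove $|W^{\Omega}_{j,r}(\xi_\Omega)|\lesssim 2^{j_1+\cdots+j_k}2^{-cj_k}$ whenever $\Psi^{\Omega,\mathrm{major}}_{j,(1/2,1)}(\xi_\Omega)<1$. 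On that set, $\xi_\Omega$ is not within $\tfrac12 2^{-\mathfrak m\cdot j}2^{j_k/10}$ (in every coordinate) of any $a/q$ with $q\le 2^{j_k/20}$.

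For each $\mathfrak m\in\Omega$, Dirichlet's theorem with parameter $Q_{\mathfrak m}=2^{j\cdot\mathfrak m}2^{-j_k/10}$ gives a reduced $c_{\mathfrak m}/q_{\mathfrak m}$ with $1\le q_{\mathfrak m}\le Q_{\mathfrak m}$ and $|\xi_{\mathfrak m}-c_{\mathfrak m}/q_{\mathfrak m}|\le 1/(q_{\mathfrak m}Q_{\mathfrak m})=2^{j_k/10}/(q_{\mathfrak m}2^{j\cdot\mathfrak m})$. These are exactly the hypotheses of Lemma~\ref{lem:ACK_bound}, apart from \eqref{dmm1}. If \eqref{dmm1} holds, \eqref{wop} gives \eqref{wop22}.

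The remaining case is $q:=\operatorname{lcm}(q_{\mathfrak m}: |\mathfrak m|\ge2)<2^{j_k/30}$, and we aim to show that $\xi_\Omega$ then lies in the region where $\Psi^{\Omega,\mathrm{major}}_{j,(1/2,1)}=1$, a contradiction. For $|\mathfrak m|\ge2$, write $c_{\mathfrak m}/q_{\mathfrak m}=a_{\mathfrak m}/q$. Then $|\xi_{\mathfrak m}-a_{\mathfrak m}/q|\le 2^{j_k/10}2^{-j\cdot\mathfrak m}$, which is within the scale $2^{-\mathfrak m\cdot j}2^{c_2j_k/10}$ with $c_2=1$. Losing a harmless constant factor, we can shrink Dirichlet's parameter slightly (say $Q_{\mathfrak m}=2^{j\cdot \mathfrak m}2^{-j_k/10}\cdot 4$) so the bound lands inside the region $\psi\equiv1$.

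The main obstacle is the linear monomials $|\mathfrak m|=1$, i.e.\ $\mathfrak m={\bf e}_\nu$, whose denominators are unconstrained by \eqref{dmm1}. For these, the plan is to sum in $t_\nu$ directly. Fix all other variables; the phase in $t_\nu$ is then $\theta t_\nu+(\text{terms of }\xi_{\mathfrak m}t^{\mathfrak m}\text{ with }|\mathfrak m|\ge2)$. Substituting $\xi_{\mathfrak m}=a_{\mathfrak m}/q+\beta_{\mathfrak m}$ and splitting $t$ into residue classes mod $q$ reduces $W$ to $q^{k}$ sums of the form $\sum_{s}e^{2\pi i(\xi_{{\bf e}_\nu}qs+\phi(s))}$, where $\phi$ has small derivatives (controlled by $|\beta_{\mathfrak m}|2^{j\cdot\mathfrak m}\le 2^{j_k/10}$) times a rational phase.

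Suppose $\|q\xi_{{\bf e}_\nu}-a\|\ge 2^{j_k/5}2^{-j_\nu}$ for every integer $a$. Then summation by parts (or van der Corput's first derivative test) in $s$ over a range of length $2^{j_\nu}/q$ yields a saving of roughly $2^{-cj_k}$, using $q<2^{j_k/30}$. Otherwise $\xi_{{\bf e}_\nu}$ is within $2^{j_k/5}2^{-j_\nu}/q$ of some $a_{{\bf e}_\nu}/q$, and doing this for all $\nu$ exhibits a common $a/q$ with $q\le 2^{j_k/30}\le 2^{j_k/20}$. This is nearly the major arc condition, except that the exponent $1/5$ exceeds $1/10$. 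To fix this, we rebalance the exponents: use Dirichlet with $2^{-j_k/40}$, the threshold $2^{j_k/20}$ in the first-derivative dichotomy, and the $\operatorname{lcm}$ cutoff $2^{j_k/30}$. Adjusting these constants, which are at our disposal since $c$ is only required to be positive, closes the contradiction. Checking this consistency of exponents, and the disjointness making $0\le1-\Psi\le1$, is the delicate bookkeeping step.
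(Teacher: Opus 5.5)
Your first step, Dirichlet at level $Q_{\mathfrak m}=2^{j\cdot\mathfrak m}2^{-j_k/10}$ followed by Lemma~\ref{lem:ACK_bound} when \eqref{dmm1} holds, is exactly the paper's. The gap is in the treatment of the linear monomials, and it is not a bookkeeping issue.

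\textbf{Why the first-derivative test cannot work.} With the normalization that Lemma~\ref{lem:ACK_bound} forces, you only know $|\beta_{\mathfrak m}|2^{j\cdot\mathfrak m}\le 2^{j_k/10}$ for $|\mathfrak m|\ge2$. So the nonlinear terms contribute up to $2^{j_k/10}2^{-j_\nu}$ to $\partial_{t_\nu}$ of the phase, which is exactly the width of the major arc in the $\mathbf e_\nu$-coordinate. Summation by parts bounds the $s_\nu$-sum by $\lesssim (2^{j_\nu}/\tau)(1+2^{j_k/10})$, against the trivial $2^{j_\nu}/q$. A gain therefore needs $\tau\ge q\,2^{j_k/10}2^{cj_k}$. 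The other branch lands in $\{\Psi^{\Omega,\mathrm{major}}_{j,(1/2,1)}=1\}$ only if $\tau\le \tfrac12 q\,2^{j_k/10}$. No threshold satisfies both.

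Stationary points genuinely occur. Take $k=1$, $\Omega=\{1,2\}$, $\xi_2=\tfrac{99}{100}2^{j/10-2j}$ and $\xi_1=-\tfrac32\xi_2 2^{j}$.
\begin{itemize}
\item At level $2^{2j-j/10}$, the only Dirichlet approximation of $\xi_2$ is $0/1$, so \eqref{dmm1} fails.
\item $|\xi_1|2^j=\tfrac{297}{200}2^{j/10}>2^{j/10}$, which gives $\Psi^{\Omega,\mathrm{major}}_{j,(1/2,1)}(\xi)=0$ for large $j$.
\item Yet $\xi_1+2\xi_2t$ vanishes at $t=\tfrac34 2^j$, and the sum over $2^{j-1}<t\le 2^j$ has size $\approx\xi_2^{-1/2}\approx 2^{19j/20}$.
\end{itemize}
The decay is real, but no first-derivative argument (Kuzmin--Landau or summation by parts) can detect it.

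\textbf{Why rebalancing does not repair it.} A Dirichlet level finer than $2^{-j_k/10}$, such as $2^{j\cdot\mathfrak m}2^{-j_k/40}$, no longer guarantees $q_{\mathfrak m}\le 2^{j\cdot\mathfrak m}2^{-j_k/10}$. Then Lemma~\ref{lem:ACK_bound} is unavailable in the case where \eqref{dmm1} holds; in the example, the finer approximation of $\xi_2$ has $q_2>2^{19j/10}$. The exponents you list also fail on their own terms: the ratio $q\,2^{j_k/40}2^{-j_k/20}$ equals $2^{j_k/120}$ when $q\approx 2^{j_k/30}$, which is growth rather than decay.

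\textbf{The missing idea.} The paper Dirichlet-approximates the linear coefficients as well, at the same level.
\begin{itemize}
\item If some linear denominator is $\ge 2^{j_k/(60k)}$, it invokes the standard one-variable Weyl estimate: a coefficient, not necessarily the leading one, with denominator in $[2^{\epsilon j},2^{(1-\epsilon)j}]$ forces decay. In the example, $\xi_1$ has Dirichlet denominator $\approx 2^{9j/10}$, so it falls in this case.
\item Otherwise every linear denominator is $<2^{j_k/(60k)}$, and every coordinate already lies within the major-arc width of its Dirichlet rational. Hence $\Psi^{\Omega,\mathrm{major}}_{j,(1/2,1)}<1$ forces $\operatorname{lcm}(q_{\mathfrak m}:\mathfrak m\in\Omega)\ge 2^{j_k/20}$. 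Removing the linear denominators, whose product is $\le 2^{j_k/60}$, yields \eqref{dmm1}.
\end{itemize}
Approximating the linear coefficients at the same level turns a linear coefficient off the major arc into one with a large denominator. That is handled by a Weyl bound, so no derivative test against the nonlinear perturbation is ever needed.
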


\begin{proof}
For each $\mathfrak{m} \in \Omega$, Dirichlet's approximation theorem guarantees the existence of a rational number $c_{\mathfrak{m}}/q_{\mathfrak{m}}$ with $1 \le q_{\mathfrak{m}} \le 2^{j \cdot \mathfrak{m}} 2^{-j_k/10}$ and $(q_{\mathfrak{m}}, c_{\mathfrak{m}}) = 1$ such that
\[
\left| \xi_{\mathfrak{m}} - \frac{c_{\mathfrak{m}}}{q_{\mathfrak{m}}} \right| < \frac{2^{j_k/10}}{q_{\mathfrak{m}} 2^{j \cdot \mathfrak{m}}}.
\]
We divide the argument into two cases based on the size of the denominators $q_{\mathfrak{m}}$.

\medskip
\noindent\textbf{Case 1.} Suppose there exists a multi-index $\mathfrak{m} \in \Omega$ with $|\mathfrak{m}| = 1$ such that $q_{\mathfrak{m}} \ge 2^{j_k/(60k)} $. In this case, applying the standard one-dimensional Weyl sum estimate directly yields the desired bound in \eqref{wop22}.

\medskip
\noindent\textbf{Case 2.} Suppose instead that $q_{\mathfrak{m}} < 2^{j_k/(60k)} $ for all $\mathfrak{m} \in \Omega$ with $|\mathfrak{m}| = 1$. Since the expression is evaluated outside the major arcs—that is, $\Psi_{j,(1/2,1)}^{\Omega,\mathrm{major}}(\xi) \neq 1$—it follows that the least common multiple of the denominators satisfies
\[
\operatorname{lcm}(q_{\mathfrak{m}} : \mathfrak{m} \in \Omega) \ge 2^{j_k/20}.
\]
Together with the contribution from the range
$\prod_{|\mathfrak{m}|=1} q_{\mathfrak{m}}
\leq 2^{j_k/60},$
this recovers precisely the condition \eqref{dmm1} required to apply the
Weyl sum estimate \eqref{wop}, and hence establishes \eqref{wop22}.

\end{proof}

\subsection{Gauss Sums and Averaged Gauss Sums}
\begin{lemma}[Gauss Sum] \label{el22} 
Let $\Omega\subset \mathbb{Z}_+^k \setminus \{{\bf 0}\}$ be a finite set and set  $d_{\Omega}:=\max\{|\mathfrak{m}|:\mathfrak{m}\in \Omega\}$. Consider a rational vector: 
   $$
  \frac{a}{q}=\frac{(a_{\mathfrak{m}})_{\mathfrak{m}\in \Omega}}{q}  \ \text{where}  \ (a,q)=1.$$ 
   We denote the $k$-parameter  Gauss sums as 
 \begin{align*} 
 S^{\Omega}\left(\frac{a}{q}\right)& = \frac{1}{q^k}\sum_{t= (t_1,\cdots,t_k)\in [q]^k}e^{2\pi i \sum_{\mathfrak{m}\in\Omega } \frac{a_{\mathfrak{m}}}{q} t^{\mathfrak{m}}}.  
\end{align*}
 Then, there exist positive constants $C$ and $c$, depending only on $k$ and $d_{\Omega}$, such that
\begin{equation}\label{g2}
\left|S^{\Omega}\!\left(\frac{a}{q}\right)\right|
\le C q^{-c}.
\end{equation}
\end{lemma}

 \begin{definition}[Gauss Sum of Layered Polynomial]
 Let $\Omega\subset \mathbb{Z}_+^k \setminus \{{\bf 0}\}$ and fix $ \ell \in  [k-1]$. For each $\mathfrak{m}=(m_1,\ldots,m_k)\in\Omega$, write \[ m=(m_1,\ldots,m_\ell) \qquad\text{and}\qquad n=(m_{\ell+1},\ldots,m_k), \] so that \begin{align}\label{pf1} \mathfrak{m}=(m,n). \end{align}
Then, define $\Omega'\subset\Omega$ by
\begin{align*}
\Omega':=  
	\{ (m,n)\in\Omega: m\ne {\bf 0}\}, \ \text{where ${\bf 0}$ is the zero vector in $\mathbb{Z}^{\ell}$}\  
\end{align*}
with its projection
$\Omega'_{1}$ and fiber $\Omega'(m)$ defined as \begin{align*}
	\Omega_{1}' &:=\{m: (m,n)\in \Omega'\text{ for some } n\in\mathbb{Z}_{+}^{k-\ell}\} \ \text{and}\ \Omega'(m):= \{ n\in\mathbb Z_+^{k-\ell}: (m,n)\in \Omega' \}.
\end{align*}	Take a rational vector 
$
\frac{a}{q}=\frac{(a_{mn})_{(m,n)\in \Omega}}{q} 
$ where  $(q,a)=1$.  Write $\mathfrak{t}=(\mathfrak{t}_1,\mathfrak{t}_2)$ where $\mathfrak{t}_1=(t_{1},\dots,t_{\ell})\in \mathbb{Z}^{\ell}$ and $\mathfrak{t}_2=(t_{\ell+1},\dots,t_k)\in \mathbb{Z}^{k-\ell}$.
We consider a vector field, depending on $\mathfrak{t}_2=(t_{\ell+1},\dots,t_k)$,
\[
\left( \frac{a_{m}(\mathfrak{t}_2)}{q} \right)_{m\in \Omega_{1}'},
\]
where   for each $m=(m_1,\cdots,m_{\ell})\in \Omega_1'$,  the layered polynomial is defined by
\[
a_{m}(\mathfrak{t}_2):=\sum_{n \in \Omega'(m)} a_{mn}\,\mathfrak{t}_2^{n}, 
\qquad \text{with } \ \mathfrak{t}_2^n=t_{\ell+1}^{m_{\ell+1}}\cdots t_k^{m_k}.
\]
 Associated with the vector field  $\left( \frac{a_{m}(\mathfrak{t}_2)}{q} \right)_{m\in \Omega_{1}'}$,    define the averaged Gauss sum: $$S^{\Omega_{1}'}\left(\left( \frac{a_{m}(\mathfrak{t}_2)}{q} \right)_{m\in \Omega_{1}'}\right):= \frac{1}{q^\ell}\sum_{\mathfrak{t}_1= (t_1,\cdots,t_\ell)\in  [1,q]^{\ell}}e^{2\pi i \sum_{m\in\Omega_1' }  \frac{a_{m}(\mathfrak{t}_2)}{q}  \mathfrak{t}_1^{m}},$$  
where  $\mathfrak{t}_1^{m}=t_1^{m_1}\cdots t_\ell^{m_\ell}.$
\end{definition}

 We now introduce the averaged Gauss sum estimates in the following Proposition. We defer the proof of Proposition~\ref{lem22p}  to Subsection \ref{ag}.
 
\begin{proposition}[Averaged  Gauss Sum]\label{lem22p}
Let $\Omega\subset \mathbb{Z}_+^k \setminus \{{\bf 0}\}$ be a finite set and consider a rational vector: 
$
\frac{a}{q}=\frac{(a_{\mathfrak{m}})_{\mathfrak{m}\in \Omega}}{q}  \ \text{where}  \ (a,q)=1.$ 
Let $j=(j_1,\cdots,j_k) \in Z(k) $ and $\ell\in[k-1]$. Suppose that $\Omega_{1}'\neq \emptyset.$ Then, there exist positive constants $C$ and $c$, depending only on $\Omega$, such that
\begin{align}\label{po10}
 \frac{1}{ 2^{j_{\ell+1} +\cdots+ j_k}       } \sum_{\mathfrak{t}_2\in \prod_{\nu=\ell+1}^k [1,2^{j_\nu}] }  \left|S^{\Omega_{1}'}\left(\left( \frac{a_{m}(\mathfrak{t}_2)}{q} \right)_{m\in \Omega_{1}'}\right) \right|\le C d(q) \left(q^{-c}+2^{-cj_{k}}\right),
\end{align} 
where $d(q)$ is the number of divisors of $q$. 

 \end{proposition}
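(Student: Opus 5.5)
The plan is to split according to how much of the denominator $q$ survives reduction of each layered coefficient vector. For fixed $\mathfrak t_2$, set
\[
g(\mathfrak t_2):=\gcd\bigl(q,\,a_m(\mathfrak t_2):m\in\Omega_1'\bigr).
\]
After reducing $\bigl(a_m(\mathfrak t_2)/q\bigr)_{m\in\Omega_1'}$ to lowest terms, its denominator is $q/g(\mathfrak t_2)$. The sum over $[1,q]^\ell$ is periodic modulo $q/g$, so $S^{\Omega_1'}$ equals the $\ell$-parameter Gauss sum at the reduced vector. Lemma~\ref{el22}, applied with $k$ replaced by $\ell$ and $\Omega$ by $\Omega_1'$, then gives
\[
\bigl|S^{\Omega_1'}\bigr|\lesssim \bigl(q/g(\mathfrak t_2)\bigr)^{-c}.
\]
Here I use that $\Omega_1'\neq\emptyset$ and $\mathbf 0\notin\Omega_1'$ by the definition of $\Omega'$. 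Grouping the $\mathfrak t_2$ by the value $g=g(\mathfrak t_2)$, which must divide $q$, the left side of \eqref{po10} is bounded by
\[
\sum_{g\mid q}(q/g)^{-c}\,\frac{\#\{\mathfrak t_2:\ g\mid a_m(\mathfrak t_2)\ \forall m\in\Omega_1'\}}{2^{j_{\ell+1}+\cdots+j_k}}.
\]
Since there are $d(q)$ terms, it suffices to show that each summand is $\lesssim q^{-c}+2^{-cj_k}$.

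The key counting step is as follows. Fix $g\mid q$. Because $(q,a)=1$, the integer polynomial vector $\mathfrak t_2\mapsto(a_m(\mathfrak t_2))_{m\in\Omega_1'}$ cannot vanish identically modulo any prime $p\mid g$ for the coefficients attached to $\Omega'$. A caveat: $(q,a)=1$ involves all coefficients, including those $a_{\mathbf 0 n}$ with $m=\mathbf 0$, which do not enter $S^{\Omega_1'}$. So I will first handle the case where $\gcd(q,a_{mn}:m\neq\mathbf 0)=q_0>1$. In that case the reduced denominator already divides $q/q_0$, and I absorb $q_0$ by replacing $q$ with $q/q_0$ in the argument. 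The resulting loss is exactly the factor controlled by the $d(q)$ and $q^{-c}$ terms together with the trivial bound when $q/q_0$ is small. I expect this bookkeeping to need care, and may require an additional factor of $d(q)$ over divisors.

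With that reduction in place, the count of $\mathfrak t_2\in\prod_{\nu>\ell}[1,2^{j_\nu}]$ with $g\mid a_m(\mathfrak t_2)$ for all $m$ is estimated by a multi-parameter sublevel/zero-counting bound modulo $g$ (Schwartz--Zippel type, or Lemma~\ref{...} on lattice sublevel sets in Subsection 3.4 if available). It gives
\[
\frac{\#}{2^{j_{\ell+1}+\cdots+j_k}}\lesssim g^{-c}+2^{-cj_k}.
\]
To prove this I would fix all variables except one, say $t_k$, with a nonvanishing leading coefficient modulo a large prime-power part of $g$. I would then use that a nonzero polynomial of degree $D$ has $\lesssim D\,(2^{j_k}/g'+1)\,g'^{1-1/D}$ roots in an interval modulo $g'$, by Hua/Stečkin-type root counting over prime powers. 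Iterating over the variables, and using $j_k\le j_\nu$, loses only constants depending on $\Omega$.

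Combining the two steps, the summand is bounded by
\[
(q/g)^{-c}\bigl(g^{-c}+2^{-cj_k}\bigr)\le q^{-c'}+2^{-cj_k},
\]
and summing over divisors yields \eqref{po10}. The main obstacle is the root-counting modulo composite $g$ uniformly in the coefficients, in particular the case where the polynomials have a large common content modulo prime powers. I would first try reducing to prime powers via CRT and multiplicativity of the counts, applying Stečkin's bound for the number of solutions of $f(x)\equiv0 \pmod{p^\alpha}$ when $f$ has content coprime to $p$.
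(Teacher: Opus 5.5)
Your outer reduction is sound and matches the paper's. Periodicity turns $S^{\Omega_1'}$ into the $\ell$-parameter Gauss sum with denominator $q/g(\mathfrak t_2)$, Lemma~\ref{el22} gives $(q/g)^{-c}$, and grouping over $g\mid q$ produces the factor $d(q)$. Your final bound $(q/g)^{-c}(g^{-c}+2^{-cj_k})\le q^{-c}+2^{-cj_k}$ is equally valid; the paper instead splits at $\gcd=q^{2/3}$ and uses only $|S|\le 1$ on the large-gcd part.

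The gap is the counting estimate
\[
\#\Bigl\{\mathfrak t_2\in\prod_{\nu=\ell+1}^{k}[1,2^{j_\nu}] :\ g\mid a_m(\mathfrak t_2)\ \text{for all } m\in\Omega_1'\Bigr\}\lesssim 2^{j_{\ell+1}+\cdots+j_k}\bigl(g^{-c}+2^{-cj_k}\bigr).
\]
This is the real content of the proposition, and your sketch does not deliver it:
\begin{itemize}
\item Working with one large prime-power part $p^\alpha\,\|\,g$ and one $a_m$ primitive mod $p$ gives decay only in $p^\alpha$. When $g$ is a product of many small primes, its largest prime-power factor can be about $\log g$, so no power of $g$ comes out.
\item The one-variable bound $D(2^{j_k}/g'+1)g'^{1-1/D}$ is useless once $g'$ is much larger than $2^{j_k}$.
\item CRT multiplicativity of root counts holds over complete residue systems, not over a box whose side $2^{j_k}$ may be far shorter than $g$.
\item Schwartz--Zippel needs a prime modulus, and Lemma~\ref{eep1} concerns real sublevel sets, not congruences.
\item Freezing all but one variable also requires controlling how often the leading coefficient, a polynomial in the frozen variables, is divisible by high powers of $p$; ``loses only constants'' passes over this.
\end{itemize}

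The paper closes the gap with Claim~\ref{hc1}. For each prime $p\mid g$, the map $(c_m)\mapsto(\sum_m c_m a_{mn})_n$ is nonzero over $\mathbb F_p$. CRT then gives integers $c_m$ for which $\widetilde a(\mathfrak t_2)=\sum_m c_m a_m(\mathfrak t_2)$ is primitive modulo $g$. Now $g\mid a_m(\mathfrak t_2)$ for all $m$ forces $g\mid\widetilde a(\mathfrak t_2)$, and a single Konyagin-type box count for one primitive polynomial (Lemma~\ref{mkl1}) gives the display.

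Your CRT idea can also be made to work, with these extra steps:
\begin{itemize}
\item Pass to a divisor $g''\mid g$ with $2^{cj_k}\le g''\le 2^{j_k}$, so that the box is a union of complete residue systems mod $g''$.
\item Treat primes $p>2^{j_k}$ separately, with a Schwartz--Zippel count on the grid.
\item Multiply multivariate Ste\v{c}kin bounds prime by prime, choosing for each $p$ an $a_{m_p}$ primitive mod $p$, at a cost of $C^{\omega(g'')}=(g'')^{o(1)}$.
\end{itemize}
That is essentially the lemma the paper imports, and your proposal does not carry it out.

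Second, your caveat about the coefficients $a_{\mathbf 0 n}$ identifies a genuine defect of the statement, but the proposed absorption of $q_0$ cannot work, because in that case the inequality is false. Take $\Omega=\{(1,1),(0,1)\}$, $\ell=1$, $q=p$ prime, $a_{(1,1)}=0$ and $a_{(0,1)}=1$. Then $(a,q)=1$, but $a_1(t_2)\equiv 0$, so $S^{\Omega_1'}\equiv 1$. The left side of \eqref{po10} equals $1$, while the right side tends to $0$ as $p,j_2\to\infty$.

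The estimate must therefore be read with $\gcd(q,a_{mn}:m\neq\mathbf 0)=1$. This is how it is used: $a/q\in\mathbb Q^{\Lambda'}$ in Lemma~\ref{lem859} and Claim~\ref{ap90}. The paper's own argument implicitly makes the same restriction. It applies Claim~\ref{hc1} to the full coefficient vector, indexed by $\Lambda_1$, while $A_{\widetilde q}$ imposes divisibility only for $m\in\Lambda_1'$.
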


\noindent\textbf{Choice of the Decay Exponent $\delta_{\Lambda}$.}
	Let $\Lambda\subset \mathbb{Z}_+^k \setminus \{{\bf 0}\}$ be a finite set, and fix $ \ell \in  [k-1]$.    Let $j=(j_1,\cdots,j_k) \in Z(k) $ and  fix $r = (r_1, \dots, r_k)$ with $r_i \in (0, 2^{j_i}] \cap \mathbb{Z}$ for each $i = 1, \dots, k$. In view of the decay estimates \eqref{wop22}, \eqref{g2}, and \eqref{po10}, we choose a constant $\delta_{\Lambda}>0$, depending only on $\Lambda$, sufficiently small so that the following four estimates hold simultaneously.
	\begin{itemize}
		\item[(1)] Suppose that $j_1 \approx_{\Lambda} \cdots \approx_{\Lambda} j_k.$ Then, for every $\xi=(\xi_{\mathfrak{m}})_{\mathfrak{m}\in\Lambda} \in \mathbb{R}^{|\Lambda|},$ we have the Weyl sum estimate \begin{align}\label{weyl}
			\left| W_{j,r}^{\Lambda}(\xi) \left( 1 - \Psi_{j,(1/2,1)}^{\Lambda,\mathrm{major}}(\xi) \right) \right| \lesssim 2^{j_1 + \dots + j_k} 2^{-\delta_{\Lambda} j_1},
		\end{align}
		\item[(2)] Define $\Lambda_{2}:=\{n: (m,n)\in \Lambda\text{ for some } m\in\mathbb{Z}_{+}^{\ell}\}.$  Suppose that $\Lambda_2\neq \emptyset$ and $\textbf{0}\notin \Lambda_2.$ Assume  $j_{\ell+1} \approx_{\Lambda} \cdots \approx_{\Lambda} j_k.$  Then, for every $\xi_{\Lambda_2}=(\xi_{n})_{n\in\Lambda_2} \in \mathbb{R}^{|\Lambda_2|},$ we have the Weyl sum estimate \begin{align}\label{weyl1}
			\left| W_{(j_{\ell+1},\cdots,j_{k}),(r_{\ell+1},\cdots,r_{k})}^{\Lambda_2}(\xi_{\Lambda_2}) \left( 1 - \Psi_{(j_{\ell+1},\cdots,j_{k}),(1/2,1)}^{\Lambda_2,\mathrm{major}}(\xi_{\Lambda_2}) \right) \right| \lesssim 2^{j_{\ell+1} + \dots + j_k} 2^{-\delta_{\Lambda} j_{\ell+1}},
		\end{align}
		\item[(3)] For every $a/q\in\mathbb{Q}^{|\Lambda|}$ satisfying $(a,q)=1$, we have the Gauss sum estimate \begin{align}\label{gaus}
			\left|S^{\Lambda}\!\left(\frac{a}{q}\right)\right|
			\lesssim q^{-\delta_{\Lambda}},
		\end{align}
		\item[(4)] Suppose that $j_{\ell+1} \approx_{\Lambda} \cdots \approx_{\Lambda} j_k$ and $ 2^{j_k/20} \le q \le 2^{N^{4k}j_{\ell+1}},$ where $N$  is defined in \eqref{00gg}. Suppose that $\Lambda_1'\neq \emptyset$. Then, for every $a/q\in\mathbb{Q}^{|\Lambda|}$ satisfying $(a,q)=1$, we have the averaged Gauss sum estimate \begin{align}\label{aver}
			\frac{1}{ 2^{j_{\ell+1} +\cdots+ j_k}       } \sum_{\mathfrak{t}_2\in \prod_{\nu=\ell+1}^k [1,2^{j_\nu}] }  \left|S^{\Lambda_{1}'}\left(\left( \frac{a_{m}(\mathfrak{t}_2)}{q} \right)_{m\in \Lambda_{1}'}\right) \right|\lesssim q^{-\delta_{\Lambda}}2^{-\delta_{\Lambda} j_{\ell+1}}.
		\end{align}
	\end{itemize}

\subsection{Sublevel Set Estimates on $\mathbb{Z}^k$}
We introduce, without proof, the following sublevel set estimate on $\mathbb{Z}^k$ associated with the polynomial
$
\sum_{\mathfrak{m}\in\Lambda}\eta_{\mathfrak{m}}t^{\mathfrak{m}}\in \mathbb{R}_{\Lambda}[t_1,\cdots,t_k].
$

\begin{lemma}[Sublevel Set Estimates on $\mathbb{Z}^k$]\label{eep1}
	Let $\Lambda\subset \mathbb{Z}^k_+$ be a finite subset. Consider a polynomial $\sum_{\mathfrak{m}=(m_1,\cdots,m_k)\in \Lambda}\eta_\mathfrak{m}\mathfrak{t}^\mathfrak{m}\in \mathbb{R}_{\Lambda}[t_1,\cdots,t_k]$. Set $
	d:=\max\{|\mathfrak{m}|:\mathfrak{m}\in \Lambda\}$ where $|\mathfrak{m}|=m_1+\cdots+m_k$.
	Let   $j=(j_1,\cdots,j_k)\in Z(k)$.  Suppose    that $ j_k \ge r\ge 0$ and that $$A(j,\eta):=\sum_{\mathfrak{m}\in \Lambda}|\eta_\mathfrak{m}2^{j\cdot \mathfrak{m}} |\ge 2^r\epsilon\ \text{ for $\epsilon>0$. }$$ Then there exists $ C >0$  independent of $\eta=(\eta_{\mathfrak{m}}),j,r$ and $\epsilon$  such that
	\begin{align*}
		\left|\left\{\mathfrak{t}\in \mathbb{Z}^k:|t_\nu|\sim 2^{j_\nu}\ \text{for $\nu\in [k]$}\ \text{and}\  |\sum_{\mathfrak{m}\in \Lambda}\eta_{\mathfrak{m}}\mathfrak{t}^\mathfrak{m}|\le \epsilon\right\} \right|\le C  2^{j_1+\cdots+j_k} 2^{-(1/d)r}.
	\end{align*}
\end{lemma}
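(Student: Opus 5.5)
Although the paper quotes this estimate without proof, here is how I would prove it. The plan is to reduce it to a one-variable sublevel bound for polynomials on an interval and then apply that bound one coordinate at a time, in the spirit of the multi-parameter van der Corput / Carbery--Christ--Wright lemmas.

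First, rescale. Write $t_\nu=2^{j_\nu}s_\nu$ with $|s_\nu|\in(1/2,1]$, and set $Q(s):=\sum_{\mathfrak m}\eta_{\mathfrak m}2^{j\cdot\mathfrak m}s^{\mathfrak m}$. The coefficients of $Q$ are $\beta_{\mathfrak m}=\eta_{\mathfrak m}2^{j\cdot\mathfrak m}$, and these satisfy $\sum|\beta_{\mathfrak m}|=A(j,\eta)\ge 2^r\epsilon$. Dividing by $A$, we need to count the lattice points $s\in\prod_\nu 2^{-j_\nu}\mathbb Z$ in the annular box where $|\tilde Q(s)|\le 2^{-r}$. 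Here $\tilde Q=Q/A$ has coefficient $\ell^1$ norm $1$, its degree is at most $d$, and its monomials are restricted to the fixed finite set $\Lambda$.

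Next, use the following standard fact. On the space of polynomials with monomials in $\Lambda$, the norm $\sum|\beta_{\mathfrak m}|$ is equivalent to $\max_{\alpha}\sup_{s\in B}|\partial^\alpha\tilde Q(s)|$ for any fixed box $B$, since all norms on a finite-dimensional space are equivalent. Consequently there is a multi-index $\alpha$, with $|\alpha|\le d$ and with entries bounded by $d$ in each variable, such that $|\partial^{\alpha}\tilde Q|\gtrsim 1$ on the box. This holds after subdividing the box into $O(1)$ subboxes and picking $\alpha$ per subbox, via a compactness argument over the unit sphere of coefficients. I would instead use the cleaner sequential approach, working one variable at a time.

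Write $\tilde Q(s)=\sum_{a}c_a(s')s_1^{a}$, where $s'=(s_2,\ldots,s_k)$, and proceed as follows.

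1. Since $\sum_a\|c_a\|_{\ell^1}=1$, there is an index $a$ with $\|c_a\|\gtrsim 1$, and hence $|c_a(s')|\gtrsim 2^{-r/2}$ or larger except on a small set. The $s'$ lattice points with $\max_a|c_a(s')|\le 2^{-r/(2)}$ are bounded by induction on $k$: the $c_a$ are polynomials in fewer variables with coefficient norm $\gtrsim1$. Applying the induction with threshold $2^{-r'}$ for a suitable $r'\approx r$ gives a bound $2^{j_2+\cdots+j_k}2^{-r'/d}$, times $2^{j_1}$ for the trivial count in $s_1$.
2. For the remaining $s'$, the one-variable polynomial $s_1\mapsto\tilde Q$ has coefficient norm $\gtrsim 2^{-r/2}$. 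The one-dimensional sublevel lemma for a degree-$d$ polynomial $p$ on $[1/2,1]$ gives $|\{|p|\le\delta\}|\lesssim(\delta/\|p\|)^{1/d}$; this follows by Lagrange interpolation or Remez. Since the sublevel set is a union of at most $d$ intervals, the number of points of $2^{-j_1}\mathbb Z$ in it is at most $2^{j_1}(\delta/\|p\|)^{1/d}+d$. The additive $+d$ is harmless because $r\le j_k\le j_1$ gives $d\lesssim 2^{j_1}2^{-r/d}$.

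Combining the two cases with properly balanced exponents yields $C2^{|j|}2^{-cr}$. To get the exact exponent $1/d$, I would avoid splitting the threshold in half. Instead I would use the symmetric fact that for a polynomial with coefficient norm $1$, the measure of $\{|\tilde Q|\le\delta\}$ is $\lesssim\delta^{1/d}$, which holds on the continuous box by Carbery--Christ--Wright-type product arguments. The remaining step is to pass from measure to lattice counts. The most concrete route is to apply the one-dimensional lattice counting in each variable successively, where each fiber set is a union of $O(1)$ intervals, and to note that lattice spacing $2^{-j_\nu}\le 2^{-r}\le 2^{-r/d}$.

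I expect the main obstacle to be obtaining the sharp exponent $1/d$ uniformly through this discretization and induction: the degrees in the individual variables multiply the losses unless they are handled carefully. If necessary I would settle for some $2^{-cr}$, which is all that the later applications with $\delta_\Lambda$ actually use.
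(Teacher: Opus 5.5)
Your proposal has a genuine gap: it never proves the stated exponent $1/d$. The argument you actually carry out (threshold $2^{-r/2}$ on the fiber coefficients $c_a(s')$, a one-variable sublevel bound in $s_1$, and induction on $k$ for the exceptional $s'$) gives only some decay $2^{-cr}$ with $c$ well below $1/d$ ($c=1/(2d)$ at best). Rebalancing the threshold gives at most $2^{-r/(D_1+D')}$. Here $D_1$ is the $s_1$-degree of $\tilde Q$ and $D'$ is the degree of the chosen $c_a$, and $D_1+D'$ can exceed $d$, for instance when the top $s_1$-coefficient is tiny.

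For the sharp exponent you invoke the continuous bound and then propose to pass to lattice counts ``one variable at a time''; that is not an argument. Counting points of $2^{-j_1}\mathbb Z$ in each continuous fiber $F(s')$ costs $2^{j_1}|F(s')|+O(d)$. You must then compare the lattice sum $\sum_{s'}|F(s')|$ with $2^{j_2+\cdots+j_k}\int|F(s')|\,ds'$. That is the same discretization problem one dimension lower, now for a function $s'\mapsto|F(s')|$ with no regularity. Note also that you use the hypothesis $r\le j_k$ only to absorb the $+d$ term, whereas it is precisely what makes the discretization work.

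The missing idea is a direct cell comparison. For each admissible $t\in\mathbb Z^k$ let $C_t=t+[-\tfrac12,\tfrac12)^k$. These cells are disjoint, have volume $1$, do not meet the coordinate hyperplanes, and lie in $E=\{x:|x_\nu|\in[2^{j_\nu-2},2^{j_\nu+1}]\ \text{for all}\ \nu\}$. On $E$ one has $|\partial_\nu P(x)|\lesssim_\Lambda\sum_{\mathfrak m}|\eta_{\mathfrak m}|2^{j\cdot\mathfrak m}2^{-j_\nu}\le A(j,\eta)2^{-j_k}$. Hence if $|P(t)|\le\epsilon\le 2^{-r}A(j,\eta)$, then, because $2^{-j_k}\le 2^{-r}$, we get $|P|\le C2^{-r}A(j,\eta)$ on all of $C_t$. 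So the lattice count is at most $|\{x\in E:|P(x)|\le C2^{-r}A(j,\eta)\}|$.

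Rescaling $x_\nu=2^{j_\nu}y_\nu$ turns this into $2^{j_1+\cdots+j_k}$ times the measure of $\{y:|y_\nu|\in[\tfrac14,2],\ |Q(y)|\le C2^{-r}\}$, where $Q$ has coefficient $\ell^1$-norm $1$. Apply Carbery--Wright on each of the $2^k$ convex boxes. Combined with the equivalence of the $L^1$-norm on such a box and the coefficient norm on the finite-dimensional span of $\{y^{\mathfrak m}\}_{\mathfrak m\in\Lambda}$, this bounds the measure by $C2^{-r/d}$.

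The paper itself states the lemma without proof and only records the continuous Carbery--Wright bound; the cell comparison is the missing step that completes that route. Your weaker $2^{-cr}$ variant would suffice for the paper's later uses in Lemmas~\ref{lem57} and~\ref{lem57b9}, which only need some power decay. It does not, however, establish the lemma as stated.
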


The continuous analogue of Lemma~\ref{eep1} is a  consequence of
the sublevel set inequality of Carbery and Wright~\cite{CW}.
Applying Theorem~2 of~\cite{CW} yields the desired continuous sublevel set
estimate with exponent $1/d$: \begin{align*}
	\left|\left\{\mathfrak{t}\in \mathbb{R}^k:|t_\nu|\sim 2^{j_\nu}\ \text{for $\nu\in [k]$}\ \text{and}\  |\sum_{\mathfrak{m}\in \Lambda}\eta_{\mathfrak{m}}\mathfrak{t}^\mathfrak{m}|\le \epsilon\right\} \right|&\lesssim 2^{j_1+\cdots+j_k}(\frac{\epsilon}{A(j,\eta)})^{1/d}\\
	&\le   2^{j_1+\cdots+j_k} 2^{-(1/d)r}.\notag
\end{align*}

  \section{Major Arc Estimates and Disparity Sectors}\label{sec3}
\subsection{Major Arc Estimates}To prove part (ii) of both Main Theorems \ref{mt1} and \ref{mt2}, we shall henceforth restrict our attention to sets $\Lambda\subset \mathbb{Z}_{+}^k$ satisfying the following condition: \begin{align}\label{wep2}
	\text{$\Lambda\subset \mathbb{Z}^k_+$ contains an odd subset  and $\Lambda$ satisfies the condition  (\ref{1009}).}
\end{align}  In this subsection, we complete the major arc estimate by applying Lemma~\ref{po3}. 
Recall
$$ \mathbb{Q}^{\Lambda}[1,2^{j_k/10}]:=  \left\{\frac{a}{q}=\frac{(a_{\mathfrak{m}})_{ \mathfrak{m}\in \Lambda }  }{q}\in  \mathbb{Q}^{|\Lambda|}: 1\le q \le 2^{j_{k}/10}\ \text{and}\  (q,a)=1\right\}.$$
For $j \in Z(k)$ as in \eqref{joor}, we define the major arc cutoff function
\begin{align*}
\Psi^{\Lambda,\mathrm{major}}_{j}(\xi)
&:= \sum_{a/q \in \mathbb{Q}^{\Lambda}[1,2^{j_k/10}]}
\psi_j^{\Lambda}\!\left( \frac{\xi-\frac{a}{q}}{2^{j_k/10}} \right),
\end{align*}
where
\[
\psi_j^{\Lambda}(\xi)
:= \prod_{\mathfrak{m}\in\Lambda}
\psi\!\left( \xi_{\mathfrak{m}}\, 2^{j\cdot \mathfrak{m}} \right).
\]
This coincides with $\Psi^{\Lambda,\mathrm{major}}_{j,(1,1)}(\xi)$ defined in \eqref{t12}; for notational simplicity, we henceforth omit the sub-index $(1,1)$.
Note that $\psi_j^{\Lambda}\left( \frac{\xi-\frac{a}{q}}{2^{j_{k}/10}} \right)$ are disjointly supported as $a/q$ ranges over $a/q\in \mathbb{Q}^{\Lambda}[1,2^{j_k/10}]$.   Moreover, the smallness of  $1\le q\le 2^{j_k/10}$ leads a  good integral approximation of $H_j^{\Lambda}(\xi) \Psi^{\Lambda,\rm{major}}_{j}(\xi)$ as in Proposition \ref{lemm29} below.
\begin{proposition}[Major Arc Estimate] \label{lemm29}
For $\Lambda\subset \mathbb{Z}_+^k $, let $\xi=(\xi_{\mathfrak{m}})_{\mathfrak{m}\in \Lambda}\in \mathbb{R}^{|\Lambda|}$. Under the condition \eqref{wep2},   we have (1) and (2) below. 
\begin{itemize}
\item[(1)]  Integeral Approximation.  For $j\in Z(k)$,
one has
\begin{align}\label{b40}
&H^{\Lambda}_j(\xi) \Psi^{\Lambda,\rm{major}}_{j}(\xi)\nonumber\\
&\qquad=  \sum_{a/q\in \mathbb{Q}^{\Lambda}[1, 2^{j_k/10}]} \left(S^{\Lambda}(a/q) \mathcal{H}^{\Lambda}_j(\xi-a/q)  \psi_j^{\Lambda}\left( \frac{\xi-\frac{a}{q}}{2^{j_{k}/10}} \right)+E_{j,a/q}(\xi)\right), 
\end{align}
where the error term is bounded by
\begin{align*}
&\sum_{(j_1,\cdots,j_{k-1}):j\in Z(k)}\sum_{a/q\in \mathbb{Q}^{\Lambda}[1, 2^{j_k/10}]}|E_{j,a/q}(\xi)|\lesssim 2^{-j_k/(40)}.
\end{align*}
\item[(2)]  Major Arc Estimate.
There exists a constant $C >0$ independent of $\xi$:
\begin{align}\label{23mm}
&\sum_{j\in Z(k) }\left|   \sum_{a/q\in \mathbb{Q}^{\Lambda}[1,2^{j_k/10}]}S^{\Lambda}(a/q)\mathcal{H}_j^{\Lambda}(\xi-a/q) \psi_j^{\Lambda}\left( \frac{\xi-\frac{a}{q}}{2^{j_{k}/10}} \right) \right|
  \le C,
\end{align}
  which  implies 
$
  \sum_{j\in Z(k)} |H^{\Lambda}_j(\xi) \Psi^{\Lambda,\rm{major}}_{j}(\xi)|\lesssim 1.
$
\end{itemize}

\end{proposition}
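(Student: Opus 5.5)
For part (1) we follow the classical major-arc comparison, done separately for each $a/q$ in the support of $\Psi^{\Lambda,\rm major}_j$. Fix $a/q\in\mathbb{Q}^{\Lambda}[1,2^{j_k/10}]$ and write $\beta=\xi-a/q$, so that $|\beta_{\mathfrak m}|\le 2^{j_k/10}2^{-j\cdot\mathfrak m}$ on the support of the cutoff. We split $t=q\ell+r$ with $r\in[q]^k$ coordinatewise. Because $a_{\mathfrak m}t^{\mathfrak m}/q\equiv a_{\mathfrak m}r^{\mathfrak m}/q \pmod 1$, this gives
\[
H^\Lambda_j(\xi)=\sum_{r\in[q]^k}e^{2\pi i\sum_{\mathfrak m}\frac{a_{\mathfrak m}}{q}r^{\mathfrak m}}\sum_{\ell\in\mathbb Z^k}F_\beta(q\ell+r),\qquad F_\beta(t):=\frac{e^{2\pi i\sum_{\mathfrak m}\beta_{\mathfrak m}t^{\mathfrak m}}\chi_j(t)}{t_1\cdots t_k}.
\]
In each variable $\ell_\nu$ separately we replace the inner sum by $q^{-k}\int F_\beta$, using Euler--Maclaurin (the mean value theorem in each coordinate). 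The derivative $\partial_{t_\nu}F_\beta$ is bounded by $2^{-j_\nu}(1+\sum_{\mathfrak m}|\beta_{\mathfrak m}|2^{j\cdot\mathfrak m})|F_\beta|\lesssim 2^{-j_\nu}2^{j_k/10}|F_\beta|$. The derivatives of $\chi_{j_\nu}$ are $O(1)$ and live on sets of length $O(1)$, so they contribute $O(2^{-j_\nu})$ after division by $t_\nu$. Replacing the lattice sum in $\ell_\nu$ by the integral therefore costs a relative error $q2^{j_k/10}2^{-j_\nu}\le 2^{j_k/5-j_\nu}$. Since $\int|F_\beta|\lesssim1$, summing over the $k$ coordinates gives
\[
|E_{j,a/q}(\xi)|\lesssim \sum_{\nu}2^{j_k/5-j_\nu}\,\psi_j^\Lambda\!\big(\beta/2^{j_k/10}\big).
\]
This yields the main term $S^\Lambda(a/q)\mathcal H^\Lambda_j(\beta)$.

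To sum the errors, we use that the cutoffs are disjointly supported in $a/q$, so for each $j$ at most one $a/q$ contributes. The per-$j$ error is then at most $\sum_\nu 2^{j_k/5-j_\nu}\le k2^{-4j_k/5}$. Summing over $j_1\ge\cdots\ge j_{k-1}\ge j_k$ would cost polynomially many terms, so we use the finer bound $\sum_\nu 2^{j_k/5-j_\nu}$ directly. The $\nu=k$ term does not decay in $j_1,\dots,j_{k-1}$, so we must use more structure. \textbf{This is the main obstacle:} for $j_1\gg j_k$ the box has $2^{j_1}$-long directions, but uniform summation over $j_1,\dots,j_{k-1}$ is needed. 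I would handle it by using the $\psi_j^\Lambda$ support more sharply. For fixed $\xi$ and fixed $a/q$ (only $\lesssim 2^{j_k/5}$ candidates with $q\le 2^{j_k/10}$ can be active), the condition $|\beta_{\mathfrak m}|\le 2^{j_k/10}2^{-j\cdot\mathfrak m}$ together with the requirement that $\Psi$ be nonzero restricts $(j_1,\dots,j_{k-1})$. In addition, in the direction $\nu=k$ the exact Euler--Maclaurin error gains a factor $|F_\beta|$ integrated in the other variables, which is itself summable in $j_1,\dots,j_{k-1}$ by the oscillatory estimate \eqref{po4} applied to the projected sets (Remark \ref{re3.1}). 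Combining these, with the exponent trade-off $2^{j_k/5}\cdot2^{-j_k/4}$, should give the claimed $2^{-j_k/40}$.

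For part (2), we bound the main term by $\sum_j\sum_{a/q}|S^\Lambda(a/q)||\mathcal H_j^\Lambda(\xi-a/q)|\psi_j^\Lambda(\cdot)$. We reorganize the sum by $a/q$. A given $a/q$ with denominator $q$ appears only for $j$ with $2^{j_k/10}\ge q$. By disjointness, for each $j$ at most one $a/q$ is active, but across $j$ different $a/q$ can occur. The key point is that if $a/q\ne a'/q'$ are both active for $j,j'$ with $j_k,j'_k$ large, then $|a/q-a'/q'|\ge 1/(qq')$ forces the arcs apart. Hence for fixed $q$, only $O(1)$ fractions (by a standard separation argument) are active for all $j$ with $j_k\ge 10\log q$. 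Then \eqref{gaus} together with \eqref{po4} (valid since \eqref{1009} implies \eqref{v16}) gives
\[
\sum_{q}q^{-\delta_\Lambda}\cdot\#\{\text{active }a\}\cdot\sup_{a}\sum_j|\mathcal H_j^\Lambda(\xi-a/q)|.
\]
To make the sum over $q$ converge despite the $d(q)$-type multiplicity, we use the $\epsilon$-power part of \eqref{po4}: we bound $|\mathcal H_j|\le|\mathcal H_j|^{\epsilon}$ and attach the factor $q^{-\delta_\Lambda}$ to the $j$ with $2^{j_k/10}\ge q$, trading $q^{-\delta_\Lambda}\le 2^{-\delta_\Lambda j_k/10}$ if necessary. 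The final implication then follows from (1).
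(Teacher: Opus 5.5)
The genuine gap is in part (2). After \eqref{gaus} and \eqref{po4} you are left with $\sum_q q^{-\delta_\Lambda}\cdot\#\{\text{active }a\}\cdot C$. Bounding the number of active $a$ for each fixed $q$ is the wrong count, because it leaves $\sum_q q^{-\delta_\Lambda}$, which diverges.

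Your repair runs the wrong way. An active $a/q$ satisfies $q\le 2^{j_k/10}$, so $q^{-\delta_\Lambda}\ge 2^{-\delta_\Lambda j_k/10}$, not $\le$. In fact no decay in $j_k$ can come from the Gauss factor: if $\xi$ is close to some $a/3$, that fraction is active for all large $j$. Passing to $|\mathcal H^\Lambda_j|^{\epsilon}$ does not help either, since the frequency $\xi-a/q$ changes from one active fraction to the next.

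The missing idea is that the fractions active for \emph{some} $j$ form a lacunary family; this is what the paper's ``$q$ lacunary'' encodes. Here is the argument.
\begin{itemize}
\item If $a/q$ is active for some $j$ with $j_k=h$, then $|\xi_{\mathfrak m}-a_{\mathfrak m}/q|\le 2^{h/10-j\cdot\mathfrak m}\le 2^{-9h/10}$ for every $\mathfrak m$, since $j\cdot\mathfrak m\ge j_k$.
\item Distinct fractions with denominators at most $2^{h/10}$ are at least $2^{-h/5}$ apart. So, for $h\ge2$, a single fraction serves all $j'$ at level $h$; the levels $h\le1$ contribute only $O(1)$ fractions.
\item If a different $a'/q'$ is active at a level $h'>h$, then $1/(qq')\le 2\cdot2^{-9h/10}$, i.e.\ $q'\ge 2^{4h/5-1}$.
\end{itemize}
List the distinct active fractions by first active level $h_1<h_2<\cdots$. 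Then $q^{(i+1)}\ge 2^{4h_i/5-1}$ with $h_i\ge i-1$, so $\sum_i (q^{(i)})^{-\delta_\Lambda}\lesssim 1$. Grouping the $j$-sum by active fraction and applying \eqref{po4} once per fraction then gives \eqref{23mm}.

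In part (1) your Euler--Maclaurin route differs from the paper's. The paper writes $t=q\mu+\ell$, applies Poisson summation in $\mu$, and identifies the $w=0$ term with $S^\Lambda(a/q)\mathcal H^\Lambda_j(\beta)$. It bounds each $w\neq0$ term by integrating by parts in the coordinates with $w_\nu\ne0$. It then interpolates against $\sum_j|\widehat{F_{j,\beta}}(w/q)|^{1/2}\lesssim1$, which comes from Lemma~\ref{po3} applied to $\widetilde\Lambda=\Lambda\cup\{\mathbf e_\nu:w_\nu\ne0\}$.

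Your per-$j$ bound $\sum_\nu 2^{j_k/5-j_\nu}$ is correct, but the obstruction is not confined to $\nu=k$. Every term with $\nu\ge2$ is flat in $j_1,\dots,j_{\nu-1}$. The support of $\psi^\Lambda_j$ does not confine $j'$ (at $\xi=a/q$ all $j'$ are admissible). Also, only one $a/q$ is active at a given $j_k$, not $2^{j_k/5}$.

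Your hint can be completed, but only by an explicit $\epsilon$-power interpolation. Convert $t_1,\dots,t_k$ in that order. The stage-$\nu$ error is then controlled by differences $\mathcal I_{j_{<\nu}}(x)-\mathcal I_{j_{<\nu}}(s)$ with $|x-s|\le q$, plus terms of size $q2^{-j_\nu}\sum_{j_{<\nu}}|\mathcal I_{j_{<\nu}}|$. Here $\mathcal I_{j_{<\nu}}$ is the continuous integral in $s_1,\dots,s_{\nu-1}$; its exponents lie in the projection of $\Lambda$ to the first $\nu-1$ coordinates, and its coefficients depend on the $\nu$-th variable and on $t_{\nu+1},\dots,t_k$. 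On the support of $\psi^\Lambda_j$ one has, pointwise,
\[
|\mathcal I_{j_{<\nu}}(x)-\mathcal I_{j_{<\nu}}(s)|\lesssim \bigl(q\,2^{j_k/10-j_\nu}\bigr)^{1/2}\bigl(|\mathcal I_{j_{<\nu}}(x)|^{1/2}+|\mathcal I_{j_{<\nu}}(s)|^{1/2}\bigr).
\]
Only after this pointwise bound may you sum in $j_{<\nu}$. Use the $\epsilon=1/2$ case of \eqref{po4} for the projected set (Remark~\ref{re3.1}), which holds uniformly in the coefficients. This gives $\lesssim 2^{(j_k/5-j_\nu)/2}$, which is summable over $j_\nu\ge\cdots\ge j_{k-1}\ge j_k$.

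Without this step, part (1) is asserted rather than proved; your ``should give'' and the unexplained trade-off $2^{j_k/5}\cdot2^{-j_k/4}$ do not supply it. The paper's Poisson route has the advantage that every error term is a genuine $k$-parameter oscillatory integral, so \eqref{po4} applies directly. Yours avoids enlarging $\Lambda$ and, once completed, gives an error better than $2^{-j_k/40}$.
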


\begin{proof}[Proof of  (\ref{b40})] 
Since the supports 
\[
\Bigl\{\xi:\psi_j^{\Lambda}\!\left(\frac{\xi-\frac{a}{q}}{2^{j_k/10}}\right)\neq 0\Bigr\}
\]
are disjoint over all $\frac{a}{q}\in\mathbb{Q}^{\Lambda}[1,2^{j_k/10}]$, it suffices to work with a fixed such $a/q$.  
Set
\[
\beta=(\beta_{\mathfrak m}) := \xi-\frac{a}{q}
\]
and recall  $\chi_j(t)=\prod_{\nu=1}^k \chi_{j_{\nu}}(t_\nu)$ to rewrite 
 \begin{align*}
  H^{\Lambda}_j(\xi) &= \sum_{t\in\mathbb{Z}^k} e^{2\pi i\sum_{\mathfrak{m}\in \Lambda}  
 \frac{a_{\mathfrak{m}}}{q} t^{\mathfrak{m}}}F_{j,\beta}(t) \ \text{where}\
 F_{j,\beta}(t):=\frac{e^{2\pi i \sum_{\mathfrak{m}\in \Lambda}  \beta_{\mathfrak{m}}t^{\mathfrak{m}} }\chi_j(t) }{t_1\cdots t_k}.\  
 \end{align*}
 Using the change of variables
  $t=q\mu+\ell$ where $\mu= (\mu_\nu)_{\nu=1}^k\in \mathbb{Z}^k$ and $\ell=(\ell_\nu)_{\nu=1}^k  \in [q]^k$,  
 \begin{align}\label{4ggk}
 H^{\Lambda}_j(\xi)  = \frac{1}{q^k} \sum_{\ell\in [q]^k} e^{2\pi  i\sum_{\mathfrak{m}\in \Lambda}  \frac{a_{\mathfrak{m}}}{q}\ell^{\mathfrak{m}}}\sum_{\mu\in\mathbb{Z}^k} q^k F_{j,\beta}(\mu q+\ell).
\end{align}
For fixed $q,\ell$, the Fourier transform of $\mu\in \mathbb{R}^k \mapsto q^k F_{j,\beta}(q\mu+\ell)$  is
\[
e^{2\pi i w\cdot \ell/q}\,\widehat{F_{j,\beta}}(w/q),\qquad w\in\mathbb{Z}^k.
\] 
Applying Poisson summation,
\[
\sum_{\mu\in\mathbb{Z}^k} q^kF_{j,\beta}(q\mu+\ell)
=
\widehat{F_{j,\beta}}(0)+
\sum_{w\neq 0}
e^{2\pi i w\cdot\ell/q}\widehat{F_{j,\beta}}(w/q),
\]
and since 
\[
\widehat{F_{j,\beta}}(0)=\mathcal{H}_j^{\Lambda}(\beta),
\]
substitution into \eqref{4ggk} yields
\[
H_j^\Lambda(\xi)\,
\psi_j^\Lambda\!\left(\frac{\xi-\frac{a}{q}}{2^{j_k/10}}\right)
=
S^\Lambda(a/q)\,
\mathcal{H}_j^\Lambda(\xi-a/q)\,
\psi_j^\Lambda\!\left(\frac{\xi-\frac{a}{q}}{2^{j_k/10}}\right)
+
E_{j,a/q}(\xi),
\]
where
\[
E_{j,a/q}(\xi)
=
\frac{1}{q^k}
\sum_{\ell\in[q]^k}
e^{2\pi i\sum_{\mathfrak m\in\Lambda}\frac{a_{\mathfrak m}}{q}\ell^{\mathfrak m}}
\sum_{w\neq 0}
e^{2\pi i w\cdot\ell/q}\,
\widehat{F_{j,\beta}}(w/q)\,
\psi_j^\Lambda\!\left(\frac{\xi-a/q}{2^{j_k/10}}\right).
\]
Let
\[
E_j(\xi):=\sum_{a/q\in\mathbb{Q}^{\Lambda}[1,2^{j_k/10}]} |E_{j,a/q}(\xi)|.
\]
To prove \eqref{b40}, it suffices to show that
\begin{equation*}
\sum_{j'=(j_1,\cdots,j_{k-1}):\, j\in Z(k)} E_j(\xi) \lesssim 2^{- j_k/(40)}.
\end{equation*}
\\
\textbf{Step 1. Estimate of $\widehat{F_{j,\beta}}(w/q)$.}
We have
\begin{align}\label{10s}
\widehat{F_{j,\beta}}(w/q)
=
\int 
e^{2\pi i\left(\sum_{\nu=1}^k \frac{w_\nu}{q}s_\nu + \sum_{\mathfrak m\in\Lambda}\beta_{\mathfrak m}s^{\mathfrak m}\right)}
\frac{\chi_j(s)}{s_1\cdots s_k}\,ds.
\end{align}
Enlarging $\Lambda$ by adjoining ${\bf e}_\nu$ for each $\nu$ such that $w_\nu\neq0$, and denoting the resulting set by $\widetilde\Lambda$, we observe that every $\mathfrak n\in\widetilde\Lambda$ has at most one odd component. 
Consequently, if $\Omega\subset\widetilde\Lambda$ is an odd subset, then
$[\Omega]_2\supset\{ {\bf e}_1,\ldots,{\bf e}_k\},$ and hence $\operatorname{rank}(\Omega)=k$, as discussed in Remark~\ref{rk423}.
This implies that $\tilde{\Lambda}$ satisfies the condition \eqref{v16}. So we can apply \eqref{po4} to obtain that
\begin{equation}\label{kp2}
\sum_{j\in\mathbb{Z}_+^k}
|\widehat{F_{j,\beta}}(w/q)|^{1/2}
\lesssim 1.
\end{equation}
On the other hand, the phase function has the gradient 
\begin{align}\label{gt11}
&\nabla_s \left(\sum_{\nu}\frac{w_\nu}{q}  s_\nu +\sum_{\mathfrak{m}\in \Lambda}  \beta_{\mathfrak{m}} s^{\mathfrak{m}}\right)=\left(\frac{ w_\nu }{q}\right)_{\nu=1}^k+\left(\sum_{\mathfrak{m}\in \Lambda}  \beta_{\mathfrak{m}}m_\nu s^{\mathfrak{m}-{\bf e}_{\nu}}\right)_{\nu=1}^k.
\end{align}
Let $j\in Z(k)$.  By $ \psi_j^{\Lambda}\left( \frac{\beta}{2^{j_{k}/10}} \right)\ne 0$ with $|s_\nu|\sim 2^{j_\nu}$, one has in (\ref{gt11}),
\begin{align*}
|\beta_{\mathfrak{m}}   s^{\mathfrak{m}-{\bf e}_{\nu}}|\lesssim 2^{j_k /10-j_\nu}=O(2^{-(9/10)j_\nu})\ \text{   for all $\mathfrak{m}\in \Lambda$.}
\end{align*}
 This combined with $1\le q\le 2^{j_k/(10)}, $  implies that  
$$\text{the $\nu^{th}$ component in (\ref{gt11}),  for every  $\nu\in [k]$, has a lower bound $\frac{1}{4}|w_\nu 2^{- j_\nu/10}|$.
}$$
Moreover,  the derivatives with respect to $s_\nu$ of the amplitude in \eqref{10s} satisfy $$
\text{$|\partial^{\ell}_{s_\nu}(1/s_\nu)|\lesssim |1/s_\nu^{\ell+1}|$, $|\{s_\nu:\partial_{s_\nu}(  \chi_j(s) )\ne 0\}|\lesssim 1$, and $|\partial^\ell_{s_\nu}(  \chi_j(s) )|\lesssim 1$.}$$
Thus integrating by parts four times in each variable $s_\nu$ gives 
\begin{align*}
&\left|\widehat{F_{j,\beta}}\left(\frac{w}{q} \right)\right| \lesssim  \left(\prod_{\nu=1}^k (|2^{j_\nu/10}w_\nu|+1)^{-4} \right)
  2^{-j_k/10},
\end{align*}
under the condition that
$w \ne {\bf 0}$ and $j_1\ge \cdots \ge j_k$ in $Z(k)$. Hence
\begin{align*}
  \sum_{w \ne {\bf 0}}   \left|\widehat{F_{j,\beta}}\left(\frac{w}{q} \right)\right|^{1/2} \lesssim 2^{-j_k/20}.
\end{align*}
This with (\ref{kp2})  implies that
\begin{align*} 
  \sum_{w \ne {\bf 0}}\sum_{(j_1,\cdots,j_{k-1}):j\in Z(k)} 
 \left|\widehat{F_{j,\beta}}\left(\frac{w}{q} \right)\right| \lesssim 2^{-j_k/20} \le 2^{-j_k/(40)}q^{-1/4},
\end{align*}
for some $c>0$ under the assumption that $1\le q\le 2^{j_k/10}$. 
\\
\textbf{Step 2. Summing over $a/q$.}
Therefore, 
\begin{align*}
\sum_{j'  :j\in Z(k)} E_j(\xi)&\lesssim \sum_{a/q\in \mathbb{Q}^{\Lambda}[1, 2^{j_k/10}] } \sum_{w \ne {\bf 0}}  \sum_{j':j\in Z(k)} |\widehat{F_{j,\beta}}\left(\frac{w}{q}  \right)|   \psi_j^{\Lambda}\left( \frac{\xi-a/q}{2^{j_{k}/10}} \right)\\
&\lesssim 2^{-j_k/(40)} \sum_{1\le q\le 2^{j_k/10}:\rm{lacunary}}q^{-1/4}\sum_{a\in\mathbb{Z}^{|\Lambda|}}\psi\left( \frac{\xi-a/q}{q^{-2}}\right)  \lesssim 2^{-j_k/(40)},
\end{align*}
where $j'=(j_1,\cdots,j_{k-1})$. This proves (\ref{b40}).  
\end{proof}

\begin{proof}[Proof of (\ref{23mm})]
The evenness condition (\ref{1009}) implies the evenness condition of (\ref{v16}).
Hence  by applying Lemma \ref{po3},  we obtain the uniform boundedness of the multiple Hilbert transforms:
 $$\sum_{j\in Z(k)}\left|\mathcal{H}^{\Lambda}_{j}\left( \xi-a/q\right)\right|\lesssim 1,$$
where $\xi-a/q=(\xi_{\mathfrak{m}}-a_{\mathfrak{m}}/q)_{\mathfrak{m}\in \Lambda}$.
Combining this with the bound $S^{\Lambda}(a/q)\lesssim q^{-\delta_{\Lambda}}$ for $(q,a)=1$, we   have
 \begin{align*}
 &  \sum_{ j\in Z(k)}\sum_{\frac{a}{q} \in  \mathbb{Q}^{\Lambda}[1,2^{j_k/10}]}\prod_{\mathfrak{m}\in \Lambda } \psi\left(  \frac{  \xi_{\mathfrak{m}}-a_{\mathfrak{m}}/q}{ 2^{-\mathfrak{m}\cdot j}2^{j_k/10}} \right)
 |S^{\Lambda}\left(\frac{a}{q}\right)\mathcal{H}^{\Lambda}_{j}\left(\xi-a/q\right)| \\
&\lesssim  \sum_{q:\text{lacunary}} q^{-\delta_{\Lambda}}\sum_{a\in \mathbb{Z}^{|\Lambda|}}\psi\left(\frac{\xi-\frac{a}{q}}{1/(10q^2)}\right)  
  \sum_{ j\in Z(k)}\left|\mathcal{H}^{\Lambda}_{j}\left(\xi-a/q\right)\right| \lesssim   1. \end{align*}
This yields the desired estimate.
\end{proof}

\begin{remark}\label{yt}
	The above result remains valid if, in the sum, the range
$a/q \in \mathbb{Q}^{\Lambda}[1,2^{j_k/10}]$
	is replaced by the larger range
$a/q \in \mathbb{Q}^{\Lambda}[1,2^{j_k/5}].$ More precisely,
\begin{itemize}
	\item[(1)]  Integeral Approximation.  For $j\in Z(k)$,
	one has
	\begin{align*}
		&H^{\Lambda}_j(\xi) \Psi^{\Lambda,\rm{major}}_{j,(2,1)}(\xi)\nonumber\\
		&\qquad=  \sum_{a/q\in \mathbb{Q}^{\Lambda}[1, 2^{j_k/5}]} \left(S^{\Lambda}(a/q) \mathcal{H}^{\Lambda}_j(\xi-a/q)  \psi_j^{\Lambda}\left( \frac{\xi-\frac{a}{q}}{2^{j_{k}/10}} \right)+E_{j,a/q}(\xi)\right), 
	\end{align*}
	where there exists $c>0$ independent of $\xi$ such that 
	\begin{align*}
		&\sum_{(j_1,\cdots,j_{k-1}):j\in Z(k)}\sum_{a/q\in \mathbb{Q}^{\Lambda}[1, 2^{j_k/5}]}|E_{j,a/q}(\xi)|\lesssim 2^{-cj_k}.
	\end{align*}
	\item[(2)]  Major Arc Estimate.
	There exists a constant $C >0$ independent of $\xi$:
	\begin{align*}
		&\sum_{j\in Z(k) }\left|   \sum_{a/q\in \mathbb{Q}^{\Lambda}[1,2^{j_k/5}]}S^{\Lambda}(a/q)\mathcal{H}_j^{\Lambda}(\xi-a/q) \psi_j^{\Lambda}\left( \frac{\xi-\frac{a}{q}}{2^{j_{k}/10}} \right) \right|
		\le C,
	\end{align*}
	which  implies 
	$
	\sum_{j\in Z(k)} |H^{\Lambda}_j(\xi) \Psi^{\Lambda,\rm{major}}_{j}(\xi)|\lesssim 1.
	$
\end{itemize}
\end{remark}

\begin{remark}
	One might expect that the support of the major arcs can be enlarged by replacing 
	$j_k/10$ with $j_\nu/10$, for some  $\nu<k$, in the cutoff
	\[
	\psi\!\left(
	\frac{\left(\xi_{\mathfrak{m}} - \frac{a_{\mathfrak{m}}}{q}\right)
		2^{j\cdot \mathfrak{m}}}{2^{j_k/10}}
	\right).
	\]
	However, under such a modification, the full integral representation in Proposition~\ref{lemm29} generally fails whenever
	$
	j_{\ell} \gg_{\Lambda} j_{\ell+1}  
	$ for some $\ell$ satisfying  $\nu\le \ell\le k-1$. We shall discuss this disparity in the next subsection.	
\end{remark}

 \subsection{Disparity Sectors: Balanced and Unbalanced Cases}\label{sec30}
Recall the cutoff function
\begin{align} \label{100s}
	\Psi^{\Lambda,\rm{major}}_{j}(\xi)&=\sum_{a/q\in \mathbb{Q}^{\Lambda}[1,2^{j_k/10}]} 
	\prod_{\mathfrak{m}\in\Lambda} \psi\left( 
	\frac{ \left(\xi_{\mathfrak{m}}-\frac{a_{\mathfrak{m}} }{q}\right)      2^{j\cdot \mathfrak{m}}    }{  2^{j_k/10}  }   \right).
\end{align} 
As   we finish the  major arc case  Proposition \ref{lemm29},
to prove  (\ref{se11}), it suffices to show 
\begin{align}\label{rr18}
	\sum_{(j_1,\cdots,j_{k-1}):j=(j_1,\cdots,j_k)\in Z(k)}|H^{\Lambda}_j(\xi)(1-\Psi^{\Lambda,\rm{major}}_{j}(\xi))|\lesssim 2^{-cj_k}\ \text{for some $c>0$}
\end{align} 
under the  condition (\ref{wep2}). 
On the support of this cutoff function, unless $j_{\nu-1}\approx_{\Lambda} j_\nu$ for all $\nu$,  the Weyl sum decay $2^{-\rho j_k}$   in (\ref{wop})  is not enough to sum over all $j$.
This   suggests us to first divide the region $Z(k)$ of \eqref{joor} 
 into the regions
defined as
\begin{align*}
Z_0(k)&:=\{ j\in Z(k):   j_{1}\approx_{\Lambda}  \cdots\approx_{\Lambda} j_k\}, \\
	Z_{\ell}(k)&:=\{j\in Z(k):   j_\ell\gg_{\Lambda} j_{\ell+1}\approx_{\Lambda}\cdots\approx_{\Lambda} j_k\}. 
\end{align*}
One can observe that
\begin{align*}
	Z(k)=\bigcup_{\ell=0}^{k-1}Z_{\ell}(k) 
\end{align*}
since   
\begin{align*}
	Z(k)&=  \{j\in Z(k): j_{k-1}\gg_{\Lambda} j_k\} \cup \{j\in Z(k): j_{k-1}\approx_{\Lambda} j_k\}\\
	&=Z_{k-1}(k)\cup  \{j\in Z(k): j_{k-1}\approx_{\Lambda} j_k\}\\
	&= Z_{k-1}(k)\cup Z_{k-2}(k)\cup  \{j\in Z(k): j_{k-2}\approx_{\Lambda} j_{k-1}\approx_{\Lambda} j_k\}=\cdots=\bigcup_{\ell=0}^{k-1}Z_{\ell}(k).
\end{align*}
Hence, to show (\ref{rr18}), it suffies to prove  under the  condition (\ref{wep2}) that  
\begin{align}\label{se01}
	\sum_{ j \in Z_{\ell}(k) } |H_j^{\Lambda}(\xi)(1-\Psi^{\Lambda,\rm{major}}_{j}(\xi))| \lesssim 1,
\end{align}
 for each $\ell=0,\cdots,k-1$.\\\\
\textbf{Balanced Cases.}
We  shall  prove (\ref{se01}) for the case $\ell=0$ where $  Z_{\ell=0}(k)$.  Recall
\begin{align*}
	H^{\Lambda}_j(\xi) &=\sum_{|t_1|\sim 2^{j_1},\cdots,|t_k|\sim 2^{j_k}} \frac{e^{2\pi i \sum_{\mathfrak{m}\in \Lambda} \xi_{\mathfrak{m}}t^{\mathfrak{m}} }}{t_1\cdots t_k},\\
	\mathcal{H}^{\Lambda}_j(\xi) &=\int_{|t_1|\sim 2^{j_1},\cdots,|t_k|\sim 2^{j_k}} \frac{e^{2\pi i \sum_{\mathfrak{m}\in \Lambda} \xi_{\mathfrak{m}}t^{\mathfrak{m}} }}{t_1\cdots t_k}dt_1\cdots dt_k.
\end{align*}
\begin{lemma}\label{lemm31}[Comparable case]
Suppose that $j \in Z_0(k)$.
Then, there exists $c'>0$ such that
\begin{align}\label{mm50}
    \bigl| H^{\Lambda}_j(\xi)\bigl(1 - \Psi^{\Lambda,\mathrm{major}}_j(\xi)\bigr) \bigr|
    \lesssim 2^{-c' j_1}.
\end{align}
Consequently, there exists $c>0$ such that 
\[
\sum_{\substack{(j_1,\dots,j_{k-1}): \\ j \in Z_0(k)}}
\bigl| H^{\Lambda}_j(\xi)\bigl(1 - \Psi^{\Lambda,\mathrm{major}}_j(\xi)\bigr) \bigr|
\lesssim 2^{-c j_k}.
\]
\end{lemma}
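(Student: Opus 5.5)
We will prove (\ref{mm50}) by removing the singular weight $1/(t_1\cdots t_k)$ through summation by parts and then invoking the Weyl sum estimate (\ref{weyl}), which applies because $j\in Z_0(k)$ means $j_1\approx_\Lambda\cdots\approx_\Lambda j_k$.

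\textbf{Reduction to positive orthants.} Write $H^{\Lambda}_j(\xi)$ as a sum over the $2^k$ sign patterns $\epsilon\in\{\pm1\}^k$, substituting $t_\nu=\epsilon_\nu s_\nu$ with $s_\nu\sim 2^{j_\nu}$. Each piece has the form $\pm\sum_{\mathfrak s\sim 2^j}e^{2\pi i\sum_{\mathfrak m}\xi^{\epsilon}_{\mathfrak m}\mathfrak s^{\mathfrak m}}/(s_1\cdots s_k)$, where $\xi^\epsilon_{\mathfrak m}=\epsilon^{\mathfrak m}\xi_{\mathfrak m}$. The major arc cutoff is essentially invariant under this sign change, since $a/q\mapsto(\epsilon^{\mathfrak m}a_{\mathfrak m})/q$ preserves $\mathbb{Q}^\Lambda[1,2^{j_k/10}]$ and $\psi$ is even. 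Therefore $1-\Psi^{\Lambda,\mathrm{major}}_j(\xi)\neq0$ implies $1-\Psi^{\Lambda,\mathrm{major}}_{j}(\xi^\epsilon)\neq 0$.

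\textbf{Summation by parts.} Fix $\epsilon$ and apply Abel summation in each of the $k$ variables. This writes the sum as a combination of boundary and difference terms of the form
$$\sum_{r}\Big(\prod_\nu \Delta_\nu(r_\nu)\Big)\,W^{\Lambda}_{j,r}(\xi^\epsilon),$$
where $W^\Lambda_{j,r}$ is the tail Weyl sum of Definition \ref{defi33}, and each factor $\Delta_\nu(r_\nu)$ is either $1/r_\nu-1/(r_\nu+1)$ or a boundary value $1/2^{j_\nu-1}$. In every case $\sum_{r_\nu}|\Delta_\nu(r_\nu)|\lesssim 2^{-j_\nu}$. The starting points $r_\nu$ range over $(2^{j_\nu-1},2^{j_\nu}]$, so the tails $W_{j,r}$ run over boxes of the admissible type.

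\textbf{Main estimate.} Applying the balanced Weyl bound (\ref{weyl}) uniformly in $r$ gives
$$|W^\Lambda_{j,r}(\xi^\epsilon)|\lesssim 2^{j_1+\cdots+j_k}2^{-\delta_\Lambda j_1}.$$
Multiplying by the weight bounds yields $|H^\Lambda_j(\xi)(1-\Psi)|\lesssim 2^{-\delta_\Lambda j_1}$, so (\ref{mm50}) holds with $c'=\delta_\Lambda$.

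\textbf{The anticipated obstacle.} Estimate (\ref{weyl}) is stated with the cutoff $1-\Psi^{\Lambda,\mathrm{major}}_{j,(1/2,1)}$, whereas we have $1-\Psi^{\Lambda,\mathrm{major}}_{j}=1-\Psi_{j,(1,1)}$. The (1,1) major arcs include denominators up to $2^{j_k/10}$, while the (1/2,1) arcs stop at $2^{j_k/20}$, so $\Psi_{j,(1,1)}\ge\Psi_{j,(1/2,1)}$ on the relevant regions. We will split
$$1-\Psi_{j,(1,1)}=\bigl(1-\Psi_{j,(1/2,1)}\bigr)-\bigl(\Psi_{j,(1,1)}-\Psi_{j,(1/2,1)}\bigr).$$
The first term is handled by (\ref{weyl}) as above. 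The difference is supported where $\xi$ lies near some $a/q$ with $2^{j_k/20}\le q\le 2^{j_k/10}$. On that set we use the integral approximation of Remark \ref{yt}, in the same manner as Proposition \ref{lemm29}. This gives
$$H_j^\Lambda(\xi)\approx S^\Lambda(a/q)\,\mathcal H^\Lambda_j(\xi-a/q)+E_{j,a/q}(\xi),$$
with $|S^\Lambda(a/q)|\lesssim q^{-\delta_\Lambda}\le 2^{-\delta_\Lambda j_k/20}$, a uniform bound on $|\mathcal H^\Lambda_j|$, and an error of size $2^{-cj_k}$. Since $j_k\approx_\Lambda j_1$, this is $\lesssim 2^{-c j_1}$. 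Alternatively, we may check that Lemma \ref{lem:ACK_bound} applies directly once $q\ge 2^{j_k/20}$; either route completes the first claim.

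\textbf{Summing over $j$.} For the consequence, note that $j\in Z_0(k)$ forces $j_k\le j_1\le (20K)^{k}j_k$. The number of $(j_1,\dots,j_{k-1})$ with a fixed $j_1$ is $O(j_1^{k})$. Hence
$$\sum_{j'}2^{-c'j_1}\lesssim \sum_{j_1\ge j_k}j_1^{k}2^{-c'j_1}\lesssim 2^{-cj_k}$$
for any $c<c'$.
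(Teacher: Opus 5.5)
Your argument is correct and is essentially the paper's proof: summation by parts in each variable (with the reduction to positive orthants, which the paper leaves implicit, made explicit), the balanced Weyl bound \eqref{weyl} applied uniformly in $r$, and then summation over $j$ using $j_1\le(20K)^{k-1}j_k$. The ``obstacle'' is not really one. The $(1,1)$ sum contains every term of the $(1/2,1)$ sum with the same widths and disjoint supports, so $0\le\Psi^{\Lambda,\mathrm{major}}_{j,(1/2,1)}\le\Psi^{\Lambda,\mathrm{major}}_{j,(1,1)}\le1$, and hence $|H^\Lambda_j|(1-\Psi^{\Lambda,\mathrm{major}}_j)\le|H^\Lambda_j|(1-\Psi^{\Lambda,\mathrm{major}}_{j,(1/2,1)})$ pointwise. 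This is exactly the paper's first step, so your integral-approximation treatment of denominators $2^{j_k/20}<q\le2^{j_k/10}$ is valid but can be dropped.
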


\begin{proof}
We first note from \eqref{t12} that
\[
\bigl| H^{\Lambda}_j(\xi)\bigl(1 - \Psi^{\Lambda,\mathrm{major}}_j(\xi)\bigr) \bigr|
\le
\bigl| H^{\Lambda}_j(\xi)\bigl(1 - \Psi^{\Lambda,\mathrm{major}}_{j,(1/2,1)}(\xi)\bigr) \bigr|.
\]
We then observe the comparability condition $j \in Z_0(k)$:   all components of $j$ are comparable up to constants depending on $\Lambda$. So we can utilize the Weyl sum estimate \eqref{weyl} after applying summation by parts  to obtain the bound
\[
\bigl| H^{\Lambda}_j(\xi)\bigl(1 - \Psi^{\Lambda,\mathrm{major}}_{j,(1/2,1)}(\xi)\bigr) \bigr|
\lesssim 2^{-\delta_{\Lambda} j_1}.
\]
This proves \eqref{mm50}. The  summation  follows  from the monotonicity $j_1 \ge \cdots \ge j_k$.
\end{proof}
\noindent\textbf{Unbalanced Cases.} Hence, to establish \eqref{rr18}, we prove the following  theorem throughout Sections 5-6.

\begin{theorem}\label{th41}
	Suppose that
	$\Lambda\subset\mathbb{Z}_+^k$
	satisfies condition~\eqref{wep2}.
	Then there exist constants $C,c>0$, depending only on $\Lambda$,
	such that the following statements hold.
	For every $\ell\in[k-1]$, every fixed
	$(j_{\ell+1},\ldots,j_k)\in\mathbb{Z}_+^{k-\ell}$, and every
	$\xi\in\mathbb{R}^{|\Lambda|}$, we have
	\begin{align}\label{rr1}
		\sum_{\substack{(j_1,\ldots,j_\ell)\in\mathbb{Z}_+^\ell ;\\
				j\in Z_\ell(k)}}
		\left|
		H_j^\Lambda(\xi)
		\left(
		1-\Psi_j^{\Lambda,\mathrm{major}}(\xi)
		\right)
		\right|
		\leq C2^{-c j_{\ell+1}},
	\end{align}
	where
	$j=(j_1,\ldots,j_k)$.
	Moreover, for every $h\in\mathbb{Z}_+$ and every
	$\xi\in\mathbb{R}^{|\Lambda|}$,
	\begin{align}\label{bb330}
		\sum_{\substack{j\in Z(k)\\ j_k\geq h}}
		\left|
		H_j^\Lambda(\xi)
		\left(
		1-\Psi_j^{\Lambda,\mathrm{major}}(\xi)
		\right)
		\right|
		\leq C2^{-ch}.
	\end{align}
\end{theorem}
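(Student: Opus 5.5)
We fix $\ell\in[k-1]$ and $(j_{\ell+1},\ldots,j_k)$ with $j_{\ell+1}\approx_\Lambda\cdots\approx_\Lambda j_k$, and write $t=(\mathfrak t_1,\mathfrak t_2)$ with $\mathfrak t_1=(t_1,\ldots,t_\ell)$ and $\mathfrak t_2=(t_{\ell+1},\ldots,t_k)$. The phase becomes $\sum_{m\in\Lambda_1}\xi_m(\mathfrak t_2)\mathfrak t_1^m$ with layered coefficients $\xi_m(\mathfrak t_2)=\sum_{n}\xi_{mn}\mathfrak t_2^{n}$, so that
\[
|H_j^\Lambda(\xi)|\lesssim 2^{-(j_{\ell+1}+\cdots+j_k)}\sum_{|\mathfrak t_2|\sim 2^{(j_{\ell+1},\ldots,j_k)}}\bigl|H^{\Lambda_1}_{(j_1,\ldots,j_\ell)}\bigl(\xi(\mathfrak t_2)\bigr)\bigr|.
\]
For each frozen $\mathfrak t_2$ we apply the $\ell$-parameter theory to $\Lambda_1$, which is admissible by Remark~\ref{re3.1}. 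We argue by induction on $k$: the case $k=1$ is the one-parameter theory, and inside $Z_\ell(k)$ the block $(j_1,\ldots,j_\ell)$ is covered by the inductive hypothesis (sum over its own disparity sectors). Because $j_\ell\gg_\Lambda j_{\ell+1}$, we use layered arcs at the scale $j_{\ell+1}$ rather than $j_k$. We decompose $\xi(\mathfrak t_2)$ into $\mathrm{major}^\sharp$, major--minor and minor pieces relative to $\mathbb Q^{\Lambda_1}[1,2^{j_{\ell+1}/10}]$, exactly as in the overview. On each piece the inner sum over $(j_1,\ldots,j_\ell)$ should be $\lesssim 2^{-cj_{\ell+1}}$:
\begin{itemize}
\item on the minor piece, by the Weyl bound \eqref{weyl} (induction, since $j_\ell\ge 20Kj_{\ell+1}$);
\item on the major--minor piece, by the integral approximation of Proposition~\ref{lemm29}/Remark~\ref{yt} together with $|S^{\Lambda_1}|\lesssim q^{-\delta_\Lambda}$ and $q\ge 2^{j_{\ell+1}/10}$;
\item on the $\mathrm{major}^\sharp$ piece, by the same integral approximation, where the high-frequency condition yields decay of $\mathcal H^{\Lambda_1}$ via \eqref{po4} (the $|\cdot|^\epsilon$ summability) and integration by parts.
\end{itemize}

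What remains is the set $A(\xi,j)$ of $\mathfrak t_2$ for which $\xi(\mathfrak t_2)$ lies in the fine sliced major arc $|\xi_m(\mathfrak t_2)-b_m/p|\lesssim 2^{j_{\ell+1}/100}2^{-(j_1,\ldots,j_\ell)\cdot m}$ with $p\le 2^{j_{\ell+1}/10}$. If $|A(\xi,j)|\le 2^{(1-c)(j_{\ell+1}+\cdots+j_k)}$, the trivial bound on $A$ plus the decay on the complement gives \eqref{rr1}.

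Otherwise we invoke major-arc rigidity. We choose many points $\mathfrak s^{(1)},\ldots,\mathfrak s^{(r)}\in A(\xi,j)$ in general position, which is possible by the sublevel estimate Lemma~\ref{eep1}: a large $A$ cannot concentrate near the zero set of a Vandermonde-type determinant in the monomials $\mathfrak t_2^{n}$, $n\in\Omega'(m)$. Inverting the resulting integer matrix, whose entries are $\lesssim 2^{N j_{\ell+1}}$, expresses each $\xi_{mn}$ as a rational with denominator $\le 2^{N^{4k}j_{\ell+1}}$ plus an error $\ll 2^{-j\cdot(m,n)}2^{j_{\ell+1}/10}$; the smallness comes from $j_\ell\gg_\Lambda j_{\ell+1}$ through the choice of $K$ in \eqref{00gg}. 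Thus $\xi_{\Lambda'}$ lies in a coarse major arc $a/q$ with $q\le 2^{N^{4k}j_{\ell+1}}$. If $q\ge 2^{j_k/20}$, we rewrite $H_j$ through the integral approximation in $\mathfrak t_1$ modulo $q$; the averaged Gauss sum \eqref{aver} then gives $q^{-\delta}2^{-\delta j_{\ell+1}}$, while the $\mathfrak t_1$-integrals remain summable over $(j_1,\ldots,j_\ell)$ by \eqref{po4}. If $q<2^{j_k/20}$, the coefficients $\xi_{\mathbf 0 n}$ (those with $m=\mathbf 0$) are handled either by the Weyl bound \eqref{weyl1} in $\mathfrak t_2$ or, if they are also major, by the conclusion that $\xi$ lies in the full major arc; this contradicts the factor $1-\Psi^{\Lambda,\mathrm{major}}_j$, apart from the high-frequency residual, which again decays by integration by parts.

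The main obstacle is this rigidity step. We must show that a large $A(\xi,j)$ forces a common denominator and a sufficiently small error simultaneously for all $m\in\Lambda_1'$, uniformly across the summed scales $(j_1,\ldots,j_\ell)$; the coarse arc must not depend on $j_1,\ldots,j_\ell$, or the summation breaks. For $k>2$ we plan to peel the variables of $\mathfrak t_2$ one at a time, as described at the end of Section~\ref{sec34}. Finally, \eqref{bb330} follows by summing \eqref{rr1} over $\ell$ and over $(j_{\ell+1},\ldots,j_k)$ with $j_k\ge h$ and $j_{\ell+1}\approx_\Lambda j_k$ (a polynomial count times $2^{-cj_{\ell+1}}$), together with Lemma~\ref{lemm31} for $Z_0(k)$.
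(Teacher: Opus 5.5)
Your architecture matches the paper's: slicing in $\mathfrak t_2$, induction for the $J_1$-block, layered minor and intermediate arcs, a coarse arc independent of $J_1$, averaged Gauss sums for $q\ge 2^{j_k/20}$, and deducing \eqref{bb330} from \eqref{rr1} plus Lemma~\ref{lemm31}. Choosing points in general position to invert a multivariable generalized Vandermonde is a workable substitute for peeling one variable at a time, but only for the \emph{coarse} arc.

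\textbf{The rigidity step does not reach the final arc.} Your claim that inverting the integer matrix leaves an error $\ll 2^{-j\cdot(m,n)}2^{j_{\ell+1}/10}$ does not follow. With only $|\det|\ge 1$, the inverse can have norm as large as $2^{O(N^2)j_{\ell+1}}$. For example, with $\Lambda=\{(1,0),(1,2)\}$ and adjacent $s_1,s_2$, the recovered $\xi_{10}$ can be off by $\approx 2^{j_2}\cdot 2^{j_2/100-j_1}$, against a final width $2^{j_2/10-j_1}$. The disparity $j_\ell\gg_\Lambda j_{\ell+1}$ cannot help, because the sliced error and the final widths carry the same factor $2^{-J_1\cdot m}$; it only makes the error small in absolute size. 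So for $q<2^{j_k/20}$ there remains a region where $\xi_{\Lambda'}$ is in the coarse arc but outside the final box.

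Your ``integration by parts'' gives nothing in that region. The slices being summed are exactly those with $\xi(\mathfrak t_2)$ in the fine sliced arc around $a(\mathfrak t_2)/q$, where the $\mathfrak t_1$-frequency may be $O(1)$. The all-variable Poisson approximation of Proposition~\ref{lemm29} is also unavailable outside the final box.

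The missing idea is to \emph{count} these slices. Set $\eta=\xi-a/q$ and $R_{m,J_2}(\eta)=\sum_n|\eta_{mn}|2^{J_2\cdot n}$. Being in the sliced arc while some $|\eta_{mn}|2^{J\cdot(m,n)}$ exceeds the final threshold forces $|\sum_n\eta_{mn}\mathfrak t_2^n|\le 2^{-cj_k}R_{m,J_2}(\eta)$. This condition is independent of $J_1$, so the $J_1$-sum can be taken first via Remark~\ref{yu1}. Lemma~\ref{eep1} then bounds the number of such $\mathfrak t_2$ by $2^{J_2\cdot\mathbf 1_2-c'j_k}$. This is the paper's $E^{\mathrm{sub}}$ estimate (Lemmas~\ref{lem57} and~\ref{lem57b9}, via \eqref{43ut}). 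A quantitative, well-conditioned choice of points would amount to the same sublevel estimate, but for the $J_1$-dependent set $A(\xi,j)$. That reintroduces the $J_1$-summation problem you flagged yourself.

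This step also needs the sliced width to lie a power of $2^{j_k}$ below the final width. That fails for your $2^{j_{\ell+1}/100}$ versus $2^{j_k/10}$ once $k-\ell\ge 2$, since $j_{\ell+1}$ can be as large as $(20K)^{k-\ell-1}j_k$. The paper scales both widths in $j_k$.

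\textbf{The $m=\mathbf 0$ coefficients are not handled by ``the Weyl bound \eqref{weyl1} in $\mathfrak t_2$''.} Your first display has already discarded $e^{2\pi i\xi_0(\mathfrak t_2)}$ through the triangle inequality. On the signed sum, the summand is $e^{2\pi i\xi_0(\mathfrak t_2)}S^{\Lambda_1'}(a(\mathfrak t_2)/q)\,\mathcal H^{\Lambda_1'}_{J_1}(\xi(\mathfrak t_2)-a(\mathfrak t_2)/q)/\mathfrak t_2^{\mathbf 1_2}$. In your scheme it is still multiplied by the $\mathfrak t_2$-dependent sliced cutoff. The Gauss-sum weight is arithmetic in $\mathfrak t_2$, so \eqref{weyl1} does not apply as it stands.

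The paper proceeds in four steps:
\begin{enumerate}
\item It replaces the sliced cutoff by the $\mathfrak t_2$-independent $\Psi^{\Lambda',\mathrm{major}}_J$; this is the third term of \eqref{479aa9}, including $E^{\mathrm{osc}}$.
\item It folds the Gauss sum into the phase and checks that $\xi_{0n}+q^{-1}\sum_m a_{mn}\mathfrak t_1^m$ stays minor when $q\le 2^{j_k/20}$, using averaged Gauss sums otherwise.
\item It sums by parts in $\mathfrak t_2$ (Claims~\ref{ap90} and~\ref{cl46}). The differences of $\mathcal H_{J_1}$ cost $2^{|V|j_k\rho/10}$, and the Weyl gain beats this only because the $\Lambda'$-arc has been shrunk by $\rho$.
\item It closes the $J_1$-sum through the $|\mathcal H|^{\rho^2}$-summability in \eqref{po4}.
\end{enumerate}
Finally, when $\xi_{\Lambda\setminus\Lambda'}$ is also major, the combined denominator can reach $2^{j_k/5}$. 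So ``$\xi$ lies in the full major arc'' additionally needs Remark~\ref{yt} and Gauss-sum decay for $q\in[2^{j_k/10},2^{j_k/5}]$.
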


\section{Two-parameter case: Proof of Theorem \ref{th41} for $k=2$ and   $\textbf{0}\notin \Lambda_1$}\label{sec4}
In this section, rather than directly treating a general multi-parameter polynomial mapping, we establish Theorem~\ref{th41} in the case $k=2$ with $\textbf{0}\notin \Lambda_1$.
The proof in this simplified setting illustrates the core ideas of the multi-parameter circle method and clarifies how it overcomes the principal difficulties described in Subsection \ref{sec340}.

\subsection{Layered Major Arcs and the Connector}\label{sec35}
Throughout this section,  write
\[
\Lambda_{1} = \{ m \in \mathbb{Z}_+ : (m,n) \in \Lambda \text{ for some } n\in\mathbb{Z}_{+} \},
\qquad
\Lambda(m) = \{\, n : (m,n) \in \Lambda \,\}.
\]
We view the two-variable  polynomial as a layered family of one-variable polynomials:
\begin{align}\label{t00}
	\sum_{(m,n)\in \Lambda}\xi_{mn}t_1^m t_2^n
	= \sum_{m\in \Lambda_1}  \xi_{m}(t_2)t_1^m,
	\qquad 
	\text{where } 
	\xi_m(t_2) := \sum_{n\in \Lambda(m)} \xi_{mn}t_2^n.
\end{align}
For a fixed $t_2$, set $\xi(t_2) = (\xi_m(t_2))_{m\in \Lambda_1}$ and write
\[
H^{\Lambda_1}_{j_1}(\xi(t_2))
  = \sum_{|t_1|\sim 2^{j_1}}
    \frac{e^{2\pi i \sum_{m\in \Lambda_1}\xi_m(t_2)t_1^m}}{t_1}
\ \text{and} \ 
H^{\Lambda}_j(\xi)
  = \sum_{|t_2|\sim 2^{j_2}}\frac{H^{\Lambda_1}_{j_1}(\xi(t_2))}{t_2}.
\]

We now define the \emph{initial stage major arcs} (layered arc) at each level $t_2$ and the \emph{final stage major arcs} (original arc) as the supports of the following cutoff functions:
\begin{align}
\Psi_{j}^{\Lambda_1,\mathrm{major}}(\xi(t_2))
  &:= \sum_{b/p\in \mathbb{Q}^{\Lambda_1}[1,2^{j_2/10}]}
      \prod_{m\in \Lambda_1}
      \psi\!\left(
        \frac{\xi_m(t_2)-\frac{b_m}{p}}
             {2^{-j_1 m}\,2^{j_2/100}}
      \right), \label{2939}\\[4pt]
\Psi_j^{\Lambda,\mathrm{major}}(\xi)
  &:= \sum_{a/q\in \mathbb{Q}^{\Lambda}[1,2^{j_2/10}]}
      \prod_{(m,n)\in \Lambda}
      \psi\!\left(
        \frac{\xi_{mn}-\frac{a_{mn}}{q}}
             {2^{-j\cdot (m,n)}\,2^{j_2/10}}
      \right). \label{plm}
\end{align}

To transition from 
$\Psi_j^{\Lambda_1,\mathrm{major}}(\xi(t_2))$
to 
$\Psi_j^{\Lambda,\mathrm{major}}(\xi)$, 
we introduce  an indispensable auxiliary pair of cutoff functions depending only on $j_2$,
\begin{align}\label{ccnn}
L^{\Lambda_1,\mathrm{major}}_{j_2}(\xi(t_2)) &:=\sum_{b/p\in \mathbb{Q}^{\Lambda_1}[1,2^{j_2/10}]  }  \psi\left(\frac{\xi(t_2)-b/p}{2^{- 10 Kj_2}}\right),\\
L^{\Lambda,\mathrm{major}}_{j_2}(\xi)&
  := \sum_{a/q\in \mathbb{Q}^{\Lambda}[1,2^{N^4 j_2}]}
     \psi\!\left(
       \frac{\xi - a/q}{2^{- 10K j_2 + N^4 j_2}}
     \right),\nonumber
\end{align} 
whose supports are enlarged from those of (\ref{2939}) and (\ref{plm}) respectively, as it holds  
$$2^{-j_1m}2^{j_2/10}\ll  2^{- 10 Kj_2} \ \text{and}\  2^{-j\cdot (m,n)}2^{j_2/10}\ll  2^{- 10Kj_2+N^4j_2}$$
due to $j_1\gg_{\Lambda}j_2$ and $m\ne 0$.
 Here the constants $N$ and $K$ are defined in 
(\ref{00gg}).
Thus, we refer to $L^{\Lambda_1,\mathrm{major}}_{j_2}(\xi(t_2))$ and $L^{\Lambda,\mathrm{major}}_{j_2}(\xi), $   as the \emph{coarse major arc cutoffs}.  They provide the essential bridge from the layered major arcs to the original major arcs.

To show the case $k=2$ of (\ref{rr1}) and $0\notin \Lambda_1$  that is $\xi_0(t_2)\equiv 0$ in (\ref{t00}), 
 we shall show  the following three estimates: for some $c>0$,
\begin{align}
&\sum_{j_1 : j \in Z_1(2)} 
  \sum_{|t_2|\sim 2^{j_2}}
  \frac{\bigl|H^{\Lambda_1}_{j_1}(\xi(t_2))\bigr|}{2^{j_2}}
  \bigl(1 - \Psi_{j}^{\Lambda_1,\mathrm{major}}(\xi(t_2))\bigr)\lesssim 2^{-c j_2}, 
  \label{283a} \\
&\sum_{j_1 : j \in Z_1(2)} 
  \sum_{|t_2|\sim 2^{j_2}}
  \frac{\bigl|H^{\Lambda_1}_{j_1}(\xi(t_2))\bigr|}{2^{j_2}}
  \Psi_{j}^{\Lambda_1,\mathrm{major}}\!\left(\xi(t_2)\right)
  \bigl(1 - L^{\Lambda,\mathrm{major}}_{j_2}(\xi)\bigr)\lesssim 2^{-j_2},\label{283b}
\end{align}
\begin{equation}
	\sum_{j_1 : j \in Z_1(2)} 
	\sum_{|t_2|\sim 2^{j_2}}
	\frac{\bigl|H^{\Lambda_1}_{j_1}(\xi(t_2))\bigr|}{2^{j_2}} 
	\Bigl|\,
	\Psi_j^{\Lambda_1,\mathrm{major}}\!\left(\xi(t_2)\right)
	L^{\Lambda,\mathrm{major}}_{j_2}(\xi)
	- \Psi_j^{\Lambda,\mathrm{major}}(\xi)
	\Bigr|\lesssim 2^{-cj_2}.\label{283c}
\end{equation}

 We estimate these three terms according to the following observations:
\begin{itemize}
\item[\eqref{283a}] \textbf{One-Parameter Estimate}. The $t_1$-sum admits $j_2$-decay  for each fixed $t_2$.
\item[\eqref{283b}] \textbf{Intermediate-Arc Estimate}. The major arc rigidity.
\item[\eqref{283c}] \textbf{Merging-Major-Arc-Estimate}. Two major-arc layers merge into the final.
\end{itemize}

\subsection{One-Parameter Estimate: Proof of (\ref{283a})}\label{Sec42v} In this subsection, we treat the sum in   (\ref{283a}) which seems the one-parameter estimate.
Here we decompose the complement of the layered major arcs into the layered minor in the scale $j_1$ and intermediate regions by splitting
$$  1-\Psi^{\Lambda_1,\rm{major}}_{j}(\xi(t_2))= \Psi^{\Lambda_1,\rm{minor}}_{j}(\xi(t_2)) +
\Psi^{\Lambda_1,\rm{mediate}}_{j}(\xi(t_2))  $$ 
where we define
\begin{align*} 
&\Psi^{\Lambda_1,\rm{minor}}_{j}(\xi(t_2)) :=1 -\sum_{b/p\in  \mathbb{Q}^{\Lambda_1}[1,2^{j_1/10}]}\psi^{\Lambda_1}_{j_1}\left(\frac{\xi(t_2)-\frac{b}{p}}{2^{j_1/10}}\right), \nonumber \\
&\Psi^{\Lambda_1,\rm{mediate}}_{j}(\xi(t_2)):=\sum_{b/p\in  \mathbb{Q}^{\Lambda_1}[2^{j_2/10},2^{j_1/10}]}\psi^{\Lambda_1}_{j_1}\left(\frac{\xi(t_2)-\frac{b}{p}}{2^{j_1/10}}\right) \nonumber \\
&\qquad\quad\qquad\qquad\quad+
\sum_{b/p\in  \mathbb{Q}^{\Lambda_1}[1,2^{j_2/10}]}
 \left( \psi^{\Lambda_1}_{j_1}\left(\frac{\xi(t_2)-\frac{b}{p}}{2^{j_1/10}}\right)-\psi^{\Lambda_1}_{j_1}\left(\frac{\xi(t_2)-\frac{b}{p}}{2^{j_2/100}}\right) \right).
    \end{align*}
The minor arc part $\Psi^{\Lambda_1,\rm{minor}}_{j}(\xi(t_2))$, for each sliced coefficient $\xi(t_2)$, is controlled by a one-parameter Weyl sum estimate, while the intermediate arc part $\Psi^{\Lambda_1,\rm{mediate}}_{j}(\xi(t_2))$ is handled using an approximation by Gauss sums combined with oscillatory integral bounds with respect to the single variable $t_1$.

We shall obtain (\ref{283a}) by proving Lemmas \ref{prop130} and \ref{prop41} below.
\begin{lemma}[Estimate for $\Psi^{\Lambda_1,\rm{minor}}_{j_1}(\xi(t_2))$]\label{prop130}
Fix $|t_2|\sim 2^{j_2}$.
Then, there exists a constant   $c>0$  independent of $j$ and $t_2$ such that
\begin{align}\label{pk1}
|H^{\Lambda_1}_{j_1}( \xi(t_2)) | \Psi^{\Lambda_1,\rm{minor}}_{j}(\xi(t_2))  \lesssim 2^{-c j_1}. 
\end{align}
\end{lemma}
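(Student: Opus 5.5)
The plan is to treat the estimate as a purely one-parameter minor arc bound in the variable $t_1$, with $t_2$ frozen. For fixed $t_2$ the coefficient vector $\xi(t_2)=(\xi_m(t_2))_{m\in\Lambda_1}$ is just some vector in $\mathbb{R}^{|\Lambda_1|}$. Since $\mathbf 0\notin\Lambda_1$ by assumption, $\Lambda_1\subset\mathbb{N}$ consists of positive exponents. The quantity $H^{\Lambda_1}_{j_1}(\xi(t_2))$ is then a one-parameter dyadic discrete Hilbert-type sum over $|t_1|\sim 2^{j_1}$. On the support of $\Psi^{\Lambda_1,\rm minor}_j(\xi(t_2))$, this cutoff is nonzero only when $\xi(t_2)$ is \emph{not} within $2^{j_1/10}2^{-j_1 m}/2$ (in each coordinate $m$) of any $b/p$ with $p\le 2^{j_1/10}$. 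This is exactly the complement of the $k=1$ major arc cutoff $\Psi^{\Lambda_1,\rm major}_{j_1,(1,1)}$ from \eqref{t12}, taken at scale $j_1$, up to the harmless factor $1/2$ coming from $\psi\equiv1$ on $|u|<1/2$. The constant obtained must of course be independent of $t_2$, which is automatic because every estimate used is uniform in the coefficient vector.

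First, I remove the kernel $1/t_1$ by summation by parts. Write $H^{\Lambda_1}_{j_1}(\eta)=\sum_{\pm}\sum_{t_1\sim 2^{j_1}}\frac{\pm e^{2\pi i P_\eta(\pm t_1)}}{t_1}$. Abel summation against the partial sums $\sum_{t_1=r}^{2^{j_1}} e^{2\pi i P_\eta(\pm t_1)}$ then gives
\[
|H^{\Lambda_1}_{j_1}(\eta)|\lesssim 2^{-j_1}\max_{2^{j_1-1}<r\le 2^{j_1}}\Bigl|W^{\Lambda_1,\pm}_{j_1,r}(\eta)\Bigr|.
\]
The sign change $t_1\mapsto -t_1$ only replaces $\eta_m$ by $(-1)^m\eta_m$, which preserves the minor arc condition since $b/p\mapsto(\pm b_m)/p$ is a bijection of $\mathbb{Q}^{\Lambda_1}[1,2^{j_1/10}]$.

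Second, I bound the partial Weyl sums by Proposition~\ref{ws1} with $k=1$, $\Omega=\Lambda_1$ and $j=j_1$. For $k=1$ this proposition reduces to the classical one-variable Weyl inequality. It requires that $\eta$ lie outside the support of $\Psi^{\Lambda_1,\rm major}_{j_1,(1/2,1)}$, the major arcs with denominators $p\le 2^{j_1/20}$ and width $2^{j_1/10}2^{-j_1m}$. Our minor region excludes even larger major arcs, namely $p\le 2^{j_1/10}$ at essentially the same width. Hence wherever $\Psi^{\Lambda_1,\rm minor}_j(\eta)\neq0$ we have $1-\Psi^{\Lambda_1,\rm major}_{j_1,(1/2,1)}(\eta)$ bounded below, or at least $\eta$ outside the inner region where that cutoff equals $1$. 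Thus \eqref{wop22} yields $|W_{j_1,r}^{\Lambda_1}(\eta)|\lesssim 2^{j_1}2^{-cj_1}$ there. Combining this with the first step gives \eqref{pk1}.

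The main obstacle is the bookkeeping of the smooth cutoffs. Proposition~\ref{ws1} is stated as a bound on $W\cdot(1-\Psi)$ rather than on $W$ alone, so one must check that the support of $\Psi^{\Lambda_1,\rm minor}_j$ lies inside the region where $1-\Psi^{\Lambda_1,\rm major}_{j_1,(1/2,1)}$ is not small. If this fails near the boundary, I would instead rerun the Dirichlet argument from the proof of Proposition~\ref{ws1} directly. On the minor support, every Dirichlet approximation $c_m/q_m$ with $q_m\le 2^{j_1m-j_1/10}$ must have $\operatorname{lcm}(q_m)\ge 2^{j_1/20}$. Otherwise $\eta$ would lie within $2^{j_1/10}2^{-j_1m}/2$ of a common rational with denominator at most $2^{j_1/10}$, which is excluded on the minor support. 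Lemma~\ref{lem:ACK_bound}, in its one-parameter case together with the linear-term case of that proof, then gives the $2^{-cj_1}$ decay.
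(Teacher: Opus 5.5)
Your proposal is correct and follows the paper's route: freeze $t_2$ and run the $k=1$ case of Lemma~\ref{lemm31}, i.e.\ summation by parts in $t_1$ followed by the Weyl sum bound \eqref{wop22}. The cutoff bookkeeping you flag is settled by the pointwise inequality $0\le\Psi^{\Lambda_1,\mathrm{minor}}_{j}\le 1-\Psi^{\Lambda_1,\mathrm{major}}_{j_1,(1/2,1)}$, which holds because the bumps are disjoint and the $(1,1)$ sum contains every nonnegative term of the $(1/2,1)$ sum. This lets the scalar factor pass through the Abel summation so that \eqref{wop22} applies directly; no lower bound on $1-\Psi^{\Lambda_1,\mathrm{major}}_{j_1,(1/2,1)}$ is needed (such a bound fails near arc boundaries), and your fallback Dirichlet argument is unnecessary.
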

\begin{proof}
Fix $t_2$. Then we can obtain (\ref{pk1}) by the same argument as in the proof of Lemma~\ref{lemm31} for $k=1$, with $\xi$ replaced by $\xi(t_2)$. \end{proof}

\begin{lemma}[Estimate of $\Psi^{\Lambda_1,\rm{mediate}}_{j}(\xi(t_2))$]\label{prop41}
 Fix $|t_2|\sim 2^{j_2}.$ Then it holds that
\begin{align}\label{32}
&\sum_{j_1:(j_1,j_2)\in Z_1(2)}|H^{\Lambda_1}_{j_1}(\xi(t_2))|\Psi^{\Lambda_1,\rm{mediate}}_{j}(\xi(t_2)) \lesssim 2^{-j_2/(200d)}.
\end{align}
\end{lemma}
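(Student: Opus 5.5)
The plan is to treat $t_2$ as frozen, so that $H^{\Lambda_1}_{j_1}(\xi(t_2))$ is a one-parameter truncated discrete Hilbert sum in $t_1$ with frequency vector $\eta:=\xi(t_2)\in\mathbb{R}^{|\Lambda_1|}$. The mediate cutoff $\Psi^{\Lambda_1,\rm{mediate}}_{j}(\eta)$ splits into two pieces, and we handle them separately.

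\textbf{Pieces.} The first piece is the major--minor piece, supported near $b/p$ with $2^{j_2/10}\le p\le 2^{j_1/10}$. The second is the major$^\sharp$ piece, supported where $p\le 2^{j_2/10}$ and $2^{j_2/100}\lesssim \max_m |\eta_m-b_m/p|\,2^{j_1 m}\le 2^{j_1/10}$. On each piece we run the Poisson-summation argument from the proof of (\ref{b40}) with $k=1$ and $j_k$ replaced by $j_1$. Since $p\le 2^{j_1/10}$ and $|\beta_m|\le 2^{j_1/10}2^{-j_1m}$ with $\beta:=\eta-b/p$, we obtain
\[
H^{\Lambda_1}_{j_1}(\eta)\,\psi^{\Lambda_1}_{j_1}\Bigl(\tfrac{\eta-b/p}{2^{j_1/10}}\Bigr)=S^{\Lambda_1}(b/p)\,\mathcal{H}^{\Lambda_1}_{j_1}(\beta)\,\psi^{\Lambda_1}_{j_1}(\cdots)+E_{j_1,b/p}(\eta),
\]
where the errors satisfy $\sum_{b/p}|E_{j_1,b/p}|\lesssim 2^{-cj_1}$. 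This is summable in $j_1$ and bounded by $2^{-cj_2}$ because $j_1\gg_\Lambda j_2$. The remark following Remark~\ref{yt} warned that full integral approximation fails for $j_1\gg j_2$, but here only the single variable $t_1$ is involved, so the one-parameter approximation at scale $j_1$ is legitimate.

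\textbf{Major--minor piece.} By (\ref{gaus}) with $k=1$, $|S^{\Lambda_1}(b/p)|\lesssim p^{-\delta}\le 2^{-\delta j_2/10}$. The main term is then bounded by $2^{-\delta j_2/10}\sum_{j_1}|\mathcal{H}^{\Lambda_1}_{j_1}(\beta)|$. The sum over $j_1$ is $O(1)$ by Lemma~\ref{po3} with $k=1$, because every $\mathfrak m\in\Lambda$ has at most one odd component. Indeed, the relevant set is the projection $\Lambda_1$ (Remark~\ref{re3.1}), and in one dimension the rank condition is automatic once $0\notin\Lambda_1$. If $\Lambda_1$ has no odd element, the integral vanishes identically. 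One also has to account for the sum over $b/p$: the supports are disjoint in $b$ for fixed $p$, and we use a lacunary grouping of $p$ as in Step 2 of the proof of (\ref{b40}), at the cost of at most a $\log$ factor absorbed by $p^{-\delta/2}$.

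\textbf{Major$^\sharp$ piece (main obstacle).} Here the Gauss sum gives no decay, so the decay must come from $\mathcal{H}^{\Lambda_1}_{j_1}(\beta)$ when $A:=\max_m|\beta_m|2^{j_1m}\ge 2^{j_2/100}$. For a single $j_1$, van der Corput's lemma (or the sublevel estimate Lemma~\ref{eep1}, whose continuous version has exponent $1/d$) gives $|\mathcal{H}^{\Lambda_1}_{j_1}(\beta)|\lesssim A^{-1/d}$. We need to sum over all $j_1$ with $(j_1,j_2)\in Z_1(2)$, where $\beta$ is fixed but $A=A(j_1)$ grows with $j_1$. For this I would interpolate:
\[
|\mathcal{H}_{j_1}|\le|\mathcal{H}_{j_1}|^{1/2}\,|\mathcal{H}_{j_1}|^{1/2}\lesssim |\mathcal{H}_{j_1}|^{1/2}\,2^{-j_2/(200d)}\quad\text{on the support},
\]
and then apply (\ref{po4}) with $\epsilon=1/2$ to get $\sum_{j_1}|\mathcal{H}_{j_1}(\beta)|^{1/2}\lesssim 1$. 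Summing over the $b/p$ with $p\le 2^{j_2/10}$ again uses disjointness of supports, with at most $2^{j_2/5}$ relevant $p$. This is the delicate point: the cutoff at a given $j_1$ selects essentially one $b/p$ per $p$. Since the decay $2^{-j_2/(200d)}$ may not beat a factor $2^{j_2/5}$ from the number of denominators, I would first try to show that for fixed $\eta$ only $O(1)$ fractions $b/p$ with $p\le2^{j_2/10}$ lie within $2^{j_1/10}2^{-j_1m}$ of $\eta$ in all coordinates. This follows because two such fractions differ by at least $p^{-1}p'^{-1}\ge 2^{-j_2/5}\gg 2^{-9j_1/10}$ in some coordinate. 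Hence only one $b/p$ contributes, and the bound (\ref{32}) follows.
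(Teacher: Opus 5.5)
Your proposal is correct and is essentially the paper's own proof. It uses the one-parameter Poisson approximation at scale $j_1$, Gauss-sum decay $p^{-\delta}\le 2^{-\delta j_2/10}$ on the major--minor piece, the van der Corput bound traded as $|\mathcal{H}^{\Lambda_1}_{j_1}|\lesssim |\mathcal{H}^{\Lambda_1}_{j_1}|^{1/2}2^{-j_2/(200d)}$ together with $\sum_{j_1}|\mathcal{H}^{\Lambda_1}_{j_1}|^{1/2}\lesssim 1$ on the major$^\sharp$ piece, and a disjointness/lacunary count for the $b/p$-sum.

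The only variation is on the major$^\sharp$ piece. There you show that a single $b/p$ with $p\le 2^{j_2/10}$ is active for all admissible $j_1$ at once. Your separation argument does give this uniformly in $j_1$, since $2^{-9j_1/10}\le 2^{-18Kj_2}\ll 2^{-j_2/5}$, and it is exactly what keeps $\beta$ fixed in the $j_1$-sum. The paper instead keeps the factor $p^{-c}$ from the Gauss sum and sums over dyadic blocks of $p$.
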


\begin{proof}
As we defined the Gauss sum in Lemma \ref{el22}, for $k=1$,  
\[
S^{\Lambda_1}\!\left(\frac{b}{p}\right)
   = \frac{1}{p}\sum_{1\le \ell_1\le p}
      \exp\!\left(2\pi i \sum_{m\in\Lambda_1}
      \frac{b_m}{p}\,\ell_1^m\right).
\]
Replacing $\xi$ in \eqref{b40} by the modified frequency $\xi(t_2)$, we obtain, for every 
\[
\frac{b}{p}=\frac{(b_m)}{p} \in \mathbb{Q}^{\Lambda_1}[1,2^{j_1/10}],
\]
the approximation
\[
\begin{aligned}
H_{j_1}^{\Lambda_1}(\xi(t_2))\,
\psi^{\Lambda_1}_{j_1}\!\left(\frac{\xi(t_2)-\frac{b}{p}}{2^{j_1/10}}\right)
&= 
S^{\Lambda_1}\!\left(\frac{b}{p}\right)\,
\mathcal{H}^{\Lambda_1}_{j_1}\!\bigl(\xi(t_2)-b/p\bigr)\,
\psi^{\Lambda_1}_{j_1}\!\left(\frac{\xi(t_2)-\frac{b}{p}}{2^{j_1/10}}\right)  \\
& 
+\, E_{j}(b/p,\xi).
\end{aligned}
\]
Here, the error term satisfies the uniform estimate
\[
\sum_{b/p\in \mathbb{Q}^{\Lambda_1}[1,2^{j_1/10}]}
   |E_j(b/p,\xi)|
   = O(2^{- j_1/(40)}).
\]
Applying the above approximation to the LHS of \eqref{32},  
we split
\[
\text{LHS of \eqref{32}}
   \ \le\ 
   \sup_{|t_2|\sim 2^{j_2}}
   \sum_{\substack{j_1; j_1\gg_{\Lambda} j_2}}
   \Bigl(
      A_j(\xi(t_2)) + B_j(\xi(t_2))
   \Bigr).
\]
Here, we   set
\[
\begin{aligned}
A_j(\xi(t_2))
&:=
\sum_{b/p\in \mathbb{Q}^{\Lambda_1}[\,2^{j_2/10},\,2^{j_1/10}\,]}
\psi^{\Lambda_1}_{j_1}\!\left(
   \frac{\xi(t_2)-\frac{b}{p}}{2^{j_1/10}}
\right)
\bigl|
   S^{\Lambda_1}(b/p)\,
   \mathcal{H}^{\Lambda_1}_{j_1}(\xi(t_2)-b/p)
\bigr|,
\\[1em]
B_j(\xi(t_2))
&:=
\sum_{b/p\in \mathbb{Q}^{\Lambda_1}[\,1,\,2^{j_2/10}\,]}
\psi^{\Lambda_1}_{j_1}\!\left(
   \frac{\xi(t_2)-\frac{b}{p}}{2^{j_1/10}}
\right)
\bigl|
   S^{\Lambda_1}(b/p)\,
   \mathcal{H}^{\Lambda_1}_{j_1}(\xi(t_2)-b/p)
\bigr|
\\
&\qquad\qquad \qquad\qquad\qquad\qquad\qquad\qquad\qquad\times
\left(
   1-
   \psi^{\Lambda_1}_{j_1}\!\left(
      \frac{\xi(t_2)-\frac{b}{p}}{2^{j_2/100}}
   \right)
\right).
\end{aligned}
\]
By applying the van der Corput lemma and using Lemma~\ref{el22} in the case $k=1$, there exists $c'>0$ such that  
\begin{align*}
\bigl|\mathcal{H}^{\Lambda_1}_{j_1}(\xi(t_2)-b/p)\bigr|
   &\lesssim 
   \min\!\left\{
      1,\ 
      \bigl|
         \bigl(
            (\xi_m(t_2)-b_m/p)\,2^{j_1 m}
         \bigr)_{m\in\Lambda_1}
      \bigr|^{-1/d}
   \right\},
\\[4pt]
S^{\Lambda_1}(b/p)
   &\lesssim p^{-c'}.
\end{align*}
 Observe that
\begin{enumerate}
\item[(i)] If $A_j(\xi(t_2))\neq 0$, then necessarily $p\ge 2^{j_2/10}$,
\item[(ii)] If $B_j(\xi(t_2))\neq 0$, then for some $m\in\Lambda_1$,
\[
\left|\xi_m(t_2)-\frac{b_m}{p}\right| 2^{j_1 m} \ge 2^{j_2/100}.
\]
\end{enumerate}
Therefore, in every summand appearing in $A_j(\xi(t_2))$ or $B_j(\xi(t_2))$, we obtain the uniform bound
\begin{equation}\label{sf2}
\left|
   S^{\Lambda_1}(b/p)\,
   \mathcal{H}^{\Lambda_1}_{j_1}(\xi(t_2)-b/p)
\right|
\ \lesssim\
\bigl|\mathcal{H}^{\Lambda_1}_{j_1}(\xi(t_2)-b/p)\bigr|^{1/2}\,
p^{-c}\,
2^{-cj_2}.
\end{equation}
In addition, the one-parameter continuous estimate implies that there exists a constant $C>0$, independent of $t_2\sim 2^{j_2}$, such that
\begin{equation}\label{sf1}
\sum_{j_1\ge 0}
\bigl|\mathcal{H}^{\Lambda_1}_{j_1}(\xi(t_2)-b/p)\bigr|^{1/2}
\ \le\ C.
\end{equation}

Combining \eqref{sf2} and \eqref{sf1}, we obtain that
\begin{align}\label{yu}
&\sum_{\substack{j_1:\\ j_1\ge j_2}}
\bigl(
   A_j(\xi(t_2)) + B_j(\xi(t_2))
\bigr) \\
&\lesssim\
\sum_{\substack{p:\\ \text{lacunary}}}
p^{-c}\,
2^{-cj_2}
\sum_{\substack{b:\ (b,p)=1}}
\psi\!\left(
   \frac{\xi(t_2)-b/p}{1/(10p^{2})}
\right)
\lesssim\
2^{-cj_2},\notag
\end{align}
where the implicit constants in $\lesssim$ are independent of $t_2$ and $\xi$.
Thus, we obtain the desired estimate in \eqref{32}. 
 \end{proof}

\subsection{Major Arc Rigidity: Proof of (\ref{283b})}\label{sec5.3}
Recall the cutoff functions
\begin{align*}
	\Psi_j^{\Lambda_1,\mathrm{major}}(\xi(t_2))
	&= \sum_{b/p\in \mathbb{Q}^{\Lambda_1}[1,2^{j_2/10}]}
	\prod_{m\in \Lambda_1}
	\psi\!\left(
	\frac{\xi_m(t_2)-\frac{b_m}{p}}
	{2^{-j_1 m}\,2^{j_2/100}}
	\right),\\
	L^{\Lambda_1,\mathrm{major}}_{j_2}(\xi(t_2)) &=\sum_{b/p\in \mathbb{Q}^{\Lambda_1}[1,2^{j_2/10}]  }  \psi\left(\frac{\xi(t_2)-b/p}{2^{-10Kj_2}}\right).
\end{align*}
The estimate on the support of
\[
\Psi_{j}^{\Lambda_1,\mathrm{major}}\!\bigl(\xi(t_2)\bigr)
\bigl(1-L^{\Lambda,\mathrm{major}}_{j_2}(\xi)\bigr)
\]
is the most challenging part of our proof. It corresponds to the situation in which the sliced frequency $\xi(t_2)$ lies in a layered major arc, while the original frequency $\xi$ appears to remain in the minor arc, where the averaged Gauss-sum estimate fails, as described in Obstacle~(ii) of Section~\ref{sec34}. To overcome this difficulty, we show that if the original frequency $\xi$ is contained in sufficiently many plates associated with the sliced frequencies $\xi(t_2)$, as enforced by the constraints of
$\Psi_{j}^{\Lambda_1,\mathrm{major}}\!\bigl(\xi(t_2)\bigr)$
for many distinct values of $t_2$, then $\xi$ must in fact belong to the coarse major arc determined by
$L^{\Lambda,\mathrm{major}}_{j_2}(\xi)$. We refer to this phenomenon as \emph{major-arc rigidity}.
This geometric observation provides the essential bridge from the layered major arcs to the original major arcs and is the key ingredient of our multi-parameter circle method.

 As we assumed that $m \ne 0$ for every $(m,n)\in \Lambda$ at the beginning of this Section, 
\[
2^{-j_1 m} 2^{j_2/10} \ll 2^{-10 K j_2}
\quad \text{for all}\  j \in Z_1(2)\ \text{where}\ j_1\gg_{\Lambda} j_2\ \text{in (\ref{00gg})}.
\]
Hence, one can deduce that
\begin{align}\label{s33}
\Psi_j^{\Lambda_1,\mathrm{major}}(\xi(t_2))
  \le
  L^{\Lambda_1,\mathrm{major}}_{j_2}(\xi(t_2)).
\end{align}
  On the other hand, by applying the one parameter result, one can assert that there exists a constant $C$   independent of $t_2$ and $\xi$ such that
\[
\sum_{j_1} \bigl|H^{\Lambda_1}_{j_1}(\xi(t_2))\bigr| \le C.
\]
Therefore, combining this estimate with \eqref{s33}, we obtain, for each fixed
$\xi$ and $j_2$,  
\begin{align*} 
&\sum_{j_1 : j \in Z_1(2)} 
  \sum_{|t_2|\sim 2^{j_2}}
  \frac{\bigl|H^{\Lambda_1}_{j_1}(\xi(t_2))\bigr|}{2^{j_2}}
  \Psi_j^{\Lambda_1,\mathrm{major}}\!\left(\xi(t_2)\right)
  \bigl(1 - L^{\Lambda,\mathrm{major}}_{j_2}(\xi)\bigr) \nonumber\\
&\qquad\quad  \;\lesssim\; 
  \frac{1}{2^{j_2}}\sum_{|t_2|\sim 2^{j_2}}
   L^{\Lambda_1,\mathrm{major}}_{j_2}(\xi(t_2))
(1-L^{\Lambda,\rm{major}}_{j_2}(\xi)). 
\end{align*}
Hence,  to prove  (\ref{283b}), we set $L^{\Lambda,\rm{minor}}_{j_2}(\xi):=1-L^{\Lambda,\rm{major}}_{j_2}(\xi)$.  Then, it remains to prove that
\begin{align} 
  \frac{1}{2^{j_2}}\sum_{|t_2|\sim 2^{j_2}}
   L^{\Lambda_1,\mathrm{major}}_{j_2}(\xi(t_2))
L^{\Lambda,\rm{minor}}_{j_2}(\xi)\le 2^{-j_2}.\label{4tb}
\end{align}
Before estimating \eqref{4tb}, we first show the following claim.
\begin{claim}\label{cl31}
Fix $\xi$ and  $|t_2|\sim 2^{j_2}.$  Then, the rational vector $\frac{b}{p} \in \mathbb{Q}^{\Lambda_1}[1,2^{j_2/10}]$
satisfying
\[
\psi\!\left(\frac{\xi(t_2)-b/p}{2^{-10K j_2}}\right) \neq 0
\]
in the summation over $b/p$  appearing in $L^{\Lambda_1,\mathrm{major}}_{j_2}(\xi(t_2))$ in (\ref{ccnn})  is unique.  
Moreover, this rational  vector can be written as $\frac{b(t_2)}{p(t_2)}$, and consequently
\begin{align}\label{i44}
L^{\Lambda_1,\mathrm{major}}_{j_2}(\xi(t_2))
  = 
  \psi\!\left(
    \frac{\xi(t_2)-\frac{b(t_2)}{p(t_2)}}{2^{-10K j_2}}
  \right),
  \qquad 
  \frac{b(t_2)}{p(t_2)} \in \mathbb{Q}^{\Lambda_1}[1,2^{j_2/10}].
\end{align}
\end{claim}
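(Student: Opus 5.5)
The claim is a separation statement for distinct rationals with small denominators, so I would argue by contradiction. Suppose two distinct reduced vectors $b/p$ and $b'/p'$ in $\mathbb{Q}^{\Lambda_1}[1,2^{j_2/10}]$ both satisfy
\[
\psi\!\left(\frac{\xi(t_2)-b/p}{2^{-10Kj_2}}\right)\neq 0
\quad\text{and}\quad
\psi\!\left(\frac{\xi(t_2)-b'/p'}{2^{-10Kj_2}}\right)\neq 0 .
\]
Since $\psi$ is supported in the unit ball, each $m$-component then satisfies $|\xi_m(t_2)-b_m/p|\le 2^{-10Kj_2}$, and likewise for $b'/p'$. Applying the triangle inequality componentwise gives
\[
\left|\frac{b_m}{p}-\frac{b'_m}{p'}\right|\le 2\cdot 2^{-10Kj_2}\qquad\text{for every } m\in\Lambda_1 .
\]

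The key step is to show that the two vectors, being distinct, differ in some component $m$. There are two cases.

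\emph{Case 1: the denominators agree.} If $p=p'$, then $b\neq b'$, so some $b_m\neq b'_m$.

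\emph{Case 2: the denominators differ.} If $p\neq p'$, suppose all components agreed, i.e.\ $b_m/p=b'_m/p'$ for every $m$. Then $p'b_m=pb'_m$ for all $m$. The reducedness condition $\gcd(p,b)=1$ then forces $p\mid p'$: indeed $p$ divides $p'b_m$ for all $m$, and hence $p$ divides $p'\gcd(b_m)$, which together with $\gcd(p,b)=1$ gives $p\mid p'$. Symmetrically $p'\mid p$, so $p=p'$, a contradiction.

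So some component $m$ has $b_m/p\neq b'_m/p'$, and for that component
\[
\left|\frac{b_m}{p}-\frac{b'_m}{p'}\right|=\frac{|b_mp'-b'_mp|}{pp'}\ge \frac{1}{pp'}\ge 2^{-j_2/5}.
\]

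This lower bound exceeds $2\cdot 2^{-10Kj_2}$ whenever $10Kj_2-1>j_2/5$, which holds for every $j_2\ge1$ because $K=N^{10k}\ge1$. That contradicts the upper bound from the first paragraph, so the rational vector $b/p$ is unique.

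It remains to treat the degenerate case $j_2=0$. There $\mathbb{Q}^{\Lambda_1}[1,1]$ consists only of integer vectors ($p=1$), and distinct integer vectors lie at distance at least $1>2\cdot 2^{0}$ in the sup norm only if the ball radius is below $1/2$. I would handle this either by noting that $|t_2|\sim 1$ is a trivial range for the later estimates, or by working modulo $1$ on $\mathbb{T}$. I expect this small-$j_2$ bookkeeping, rather than the main separation argument, to be the only delicate point.

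Once uniqueness is established, call the unique vector $b(t_2)/p(t_2)$; it depends only on $\xi$ and $t_2$. The sum defining $L^{\Lambda_1,\mathrm{major}}_{j_2}(\xi(t_2))$ then collapses to the single term in (\ref{i44}), because every other summand vanishes.
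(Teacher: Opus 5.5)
Your proof is correct and follows the paper's argument. The separation $|b_m/p-b'_m/p'|\ge 1/(pp')\ge 2^{-j_2/5}$ contradicts the upper bound $2\cdot 2^{-10Kj_2}$ forced by the two supports, and your reducedness argument supplies a step the paper leaves implicit: two distinct reduced vectors must differ in some coordinate. Your caveat about $j_2=0$ is also well founded, because there $p=1$ and the cutoff has radius $1$, so two integer vectors differing by $1$ in one coordinate can both give nonzero summands and the paper's comparison $2^{-j_2/5}\gg 2^{-10Kj_2+2}$ fails; the claim should therefore be read for $j_2\ge 1$, which is harmless since the estimates it feeds have the form $\lesssim 2^{-cj_2}$ and are trivial for bounded $j_2$.
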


\begin{proof}[Proof of Claim \ref{cl31}]
  Assume that there exist two distinct rational vectors $b/p \neq b'/p'$ in $\mathbb{Q}^{\Lambda_1}[1,2^{j_2/10}] $ such that
\[
\psi\!\left(\frac{\xi(t_2)-b/p}{2^{-10K j_2}}\right)\neq 0
\quad \text{and} \quad 
\psi\!\left(\frac{\xi(t_2)-b'/p'}{2^{-10K j_2}}\right)\neq 0.
\]
Since the two rational vectors are distinct and  $p,p'\le 2^{j_2/10}$, we know that
\[
|b'/p' - b/p| \;\ge\; \frac{1}{pp'} \;\ge\; 2^{-j_2/5}.
\]
As both rational vectors lie within the support of the cut-off $\psi$, we  have
\[
|b'/p' - b/p|
\;\le\; |b'/p' - \xi(t_2)| + |\xi(t_2)-b/p|
\;<\; 2^{-10K j_2 + 2}.
\]
But for large $K$, the inequality $2^{-j_2/5} \gg 2^{-10K j_2 + 2}$ holds, which yields a contradiction. 
Therefore, such a rational $b/p$ is unique. We denote it by $b(\xi,t_2)/p(\xi,t_2)$, or simply by $b(t_2)/p(t_2)$ when $\xi$ is fixed. 
\end{proof} 
Hence,  by applying  (\ref{i44})  in (\ref{4tb}),
we shall show that  for each $\xi$ and $j_2$ 
\begin{align} \label{416ss}
  \frac{1}{2^{j_2}}
\sum_{|t_2|\sim 2^{j_2}}
\psi\!\left(
\frac{\xi(t_2)-\frac{b(t_2)}{p(t_2)}}{2^{-10Kj_2}}
\right)
L^{\Lambda,\mathrm{minor}}_{j_2}(\xi)
\lesssim 2^{-j_2}.
\end{align}

Observe that the desired estimate follows immediately whenever
\[
\sum_{|t_2|\sim 2^{j_2}}
\psi\!\left(
\frac{\xi(t_2)-\frac{b(t_2)}{p(t_2)}}{2^{-10Kj_2}}
\right)
<2|\Lambda|.
\]
   Therefore,  to prove \eqref{416ss}, it remains to consider the complementary case. In this case, the following proposition shows that the factor
$1-L^{\Lambda,\mathrm{major}}_{j_2}(\xi) =L^{\Lambda,\mathrm{minor}}_{j_2}(\xi) $ vanishes.

\begin{proposition}\label{prop6100}
Let $\xi=(\xi_{mn})_{(m,n)\in\Lambda}\in\mathbb R^{|\Lambda|}$ and
$j_2\in\mathbb Z_+$. Suppose that
\begin{equation}\label{420s}
\sum_{|t_2|\sim 2^{j_2}}
\psi\!\left(
\frac{\xi(t_2)-\frac{b(t_2)}{p(t_2)}}{2^{-10Kj_2}}
\right)
\ge 2|\Lambda|
\ \text{with} \
\frac{b(t_2)}{p(t_2)}
\in\mathbb Q^{\Lambda_1}[1,2^{j_2/10}].
\end{equation}
Then there exists
\[
\frac{a}{q}\in\mathbb Q^\Lambda[1,2^{N^4j_2}]
\]
such that
\begin{equation}\label{pkn}
\psi\!\left(
\frac{\xi-\frac{a}{q}}{2^{-10Kj_2+N^4j_2}}
\right)=1.
\end{equation}
Consequently,
\[
1-L^{\Lambda,\mathrm{major}}_{j_2}(\xi_\Lambda)=0,
\]
where
\[
L^{\Lambda,\mathrm{major}}_{j_2}(\xi_\Lambda)
=
\sum_{a/q\in\mathbb Q^\Lambda[1,2^{N^4j_2}]}
\psi\!\left(
\frac{\xi-\frac{a}{q}}{2^{-10Kj_2+N^4j_2}}
\right).
\]
\end{proposition}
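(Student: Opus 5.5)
The plan is to turn the hypothesis \eqref{420s} into a Vandermonde-type linear system for the unknowns $\xi_{mn}$ and then invert it. Since $\psi$ is supported in the unit ball, each $t_2$ contributing to the sum in \eqref{420s} satisfies $|\xi(t_2)-b(t_2)/p(t_2)|\le 2^{-10Kj_2}$. The sum is at least $2|\Lambda|$ and each term is at most $1$, so there are at least $2|\Lambda|$ such values of $t_2$. Passing to $|t_2|$ (the sign of $t_2$ only affects parities) leaves at least $|\Lambda|$ distinct values. Fix $m\in\Lambda_1$ and let $D_m:=|\Lambda(m)|$. Choose distinct $s_1,\dots,s_{D_m}$ from these values. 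For each $m$ we then have
\[
\sum_{n\in\Lambda(m)}\xi_{mn}s_i^{\,n}=\frac{b_m(s_i)}{p(s_i)}+O(2^{-10Kj_2}),\qquad i=1,\dots,D_m .
\]
This is a generalized Vandermonde system with matrix $V_m=(s_i^{\,n})_{i,n}$. It is invertible: the $s_i$ are distinct positive integers, and a generalized Vandermonde determinant with positive nodes and distinct exponents is nonzero, by Descartes' rule of signs.

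Next solve each system by Cramer's rule, $\xi_{m\cdot}=V_m^{-1}(b_m(s_i)/p(s_i))_i+V_m^{-1}O(2^{-10Kj_2})$. We need two bounds.
\begin{enumerate}
\item[(a)] Since $|s_i|\le 2^{j_2}$ and the exponents are at most $\max|\mathfrak m|$, both $\det V_m$ and every cofactor are integers bounded by $2^{C_\Lambda j_2}$, with $C_\Lambda\le N$. Also $|\det V_m|\ge 1$. Hence $\|V_m^{-1}\|\le 2^{Nj_2}$, and the error becomes $O(2^{-10Kj_2+Nj_2})$.
\item[(b)] The main term equals $\frac{1}{\det V_m}\sum_i \mathrm{cof}_{i,n}\, b_m(s_i)/p(s_i)$. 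This is a rational number whose denominator divides $\det V_m\cdot\operatorname{lcm}_i p(s_i)$, which is at most $2^{Nj_2}\cdot 2^{|\Lambda| j_2/10}$.
\end{enumerate}

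Then take the common denominator $q$ of all coordinates over all $m\in\Lambda_1$. It divides $\prod_m \det V_m\cdot\operatorname{lcm}_{m,i}p(s_i)$, so $q\le 2^{(|\Lambda|N+|\Lambda|^2)j_2}\le 2^{N^4j_2}$, using $N\ge|\Lambda|!\,\ge|\Lambda|$. Reduce $a/q$ to lowest terms; this only decreases $q$, so $a/q\in\mathbb Q^\Lambda[1,2^{N^4j_2}]$. The coordinates $n$ with $m=0$ do not occur by the standing assumption $0\notin\Lambda_1$. The remaining fact is $|\xi-a/q|\lesssim 2^{-10Kj_2+Nj_2}\le \tfrac12\, 2^{-10Kj_2+N^4j_2}$ for $j_2\ge1$. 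The case $j_2=0$ is trivial, or is absorbed into the constants. Since $\psi\equiv1$ on $|u|<1/2$, this gives \eqref{pkn}.

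Finally, $L^{\Lambda,\mathrm{major}}_{j_2}(\xi)=1$, because the supports of the summands are disjoint. Two distinct rationals with denominators at most $2^{N^4j_2}$ are separated by at least $2^{-2N^4j_2}$, which is far larger than $2^{-10Kj_2+N^4j_2}$ since $K=N^{10k}$. So the summand at $a/q$ equals $1$ and all other summands vanish.

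The main obstacle is the bookkeeping in (a) and (b). One must control both the operator norm of $V_m^{-1}$ and the growth of the denominator polynomially in $2^{j_2}$, with exponents fitting under $N^4$. The lower bound $|\det V_m|\ge1$, which comes from integrality, is what makes the norm bound work. Beyond that, the remaining check is that the choice of $K$ and $N$ in \eqref{00gg} absorbs all of these exponents.
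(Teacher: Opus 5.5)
Your proposal is correct and is essentially the paper's proof. For each $m\in\Lambda_1$ you invert the generalized Vandermonde system, use integrality to get $|\det V_m|\ge 1$ and cofactor bounds polynomial in $2^{j_2}$, and bound the common denominator by $\det V_m$ times the $p(s_i)$. The only differences are cosmetic: you use Descartes' rule of signs where the paper uses a repeated Rolle argument, and you give an explicit separation argument for $L^{\Lambda,\mathrm{major}}_{j_2}(\xi)=1$, which the paper leaves implicit.

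One step should be stated differently. Reduce to positive nodes by pigeonholing on sign, not by passing to $|t_2|$. At least $|\Lambda|$ of the $2|\Lambda|$ contributing $t_2$ share a sign, and a common negative sign is absorbed by $\xi_{mn}\mapsto(-1)^n\xi_{mn}$. If you mix signs, the equations are $\sum_n\xi_{mn}(\epsilon_i s_i)^n\approx b_m/p$ with $\epsilon_i=\pm1$. The matrix $\bigl((\epsilon_i s_i)^n\bigr)$ is then not a rescaling of $(s_i^n)$, and Descartes' rule no longer gives invertibility. For example, exponents $\{0,1,3\}$ at nodes $1,2,-3$ already give a singular matrix.
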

 
\begin{proof}[Proof of Proposition \ref{prop6100}] 
Recall the definitions of the constants  $K$ and $N$, which depend only on $\Lambda$:
\begin{align*}
	K=N^{10k}\ \text{and}\ N=\lvert\Lambda\rvert!\sum_{\mathfrak{m}\in \Lambda} (|\mathfrak{m}|+k).
\end{align*} Fix $\xi$. Without loss of generality, we assume that 
there exist at least $|\Lambda|$ distinct  $t_2 \in (2^{j_2-1},2^{j_2}]\cap \mathbb{Z}$ such that
\begin{align} \label{390}
 \left|\xi_m(t_2)-\frac{b_m(t_2)}{p(t_2)}\right|\le 2^{-10Kj_2} \ \text{for all $m\in \Lambda_1$}.
\end{align}
In other words, the values $t_2=s_1,\cdots, s_{|\Lambda|} \in (2^{j_2-1},2^{j_2}]\cap \mathbb{Z}$ where 
   $s_1<\cdots<s_{|\Lambda|}$ satisfy the condition (\ref{390}).
 For each $m\in \Lambda_1$, let  $$\Lambda(m)=\{n: (m,n)\in \Lambda\}=
\{n_{1},\dots,n_{r}\} \quad \text{with } n_{1}<\cdots<n_{r}
.$$ Here $r=|\Lambda(m)|\le |\Lambda|$ and $n_i=n_i(m)$ may depend on $m$, but we omit the dependence  
  for brevity.  Then for each $m$, the  restrictions of (\ref{390}) can be written as 
\begin{align*}
s_1^{n_1}\xi_{mn_1}+\cdots+s_1^{n_{r}}\xi_{mn_{r}}&=\frac{b_m(s_1)}{p(s_1)}+O(2^{-10Kj_2})\\
&\vdots\\
s_{r}^{n_1}\xi_{mn_1}+\cdots+s_{r}^{n_{r}}\xi_{mn_{r}}&=\frac{b_m(s_{r})}{p(s_{r})}+O(2^{-10Kj_2}).
\end{align*}
We rewrite  this linear system  for each $m\in \Lambda_1$  in the matrix form as
\begin{align}\label{meg1}
\left(\begin{matrix}s_1^{n_{1}}&s_1^{n_{2}}& \cdots&s_1^{n_{r}}\\
s_2^{n_{1}}&s_2^{n_{2}}& \cdots&s_2^{n_{r}}\\
\vdots& &\vdots\\
s_{r}^{n_{1}}&s_{r}^{n_{2}}& \cdots&s_{r}^{n_{r}}
  \end{matrix}\right) \left(\begin{matrix}\xi_{mn_1}\\
\xi_{mn_2}\\
 \vdots\\
\xi_{mn_r}
  \end{matrix}\right)=\left(\begin{matrix}\frac{b_m(s_1)}{p(s_1)}\\
\frac{b_m(s_2)}{p(s_2)}\\
 \vdots\\
\frac{b_m(s_r)}{p(s_r)}
  \end{matrix}\right)+O(2^{-10Kj_2}).
\end{align}
Here,  $O(2^{-10Kj_2})$ means that the error in each component is  most $2^{-10Kj_2}$.

\begin{ob}\label{lem7100} 
Let $s_1,\cdots,s_r\in (2^{j_2-1},2^{j_2}]\cap \mathbb{Z}$ such that $s_1<\cdots<s_r$ and let $\Lambda(m)=\{n_i\}_{i=1}^r$ where $0\le n_1<\cdots<n_r\le N$ with $r=|\Lambda(m)|$  and $m\in \Lambda_1$ as above. Then
  the $r\times r$ generalized Vandermonde matrix defined as
  $$S_m=\left(\begin{matrix}s_1^{n_{1}}&s_1^{n_{2}}& \cdots&s_1^{n_{r}}\\
s_2^{n_{1}}&s_2^{n_{2}}& \cdots&s_2^{n_{r}}\\
\vdots& &\vdots\\
s_{r}^{n_{1}}&s_{r}^{n_{2}}& \cdots&s_{r}^{n_{r}}
  \end{matrix}\right)   $$  has its inverse  $S^{-1}_m=\left(\frac{c_{ij}(s)}{\det(S_m)}\right)$ where $ \left\|S_m^{-1}\right\|\le N^{2} 2^{j_2N^2}$ and $1\le   \det(S_m)\le N 2^{j_2N^2}.$
\end{ob}
\begin{proof}[Proof of Observation \ref{lem7100}] To claim its invertibility, 
assume the contrary so that $\det (S_m)=0$. Then $r$ columns $\left(\begin{matrix}s_1^{n_{\ell}} \\
s_{2}^{n_{\ell}} \\
\vdots \\ 
s_{r}^{n_{\ell}} 
  \end{matrix}\right) $ are linearly dependent.  Thus for some $(c_1,\cdots,c_r)\ne {\bf 0}$, the 1D polynomial function $f(x):= c_1x^{n_1}+\cdots+c_r x^{n_r}=x^{n_1}(c_1+c_2x^{n_2-n_1}+\cdots+c_rx^{n_r-n_1})$ with $0\le n_1<\cdots<n_r$ has $r$ distict zeros $2^{j_2-1}\le s_1<\cdots<s_r\le 2^{j_2}$.  This leads a contradiction via repeated applications of rolle's theorem. Thus $\det(S_m)\ne 0 $.  
Since the   entries are integers,  it follows that  $|\det(S_m)|\ge 1$.  The $(i,j)$-entry  of $S^{-1}_m$ is  given by
$ (S_m^{-1})_{ij}=\frac{c_{ij}(s)}{\det(S_m)}, $ where   $c_{ij}(s):=(-1)^{i+j}\det((S_m)^{*}_{ji}) $ and $(S_m)^{*}_{ij}$ denotes the minor matrix  of $S_m$ obtained by deleting the $i$-th row and $j$-column. Hence, $$|c_{ij}(s)|\le\max_{s_1,\cdots,s_r} r!|(s_1  s_2 \cdots s_r)^{n_{r}}|\le r!2^{j_2 rN}\le N2^{j_2N^2},$$ since $0\le  r!, n_r\le N$, and each $s_i$ satisfies $s_i \sim 2^{j_2}$. This   in turn implies 
 $\|S_m^{-1}\|\le \|S_m^{-1}\|_{HS}\le    N^2 2^{j_2N^2}$ where $\| \cdot \|_{HS}$ denotes the Hilbert-Schmidt norm.
\end{proof}

With Observation \ref{lem7100} in hand,
we now continue the proof of (\ref{pkn}).
Multiplying both sides of (\ref{meg1}) by $S_m^{-1}$ and using the bound 
$\|S_m^{-1}\| \le N^2 2^{j_2 N^2}$ from Observation~\ref{lem7100}, we obtain  that
\begin{align*} 
\begin{pmatrix}
\xi_{mn_1}\\
\xi_{mn_2}\\
\vdots\\
\xi_{mn_r}
\end{pmatrix}
&=
\begin{pmatrix}
s_1^{n_1} & s_1^{n_2} & \cdots & s_1^{n_r} \\
s_2^{n_1} & s_2^{n_2} & \cdots & s_2^{n_r} \\
\vdots & \vdots & & \vdots \\
s_r^{n_1} & s_r^{n_2} & \cdots & s_r^{n_r}
\end{pmatrix}^{-1}
\begin{pmatrix}
\dfrac{b_m(s_1)}{p(s_1)}\\[4pt]
\dfrac{b_m(s_2)}{p(s_2)}\\[4pt]
\vdots\\
\dfrac{b_m(s_r)}{p(s_r)}
\end{pmatrix}
+ O\!\left(N^2 2^{j_2 N^2} 2^{-10Kj_2}\right).
\end{align*}
Here the error term arises from applying $S_m^{-1}$ to the error vector in (\ref{meg1}), 
so that its magnitude is amplified by at most $\|S_m^{-1}\| \le N^2 2^{j_2 N^2}$.
  Using the entries of the inverse matrix 
$S_m^{-1}=\left(\dfrac{c_{ij}(s)}{\det(S_m)}\right)$, 
  compute the matrix multiplication on the right-hand side as
\begin{align*}
\begin{pmatrix}
\xi_{mn_1}\\
\xi_{mn_2}\\
\vdots\\
\xi_{mn_r}
\end{pmatrix}
&=
\left(
  \sum_{j=1}^r 
  \frac{c_{ij}(s)}{\det(S_m)} 
  \frac{b_m(s_j)}{p(s_j)}
\right)_{i=1}^r
+ O\!\left(N^2 2^{j_2 N^2} 2^{-10Kj_2}\right).
\end{align*}
Since  
$1\le |\det(S_m)| \le N 2^{j_2 N^2}$ 
and 
$1 \le p(s_j) \le 2^{j_2/10}$ 
for each $j$, 
the common multiple $q_m$ of the denominators 
$|\det(S_m)|\,p(s_j)$ for $j=1,\dots,r$ 
satisfies
\begin{align}\label{gh11}
1\le 
q_m \le 2^{j_2 N^3}.
\end{align}
Hence we obtain a rational vector 
$(h_{mn_1}, \dots, h_{mn_r})/q_m 
\in \mathbb{Q}^{\Lambda(m)}[1, 2^{j_2 N^3}]$ 
such that
\[
(\xi_{mn})_{n \in \Lambda(m)} =
\begin{pmatrix}
\xi_{mn_1}\\
\xi_{mn_2}\\
\vdots\\
\xi_{mn_r}
\end{pmatrix}
=
\left(\begin{matrix}\frac{h_{mn_1}}{q_m}\\
\frac{h_{mn_2}}{q_m}\\
 \vdots\\
\frac{h_{mn_{r}}}{q_m}
  \end{matrix}\right)
+ O\!\left(N^2 2^{j_2 N^2} 2^{-10Kj_2}\right).
\]
  Combining the above approximations for  all $m \in \Lambda_1$, we obtain that for $j_2\ge 1$
\begin{align}\label{42kp}
\left|
(\xi_{mn})_{(m,n)\in \Lambda}
- 
\left(\frac{h_{mn}}{q_m}\right)_{(m,n)\in \Lambda}
\right|
\le |\Lambda_1|
 N^3 2^{j_2 N^2} 2^{-10Kj_2}
\le 
2^{-2}2^{j_2 N^4} 2^{-10Kj_2}.
\end{align}
Next, assembling all vectors  with 
$1 \le q_m \le 2^{N^3 j_2}$ for every $m \in \Lambda_1$, 
we define
\begin{align}\label{yy}
\frac{a}{q}
=
\left( \frac{a_{mn}}{q} \right)_{(m,n)\in \Lambda}
:=
\left( \frac{h_{mn}}{q_m} \right)_{(m,n)\in \Lambda}.
\end{align} Then, we obtain that  \begin{align}\label{ep1}
\frac{a}{q} \in \mathbb{Q}^{\Lambda}\big([1, 2^{N^4 j_2}]\big),
\end{align} 
since the common multiple of $\{ q_m : m \in \Lambda_1 \}$ 
is at most $2^{j_2 N^3N} \le 2^{j_2 N^4}$.
This property and \eqref{42kp} yield \eqref{pkn}, completing the proof of Proposition~\ref{prop6100}.
\end{proof} 
For the multi-parameter setting $(k>2)$ in Section \ref{sec5}, we introduce the following slight variant of Proposition~\ref{prop6100}.
\begin{lemma}[Variant of Proposition~\ref{prop6100}]\label{vari61}
Let $\ell'\in  \{0,1,\cdots,k\}$.  Let $j'$ be an integer satisfying $0\leq j'\leq j_2$. Let $\xi=(\xi_{mn})_{(m,n)\in \Lambda}\in \mathbb{R}^{|\Lambda|}$ and $j_2\in \mathbb{Z}_{+}$. Assume that condition \emph{(\ref{420s})} is replaced by 
\begin{align*}
\sum_{|t_2| \sim 2^{j'}}
\psi\!\left(
  \frac{\xi(t_2) - \frac{b(t_2)}{p(t_2)}}{2^{-10K j_2 + N^{4\ell'} j_2}}
\right)
\ge 2|\Lambda|,
\qquad
\frac{b(t_2)}{p(t_2)}=\left( \frac{b_m(t_2)}{p_m(t_2)} \right)_{m\in \Lambda_1} \in \mathbb{Q}^{\Lambda_1}\!\left[1, 2^{N^{4\ell'} j_2}\right].
\end{align*}
Then there exists
\[
\frac{a}{q} \in \mathbb{Q}^{\Lambda}\!\left[1, 2^{N^{4(\ell'+1)} j_2}\right]
\quad \text{such that} \quad
\psi\!\left(
  \frac{\xi - \frac{a}{q}}{2^{-10K j_2 + N^{4(\ell'+1)} j_2}}
\right) = 1.
\]
\end{lemma}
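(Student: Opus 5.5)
The argument repeats the proof of Proposition~\ref{prop6100} with the scales tracked more carefully. First I extract $|\Lambda|$ good values of $t_2$. By the hypothesis there are at least $2|\Lambda|$ values $|t_2|\sim 2^{j'}$ at which the cutoff is nonzero. Using the symmetry $t_2\mapsto -t_2$, or simply passing to a sign class, I obtain distinct $s_1<\cdots<s_{|\Lambda|}$ of one sign with $|s_i|\sim 2^{j'}$ such that
\[
\Bigl|\xi_m(s_i)-\frac{b_m(s_i)}{p_m(s_i)}\Bigr|\le 2^{-10Kj_2+N^{4\ell'}j_2}
\qquad\text{for all } m\in\Lambda_1 .
\]
Here the denominators are $p_m(s_i)\le 2^{N^{4\ell'}j_2}$, and they may depend on $m$, which does no harm. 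For each $m\in\Lambda_1$, with $\Lambda(m)=\{n_1<\cdots<n_r\}$, I form the generalized Vandermonde system \eqref{meg1} in the points $s_1,\dots,s_r$.

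Next I apply Observation~\ref{lem7100} with $j_2$ replaced by $j'$. The Rolle argument works for points of a single sign, and since $j'\le j_2$ it gives $1\le|\det S_m|\le N2^{j_2N^2}$ and $\|S_m^{-1}\|\le N^22^{j_2N^2}$. Inverting the system, $(\xi_{mn})_{n\in\Lambda(m)}$ equals a rational vector plus an error of size at most $N^22^{j_2N^2}\,2^{-10Kj_2+N^{4\ell'}j_2}$. The common denominator $q_m$ of the numbers $\det(S_m)\,p_m(s_i)$, $i\le r$, is at most $N2^{j_2N^2}\cdot 2^{rN^{4\ell'}j_2}\le 2^{j_2(N^2+1+NN^{4\ell'})}$. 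Taking the lcm over $m\in\Lambda_1$ with $|\Lambda_1|\le N$ then gives
\[
q\le 2^{j_2N(N^2+1+N^{4\ell'+1})}\le 2^{N^{4\ell'+3}j_2}\le 2^{N^{4(\ell'+1)}j_2}.
\]
I then reduce $a/q$ to lowest terms, which only lowers the denominator, so $a/q\in\mathbb Q^\Lambda[1,2^{N^{4(\ell'+1)}j_2}]$.

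Finally I need the cutoff to equal $1$, which requires $|\xi-a/q|\le \tfrac12\,2^{-10Kj_2+N^{4(\ell'+1)}j_2}$. The total error is at most $|\Lambda_1|N^22^{j_2N^2}\,2^{-10Kj_2+N^{4\ell'}j_2}$. This is bounded by $2^{-1}2^{-10Kj_2+N^{4\ell'+4}j_2}$ once $j_2\ge1$, because $N^{4\ell'}+N^2+\log_2(2N^3)/j_2\le N^{4\ell'+4}$ for $N\ge2$. The case $j_2=0$ is degenerate and is handled by taking $q=1$, since then $\psi(\cdot/2^{0})$ covers the unit box around some integer vector. The step I expect to need the most care is this exponent bookkeeping. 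The denominator bound must absorb the $r$-fold product of the $p_m(s_i)$, each up to $2^{N^{4\ell'}j_2}$, which is why the exponent jumps from $N^{4\ell'}$ to $N^{4\ell'+4}$ rather than by an additive amount. I also have to check the Vandermonde nondegeneracy when the $s_i$ live at scale $2^{j'}$ and not $2^{j_2}$; this is fine, since only distinctness and a common sign are used, and the size bounds only improve.
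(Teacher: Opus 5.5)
Your proposal is correct and follows the paper's proof. You rerun the generalized Vandermonde inversion of Proposition~\ref{prop6100} at points $|s_i|\sim 2^{j'}$ of a single sign, enlarge the error to $2^{-10Kj_2+N^{4\ell'}j_2}$, and absorb the denominators $p_m(s_i)\le 2^{N^{4\ell'}j_2}$ into $q_m$ and then into the lcm. The single-sign selection and the remark that $j'\le j_2$ only improves the entry bounds are exactly the details the paper leaves implicit.

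Two slips in the bookkeeping are harmless. For $\ell'=0$ your intermediate bound $q\le 2^{N^{4\ell'+3}j_2}$ fails, since the exponent you actually derive is $N^3+N^2+N$. Also, the step $N2^{j_2N^2}\le 2^{j_2(N^2+1)}$ needs $2^{j_2}\ge N$; use $\log_2 N\le Nj_2$ instead. In both cases the final bound $q\le 2^{N^{4(\ell'+1)}j_2}$ still holds with ample room.
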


\begin{proof}
We apply the same argument as in Proposition~\ref{prop6100}, with no essential modification. Compared with Proposition~\ref{prop6100}, the error term in \eqref{meg1} is now \[ O\!\left(2^{-10K j_2 + N^{4\ell'} j_2}\right) \]
rather than $O(2^{-10Kj_2})$. Therefore, as in \eqref{42kp} and \eqref{yy}, we have $\frac{a}{q}
=
\left( \frac{a_{mn}}{q} \right)_{(m,n)\in \Lambda}$ such that
\begin{align}\label{yy1}
	\left|\xi-\frac{a}{q}\right|
	\le \frac{1}{4}
	2^{N^{4(\ell'+1)}j_2}2^{-10Kj_2}.
\end{align}
Since
$
1\le p_m(t_2)\le 2^{N^{4\ell'}j_2},
$
it follows from \eqref{gh11} that
$
1\le q_m\le 2^{N^{4\ell'+3}j_2}.
$
Thus, arguing as in \eqref{ep1}, we can obtain $a/q$ satisfying 
$
1\le q\le 2^{N^{4(\ell'+1)}j_2}
$ and \eqref{yy1}.

This completes the proof.

\end{proof}
Hence, we have established \eqref{283b}. The preceding argument reveals a rigidity principle underlying the passage from the sliced frequencies back to the original frequency. Namely, when the major arcs associated with the sliced coefficient $\xi(t_2)$ occur with sufficiently high concentration, the frequency $\xi$ itself must admit a rational approximation at a coarser scale. Schematically, $$
\text{Concentration of sliced major arcs for $\xi(t_2)$}
\Longrightarrow
\text{Coarse major-arc localization of $\xi$.}
$$

\subsection{Merging Major Arcs: Proof of \eqref{283c}}\label{Sec5.4}
We refer to \eqref{283c} as the \emph{merging major arc estimate}. In this subsection, we establish this estimate.

Our analysis is based on the product cutoff
\[
\Psi^{\Lambda_1,\mathrm{major}}_{j}\bigl(\xi(t_2)\bigr)
L^{\Lambda,\mathrm{major}}_{j_2}(\xi),
\]
where
\begin{align}
\Psi^{\Lambda_1,\mathrm{major}}_{j}\bigl(\xi(t_2)\bigr)
&=
\sum_{b/p\in\mathbb{Q}^{\Lambda_1}[1,2^{j_2/10}]}
\prod_{m\in\Lambda_1}
\psi\!\left(
\frac{\xi_m(t_2)-\frac{b_m}{p}}
{2^{-mj_1}2^{j_2/100}}
\right),\label{put3}\\
L^{\Lambda,\mathrm{major}}_{j_2}(\xi)
&=
\sum_{a/q\in\mathbb{Q}^{\Lambda}[1,2^{N^4j_2}]}
\psi\!\left(
\frac{\xi-a/q}
{2^{-10Kj_2+N^4j_2}}
\right).\nonumber
\end{align} 
Here, $m\neq0$ for every $m\in\Lambda_1$ by hypothesis.

The cutoff $L^{\Lambda,\mathrm{major}}_{j_2}$ defines a coarse major arc, whose neighborhoods are wider than those of the original major arcs and whose rational approximations allow larger denominators. Nevertheless, when combined with the layered major arc cutoff
$\Psi^{\Lambda_1,\mathrm{major}}_{j}\bigl(\xi(t_2)\bigr)$,
which localizes the sliced frequencies at a much finer scale and with much smaller denominators, the combined information is sufficiently precise to recover the original major arc determined by
$\Psi^{\Lambda,\mathrm{major}}_{j}(\xi)$.

\medskip

\noindent
{\bf The coarse approximation determines the sliced major arc.}
We first show that the coarse rational approximation of $\xi$ uniquely determines the rational approximation appearing in the sliced major arc cutoff
$\Psi_j^{\Lambda_1,\mathrm{major}}(\xi(t_2))$.
More precisely,
\[
\left(\frac{b_m}{p}\right)_{m\in\Lambda_1}
=
\left(
\frac{\sum_{n\in\Lambda(m)}a_{mn}t_2^n}{q}
\right)_{m\in\Lambda_1},
\]
as stated in the following lemma.

\begin{lemma}[Removal of $b/p$]\label{lem31}
For
$a/q=(a_{mn})/q\in\mathbb{Q}^{\Lambda}[1,2^{N^4j_2}]$
and $|t_2|\sim2^{j_2}$, define
\begin{align*}
\frac{a(t_2)}{q}
:=
\left(
\frac{a_m(t_2)}{q}
\right)_{m\in\Lambda_1}
:=
\left(
\frac{\sum_n a_{mn}t_2^n}{q}
\right)_{m\in\Lambda_1}.
\end{align*}
Then the summation over $b/p$ in
$\Psi_j^{\Lambda_1,\mathrm{major}}(\xi(t_2))$
can be eliminated:
\begin{align}\label{345}
\Psi^{\Lambda_1,\mathrm{major}}_{j}\bigl(\xi(t_2)\bigr)
L^{\Lambda,\mathrm{major}}_{j_2}(\xi)
&\le
\sum_{a/q\in\mathbb Q^\Lambda[1,2^{N^4j_2}]}
\chi_{\mathbb Q^{\Lambda_1}[1,2^{j_2/10}]}
\!\left(\frac{a(t_2)}{q}\right)
\\
&\qquad\times
\psi\!\left(
\frac{\xi-a/q}{2^{-10Kj_2+N^4j_2}}
\right)
\psi^{\Lambda_1}_{j}\!\left(
\frac{\xi(t_2)-a(t_2)/q}{2^{j_2/100}}
\right).\nonumber
\end{align}

\end{lemma}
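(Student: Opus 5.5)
We prove the pointwise bound \eqref{345} at a fixed $\xi$ and $t_2$ with $|t_2|\sim 2^{j_2}$. If the left side is zero there is nothing to show, so assume both factors are nonzero. The cutoff $\psi$ takes values in $[0,1]$ and distinct rationals give disjoint supports. Hence exactly one $b/p\in\mathbb{Q}^{\Lambda_1}[1,2^{j_2/10}]$ contributes to $\Psi^{\Lambda_1,\mathrm{major}}_j(\xi(t_2))$. Likewise exactly one $a/q\in\mathbb{Q}^\Lambda[1,2^{N^4j_2}]$ contributes to $L^{\Lambda,\mathrm{major}}_{j_2}(\xi)$; uniqueness for $a/q$ follows as in Claim~\ref{cl31}, since $2^{-2N^4j_2}\gg 2^{-10Kj_2+N^4j_2}$. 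The left side is therefore the single product
\[
\psi^{\Lambda_1}_{j}\Bigl(\tfrac{\xi(t_2)-b/p}{2^{j_2/100}}\Bigr)\,\psi\Bigl(\tfrac{\xi-a/q}{2^{-10Kj_2+N^4j_2}}\Bigr).
\]
It then suffices to show that $b/p=a(t_2)/q$ as rational vectors. This identity gives $a(t_2)/q\in\mathbb{Q}^{\Lambda_1}[1,2^{j_2/10}]$ after reduction, so the characteristic function equals $1$, and the first factor is the $\psi^{\Lambda_1}_j$ factor on the right side. The $a/q$ term is one of the nonnegative summands on the right, so \eqref{345} follows.

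\textbf{Step 1: two approximations of the sliced frequency.} From the $L$-cutoff we have $|\xi_{mn}-a_{mn}/q|\le 2^{-10Kj_2+N^4j_2}$. Multiplying by $|t_2^n|\le 2^{j_2N}$ and summing over $n\in\Lambda(m)$ gives
\[
\Bigl|\xi_m(t_2)-\tfrac{a_m(t_2)}{q}\Bigr|\lesssim |\Lambda|\,2^{j_2N}2^{-10Kj_2+N^4j_2}\le 2^{-9Kj_2}.
\]
This uses $K=N^{10k}\gg N^4+N$. From the sliced cutoff we have $|\xi_m(t_2)-b_m/p|\le 2^{-mj_1}2^{j_2/100}$. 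Since $m\ge1$ and $j_1>20Kj_2$, this is at most $2^{-19Kj_2}$.

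\textbf{Step 2: the two rationals coincide.} Combining the bounds from Step 1,
\[
\Bigl|\tfrac{b_m}{p}-\tfrac{a_m(t_2)}{q}\Bigr|\le 2^{-8Kj_2}\qquad\text{for every } m\in\Lambda_1.
\]
If the two rationals differed, their difference would have denominator at most $pq\le 2^{j_2/10+N^4j_2}$. It would then be at least $2^{-(N^4+1)j_2}\gg 2^{-8Kj_2}$, which is a contradiction. Hence $b_m/p=a_m(t_2)/q$ for all $m\in\Lambda_1$, and therefore $b/p=a(t_2)/q$. Since $b/p$ is a reduced vector with $p\le 2^{j_2/10}$, the vector $a(t_2)/q$ reduces to an element of $\mathbb{Q}^{\Lambda_1}[1,2^{j_2/10}]$.

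\textbf{Main point of care.} The delicate step is interpreting $\chi_{\mathbb Q^{\Lambda_1}[1,2^{j_2/10}]}(a(t_2)/q)$ when the fraction $a(t_2)/q$ is not written in lowest terms. We read it, as the subsequent use intends, as membership of the rational vector after reduction. With that reading, Step 2 shows the indicator equals $1$. Apart from this, the argument only needs the scale separation $j_1\gg_\Lambda j_2$ and the largeness of $K$ relative to $N^4$.
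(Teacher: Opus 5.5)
Your proof is correct and follows the paper's argument. You use the same two approximations of $\xi_m(t_2)$: one from the sliced cutoff, using $m\ge 1$ and $j_1>20Kj_2$, and one from the coarse cutoff with the factor $2^{Nj_2}$. These give the same bound $|b_m/p-a_m(t_2)/q|\le 2^{-8Kj_2}$, and the denominator gap $1/(pq)$ then forces $b/p=a(t_2)/q$. The only difference is that you first isolate a single contributing $b/p$ and $a/q$ through disjointness of supports. The paper instead observes that every contributing pair in the expanded product must satisfy $b/p=a(t_2)/q$, so the double sum collapses. That makes your uniqueness step harmless but unnecessary.
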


\begin{proof}
Recall the definitions in \eqref{00gg} and \eqref{balance}. If $\Psi^{\Lambda_1,\mathrm{major}}_{j}\bigl(\xi(t_2)\bigr)
L^{\Lambda,\mathrm{major}}_{j_2}(\xi)\neq 0$, then one has
\begin{align}
&\left|
\xi_m(t_2)-\frac{b_m}{p}
\right|
\le
2^{-mj_1}2^{j_2/100}
\le
2^{-19Kj_2},\label{77}
\\
&\left|
\xi_m(t_2)-\frac{a_m(t_2)}{q}
\right|
=
\left|
\sum_n
\left(
\xi_{mn}-\frac{a_{mn}}{q}
\right)
t_2^n
\right|
\lesssim
2^{Nj_2}
2^{-10Kj_2+N^4j_2}
\le
2^{-9Kj_2},\notag
\end{align}
where the second inequality of (\ref{77}) follows from the hypothesis $m>0$.
Hence,
\[
\left|
\frac{b_m}{p}
-
\frac{a_m(t_2)}{q}
\right|
\le
2^{-8Kj_2},
\qquad
m\in\Lambda_1.
\]

Since
$1\le p\le2^{j_2/10}$
and
$1\le q\le2^{N^4j_2}$,
the above estimate implies that, for each
$a_m(t_2)/q$, there exists a unique rational number
$b_m/p$ satisfying
\begin{equation}
\frac{b_m}{p}
=
\frac{a_m(t_2)}{q},
\qquad
m\in\Lambda_1.
\end{equation}
Therefore, the rational approximation $b/p$ is uniquely determined by
$a(t_2)/q$, and the summation over $b/p$ in
\eqref{put3} can be eliminated, yielding
\eqref{345}.
\end{proof}

\noindent{\bf To the Final Major Arc}. One can split the summation $\Psi^{\Lambda_1,\rm{major}}_{j}(\xi(t_2))L^{\Lambda,\rm{major}}_{j_2}(\xi)$  in \eqref{345} as
\begin{align*}
&\sum_{ a/q\in  \mathbb{Q}^{\Lambda}[2^{j_2/10},\,2^{N^4 j_2}]}\chi_{\mathbb Q^{\Lambda_1}[1,2^{j_2/10}]}
\!\left(\frac{a(t_2)}{q}\right)
\psi\!\left(
\frac{\xi-a/q}{2^{-10Kj_2+N^4j_2}}
\right)
\psi^{\Lambda_1}_{j}\!\left(
\frac{\xi(t_2)-a(t_2)/q}{2^{j_2/100}}
\right).\\
&\qquad+ \sum_{a/q\in  \mathbb{Q}^{\Lambda}[1,2^{j_2/10}]} 
\psi\!\left(
\frac{\xi-a/q}{2^{-10Kj_2+N^4j_2}}
\right)
\psi^{\Lambda_1}_{j}\!\left(
\frac{\xi(t_2)-a(t_2)/q}{2^{j_2/100}}
\right)
\end{align*}
where    $a/q \in \mathbb{Q}^{\Lambda}[1,2^{j_2/10}]$ implies $ \chi_{\mathbb Q^{\Lambda_1}[1,2^{j_2/10}]}
\!\left(\frac{a(t_2)}{q}\right)=1.$
Together with the identity $$\Psi^{\Lambda,\rm{major}}_{j}(\xi)= \sum_{a/q\in  \mathbb{Q}^{\Lambda}[1, 2^{j_2/10}] } \prod_{(m,n)\in \Lambda}\psi\left(  \frac{  \xi_{mn}-\frac{a_{mn}}{q}}{ 2^{-j\cdot (m,n)}2^{j_2/10}} \right) \psi\left(\frac{\xi-a/q}{2^{-(10K-N^4)j_2}}\right),$$ one can split  
\begin{align*}
	&\sum_{j_1:j\in Z_1(2)}  \sum_{|t_2|\sim 2^{j_2}}    \frac{|H^{\Lambda_1}_{j_1}(\xi(t_2))|}{|t_2|}  \left(\Psi^{\Lambda_1,\rm{major}}_{j}(\xi(t_2))L^{\Lambda,\rm{major}}_{j_2}(\xi)-\Psi^{\Lambda,\rm{major}}_{j}(\xi)\right) \\
	&\le  \sum_{j_1:j\in Z_1(2)} E_j^{\rm{gauss}}(\xi)+E_j^{\rm{sub}}(\xi) +E_j^{\rm{osc}}(\xi). 
\end{align*}
Here, we define
\begin{align}
	E^{\rm{gauss}}_j(\xi)&:= \sum_{ a/q\in \mathbb{Q}^{\Lambda}[2^{j_2/10},2^{N^4 j_2}]}  \sum_{|t_2|\sim 2^{j_2}}    \frac{|H^{\Lambda_1}_{j_1}(\xi(t_2))|}{|t_2|}    \prod_{m\in\Lambda_1}\psi\left( \frac{\sum_{n\in\Lambda(m)}\left( \xi_{mn}-\frac{a_{mn}}{q} \right)t_2^n }{2^{-mj_1} 2^{ j_2/100}} \right) \nonumber \\
	&\times \psi\left(\frac{\xi-\frac{a}{q}}{2^{-10Kj_2+N^4j_2}}\right), \nonumber\\
	E^{\rm{sub}}_j(\xi)&:=  \sum_{ a/q\in \mathbb{Q}^{\Lambda}[1,2^{j_2/10}]}  \sum_{|t_2|\sim 2^{j_2}}   \frac{|H^{\Lambda_1}_{j_1}(\xi(t_2))|}{|t_2|}   \prod_{m\in\Lambda_1}\psi\left( \frac{\sum_{n\in\Lambda(m)}\left( \xi_{mn}-\frac{a_{mn}}{q} \right)t_2^n }{2^{-mj_1} 2^{ j_2/100}} \right)\nonumber \\
	&\times \left(1-\prod_{(m,n)\in\Lambda}\psi\left(  \frac{  \xi_{mn}-\frac{a_{mn}}{q}}{ 2^{-j\cdot (m,n)}2^{j_2/10}} \right)\right)   \psi\left(\frac{\xi-\frac{a}{q}}{2^{-10Kj_2+N^4j_2}}\right),    \notag\\
	E^{\rm{osc}}_j(\xi)&:=   \sum_{ a/q\in \mathbb{Q}^{\Lambda}[1,2^{j_2/10}]}  \sum_{|t_2|\sim 2^{j_2}}   \frac{|H^{\Lambda_1}_{j_1}(\xi(t_2))|}{|t_2|}  \left(  \prod_{m\in\Lambda_1}\psi\left( \frac{\sum_{n\in\Lambda(m)}\left( \xi_{mn}-\frac{a_{mn}}{q} \right)t_2^n }{2^{-mj_1} 2^{ j_2/100}} \right) -1\right)\notag\\
	&\times  \prod_{(m,n)\in\Lambda}\psi\left(  \frac{  \xi_{mn}-\frac{a_{mn}}{q}}{ 2^{-j\cdot (m,n)}2^{j_2/10}} \right).\nonumber
\end{align}
In $E^{\mathrm{gauss}}_j(\xi)$, we drop the constraint  $\chi_{\mathbb Q^{\Lambda_1}[1,2^{j_2/10}]}$
since the summands are nonnegative.\\

\noindent{\bf Three  Estimates.}
We shall prove Lemmas~\ref{lem850}, \ref{lem57}, and \ref{lem574} below.
The proofs respectively rely on the averaged Gauss sum estimate for
\(E^{\mathrm{gauss}}_j(\xi)\), the sublevel set estimate for
\(E^{\mathrm{sub}}_j\), and the oscillatory integral estimate for
\(E^{\mathrm{osc}}_j\).

Before proving the three lemmas, we first observe that for every $m\in \Lambda_1$,
$$  \left|  \xi_m(t_2)-\frac{a_m(t_2)}{q} \right| \lesssim 2^{-mj_1} 2^{ j_2/10}$$ in the  three multipliers \(E^{\mathrm{gauss}}_j(\xi)\), \(E^{\mathrm{sub}}_j\) and  \(E^{\mathrm{osc}}_j\). Combining   this  with  $q\le 2^{N^4j_2}<2^{j_1/10}$, we can apply Proposition  \ref{lemm29}  in every   summand  of the three $q$-sums to have
\begin{align} \label{2300}
	H^{\Lambda_1}_{j_1}( \xi(t_2) )&=  S^{\Lambda_1}\left(\frac{a(t_2)}{q} \right)  \mathcal{H}^{\Lambda_1}_{j_1}\left( \xi(t_2)-\frac{a(t_2)}{q}\right)+O(2^{-cj_1}).
\end{align}
By using the one parameter oscillatory integral estimate and Lemma \ref{po3}, one has
\begin{align}
	&\left|\mathcal{H}^{\Lambda_1}_{j_1}\left(\xi(t_2)-\frac{a(t_2)}{q}\right)\right| \lesssim\min\left\{ \left|\left(\xi_{m_1}(t_2)-\frac{a_{m_1}(t_2)}{q}\right)2^{m_1 j_1}\right|^{-c'}\right\}_{m_1\in \Lambda_1}\ \text{and}\   \label{hees}\\
	&\sup_{t_2,\xi}\sum_{j_1\in \mathbb{Z}}\left( \left|\mathcal{H}^{\Lambda_1}_{j_1}\left(\xi(t_2)-\frac{a(t_2)}{q}\right)\right|   + \left|\mathcal{H}^{\Lambda_1}_{j_1}\left(\xi(t_2)-\frac{a(t_2)}{q}\right)\right|^{1/2} \right)  \lesssim 1. \label{2203}
\end{align}

 \begin{lemma}\label{lem850}[Estimate of $E_j^{\rm{gauss}}$]
  There exists $c>0$ independent of $\xi$ such that 
$$\sum_{j_1:j\in Z_1(2)} |E^{\rm{gauss}}_j(\xi)|\lesssim 2^{-cj_2}.$$
\end{lemma}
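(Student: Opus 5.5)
We aim to show that $\sum_{j_1}|E^{\rm gauss}_j(\xi)|\lesssim 2^{-cj_2}$. The starting point is the integral approximation \eqref{2300}, applied inside each summand of $E^{\rm gauss}_j$. The error term $O(2^{-cj_1})$ contributes
\[
\sum_{j_1\gg_\Lambda j_2} 2^{-cj_1}\cdot \#\{a/q\}\lesssim 2^{-cj_2}.
\]
This holds because at most one $a/q\in\mathbb{Q}^{\Lambda}[1,2^{N^4j_2}]$ has $\psi\bigl((\xi-a/q)/2^{-10Kj_2+N^4j_2}\bigr)\ne 0$: distinct such rationals are separated by at least $2^{-2N^4j_2}$, which is much larger than $2^{-10Kj_2+N^4j_2}$. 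This uniqueness is the same separation argument as in Claim \ref{cl31}. So we may fix the single $a/q$ with $2^{j_2/10}\le q\le 2^{N^4j_2}$, and it remains to bound
\[
\sum_{j_1}\frac{1}{2^{j_2}}\sum_{|t_2|\sim 2^{j_2}} \Bigl|S^{\Lambda_1}\bigl(\tfrac{a(t_2)}{q}\bigr)\Bigr|\,\Bigl|\mathcal{H}^{\Lambda_1}_{j_1}\bigl(\xi(t_2)-\tfrac{a(t_2)}{q}\bigr)\Bigr|.
\]

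Next we interchange the sums over $t_2$ and $j_1$. By \eqref{2203}, $\sup_{t_2,\xi}\sum_{j_1}|\mathcal{H}^{\Lambda_1}_{j_1}(\xi(t_2)-a(t_2)/q)|\lesssim 1$. The quantity above is therefore bounded by
\[
\frac{1}{2^{j_2}}\sum_{|t_2|\sim 2^{j_2}}\Bigl|S^{\Lambda_1}\bigl(\tfrac{a(t_2)}{q}\bigr)\Bigr|.
\]
This is exactly the averaged Gauss sum of Proposition \ref{lem22p} with $k=2$ and $\ell=1$. The hypothesis $\Omega_1'\ne\emptyset$ holds because $m\neq0$ for every $(m,n)\in\Lambda$ in this section, so $\Lambda_1'=\Lambda_1$. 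Here $q$ satisfies $2^{j_2/20}\le 2^{j_2/10}\le q\le 2^{N^4j_2}\le 2^{N^{4k}j_2}$, which is precisely the range in \eqref{aver}. That estimate gives the bound $q^{-\delta_\Lambda}2^{-\delta_\Lambda j_2}\lesssim 2^{-\delta_\Lambda j_2}$. Since $t_2$ ranges over both signs, we split into $t_2>0$ and $t_2<0$; the negative part reduces to the positive one by replacing $a_{mn}$ with $(-1)^n a_{mn}$, which preserves $(a,q)=1$.

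The main delicate point is the reduction of $a(t_2)/q$: the Gauss sum $S^{\Lambda_1}(a(t_2)/q)$ is written with the unreduced denominator $q$, whereas \eqref{2300} was obtained from Proposition \ref{lemm29}, which was stated for reduced fractions. I would handle this by noting that the Poisson summation argument in the proof of \eqref{b40} works with any modulus $q\le 2^{j_1/10}$, reduced or not. The complete sum over $[q]$ of a $p$-periodic phase, with $p$ dividing $q$, equals the reduced Gauss sum, so \eqref{2300} holds verbatim with $S^{\Lambda_1}$ defined with denominator $q$. Proposition \ref{lem22p}, the source of \eqref{aver}, is phrased in exactly this unreduced form, so the two statements match. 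The bound $2^{-\delta_\Lambda j_2}$ from \eqref{aver} applies to the unreduced sum and needs no separate handling of the $t_2$ for which $a(t_2)/q$ reduces to a small denominator. Collecting the main term and the error term gives $\sum_{j_1}|E^{\rm gauss}_j(\xi)|\lesssim 2^{-cj_2}$ with $c=\min(\delta_\Lambda,c)$.
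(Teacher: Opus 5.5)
Your proposal is correct and follows the paper's own argument. You apply the integral approximation \eqref{2300} inside $E^{\rm gauss}_j$, pull out $\sup_{t_2}\sum_{j_1}|\mathcal{H}^{\Lambda_1}_{j_1}|$ via \eqref{2203}, bound the remaining average by the averaged Gauss sum estimate \eqref{aver} in the range $2^{j_2/10}\le q\le 2^{N^4j_2}$, and use the bounded overlap of the coarse arcs. Your extra remarks, namely the uniqueness of the coarse rational $a/q$, the reduction of negative $t_2$ via $a_{mn}\mapsto(-1)^n a_{mn}$, and the match between unreduced denominators in \eqref{2300} and Proposition~\ref{lem22p}, are correct and fill in details the paper leaves implicit.
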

\begin{proof}
Using (\ref{2300}),  one has
  \begin{align*}
\sum_{j_1:j\in Z_1(2)} E^{\rm{gauss}}_j(\xi)
&\lesssim \sum_{a/q\in  \mathbb{Q}^{\Lambda}[2^{j_2/10},2^{N^4j_2}]} \frac{1}{2^{j_2}}\sum_{t_2\sim 2^{j_2}}\left|S^{\Lambda_1}\left( \frac{a(t_2)}{q} \right)\right|\psi\left(\frac{\xi-\frac{a}{q}}{2^{-(10K-N^4)j_2}}\right) \\
 &\times\sup_{t_2,\xi} \sum_{j_1:j\in Z_1(2)}\left|\mathcal{H}^{\Lambda_1}_{j_1}\left( \xi(t_2)-\frac{a(t_2)}{q}  \right)\right| +2^{-cj_2}
 \end{align*}
Applying the Hilbert-transform estimate \eqref{2203} and 
 the averaged Gauss-sum estimate in Proposition \ref{lem22p} with $k=2$ and $\ell=1$, in the range $  2^{j_2/10}\le q\le 2^{N^4j_2}$, we obtain
$$
 \sum_{j_1:j\in Z_1(2)} E^{\rm{gauss}}_j(\xi)\lesssim \sum_{a/q\in  \mathbb{Q}^{\Lambda}[2^{j_2/10},2^{N^4j_2}]}2^{-cj_2} \psi\left(\frac{\xi-\frac{a}{q}}{2^{-(10K-N^4)j_2}}\right)  \lesssim 2^{-cj_2}.
$$
Thus, we proved Lemma \ref{lem850}.
\end{proof}

\begin{lemma}\label{lem57}[Estimate of $E_j^{\rm{sub}}$]\label{lem36}  There is $c>0$  independent of $\xi$ such that 
$$\sum_{j_1:j\in Z_1(2)} |E^{\rm{sub}}_j(\xi)| \lesssim 2^{-cj_2}.$$
\end{lemma}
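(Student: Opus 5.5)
Fix $\xi$, $j_2$ and $a/q\in\mathbb{Q}^{\Lambda}[1,2^{j_2/10}]$; the cutoffs $\psi\bigl((\xi-a/q)/2^{-10Kj_2+N^4j_2}\bigr)$ have disjoint supports in $a/q$ (same separation argument as in Claim~\ref{cl31}), so at most one $a/q$ contributes. Write $\beta=\xi-a/q$ and $\beta_m(t_2)=\sum_{n\in\Lambda(m)}\beta_{mn}t_2^n$. By \eqref{2300}, each $|H^{\Lambda_1}_{j_1}(\xi(t_2))|$ is bounded by $|S^{\Lambda_1}(a(t_2)/q)|\,|\mathcal{H}^{\Lambda_1}_{j_1}(\beta(t_2))|+O(2^{-cj_1})$. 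The $O(2^{-cj_1})$ part is summable over $j_1\gg_\Lambda j_2$ and gives $2^{-cj_2}$. We use only $|S^{\Lambda_1}|\le1$, since the decay will come from the $t_2$-average.

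On the support of $E^{\rm sub}_j$, the factor $1-\prod_{(m,n)}\psi(\cdot)$ is nonzero, so some $(m_0,n_0)\in\Lambda$ satisfies $|\beta_{m_0n_0}|2^{m_0j_1+n_0j_2}\ge 2^{j_2/10}/2$. At the same time the layered cutoff gives $|\beta_{m_0}(t_2)|\,2^{m_0j_1}\le 2^{j_2/100}$. Apply Lemma~\ref{eep1} with $k=1$ to the one-variable polynomial $t_2\mapsto\eta(t_2):=\beta_{m_0}(t_2)2^{m_0j_1}$. Here $A(j_2,\eta)=\sum_n|\beta_{m_0n}|2^{m_0j_1+nj_2}\ge 2^{j_2/10}/2$. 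Taking $\epsilon=2^{j_2/100}$ and $r\approx j_2/10-j_2/100$, the number of $t_2\sim2^{j_2}$ with $|\eta(t_2)|\le\epsilon$ is at most $C2^{j_2}2^{-(9/100d)j_2}$, where $d$ is the maximal degree.

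Insert this count into
\[
\sum_{j_1}\frac1{2^{j_2}}\sum_{t_2}|\mathcal{H}^{\Lambda_1}_{j_1}(\beta(t_2))|\,\chi_{\{|\eta(t_2)|\le\epsilon\}}.
\]
The difficulty is that $m_0$ and the sublevel set depend on $j_1$. I handle this by the $\epsilon$-power trick: bound $\sum_{j_1}|\mathcal{H}|\cdot\chi_{j_1}(t_2)$ by $(\sup_{j_1}\chi)\sum_{j_1}|\mathcal{H}|$ using \eqref{2203}, which requires a $j_1$-independent sublevel set. I expect this to be the main obstacle. Two fixes are available.

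\emph{Fix (a).} Note that $2^{m_0j_1}$ factors out of both the hypothesis and the constraint, so the condition becomes $|\beta_{m_0}(t_2)|\le 2^{-m_0j_1}2^{j_2/100}$ with $\max_n|\beta_{m_0n}|2^{nj_2}\ge 2^{-m_0j_1}2^{j_2/10}/2$. For each of the finitely many $m_0$, the set of $j_1$ for which this can hold while the layered cutoff is nonzero is an interval. Split dyadically in $j_1$ and use \eqref{hees}. For $j_1$ where $|\beta_{m_0}(t_2)|2^{m_0j_1}$ is large, $\mathcal{H}$ decays, and the geometric sum over $j_1$ is dominated by the terms near the threshold. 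The threshold varies with $t_2$ only through the sublevel set.

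\emph{Fix (b), simpler.} Use $|\mathcal{H}|\le|\mathcal{H}|^{1/2}\cdot 1$ and Cauchy--Schwarz in $(j_1,t_2)$:
\[
\sum_{j_1}\frac{1}{2^{j_2}}\sum_{t_2}|\mathcal{H}|\chi\le\Bigl(\sum_{j_1}\frac1{2^{j_2}}\sum_{t_2}|\mathcal{H}|\Bigr)^{1/2}\Bigl(\sum_{j_1}\frac1{2^{j_2}}\sum_{t_2}|\mathcal{H}|\chi\Bigr)^{1/2}.
\]
The first factor is $\lesssim1$ by \eqref{2203}. For the second, split $j_1$ into $j_1\le Cj_2$, where the count yields $j_2\,2^{-cj_2}$, and $j_1>Cj_2$, where the ratio of scales $2^{m_0j_1}$ forces $|\beta_{m_0}(t_2)|2^{m_0j_1}$ to grow geometrically away from the threshold, so \eqref{hees} gives $2^{-c(j_1-j_1^*)}$. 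Either way the total is $\lesssim2^{-cj_2}$.
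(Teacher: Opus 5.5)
There is a genuine gap, and it sits exactly at the step you flag as the main obstacle. Your setup is fine: a single $a/q$ contributes, \eqref{2300} replaces $H^{\Lambda_1}_{j_1}$, and Lemma~\ref{eep1} counts the sublevel set for each fixed $j_1$. Neither fix, however, produces the $j_1$-independent set of $t_2$ that you need.

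\textbf{Fix (b) does not work.} Since $\chi^2=\chi$, the second Cauchy--Schwarz factor is literally your left-hand side. The inequality therefore only returns $X\lesssim 1$ and gains nothing.

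The split at $j_1=Cj_2$ is also irrelevant. Which $j_1$ are active is dictated by the size of $\beta=\xi-a/q$, which is arbitrary. For small $\beta$, every active $j_1$ exceeds $Cj_2$, so that regime must carry the whole argument. There the indicator forces $|\beta_{m_0}(t_2)|2^{m_0j_1}\le 2^{j_2/100}$, so \eqref{hees} gives no useful decay on the support. Moreover, for $t_2$ near a zero of $\beta_{m_0}$, the active range $j_{\rm lo}\le j_1\le j_{\rm hi}(t_2)$ is arbitrarily long. Geometric decay above a $t_2$-dependent threshold gives at best $O(1)$ per $t_2$. That puts you back at counting the contributing $t_2$ by a set that does not depend on $j_1$.

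\textbf{Fix (a) is incomplete.} It stays at the level of an interval/threshold heuristic and never states such a set.

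\textbf{The missing step} is one line from your own observation in Fix (a). Both constraints carry the same factor $2^{-m_0j_1}$, so chaining them gives
\[
|\beta_{m_0}(t_2)|\le 2^{-m_0j_1}2^{j_2/100}\le 2\cdot 2^{-9j_2/100}R_{m_0,j_2}(\beta),\qquad R_{m,j_2}(\beta):=\sum_{n\in\Lambda(m)}|\beta_{mn}|2^{nj_2}.
\]
This condition does not involve $j_1$. Hence, for every $j_1$, the $t_2$-support of the summand lies in the fixed set
\[
\bigcup_{m\in\Lambda_1,\,R_{m,j_2}(\beta)>0}\bigl\{t_2:\ |\beta_m(t_2)|\le 2\cdot 2^{-9j_2/100}R_{m,j_2}(\beta)\bigr\}.
\]
Taking the union over $m$ handles the fact that $m_0$ may change with $j_1$. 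By Lemma~\ref{eep1}, applied to the normalized polynomials $\beta_m(t_2)/R_{m,j_2}(\beta)$ (for which $A=1$), this set has at most $C2^{j_2(1-c)}$ points.

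To finish, interchange the sums and use $\sum_{j_1}|H^{\Lambda_1}_{j_1}(\xi(t_2))|\lesssim 1$ uniformly in $t_2$. Dividing by $2^{j_2}$ gives $2^{-cj_2}$. This is exactly the paper's argument, encoded in the pointwise majorization \eqref{43ut}. It needs neither \eqref{2300} nor \eqref{hees}.
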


\begin{proof}
Fix the scale parameter \(j\) and the frequency offset \(\xi-a/q\).
For each \(m\in \Lambda_1\), write
$\eta_{mn}:=\xi_{mn}-\frac{a_{mn}}{q}.$
We then define the size functional
\begin{align}\label{4gh}
R_{m,j_2}(\eta):=\sum_{n\in\Lambda(m)} |\eta_{mn}|\, 2^{j_2 n}.
\end{align}
 Then we can control  the small support of the second multiplier $E^{\rm{sub}}_j(\xi)$ by the region independent of $j_1$ as
\begin{align}\label{43ut}
	&\prod_{m\in\Lambda_1}
	\psi\!\left(
	\frac{\sum_{n\in\Lambda(m)} \eta_{mn}\, t_2^n}
	{2^{-m j_1}\, 2^{j_2/100}}
	\right)
	\left(
	1-
	\prod_{(m,n)\in\Lambda}
	\psi\!\left(
	\frac{\eta_{mn}\, 2^{j_2 n}}
	{2^{-m j_1}\, 2^{j_2/10}}
	\right)
	\right)
	\\
	&\qquad\le
	\sum_{\substack{ m\in \Lambda_1\\ R_{m,j_2}(\eta)>0}}\psi\!\left(
	\frac{
		\frac{\sum_{n\in\Lambda(m)} \eta_{mn}\, t_2^n}
		{2R_{m,j_2}(\eta)}
	}
	{ 2^{j_2(1/100-1/10)}}
	\right). \nonumber
\end{align}
\begin{proof}[Proof of \eqref{43ut}]
By the second factor in \eqref{43ut}, there exists \(m\in\Lambda_1\) such that
\[
\prod_{n\in\Lambda(m)}
\psi\!\left(
\frac{\eta_{m n}\, 2^{j_2 n}}
{2^{-m j_1}\, 2^{j_2/10}}
\right)
\neq 1.
\]
Consequently, for this $m$, it holds that
\[R_{m,j_2}(\eta)=
\sum_{n\in\Lambda(m)}
\bigl|\eta_{m n}\bigr|\, 2^{j_2 n}
\;\ge\;
2^{-1}2^{-m j_1}2^{j_2/10}.
\]
This inequality on this  \(m\)-th factor in the first product of
\eqref{43ut} yields that
$$  \psi\!\left(
	\frac{\sum_{n\in\Lambda(m)} \eta_{mn}\, t_2^n}
	{2^{-m j_1}\, 2^{j_2/100}}
	\right) \le \psi\!\left(
	\frac{
		\frac{\sum_{n\in\Lambda(m)} \eta_{mn}\, t_2^n}
		{2R_{m,j_2}(\eta)}
	}
	{ 2^{j_2(1/100-1/10)}}
	\right)$$
which leads the desired inequality (\ref{43ut}). 
\end{proof}
 Note that the  right-hand side of \eqref{43ut} is independent of \( j_1 \).
Therefore, by using (\ref{43ut}), we can obtain that 
\begin{align}\label{subes1}
	\sum_{j_1:j\in Z_1(2)} |E_j^{\rm{sub}}(\xi)|
	& \lesssim \sum_{\substack{ m\in \Lambda_1\\ R_{m,j_2}(\eta)>0}}
	 \sum_{ a/q\in \mathbb{Q}^{\Lambda}[1,2^{j_2/10}]}     \sum_{|t_2|\sim 2^{j_2}}   
	\psi\!\left(
	\frac{
		\sum_{n\in\Lambda(m)} \left(\frac{\eta_{mn}}{R_{m,j_2}(\eta)} \right)t_2^n
	}
	{ 2\cdot 2^{j_2(1/100-1/10)}}
	\right)\\
	&\times
	\left(\sum_{j_1:j\in Z_1(2)} \frac{|H^{\Lambda_1}_{j_1}(\xi(t_2))|}{2^{j_2}}  \right)  
	\psi\left(\frac{\xi-\frac{a}{q}}{2^{-10Kj_2+N^4j_2}}\right). \nonumber
\end{align}
We first recall the 1D result for the discrete Hilbert transform:
$$\sup_{t_2,\xi} \sum_{j_1:j\in Z_1(2)} |H^{\Lambda_1}_{j_1}(\xi(t_2))| \le C\ \text{where $C$ is indendent of $t_2$}.$$
After finishing this $j_1$ sum estimate, we are able to  apply the lattice sublevel set estimate of Lemma \ref{eep1} associated with the 1D polynomial. By (\ref{4gh}), note that for every $m\in \Lambda_1$, $$ \sum_{n\in\Lambda(m)} \left|\frac{\eta_{mn}}{R_{m,j_2}(\eta)} 2^{nj_2}\right|=1.$$
Then, with  $\epsilon= 4\cdot2^{j_2(1/100-1/10)}$ and $2^r\epsilon =1$ where $j_2\ge r=(1/10-1/100) j_2-2$ in  Lemma \ref{eep1},
there exists $c>0$ such that
$$   \sum_{|t_2|\sim 2^{j_2}}   
\psi\!\left(
\frac{
	\sum_{n\in\Lambda(m)} \left(\frac{\eta_{mn}}{R_{m,j_2}(\eta)} \right)t_2^n
}
{ 2\cdot 2^{j_2(1/100-1/10)}}
\right)\lesssim 2^{j_2-cj_2}.  $$
Therefore,  we have
\begin{align*}
	\text{RHS of (\ref{subes1})}\lesssim |\Lambda_1| 2^{j_2-cj_2} \left( \frac{1}{2^{j_2}} \right) \sum_{ a/q\in \mathbb{Q}^{\Lambda}[1,2^{j_2/10}]}  \psi\left(\frac{\xi-\frac{a}{q}}{2^{-(10K-N^4)j_2}}\right)\lesssim 2^{-cj_2},
\end{align*}
which is the desired bound for  Lemma \ref{lem57}.\end{proof}

Next, we prove the following lemma.
\begin{lemma}\label{lem574}[Estimate of $E^{\rm{osc}}_j(\xi)$]  There is $c>0$ independent of $\xi$:
$$\sum_{j_1:j\in Z_1(2)} |E^{\rm{osc}}_j(\xi)|\lesssim 2^{-cj_2}.$$
\end{lemma}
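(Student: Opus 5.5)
We reduce the estimate to a pure oscillatory-integral statement through the integral approximation \eqref{2300}. Fix $a/q\in\mathbb{Q}^{\Lambda}[1,2^{j_2/10}]$ and set $\eta_{mn}:=\xi_{mn}-a_{mn}/q$. On the support of the last product in $E^{\rm osc}_j$ we have $|\eta_{mn}|\le 2^{-j\cdot(m,n)}2^{j_2/10}$. Hence $|\xi_m(t_2)-a_m(t_2)/q|\lesssim 2^{-mj_1}2^{j_2/10}$, and \eqref{2300} applies with $q\le 2^{j_2/10}$; the supports in $a/q$ are disjoint. The factor $\bigl(\prod_{m}\psi(\cdots)-1\bigr)$ is nonzero only when some $m\in\Lambda_1$ satisfies
\[
\Bigl|\sum_{n\in\Lambda(m)}\eta_{mn}t_2^n\Bigr|\,2^{mj_1}\ge 2^{-1}2^{j_2/100},
\]
that is, only when the sliced frequency $\xi(t_2)-a(t_2)/q$ is ``high'' at scale $j_1$. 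On this set we combine $|S^{\Lambda_1}(a(t_2)/q)|\le 1$ with the decay \eqref{hees} and write
\[
\bigl|\mathcal{H}^{\Lambda_1}_{j_1}(\xi(t_2)-a(t_2)/q)\bigr|\lesssim \bigl|\mathcal{H}^{\Lambda_1}_{j_1}(\cdot)\bigr|^{1/2}\,2^{-c'j_2/200}.
\]
The error term $O(2^{-cj_1})$ from \eqref{2300}, summed over $j_1\gg_\Lambda j_2$ and over the at most one relevant $a/q$, contributes $O(2^{-cj_2})$.

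Next we sum over $j_1$ for each fixed $t_2$ using \eqref{2203}, which is uniform in $t_2$ and $\xi$:
\[
\sum_{j_1}\bigl|\mathcal{H}^{\Lambda_1}_{j_1}\bigr|^{1/2}\lesssim 1.
\]
Averaging over $|t_2|\sim 2^{j_2}$ with the weight $1/|t_2|$ costs nothing. Summing over $a/q$ then gives a bounded factor, since at most one $a/q$ survives for given $\xi$ because the supports are disjoint. The result is $\sum_{j_1}|E^{\rm osc}_j(\xi)|\lesssim 2^{-cj_2}$.

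The main obstacle is legitimacy. The high-frequency lower bound depends on $j_1$ through $2^{mj_1}$, so the $j_1$-summation must not be interchanged in a way that loses the decay. Because the bound $|\mathcal H|\lesssim|\mathcal H|^{1/2}2^{-cj_2}$ is pointwise in $(j_1,t_2)$, and only the $j_1$-independent sum \eqref{2203} is used afterwards, this is fine.

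A second, minor point is that \eqref{hees} takes a minimum over $m_1$, whereas we only know that one $m$ is large. We therefore take the specific large $m$, which is exactly what the bound $\min\{\cdot\}\le$ (any entry) allows.
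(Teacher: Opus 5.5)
Your argument is correct and follows the paper's proof essentially step for step. The support condition of $\bigl(\prod_{m}\psi(\cdots)-1\bigr)$ forces a high sliced frequency, \eqref{hees} then gives $|\mathcal{H}^{\Lambda_1}_{j_1}|\lesssim|\mathcal{H}^{\Lambda_1}_{j_1}|^{1/2}2^{-cj_2}$ pointwise, and \eqref{2300}, \eqref{2203} and disjointness in $a/q$ finish the estimate. The one point you should state explicitly is that the surviving $a/q$ is independent of $j_1$, which you need when you put the $j_1$-sum inside the $a/q$-sum. The paper secures this by majorizing $\prod_{(m,n)\in\Lambda}\psi(\cdots)$ by the $j_1$-independent cutoff $\psi\bigl((\xi-a/q)/2^{-Kj_2}\bigr)$. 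This is legitimate because $m\neq 0$ and $j_1\gg_\Lambda j_2$, and these coarse cutoffs are disjointly supported for $q\le 2^{j_2/10}$.
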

\begin{proof}
By  the support condition in  $E^{\rm{osc}}_j(\xi) $ of
$$\left( 1- \prod_{m\in\Lambda_1}\psi\left( \frac{\sum_{n\in\Lambda(m)}\left( \xi_{mn}-\frac{a_{mn}}{q} \right)t_2^n }{2^{-mj_1} 2^{ j_2/100}} \right) \right),$$
one can observe that $  \left|\xi_m(t_2)-\frac{a_m(t_2)}{q}\right|2^{m j_1} \gtrsim 2^{j_2/100}  $ for some $m\in\Lambda_1$. This with (\ref{hees}) yields that
$\left|  \mathcal{H}^{\Lambda_1}_{j_1}\left(\xi(t_2)-\frac{a(t_2)}{q}\right)\right|\lesssim 2^{-cj_2} . $
By using this decay together with  (\ref{2300}) and (\ref{2203}),  we have with an error term $O(2^{-cj_2})$,
\begin{align*} 
\sum_{j_1:j\in Z_1(2)} |E^{\rm{osc}}_{j}(\xi)|
 &\lesssim \sum_{j_1:j\in Z_1(2)} \sum_{t_2\sim 2^{j_2}} \frac{1}{2^{j_2}} \sum_{ a/q\in \mathbb{Q}^{\Lambda}[1,2^{j_2/10}]}   \psi\left(\frac{\xi-\frac{a}{q}}{2^{-Kj_2}}\right)\\
&\times \left| \mathcal{H}^{\Lambda_1}_{j_1}\left(\xi(t_2)-\frac{a(t_2)}{q}\right)\right|\left( 1- \prod_{m\in\Lambda_1}\psi\left( \frac{\sum_{n\in\Lambda(m)}\left( \xi_{mn}-\frac{a_{mn}}{q} \right)t_2^n }{2^{-mj_1} 2^{ j_2/100}} \right) \right)\nonumber\\
&
\lesssim  \sum_{t_2\sim 2^{j_2}} \frac{1}{2^{j_2}} \sum_{ a/q\in \mathbb{Q}^{\Lambda}[1,2^{j_2/10}]}  2^{-cj_2/2}\psi\left(\frac{\xi-\frac{a}{q}}{2^{-Kj_2}}\right)  \le 2^{-cj_2/2}.
\end{align*}
Therefore, we proved Lemma \ref{lem574}.
\end{proof}

\section{Proof of Theorem \ref{th41} in the  Multi-Parameter Case}\label{sec5}
In this section, we extend the two-parameter arguments developed in the previous section to the general $k$-parameter setting. More precisely, we shall consider $\Lambda \subset \mathbb{Z}_{+}^k$ satisfying the condition of (\ref{wep2}) and   establish the corresponding result for
\begin{align*} 
	H^{\Lambda}_J(\xi)= \sum_{t\in \mathbb{Z}^k}  \frac{e^{2\pi i \sum_{\mathfrak{m}\in\Lambda} \xi_{\mathfrak{m}} t^{\mathfrak{m}}}\chi_j(t)}{t_1\cdots t_k} \ \text{ where $J=(j_1,\cdots,j_k)$}.
\end{align*}

   Fix $k\in \mathbb{N}$. To prove  Theorem \ref{th41},  we shall use the induction argument on (\ref{bb330}).
 Let $ k_0\in  [k]$. For $J_0=(j_1,\cdots,j_{k_0})\in \mathbb{Z}^{k_0}_+$ and $\Omega\subset \mathbb{Z}_+^{k_0}$ with ${\bf 0}\notin \Omega$, as in (\ref{100s}), we  define
\begin{align*}
\Psi^{\Omega,\mathrm{major}}_{J_0}(\xi) :=\sum_{a/q \in \mathbb{Q}^{\Omega}[1, 2^{j_{k_0}/10}]}
\psi^{\Omega}_{J_0} \left( \frac{\xi- \frac{a}{q}}{2^{j_{k_0}/10}} \right)\ \text{where}\   \psi^{\Omega}_{J_0} =\prod_{\mathfrak{m}\in \Omega}\psi\left(\xi_{\mathfrak{m}} 2^{J_0\cdot \mathfrak{m}} \right).
\end{align*}
We first consider the case $k_0=1$. For  every $\Omega\subset \mathbb{Z}_+^1$ and every $h\ge 0$, there is $c>0$ such that
$$ \sum_{j_1\in Z(1); j_1\ge h}| H^{\Omega}_{j_1}(\xi) \left( 1-  \Psi^{\Omega,\mathrm{major}}_{j_1}(\xi)\right)|\le 2^{-ch}. $$ So, (\ref{bb330})  holds when  $k_0=1$.
We now proceed with the induction step as follows:\\
{\bf Induction Hypothesis.}\  Assume  that   for any $ k_0\in  [k-1]$ and any $\Omega\subset \mathbb{Z}_+^{k_0}$  satisfying the evenness condition of (\ref{wep2}),  there is $c>0$ independent of $h\ge 0$:
\begin{align}\label{bb33}
 \sum_{J_0=(j_1,\cdots,j_{k_0})\in  Z(k_0)\ \text{and}\ j_{k_0}\ge h\ge 0}|H^{\Omega}_{J_0}(\xi)(1-\Psi^{\Omega,\rm{major}}_{J_0}(\xi))|\lesssim 2^{-ch}. 
\end{align}
\\
{\bf Goal.}\  Define the major-arc cutoff  function  by
$$  \Psi^{\Lambda,\mathrm{major}}_{J}(\xi) := \sum_{a/q \in \mathbb{Q}^{\Lambda}[1, 2^{j_{k}/10}]}
\psi^{\Lambda}_{J} \left( \frac{\xi- \frac{a}{q}}{2^{j_{k}/10}} \right)\ \text{where}\ J=(j_1,\cdots,j_{k}). $$ Then,
under  the  condition (\ref{wep2}),  we aim to show that for any  $\ell\in [k-1]$, there exists a constant \( c > 0 \), independent of \( J_2=(j_{\ell+1},\cdots,j_k) \) and \( \xi \), such that 
\begin{align}\label{rpkg9}
 \sum_{J_1=(j_1,\cdots,j_\ell):J=(j_1,\cdots,j_k)\in Z_{\ell}(k)}|H^{\Lambda}_J(\xi)(1-\Psi^{\Lambda,\rm{major}}_{J}(\xi))|\lesssim 2^{-cj_{\ell+1}}.
\end{align}
This directly implies the property (\ref{bb330}) for  $k$. Here, we recall   \begin{align*}
	Z(k)=\bigcup_{\ell=0}^{k-1}Z_{\ell}(k) \ \text{where}   \	Z_0(k)&:=\{ j\in Z(k):   j_{1}\approx_{\Lambda}  \cdots\approx_{\Lambda} j_k\}\  \text{and} \\
	Z_{\ell}(k)&:=\{j\in Z(k):   j_\ell\gg_{\Lambda} j_{\ell+1}\approx_{\Lambda}\cdots\approx_{\Lambda} j_k\}. 
\end{align*}

\begin{remark}\label{yu1}
 For any $ k_0\in  [k-1]$ and any $\Omega\subset \mathbb{Z}_+^{k_0}$  satisfying the evenness condition of (\ref{wep2}),	by the induction hypothesis (\ref{bb33}) combined with    the  major arc estimate in Proposition \ref{lemm29}, one has
	\begin{align*} 
		&\sum_{J_0=(j_1,\cdots,j_{k_0})\in  Z(k_0)} |H^{\Omega}_{J_0}(\xi)|\\
		&\le\sum_{J_0=(j_1,\cdots,j_{k_0})\in  Z(k_0)} \left(|H^{\Omega}_{J_0}(\xi)(1-\Psi^{\Omega,\rm{major}}_{J_0}(\xi))|+ |H^{\Omega}_{J_0}(\xi) \Psi^{\Omega,\rm{major}}_{J_0}(\xi)|\right) \lesssim 1. 
	\end{align*}

\end{remark}

\begin{definition}[Projection and Fiber]\label{diid}
Let $\Lambda\subset \mathbb{Z}_+^k$ and $\ell \in [k-1]$. For simplicity,   write
\begin{align*}
& \mathfrak{m}=(m,n)\in \Lambda \text{ where $m=(m_1,\cdots,m_\ell)$ and $n=(m_{\ell+1},\cdots,m_k)$.} 
\end{align*}
Next, express the variable $\mathfrak{t}=(t_1,\cdots,t_k)$ as
\begin{align*}
\mathfrak{t}=(\mathfrak{t}_1, \mathfrak{t}_2)\ \text{where $\mathfrak{t}_1=(t_1,\cdots,t_\ell)$ and $\mathfrak{t}_2=(t_{\ell+1},\cdots,t_k)$}.
\end{align*}
Similarly, define
$${\bf 1}:=(1,\cdots,1)=({\bf 1}_1,{\bf 1}_2)\ \text{where}\ {\bf 1}_1\in \mathbb{Z}_+^\ell,{\bf 1}_2\in \mathbb{Z}_{+}^{k-\ell}. $$
Set  the projection space of $m$ as the image of the projection map $\pi_{\le \ell}:\mathbb{Z}_{+}^k\rightarrow \mathbb{Z}_{+}^{\ell}$:
\begin{align}\label{11pp}
\Lambda_{1}:=\{m=(m_1,\cdots,m_\ell) \in \mathbb{Z}_+^{\ell}:(m,n)\in \Lambda\text{ for some } n\in\mathbb{Z}_{+}^{k-\ell}\}, 
\end{align}
and set the fiber over a fixed $m\in \Lambda_{1}$ as 
\begin{align*}
\Lambda(m):=\{n=(m_{\ell+1},\cdots,m_k):  (m,n)\in \Lambda\}.
\end{align*}
Next, we  define $\Lambda'\subset\Lambda$ as
\begin{align} \label{desi1}
\Lambda':=  
\{ (m,n)\in\Lambda: m\ne {\bf 0}\}, \  
\end{align}
with its projection 
  $\Lambda'_{1}$ and a fiber $\Lambda'(m)$ defined as \begin{align*}
	\Lambda_{1}' &:=\{m: (m,n)\in \Lambda' \text{ for some } n\in\mathbb{Z}_{+}^{k-\ell}\} \ \text{and}\ \Lambda'(m):= \{ n: (m,n)\in \Lambda' \}.
\end{align*} 
\end{definition}
\begin{definition}[Layer Polynomials]
For $\mathfrak{t}=(t_1,\cdots,t_k) = (\mathfrak{t}_1,\mathfrak{t}_2)\in \mathbb{Z}^\ell\times \mathbb{Z}^{k-\ell}$ in the above,  their powers of the exponents $\mathfrak{m}=(m,n)\in \Lambda$ are given by
\begin{align*}
\mathfrak{t}^{\mathfrak{m}}=t_1^{m_1}\cdots t_k^{m_k}\ \text{where}\ \mathfrak{t}_1^m=t_1^{m_1}\cdots t_\ell^{m_\ell}\ \text{and}\  \mathfrak{t}_2^n=t_{\ell+1}^{m_{\ell+1}}\cdots t_k^{m_k}.
\end{align*}
Then,  the polynomial $\sum_{\mathfrak{m}\in \Lambda}\xi_{\mathfrak{m}} t^{\mathfrak{m}}$  in the $k$ variables $\mathfrak{t}=(t_1,\cdots,t_k)$  may be viewed as a polynomial in the first $\ell$ variables $\mathfrak{t}_1=(t_1,\cdots,t_\ell)$ whose coefficients depend  on the remaining variables $\mathfrak{t}_2$. More precisely,
\begin{align*} 
 \sum_{(m,n)\in \Lambda} \xi_{mn} \mathfrak{t}_1^{m}\mathfrak{t}_2^n=\sum_{m\in \Lambda_{1}} \xi_{m}(\mathfrak{t}_2)  \mathfrak{t}_1^{m}, 
\end{align*}
where    the coefficient $ \xi_{m}(\mathfrak{t}_2)$ of \( \mathfrak{t}_1^{m}   \)   is itself a polynomial in $\mathfrak{t}_2$, given by 
\begin{align*}
 \xi_{m}(\mathfrak{t}_2) = \sum_{n\in \Lambda(m)} \xi_{mn}\mathfrak{t}_2^n.
\end{align*}
Set the vector polynomial
 as $\xi(\mathfrak{t}_2)=(\xi_{m}(\mathfrak{t}_2))_{m\in \Lambda_{1}}$, 
and for $J=(J_1,J_2)\in Z_\ell(k)$  where
$\text{$J_1=(j_1,\cdots,j_\ell)$ and $J_2=(j_{\ell+1},\cdots,j_k)$},$
we define the multiplier of the $\ell$-parameter discrete Hilbert transform by
\begin{align*}
H^{\Lambda_{1}}_{J_1}\left( \xi(\mathfrak{t}_2) \right) = \sum_{ |\mathfrak{t}_1| \sim 2^{J_1}} 
\frac{e^{2\pi i \sum_{m \in \Lambda_{1}} \xi_{m}(\mathfrak{t}_2) \, \mathfrak{t}_1^{m} }  }{t_1\cdots t_\ell}= \sum_{|\mathfrak{t}_1|\sim 2^{J_1} } 
\frac{e^{2\pi i \sum_{m \in \Lambda_{1}} \xi_{m}(\mathfrak{t}_2) \, \mathfrak{t}_1^{m} }  }{\mathfrak{t}_1^{{\bf 1}_1} }.
\end{align*}
Here, the notation  $|\mathfrak{t}_1|\sim 2^{J_1}$  abbreviates the conditions $ |t_1| \sim 2^{j_1},\cdots,|t_\ell|\sim 2^{j_{\ell}}$.
With this notation, we can express the full sum \( H^{\Lambda}_{J}(\xi) \) for \(J= (J_1,J_2) \in Z_\ell(k) \) as
\begin{align*}
H^{\Lambda}_J(\xi)=\sum_{| \mathfrak{t}_2 | \sim 2^{J_2}} 
\frac{  H^{\Lambda_{1}}_{J_1}\left( \xi(\mathfrak{t}_2) \right)}{t_{\ell+1}\cdots t_k}= \sum_{|\mathfrak{t}_2|\sim 2^{J_2}} 
\frac{  H^{\Lambda_{1}}_{J_1}\left( \xi(\mathfrak{t}_2) \right)}{\mathfrak{t}_{2}^{{\bf 1}_2}},\  
\end{align*}
where $|\mathfrak{t}_2|\sim 2^{J_2}:|t_{\ell+1}|\sim 2^{j_{\ell+1}},\cdots, |t_k|\sim 2^{j_k}$. Note also that
$$H^{\Lambda_{1}}_{J_1}\left( \xi(\mathfrak{t}_2) \right)=e^{2\pi i \xi_{0}(\mathfrak{t}_2)} \, H^{\Lambda'_{1}}_{J_1}\left( \xi(\mathfrak{t}_2) \right)\ \text{where}\ \xi_{0}(\mathfrak{t}_2) = \sum_{n \in \Lambda(0)} \xi_{0n} \mathfrak{t}_2^n.$$
\end{definition}

 \begin{definition}[Layered Major Arc] 
Consider $\Lambda'$ in (\ref{desi1}). We set
\[
\xi_{\Lambda'_{1}} := (\xi_{m})_{m\in \Lambda'_{1}} \in \mathbb{R}^{|\Lambda'_{1}|}, \quad
\xi_{\Lambda'} := (\xi_{mn})_{(m,n) \in \Lambda'} \in \mathbb{R}^{|\Lambda'|},
\]
and the corresponding smooth cutoffs:
\[
\psi^{\Lambda'_{1}}_{J_1}(\xi_{\Lambda'_{1}}) := \prod_{m \in \Lambda'_{1}} \psi\left( \xi_{m}   2^{m\cdot J_1} \right), \quad
\psi^{\Lambda'}_{J}(\xi_{\Lambda'}) := \prod_{(m,n)\in \Lambda'} \psi\left( \xi_{mn}   2^{(m,n)\cdot J} \right).
\]
With  $K$ in (\ref{00gg}), define  \begin{align}\label{rho3}
	\rho:=(400K)^{-k}\delta_{\Lambda}.
\end{align}The constant $\rho$ in the cutoff functions is required to address technical issues arising in Subsection~\ref{5.4}. Then,
we redefine the cutoff functions of two different layered major arcs:
\begin{align}\label{4022k9}
\Psi^{\Lambda'_{1},\mathrm{major}}_{J}(\xi(\mathfrak{t}_2)) &:= \sum_{b/p \in \mathbb{Q}^{\Lambda'_{1}}[1, 2^{j_k/10}]}
\psi^{\Lambda'_{1}}_{J_1} \left( \frac{\xi(\mathfrak{t}_2) - \frac{b}{p}}{2^{j_k(\rho/100)}} \right), \nonumber \\
\Psi^{\Lambda',\mathrm{major}}_{J}(\xi_{\Lambda'}) &:= \sum_{a/q \in \mathbb{Q}^{\Lambda'}[1, 2^{j_k/10}]}
\psi^{\Lambda'}_{J} \left( \frac{\xi_{\Lambda'} - \frac{a}{q}}{2^{j_k(\rho/10)}} \right).
\end{align}
The corresponding  cutoffs defined by
\begin{align}
L_{J_2}^{\Lambda'_1,\rm{major}}(\xi(\mathfrak{t}_2))&:=\sum_{b/p\in \mathbb{Q}^{\Lambda'_1}[1,2^{j_k/10}]} \psi\left(\frac{\xi(\mathfrak{t}_{2}) -\frac{b}{p}}{ 2^{-10Kj_{\ell+1}} }\right),\label{88332}\\
L^{\Lambda',\rm{major}}_{J_2}(\xi_{\Lambda'})&:= \sum_{a/q\in \mathbb{Q}^{\Lambda'}[1,2^{N^{4(k-\ell)}j_{\ell+1}}]} \psi\left(\frac{\xi_{\Lambda'}-a/q}{2^{-10Kj_{\ell+1}+N^{4(k-\ell)}j_{\ell+1}  }}\right).\label{56}
\end{align}
The above cutoffs $L_{J_2}^{\Lambda'_1,\rm{major}}(\xi(\mathfrak{t}_2))$ and $L^{\Lambda',\rm{major}}_{J_2}(\xi_{\Lambda'})$ serve as connectors from major arcs of the sliced frequency $\xi(\mathfrak{t}_2)$ to major arcs of the original frequency $\xi$. 
They are  coarse major arc cutoffs:
\[
\operatorname{supp} \left[ \Psi^{\Lambda',\mathrm{major}}_{J} \right]
\subset \operatorname{supp} \left[L^{\Lambda',\mathrm{major}}_{J_2} \right],\]
since for each $  (m,n) \in \Lambda'  $ with $m=(m_1,\cdots,m_\ell)\ne {\bf 0}$   it holds that
\begin{align}\label{pik3}
2^{-J \cdot (m,n)}   2^{j_k/10} \ll 2^{-10 K j_{\ell+1}}\ \text{ for $J\in Z_\ell(k)$  where $\ell\le k-1$}.
\end{align}
 \end{definition}
We then decompose the multiplier in the left hand side of (\ref{rpkg9}) as
\[  H^{\Lambda}_J(\xi) \left( 1 - \Psi^{\Lambda,\mathrm{major}}_{J}(\xi) \right)=
\sum_{|\mathfrak{t}_2|\sim 2^{J_2}} 
\frac{ H^{\Lambda_{1}}_{J_1}\left( \xi(\mathfrak{t}_2) \right)}{\mathfrak{t}_{2}^{{\bf 1}_2}} \left( 1 - \Psi^{\Lambda,\mathrm{major}}_{J}(\xi) \right),
\]
into the following four components:
\begin{align}\label{479aa9}
&\sum_{|\mathfrak{t}_2|\sim 2^{J_2}}
\frac{ H^{\Lambda_{1}}_{J_1}\!\left( \xi(\mathfrak{t}_2) \right)}{\mathfrak{t}_{2}^{{\bf 1}_2}}
\begin{cases}
1 - \Psi^{\Lambda'_{1},\mathrm{major}}_{J}\bigl(\xi(\mathfrak{t}_2)\bigr), \\[3pt]
\Psi^{\Lambda'_{1},\mathrm{major}}_{J}\bigl(\xi(\mathfrak{t}_2)\bigr)\,
\bigl( 1 - L^{\Lambda',\mathrm{major}}_{J_2}(\xi_{\Lambda'}) \bigr), \\[3pt]
\Psi^{\Lambda'_{1},\mathrm{major}}_{J}\bigl(\xi(\mathfrak{t}_2)\bigr)\,
L^{\Lambda',\mathrm{major}}_{J_2}(\xi_{\Lambda'})
- \Psi^{\Lambda',\mathrm{major}}_{J}(\xi_{\Lambda'}),
\end{cases} \\[5pt]
&\quad +
\sum_{|\mathfrak{t}_2|\sim 2^{J_2}}
\frac{ H^{\Lambda_{1}}_{J_1}\!\left( \xi(\mathfrak{t}_2) \right)}{\mathfrak{t}_{2}^{{\bf 1}_2}}
\Bigl(
\Psi^{\Lambda',\mathrm{major}}_{J}(\xi_{\Lambda'})
- \Psi^{\Lambda,\mathrm{major}}_{J}(\xi)
\Bigr). \nonumber
\end{align}
To prove \eqref{rpkg9}, it suffices to show that, for each of the four terms above, the sum over $J_1$ admits a decay factor of $2^{-c j_{\ell+1}}$.
The first three terms correspond  to (\ref{283a})--(\ref{283c}) for the case $k=2$. The fourth term is new and appears only when $\Lambda\ne \Lambda'$. In this case, the property (\ref{pik3}) may fail for terms $(m,n)\in\Lambda\setminus\Lambda'$ with $m={\bf 0}$.
\\
\\
\textbf{Overview of the Proof of \eqref{479aa9}.}\\
\noindent\textbf{1. The first term of \eqref{479aa9}.}
For the first component of \eqref{479aa9}, we combine the argument underlying 
\eqref{283a} with the induction hypothesis \eqref{bb33} to recover the same minor–arc gain as in the case $k=2$. \\
\noindent\textbf{2. The second term of \eqref{479aa9}.}
We now turn to the second line of \eqref{479aa9}.
Refining the argument used in \eqref{283b},  we may pass
\[\ \text{from}\ 
\Psi_{J}^{\Lambda'_1,\mathrm{major}}\!\left(\xi(\mathfrak{t}_2)\right)
\ \text{to}\ 
\Psi_{J}^{\Lambda'_1,\mathrm{major}}\!\left(\xi(\mathfrak{t}_2)\right)
\,L^{\Lambda',\mathrm{major}}_{J_2}(\xi_{\Lambda'}).
\]
To carry out this refinement, we transfer the localization
\[
L_{J_2}^{\Lambda'_1,\mathrm{major}}\!\left(\xi(\mathfrak{t}_2)\right)
\ \text{to}\ 
L_{J_2}^{\Lambda',\mathrm{major}}(\xi_{\Lambda'})
\]
of connecting major arcs (coarse major arcs) appearing in  \eqref{88332} and \eqref{56}.

This transfer is achieved through a sequence of consecutive
connecting coarse major arc  estimates, which allow us to pass successively  
\[\ \text{from}\
\xi(t_{\ell+1},\ldots,t_k)
\ \text{to}\
\xi(t_{\ell+2},\ldots,t_k),
\quad \ldots, \quad
\text{and finally from $\xi(t_k)$ to}\ 
\xi.
\]
Here, for \(s=\ell+1,\ldots,k-1\), the layered polynomials are defined by
\[
\xi(t_{s+1},\ldots,t_k)
=
\Bigl(
\sum_{m_{s+1}}
\xi_{m_1\cdots m_s m_{s+1}}(t_{s+2},\ldots,t_k)\,
t_{s+1}^{m_{s+1}}
\Bigr)_{(m_1,\ldots,m_s)}.
\]
The crucial estimate for the passage from
\(\xi(t_{s+1},\ldots,t_k)\) to \(\xi(t_{s+2},\ldots,t_k)\)
follows from Proposition~\ref{prop6100} applied with
\(t_{s+2},\ldots,t_k\) frozen. This estimate reveals the following major-arc rigidity phenomenon:
\begin{align*}
	&\text{if }\xi(t_{s+1},t_{s+2},\ldots,t_k)\text{ lies in the major-arc region for many distinct values of }t_{s+1},\\
	&\qquad\text{then }\xi(t_{s+2},\ldots,t_k)\text{ must lie in a slight enlargement of the major-arc region.}
\end{align*}This rigidity phenomenon is a key ingredient in the proof of our main theorem.\\
\noindent\textbf{3. The third term of \eqref{479aa9}.}
For the third line of \eqref{479aa9}, we  pass
\[
\text{from}\ 
\Psi_{J}^{\Lambda'_1,\mathrm{major}}\bigl(\xi(\mathfrak{t}_2)\bigr)\,
L^{\Lambda',\mathrm{major}}_{J_2}(\xi_{\Lambda'})
\ \text{to}\ 
\Psi^{\Lambda',\mathrm{major}}_{J}(\xi_{\Lambda'}),
\]
by combining the estimate \eqref{283c} with the induction hypothesis
\eqref{bb33}. This step is entirely analogous to the case \(k=2\).

\medskip
\noindent\textbf{4. The last term of \eqref{479aa9}.}
For the final component, we  pass
\[
\text{from}\ 
\Psi^{\Lambda',\mathrm{major}}_{J}(\xi_{\Lambda'})
\ \text{to}\ 
\Psi^{\Lambda,\mathrm{major}}_{J}(\xi).
\]
This requires estimating the Weyl sum over \(\mathfrak{t}_2\) with phase
\[
\xi_0(\mathfrak{t}_2)
=
\sum_{n\in \Lambda(0)} \xi_{0n}\,\mathfrak{t}_2^n,
\]
where
\[
H^{\Lambda_{1}}_{J_1}\!\left( \xi(\mathfrak{t}_2)\right)
=
e^{2\pi i \xi_0(\mathfrak{t}_2)}
\,H^{\Lambda'_{1}}_{J_1}\!\left( \xi(\mathfrak{t}_2)\right).
\]
The oscillatory factor \(e^{2\pi i \xi_0(\mathfrak{t}_2)}\) corrects the
discrepancy between \(\Lambda\) and \(\Lambda'\), thereby allowing the
major--arc structure to be transferred from \(\xi_{\Lambda'}\) to \(\xi\).

In the successive transitions described above,
\[
\text{from } \xi(t_{\ell+1},\ldots,t_k)
\text{ to } \xi(t_{\ell+2},\ldots,t_k),
\quad \ldots, \quad
\text{and finally from } \xi(t_k) \text{ to } \xi,
\]
one can observe that the layers indexed by \(t_{\ell'}\) \((\ell+1 \le \ell' \le k-1)\)
are peeled off one by one.

\subsection{Minor Arc Estimate for $\xi(\mathfrak{t}_2)$}\label{sec5.1}
In this subsection, we estimate the first term in \eqref{479aa9}. The proof proceeds by further decomposing the relevant frequency region into minor-arc and intermediate regimes. The resulting bounds are obtained by combining the induction hypothesis with suitable extensions of the arguments developed in  Subsection \ref{Sec42v}.

\begin{proposition}\label{prop419b9}
Suppose that $\Lambda$ satisfies   the  condition (\ref{wep2}). Then
there is $c>0$  independent of $J_2$ and $\xi$ such that
\begin{align*}
&\sup_{|\mathfrak{t}_2|\sim 2^{J_2}}\sum_{J_1: J=(J_1,J_2)\in Z_\ell(k)}   |H^{\Lambda_{1}}_{J_1}( \xi(\mathfrak{t}_2))|  \left(1-\Psi_{J}^{\Lambda'_1,\mathrm{major}}(\xi(\mathfrak{t}_2))  \right) \lesssim 2^{-cj_{\ell+1}}.
\end{align*}
\end{proposition}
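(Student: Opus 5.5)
Fix $\mathfrak t_2$ with $|\mathfrak t_2|\sim 2^{J_2}$. The plan is to treat $H^{\Lambda_1}_{J_1}(\xi(\mathfrak t_2))$ as an $\ell$-parameter discrete Hilbert multiplier in the frequency $\xi(\mathfrak t_2)$, and to mimic the proof of \eqref{283a}. First, $|H^{\Lambda_1}_{J_1}(\xi(\mathfrak t_2))|=|H^{\Lambda_1'}_{J_1}(\xi(\mathfrak t_2))|$ since $e^{2\pi i\xi_0(\mathfrak t_2)}$ is unimodular, so we may work with $\Lambda_1'$, which omits $\mathbf 0$. By Remark~\ref{re3.1}, $\Lambda_1'$ satisfies \eqref{1009} in dimension $\ell$. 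If $\Lambda_1'$ contains no odd subset, the multiplier vanishes identically by the argument giving \eqref{10099}. Otherwise it satisfies \eqref{wep2} with $k=\ell$. Since $\ell\le k-1$, the induction hypothesis \eqref{bb33} and Remark~\ref{yu1} both apply to $\Omega=\Lambda_1'$.

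Next, split $1-\Psi^{\Lambda_1',\mathrm{major}}_J(\xi(\mathfrak t_2))$ into three pieces:
\begin{itemize}
\item $\Psi^{\Lambda_1',\mathrm{minor}}$, the complement of the $\ell$-parameter major arc $\Psi^{\Lambda_1',\mathrm{major}}_{J_1}$ at its natural scale, with denominators up to $2^{j_\ell/10}$ and width $2^{j_\ell/10}$;
\item a mediate piece with denominators in $[2^{j_k/10},2^{j_\ell/10}]$;
\item a mediate piece with denominators $p\le 2^{j_k/10}$ but with $|(\xi_m(\mathfrak t_2)-b_m/p)2^{m\cdot J_1}|\gtrsim 2^{j_k\rho/100}$ for some $m\in\Lambda_1'$.
\end{itemize}

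For the minor piece, apply \eqref{bb33} with $h=j_\ell$. Because $j_\ell\gg_\Lambda j_{\ell+1}\ge j_k$ on $Z_\ell(k)$, summing over $J_1$ gives $\lesssim 2^{-cj_\ell}\le 2^{-cj_{\ell+1}}$. This presumes the ordering $j_1\ge\dots\ge j_\ell$, i.e.\ $J_1\in Z(\ell)$, which holds.

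For the two mediate pieces, the denominators satisfy $p\le 2^{j_\ell/10}$. Apply the integral approximation of Proposition~\ref{lemm29}, in the form of Remark~\ref{yt} in dimension $\ell$ with frequency $\xi(\mathfrak t_2)$. This writes $H^{\Lambda_1'}_{J_1}$ as $S^{\Lambda_1'}(b/p)\,\mathcal H^{\Lambda_1'}_{J_1}(\xi(\mathfrak t_2)-b/p)$ plus errors summable in $J_1$ with decay $2^{-cj_\ell}$. On the large-denominator piece, the Gauss sum bound \eqref{g2} gives $|S|\lesssim p^{-c}\le p^{-c/2}2^{-cj_k/20}$. On the high-frequency piece, the $\ell$-parameter oscillatory integral decays like a power of $2^{-j_k\rho/100}$ in its largest normalized coefficient, via Carbery--Wright/van der Corput as in \eqref{hees}. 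In both cases use the geometric-mean trick \eqref{sf2}: bound $|S\mathcal H|\lesssim|\mathcal H|^{1/2}p^{-c}2^{-cj_k}$. Then sum over $J_1$ using \eqref{po4} with $\epsilon=1/2$, valid since $\Lambda_1'$ satisfies \eqref{v16}. The supports are disjoint over lacunary $p$, so the $b/p$ sum is finite. This yields $2^{-cj_k}$.

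Since $j_{\ell+1}\approx_\Lambda j_k$, i.e.\ $j_{\ell+1}\le(20K)^{k}j_k$, this is $\lesssim 2^{-c'j_{\ell+1}}$ with $c'=c(20K)^{-k}$. The bound is uniform in $\mathfrak t_2$ and $\xi$, so the supremum is harmless.

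The main obstacle is the high-frequency piece. The decay of $\mathcal H^{\Lambda_1'}_{J_1}(\eta)$ must come from a single large coordinate $|\eta_m2^{m\cdot J_1}|\ge 2^{j_k\rho/100}$ in a genuinely multi-parameter integral, and must be obtained uniformly with enough room to survive the $|\cdot|^{1/2}$ summation. I would first try the multi-parameter sublevel estimate (continuous analogue of Lemma~\ref{eep1}) after integrating by parts in the single variable $t_\nu$ with $m_\nu\neq0$ carrying the dominant monomial. Failing that, I would fall back on Lemma~\ref{po3}'s proof, which already encodes such decay.
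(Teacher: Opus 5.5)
Your proposal is correct and follows the paper's proof essentially step for step. You use the same split of $1-\Psi^{\Lambda_1',\mathrm{major}}_J$. The minor piece is handled by the induction hypothesis \eqref{bb33}; there $h$ should be the fixed lower bound (e.g.\ $h=j_{\ell+1}$, as in the paper) rather than the summed index $j_\ell$. The two mediate pieces are handled by the integral approximation, the Gauss-sum or oscillatory-integral gain, the $|\mathcal H|^{1/2}$ trick of \eqref{sf2} combined with \eqref{po4}, and finally $j_{\ell+1}\approx_\Lambda j_k$. Your extra remarks fill in details the paper leaves implicit. These are that the multiplier vanishes when $\Lambda_1'$ contains no odd subset, and that the high-frequency piece needs only the van der Corput-type decay already underlying \eqref{hees} and \eqref{abb1}.
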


To prove Proposition~\ref{prop419b9}, we begin with the following decomposition: 
$$1-\Psi_{J}^{\Lambda'_1,\mathrm{major}}(\xi(\mathfrak{t}_2))=\Psi^{\Lambda'_1,\rm{minor}}_{J_1}(\xi(\mathfrak{t}_2)) +
\Psi^{\Lambda'_1,\rm{mediate}}_{J_1}(\xi(\mathfrak{t}_2)),$$
where  from (\ref{4022k9}), we set
\begin{align*} 
&\Psi^{\Lambda'_1,\rm{minor}}_{J_1}(\xi(\mathfrak{t}_2)) :=1 -\sum_{b/p\in  \mathbb{Q}^{\Lambda'_1}[1,2^{j_{\ell}/10}]}\psi^{\Lambda'_1}_{J_1}\left(\frac{\xi(\mathfrak{t}_2)-\frac{b}{p}}{2^{j_{\ell}/10}}\right),\nonumber \\
&\Psi^{\Lambda'_1,\rm{mediate}}_{J_1}(\xi(\mathfrak{t}_2)):=\sum_{b/p\in  \mathbb{Q}^{\Lambda'_1}[2^{j_{k}/10},2^{j_{\ell}/10}]}\psi^{\Lambda_1'}_{J_1}\left(\frac{\xi(\mathfrak{t}_2)-\frac{b}{p}}{2^{j_{\ell}/10}}\right)  \\
&\qquad\quad\qquad\qquad\quad+
\sum_{b/p\in  \mathbb{Q}^{\Lambda'_1}[1,2^{j_k/10}]}
 \left( \psi^{\Lambda'_1}_{J_1}\left(\frac{\xi(\mathfrak{t}_2)-\frac{b}{p}}{2^{j_{\ell}/10}}\right)-\psi^{\Lambda'_1}_{J_1}\left(\frac{\xi(\mathfrak{t}_2)-\frac{b}{p}}{2^{j_{k}(\rho/100)}}\right) \right).\nonumber
    \end{align*}
 Hence, to complete the proof of Proposition~\ref{prop419b9}, it suffices to
establish Lemmas~\ref{prop130y} and~\ref{prop41y} below.  
The proofs of these lemmas are obtained by extending
the arguments used in Lemmas~\ref{prop130} and~\ref{prop41} of Section \ref{Sec42v}.

\begin{lemma}\label{prop130y}
Suppose that $\Lambda$ satisfies   the  condition (\ref{wep2}). Then there exists a constant $c>0$, independent of $J_2$ and $\xi$, such that
\begin{equation}\label{pk1y}
\sup_{|\mathfrak{t}_2|\sim 2^{J_2}}
\sum_{J_1:\, J=(J_1,J_2)\in Z_\ell(k)}
\bigl| H^{\Lambda_{1}}_{J_1}\bigl( \xi(\mathfrak{t}_2) \bigr) \bigr|\,
\Psi^{\Lambda'_1,\mathrm{minor}}_{J_1}\bigl(\xi(\mathfrak{t}_2)\bigr)
\;\lesssim\;
2^{-c j_{\ell+1}}, 
\end{equation}under the  induction hypothesis~\eqref{bb33}.
\end{lemma}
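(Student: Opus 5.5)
Fix $\mathfrak t_2$ with $|\mathfrak t_2|\sim 2^{J_2}$ and view $\eta:=\xi(\mathfrak t_2)\in\mathbb R^{|\Lambda_1|}$ as a frequency for the $\ell$-parameter multiplier. Since $H^{\Lambda_1}_{J_1}(\eta)=e^{2\pi i\xi_0(\mathfrak t_2)}H^{\Lambda_1'}_{J_1}(\eta)$, we may replace $\Lambda_1$ by $\Lambda_1'$; it consists of nonzero exponents and, by Remark~\ref{re3.1}, satisfies \eqref{1009} in dimension $\ell$. If $\Lambda_1'$ has no odd subset, the multiplier vanishes by the parity argument for \eqref{10099}, so we assume \eqref{wep2} for $\Lambda_1'$. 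All estimates below will be uniform in $\eta$, so the supremum over $\mathfrak t_2$ costs nothing.

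Since $J\in Z_\ell(k)$, we have $j_\ell\gg_\Lambda j_{\ell+1}$, and in particular $j_\ell\ge 20K j_{\ell+1}$. We will not compare the full $\ell$-parameter cutoff with $\Psi^{\Lambda_1',\rm major}_{J_1}$ at scale $2^{j_\ell/10}$ directly. Instead, we will reduce to the induction hypothesis \eqref{bb33} with $k_0=\ell\le k-1$ and $\Omega=\Lambda_1'$, applied to $J_1=(j_1,\dots,j_\ell)\in Z(\ell)$. That hypothesis gives
\[
\sum_{J_1\in Z(\ell),\,j_\ell\ge h}\bigl|H^{\Lambda_1'}_{J_1}(\eta)\bigl(1-\Psi^{\Lambda_1',\rm major}_{J_1}(\eta)\bigr)\bigr|\lesssim 2^{-ch},
\]
where $\Psi^{\Lambda_1',\rm major}_{J_1}$ is the cutoff with denominators $q\le 2^{j_\ell/10}$ and widths $2^{-m\cdot J_1}2^{j_\ell/10}$. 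This is exactly $1-\Psi^{\Lambda_1',\rm minor}_{J_1}$ as defined above. The minor cutoff $\Psi^{\Lambda_1',\rm minor}_{J_1}$ therefore coincides with $1-\Psi^{\Lambda_1',\rm major}_{J_1}$ in the notation of the induction hypothesis, up to the harmless point that both are built from the same $\psi$.

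We then take $h=20Kj_{\ell+1}$. Every $J_1$ with $(J_1,J_2)\in Z_\ell(k)$ satisfies $j_\ell\ge h$ and lies in $Z(\ell)$, so the sum in \eqref{pk1y} is bounded by $2^{-c\cdot 20K j_{\ell+1}}\lesssim 2^{-cj_{\ell+1}}$. The whole proof thus comes down to two checks. First, the cutoff conventions must match exactly, so that \eqref{bb33} applies verbatim: the same $\psi$, the same Farey range $\mathbb Q^{\Lambda_1'}[1,2^{j_\ell/10}]$, and the same scaling by $2^{j_\ell/10}$. Second, the constant in \eqref{bb33} must depend only on $\Lambda_1'$, and hence only on $\Lambda$, so that it is uniform in $\eta$ and therefore in $\mathfrak t_2$.

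The main obstacle is the degenerate case where $\Lambda_1'$ depends on fewer than $\ell$ of the variables, i.e.\ $\mathfrak m\cdot\mathbf e_\nu=0$ for all $\mathfrak m\in\Lambda_1'$ and some $\nu\le\ell$. In that case the $t_\nu$-sum $\sum_{|t_\nu|\sim2^{j_\nu}}1/t_\nu$ vanishes by oddness, so these terms contribute zero. In every other case the induction hypothesis in dimension $\ell$ applies directly. As a fallback, if one prefers not to rely on \eqref{bb33} in the unbalanced sub-sectors of $J_1$, one can repeat the $k=1$ argument of Lemma~\ref{prop130}: apply the Weyl bound \eqref{wop22} on the balanced sector of $J_1$, which gives decay $2^{-cj_\ell}\le 2^{-c'j_{\ell+1}}$ after summing over $J_1$ with $j_\ell$ dominating, and use induction for the rest.
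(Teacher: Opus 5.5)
Your proof is correct and matches the paper's: the paper also drops the unimodular factor $e^{2\pi i\xi_0(\mathfrak t_2)}$, freezes $\mathfrak t_2$, and applies the induction hypothesis \eqref{bb33} with $k_0=\ell$ and $\Omega=\Lambda_1'$. The paper takes $h=j_{\ell+1}$ rather than your $20Kj_{\ell+1}$; either choice works, since both are at most $j_\ell$. Your separate treatment of the degenerate case (and the fallback) is unnecessary: if $\Lambda_1'$ omits some variable $t_\nu$, then the $\nu$-th component of every subset sum is $0$, so $\Lambda_1'$ has no odd subset and is already covered by your vanishing case.
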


\begin{proof}
Recall $\Lambda'_{1}=\{m\in\Lambda_{1}:m\neq {\bf 0}\}$.  We also note that
\[
H^{\Lambda_{1}}_{J_1}\bigl(\xi(\mathfrak{t}_2)\bigr)
=
e^{2\pi i\, \xi_{0}(\mathfrak{t}_2)}\,
H^{\Lambda'_{1}}_{J_1}\bigl(\xi(\mathfrak{t}_2)\bigr).
\]
Then, for fixed $\mathfrak{t}_2$, we regard $\xi(\mathfrak{t}_2)$ as the variable~$\xi$ in the induction hypothesis~\eqref{bb33}.  
In this identification, the present index $\ell$ in $J_1=(j_1,\dots,j_\ell)$ corresponds to the index $k_0$ in $J_0=(j_1,\cdots,j_{k_0})$ of~\eqref{bb33}, with $\Lambda'_1 \subset \mathbb{Z}_+^{k_0}$. Moreover, set $h = j_{\ell+1}$  in \eqref{bb33}.

With the above setup, we conclude that 
the induction hypothesis~\eqref{bb33}  directly implies \eqref{pk1y}.
\end{proof}

\begin{lemma}\label{prop41y}
Suppose that $\Lambda$ satisfies   the  condition (\ref{wep2}).  There exists  $c>0$  independent of $J_2$ and $\xi$ such that
\begin{align}\label{32y}
\sup_{|\mathfrak{t}_2|\sim 2^{J_2}}\sum_{J_1: J=(J_1,J_2)\in Z_\ell(k)}   |H^{\Lambda_{1}}_{J_1}( \xi(\mathfrak{t}_2))|  \Psi^{\Lambda'_1,\rm{mediate}}_{J_1}(\xi(\mathfrak{t}_2))  \lesssim 2^{-cj_{\ell+1}}. 
\end{align}
\end{lemma}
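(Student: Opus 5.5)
Our plan follows the proof of Lemma~\ref{prop41}, now in $\ell$ parameters. Fix $\mathfrak{t}_2$ with $|\mathfrak{t}_2|\sim 2^{J_2}$ and write
\[
H^{\Lambda_1}_{J_1}(\xi(\mathfrak{t}_2))=e^{2\pi i\xi_0(\mathfrak{t}_2)}H^{\Lambda'_1}_{J_1}(\xi(\mathfrak{t}_2)).
\]
The unimodular factor is harmless, so it suffices to bound $\sum_{J_1}|H^{\Lambda'_1}_{J_1}(\xi(\mathfrak{t}_2))|\Psi^{\Lambda'_1,\mathrm{mediate}}_{J_1}(\xi(\mathfrak{t}_2))$ uniformly in $\mathfrak{t}_2$. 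The index set $\Lambda'_1\subset\mathbb{Z}_+^\ell$ satisfies \eqref{1009} by Remark~\ref{re3.1}. Hence \eqref{v16} holds for $\Lambda'_1$, and Lemma~\ref{po3} applies to the $\ell$-parameter integrals $\mathcal{H}^{\Lambda'_1}_{J_1}$.

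\textbf{Step 1 (integral approximation).} On the support of each summand of $\Psi^{\Lambda'_1,\mathrm{mediate}}_{J_1}$ we have $p\le 2^{j_\ell/10}$ and $|\xi_m(\mathfrak{t}_2)-b_m/p|\le 2^{-m\cdot J_1}2^{j_\ell/10}$. We cannot apply Proposition~\ref{lemm29} verbatim, since its denominators are restricted to $q\le 2^{j_\ell/10}$ only in the balanced case. We therefore redo the Poisson-summation argument of its proof for $\Lambda'_1$ with $J_1=(j_1,\dots,j_\ell)$. The gradient lower bound in Step~1 of that proof needs, for every $\nu\le\ell$, that $q\ll 2^{j_\nu}$ and $|\beta_m s^{m-{\bf e}_\nu}|\ll 2^{-j_\nu}\cdot 2^{j_\nu}/q$. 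Since $J_1$ is itself ordered, we restrict to the relevant denominators. More precisely, we use the splitting of $Z_\ell(k)$ in the $J_1$ variables: by the induction hypothesis \eqref{bb33} we may assume $j_1\approx_\Lambda\cdots\approx_\Lambda j_\ell$ up to further sub-sectors, and the unbalanced sub-sectors of $J_1$ are handled by the same recursion. This yields
\[
H^{\Lambda'_1}_{J_1}(\xi(\mathfrak{t}_2))\,\psi(\cdots)=S^{\Lambda'_1}(b/p)\,\mathcal{H}^{\Lambda'_1}_{J_1}(\xi(\mathfrak{t}_2)-b/p)\,\psi(\cdots)+E,
\]
with error $E=O(2^{-cj_\ell})$, which is summable in $J_1$ and bounded by $2^{-cj_{\ell+1}}$.

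\textbf{Step 2 (decay from the mediate support).} As in (i)--(ii) of Lemma~\ref{prop41}, there are two cases for each summand.
\begin{itemize}
\item If $p\ge 2^{j_k/10}$, then Lemma~\ref{el22} gives $|S^{\Lambda'_1}(b/p)|\lesssim p^{-c}\lesssim p^{-c/2}2^{-cj_k/20}$.
\item If $p<2^{j_k/10}$, the difference of cutoffs forces $|(\xi_m(\mathfrak{t}_2)-b_m/p)2^{m\cdot J_1}|\ge 2^{j_k\rho/100}$ for some $m$. A van der Corput bound for the $\ell$-parameter integral, taken in the variable realizing the dominant monomial, then gives $|\mathcal{H}^{\Lambda'_1}_{J_1}|\lesssim 2^{-c\rho j_k}$.
\end{itemize}
Since $j_k\approx_\Lambda j_{\ell+1}$ on $Z_\ell(k)$, both cases give a factor $2^{-cj_{\ell+1}}$. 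Interpolating as in \eqref{sf2}, we obtain
\[
|S\mathcal{H}|\lesssim|\mathcal{H}|^{1/2}p^{-c}2^{-cj_{\ell+1}}.
\]

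\textbf{Step 3 (summation).} We sum over $J_1$ using \eqref{po4} with exponent $\epsilon=1/2$; Lemma~\ref{po3} holds for $\Lambda'_1$, so this step is uniform. We then sum over $b/p$. The cutoffs are disjointly supported for fixed $p$, and $p$ can be taken lacunary as in \eqref{yu}. This gives the bound $\sum_p p^{-c}2^{-cj_{\ell+1}}\lesssim 2^{-cj_{\ell+1}}$, uniformly in $\mathfrak{t}_2$ and $\xi$.

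\textbf{Main obstacle.} We expect the main difficulty to be Step~1 in the multi-parameter form. The denominators $p$ range up to $2^{j_\ell/10}$, while the smaller scales $j_{\nu}$ with $\nu<\ell$ are larger, so $p\le 2^{j_\ell/10}\le 2^{j_\nu/10}$ for all $\nu\le\ell$. Thus the gradient condition actually holds in every $t_\nu$, $\nu\le\ell$, and Poisson summation goes through with error $\prod_\nu(1+|w_\nu|2^{j_\nu/10})^{-4}2^{-j_\ell/10}$. The remaining subtlety is the van der Corput step. It requires a multi-parameter decay $|\mathcal{H}^{\Lambda'_1}_{J_1}(\eta)|\lesssim\max_m|\eta_m2^{m\cdot J_1}|^{-c}$. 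We would obtain this by integrating first in the single variable $t_\nu$ with $m_\nu\ne0$ of a dominant monomial, after a sublevel decomposition via Carbery--Wright if needed.
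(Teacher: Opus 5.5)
Your proposal is correct and follows the paper's proof. The paper likewise uses the integral approximation of Proposition~\ref{lemm29} with $k$ replaced by $\ell$, then splits into the large-denominator part (Gauss sum decay) and the small-denominator, high-frequency part (oscillatory decay), and sums via the $|\mathcal{H}|^{1/2}$ bound from Lemma~\ref{po3} together with the lacunary-$p$ argument of \eqref{yu}. The detour in your Step~1 through \eqref{bb33} and balanced sub-sectors of $J_1$ is unnecessary: as you observe yourself at the end, $p\le 2^{j_\ell/10}$ with $j_\ell=\min J_1$ is exactly the hypothesis of Proposition~\ref{lemm29}(1), which holds on all of $Z(\ell)$. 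The summation of the error over $(j_1,\dots,j_{\ell-1})$ comes from interpolating with the $|\widehat{F}|^{1/2}$ bound as in that proof, not from a per-$J_1$ bound $O(2^{-cj_\ell})$ alone.
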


\begin{proof}
For any  $ b/p =(b_m)/p\in  \mathbb{Q}^{\Lambda_1'}[1,2^{j_\ell/10}]$, define $$S^{\Lambda'_1}(\frac{b}{p}):= \frac{1}{p^{\ell} }\sum_{1\le t_1,\cdots,t_\ell\le p}e^{2\pi i \sum_{m\in\Lambda'_{1}} \frac{b_{m}}{p} \mathfrak{t}_1^m}.$$ 
Fix $\mathfrak{t}_2$ and substitute $\xi(\mathfrak{t}_2)$ for $\xi$, with the index $k$ replaced by $\ell$ in \eqref{b40}.  Then as in  Proposition \ref{lemm29}, we obtain the following integral approximation  
\begin{align*}
&H^{\Lambda'_1}_{J_1}(\xi(\mathfrak{t}_2)) \psi^{\Lambda'_1}_{J_1}\left(\frac{\xi(\mathfrak{t}_2)-\frac{b}{p}}{2^{j_{\ell}/10}}\right)\\
&\quad=  S^{\Lambda'_1}\left(\frac{b}{p}\right)  \mathcal{H}^{\Lambda'_1}_{J_1}\Bigl(\xi(\mathfrak{t}_2)-b/p\Bigr)\psi^{\Lambda'_1}_{J_1}\left(\frac{\xi(\mathfrak{t}_2)-\frac{b}{p}}{2^{j_{\ell}/10}}\right)+ E_{J_1}(b/p,\xi),
\end{align*}
where   the error term satisfies the uniform estimate
\[
\sum_{J_1: J=(J_1,J_2)\in Z_\ell(k)}\sum_{b/p\in \mathbb{Q}^{\Lambda'_1}[1,2^{j_\ell/10}]}
   |E_{J_1}(b/p,\xi)|
   = O(2^{-c j_{\ell+1}}).
\]
Define
\begin{align*}
	A_J(\xi(\mathfrak{t}_2))&:=  \sum_{b/p\in  \mathbb{Q}^{\Lambda'_1}[2^{j_{k}/10},2^{j_{\ell}/10}]} \psi^{\Lambda'_1}_{J_1}\left(\frac{\xi(\mathfrak{t}_2)-\frac{b}{p}}{2^{j_{\ell}/10}}\right)\left|S^{\Lambda'_1}\left(\frac{b}{p}\right)\mathcal{H}^{\Lambda'_1}_{J_1}(\xi(\mathfrak{t}_2)-b/p)\right|,\\
 B_J(\xi(\mathfrak{t}_2))&:=  \sum_{b/p\in  \mathbb{Q}^{\Lambda'_1}[1,2^{j_k/10}]} \psi^{\Lambda'_1}_{J_1}\left(\frac{\xi(\mathfrak{t}_2)-\frac{b}{p}}{2^{j_{\ell}/10}}\right)\left|S^{\Lambda'_1}\left(\frac{b}{p}\right)\mathcal{H}^{\Lambda'_1}_{J_1}(\xi(\mathfrak{t}_2)-b/p)\right| \\
 &\times  \left(1- \psi^{\Lambda'_1}_{J_1}\left(\frac{\xi(\mathfrak{t}_2)-\frac{b}{p}}{2^{j_k(\rho/100)}}\right) \right).\end{align*} 
  By applying Lemma \ref{po3}, there is  $C>0$ independent of   $\mathfrak{t}_2 $ such that
 \begin{align*}
 	&\sum_{J_1 }|\mathcal{H}^{\Lambda'_1}_{J_1}(\xi(\mathfrak{t}_2)-b/p)|^{1/2}  \le C.
 \end{align*}
Therefore,   by the same argument used to establish \eqref{yu}, there exist constants $c',c>0$ such that
 \begin{align*}
 \sum_{J_1; J\in Z_\ell(k)}  A_J(\xi(t_2)) +  B_J(\xi(t_2)) 
 \lesssim 2^{-c' j_{k}}   \lesssim  2^{-c j_{\ell+1}}.
 \end{align*}
For the last inequality, we used the comparability
$
j_{\ell+1}\approx_\Lambda\cdots\approx_\Lambda j_k
$
for $J\in Z_\ell(k)$. Thus, we   get the desired bound of (\ref{32y}).
 \end{proof}

 \subsection{Connecting Layered Arcs}\label{sec5.2}
In this subsection, we shall estimate the second term of (\ref{479aa9}). 
The proof extends the two--parameter argument by iteratively propagating major arc information through successive layers. One can observe the major arc rigidity phenomenon  throughout this subsection. 

\begin{proposition}\label{prop5167b9}
For  $J_2=(j_{\ell+1},\cdots,j_k)$, recall $ L^{\Lambda',\rm{major}}_{J_2}(\xi_{\Lambda'})$ in (\ref{56}) and set
 $$ L^{\Lambda',\rm{minor}}_{J_2}(\xi_{\Lambda'}):= 1- L^{\Lambda',\rm{major}}_{J_2}(\xi_{\Lambda'}).$$  Suppose that $\Lambda$ satisfies   the  condition (\ref{wep2}).  Then
there is $c>0$  independent of $J_2$ and $\xi$ such that
\begin{align} 
\label{44gg9} \sum_{J_1:J\in Z_\ell(k)}\sum_{|\mathfrak{t}_{2}|\sim 2^{J_2}} \left| \frac{ H^{\Lambda_{1}}_{J_1}( \xi(\mathfrak{t}_2)) }{2^{j_{\ell+1}+\cdots+j_k}}
\Psi_{J}^{\Lambda'_1,\mathrm{major}}( \xi(\mathfrak{t}_2)) \right|L^{\Lambda',\rm{minor}}_{J_2}(\xi_{\Lambda'})\lesssim
 2^{-cj_{\ell+1}}.
\end{align} 
\end{proposition}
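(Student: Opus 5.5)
First I remove the $J_1$-sum, exactly as in the proof of \eqref{283b}. Since $H^{\Lambda_1}_{J_1}(\xi(\mathfrak{t}_2))=e^{2\pi i\xi_0(\mathfrak{t}_2)}H^{\Lambda_1'}_{J_1}(\xi(\mathfrak{t}_2))$ and $\Lambda_1'$ satisfies \eqref{1009} (Remark~\ref{re3.1}), Remark~\ref{yu1} gives $\sum_{J_1}|H^{\Lambda_1}_{J_1}(\xi(\mathfrak{t}_2))|\lesssim 1$ uniformly in $\mathfrak{t}_2$. By \eqref{pik3}, together with $2^{j_k\rho/100}\le 2^{j_{\ell+1}}$, the layered cutoff satisfies $\Psi^{\Lambda_1',\mathrm{major}}_J(\xi(\mathfrak{t}_2))\le L^{\Lambda_1',\mathrm{major}}_{J_2}(\xi(\mathfrak{t}_2))$. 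It therefore suffices to show
\[
\frac{1}{2^{j_{\ell+1}+\cdots+j_k}}\sum_{|\mathfrak{t}_2|\sim 2^{J_2}}L^{\Lambda_1',\mathrm{major}}_{J_2}(\xi(\mathfrak{t}_2))\,L^{\Lambda',\mathrm{minor}}_{J_2}(\xi_{\Lambda'})\lesssim 2^{-cj_{\ell+1}}.
\]
The argument of Claim~\ref{cl31} shows that the rational vector $b(\mathfrak{t}_2)/p(\mathfrak{t}_2)$ in $L^{\Lambda_1'}$ is unique. The denominators are at most $2^{j_k/10}\le 2^{j_{\ell+1}/10}$ and the width is $2^{-10Kj_{\ell+1}}$, so this works.

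Next I peel the variables $t_{\ell+1},\dots,t_k$ off one at a time, applying Lemma~\ref{vari61} iteratively with the outer variables frozen. For $s=\ell+1,\dots,k$, let $G_s$ be the set of $(t_{s+1},\dots,t_k)$ for which the coefficient vector $\xi(t_{s+1},\dots,t_k)$, indexed by $(m_1,\dots,m_s)$ with $m\neq\mathbf 0$, lies within $2^{-10Kj_{\ell+1}+N^{4(s-\ell)}j_{\ell+1}}$ of a rational vector with denominator at most $2^{N^{4(s-\ell)}j_{\ell+1}}$.

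Fix $(t_{s+2},\dots,t_k)$. If at least $2|\Lambda|$ values $t_{s+1}\sim 2^{j_{s+1}}$ satisfy $(t_{s+1},\dots,t_k)\in G_s$, then Lemma~\ref{vari61} applies with $t_2$ replaced by $t_{s+1}$, $j'=j_{s+1}\le j_{\ell+1}$, and $\ell'=s-\ell$. It puts $(t_{s+2},\dots,t_k)$ in $G_{s+1}$. The generalized Vandermonde inverse costs a factor $2^{N^2 j_{s+1}}\le 2^{N^2 j_{\ell+1}}$, which the growth of the exponent $N^{4\ell'}\to N^{4(\ell'+1)}$ absorbs.

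Two points in the lemma need care. First, the distinct $t_{s+1}$'s may carry different denominators $p(t_{s+1})$; the common-multiple bound \eqref{gh11} already handles this. Second, the fibers $\Lambda(m)$ must now be read with respect to the single variable $t_{s+1}$. Because $K=N^{10k}\gg N^{4k}$, after at most $k-\ell$ steps the accumulated loss $N^{4(k-\ell)}j_{\ell+1}$ is still far below $10Kj_{\ell+1}$. The final level $G_k=\{\ast\}$ is exactly the support condition of $L^{\Lambda',\mathrm{major}}_{J_2}(\xi_{\Lambda'})$ in \eqref{56}.

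Now take $\xi$ with $L^{\Lambda',\mathrm{minor}}_{J_2}(\xi_{\Lambda'})\neq 0$. Then $\xi\notin G_k$, so at the top level fewer than $2|\Lambda|$ values of $t_k$ come from $G_{k-1}$. Inductively, for each frozen outer tuple not in $G_{s+1}$, fewer than $2|\Lambda|$ values of $t_{s+1}$ lead into $G_s$. Counting along this tree, the number of $\mathfrak{t}_2$ in the support, i.e.\ in $G_\ell$, is bounded by a sum over $s$ of (at most $2|\Lambda|$ choices of $t_{s+1}$) times (all choices of the other coordinates). This gives at most $C_\Lambda\,2^{j_{\ell+1}+\cdots+j_k}\,2^{-j_k}$, and since $j_k\approx_\Lambda j_{\ell+1}$ on $Z_\ell(k)$ this is $\lesssim 2^{j_{\ell+1}+\cdots+j_k}\,2^{-cj_{\ell+1}}$. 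Dividing by the normalization $2^{j_{\ell+1}+\cdots+j_k}$ yields \eqref{44gg9}.

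The main obstacle is making this tree count rigorous: the set $G_s$ must be defined uniformly, with fixed widths and denominators depending only on the level $s$, so that the contrapositive of Lemma~\ref{vari61} can be chained cleanly from level $k$ down to level $\ell$. I would set this up first, by induction on $s$ from $k$ downward, before doing any counting.
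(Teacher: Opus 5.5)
Your proposal is correct and follows essentially the paper's argument. The paper also first removes the $J_1$-sum via Remark~\ref{yu1} and the majorization $\Psi^{\Lambda_1',\mathrm{major}}_J\le L^{\Lambda_1',\mathrm{major}}_{J_2}$; your tree count over the sets $G_s$ is then a set-theoretic version of its telescoping bound in Claim~\ref{cl61} together with the per-level estimate \eqref{44kk9}, which the paper proves exactly as you propose, by applying Lemma~\ref{vari61} with the outer variables frozen (Claim~\ref{cl62}).
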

\begin{proof}[Proof of Proposition \ref{prop5167b9}] 
We shall extend the two parameter proof   (\ref{283b})  in Section \ref{sec5.3} to that of $k$-parameters.
As in (\ref{pik3}),  
observe that  $2^{-J\cdot (m,0)} 2^{j_k(\rho/100) } \ll 2^{-10Kj_{\ell+1} }$ because $m=(m_1,\cdots,m_\ell) \in \Lambda'_1$ is nonzero and $j_{\ell}\gg_{\Lambda} j_{\ell+1}$. Thus, one has
\begin{align}\label{b390}
\Psi_{J}^{\Lambda'_1,\mathrm{major}}( \xi(\mathfrak{t}_2)) \le L^{\Lambda_{1}',\rm{major}}_{J_2}(\xi(\mathfrak{t}_2) ).
\end{align}
On the other hand, in view of Remark \ref{yu1}, one has
\begin{align*} 
   \left(\sup_{|\mathfrak{t}_2| \sim 2^{J_2} } \sum_{J_1\in \mathbb{Z}_+^{\ell} }|H^{\Lambda_{1}}_{J_1}(\xi(\mathfrak{t}_2)) |   \right)\lesssim 1.
\end{align*} 
Combining this with (\ref{b390}), for a fixed $J_2=(j_{\ell+1},\cdots,j_k)$, we can obtain that
\begin{align} \label{v011}
\text{ LHS of (\ref{44gg9})} \lesssim \frac{1}{2^{j_{\ell+1}+\cdots+j_k}} \sum_{|\mathfrak{t}_{2}|\sim 2^{J_2}}  
L^{\Lambda_{1}',\rm{major}}_{J_2}(\xi(\mathfrak{t}_2) )L^{\Lambda',\rm{minor}}_{J_2}(\xi_{\Lambda'}).
\end{align} 

Fix $1\le \ell \le  k-1$. For $s\ge \ell$, we set $m_{\le s}=(m_1,\cdots,m_s)$ and define
\begin{align*}
	 \Lambda_{\le s}:=\Lambda_{1\cdots s}:=\{m_{\le s}:m=(m_{\le s}, m_{s+1},\cdots,m_k)\in \Lambda \text{ for some } (m_{s+1},\cdots,m_k)\in\mathbb{Z}_{+}^{k-s}\},\\
	 \Lambda'_{\le s}:=\Lambda'_{1\cdots s}:=\{m_{\le s}:m=(m_{\le s}, m_{s+1},\cdots,m_k)\in \Lambda' \text{ for some } (m_{s+1},\cdots,m_k)\in\mathbb{Z}_{+}^{k-s}\}.
\end{align*}
From this notation,  the projection $$\Lambda_{1}=\{(m_1,\cdots,m_\ell):(m_1,\cdots,m_k)\in \Lambda \text{ for some } (m_{\ell+1},\cdots,m_k)\in\mathbb{Z}_{+}^{k-\ell}\}$$ in (\ref{11pp}) can  be denoted as $\Lambda_{\le \ell}$. 
Moreover,  we rewrite $\xi_{m_1\cdots m_s}(t_{s+1},\cdots, t_k)$ as
\begin{align}
\xi_{m_{\le s}}(t_{s+1},\cdots,t_k)&=\sum_{(m_{s+1},\cdots,m_k)\in \Lambda(m_{\le s})} \xi_{m_{\le s} m_{s+1}\cdots m_k} t_{s+1}^{m_{s+1}}\cdots t_k^{m_k}\ \text{and}\nonumber\\
  \xi(t_{s+1},\cdots,t_k)&=(\xi_{m_{\le s} }(t_{s+1},\cdots, t_k))_{m_{\le s}\in \Lambda_{\le s}}.\label{sk41}
\end{align}
Accordingly, we can rewrite $L^{\Lambda_{1}',\rm{major}}_{J_2}(\xi(\mathfrak{t}_2) )$ and $L^{\Lambda',\rm{major}}_{J_2}(\xi_{\Lambda'})$ in (\ref{88332}) as
\begin{align*}
 L^{\Lambda_{\le \ell}',\rm{major}}_{J_2}(\xi(t_{\ell+1},\cdots,t_k) )&:=\sum_{b/p\in \mathbb{Q}^{\Lambda'_{\le \ell}}[1,2^{j_{k}/10)} ]}\psi\left(\frac{\xi(t_{\ell+1},\cdots,t_k)-\frac{b}{p}}{ 2^{-10Kj_{\ell+1} } }\right),\nonumber \\
 L^{\Lambda_{\le k}',\rm{major}}_{J_2}(\xi_{\Lambda'} )&:=\sum_{b/p\in \mathbb{Q}^{\Lambda'_{\le k}}[1,2^{N^{4(k-\ell)}j_{\ell+1}} ]}\psi\left(\frac{\xi_{\Lambda'}-\frac{b}{p}}{ 2^{-10Kj_{\ell+1} +N^{4 (k-\ell)}j_{\ell+1}} }\right),
\end{align*}
respectively.
Then, to prove (\ref{44gg9}),  it suffices to show that  in (\ref{v011}), for  each $J_2$ with $J=(J_1,J_2)\in Z_{\ell}(k)$,
\begin{align} \label{rara}
  \frac{1}{2^{j_{\ell+1}+\cdots+j_k}}  \sum_{|\mathfrak{t}_{2}|\sim 2^{J_2}}  
L^{\Lambda_{\le \ell}',\rm{major}}_{J_2}(\xi(t_{\ell+1},\cdots,t_k) )L^{\Lambda',\rm{minor}}_{J_2}(\xi_{\Lambda'})\lesssim 2^{- j_{k}}.
\end{align} 
Define the cutoff functions  for the frequency  $\xi(t_{s+1},\cdots,t_k)$ where $\ell+1\le  s\le k-1$ in (\ref{sk41}),
\begin{align*} 
 L^{\Lambda_{\le s}',\rm{major}}_{J_2}(\xi(t_{s+1},\cdots,t_k) )&:=\sum_{b/p\in \mathbb{Q}^{\Lambda'_{\le s}}[1,2^{N^{4(s-\ell)}j_{\ell+1}} ]}\psi\left(\frac{\xi(t_{s+1},\cdots,t_k)-\frac{b}{p}}{ 2^{-10Kj_{\ell+1} +N^{4 (s-\ell)}j_{\ell+1}} }\right),\\
 L^{\Lambda_{\le s}',\rm{minor}}_{J_2}(\xi(t_{s+1},\cdots,t_k) )&:= 1-L^{\Lambda_{\le s}',\rm{major}}_{J_2}(\xi(t_{s+1},\cdots,t_k) ).\nonumber
\end{align*}
Then, we shall obtain (\ref{rara}) by proving Claims \ref{cl61} and \ref{cl62} below.
\begin{claim}\label{cl61} Let $ \ell \in [k-1]$. Then the estimate can be reduced to a sequence of consecutive layered cases:
\begin{align} \label{718m}
& \sum_{|\mathfrak{t}_{2}|\sim 2^{J_2}}  
L^{\Lambda_{\le \ell}',\rm{major}}_{J_2}(\xi(t_{\ell+1},\cdots,t_k) )L^{\Lambda',\rm{minor}}_{J_2}(\xi_{\Lambda'}) \\
&\qquad\lesssim  \sum_{s=\ell}^{k-1}  \sum_{|\mathfrak{t}_2|\sim 2^{J_2}} L^{\Lambda_{\le s}',\rm{major}}_{J_2}(\xi(t_{s+1},\cdots,t_k) )L^{\Lambda_{\le s+1}',\rm{minor}}_{J_2}(\xi(t_{s+2},\cdots,t_k) ).\nonumber
\end{align} 
Here, in the case $s' > k$, we identify $\xi(t_{s'}, \ldots, t_k)$ with $\xi_{\Lambda'}$.  
\end{claim}
\begin{proof}[Proof of (\ref{718m})]
Note that  $L^{\Lambda_{\le s}',\rm{major}}_{J_2}(\xi(t_{>s}) )\le 1$ for every $\ell\le s\le k-1$. Then, by using the notation $t_{>s}=(t_{s+1},\cdots,t_k)$,  we have
\begin{align*}
1&=\prod_{s=\ell+1}^{k-1}\left(L^{\Lambda_{\le s}',\rm{minor}}_{J_2}(\xi(t_{>s}) +L^{\Lambda_{\le s}',\rm{major}}_{J_2}(\xi(t_{>s}) )\right)  \le  L^{\Lambda_{\le \ell+1}',\rm{minor}}_{J_2}(\xi(t_{>\ell+1}) )\\
&+L^{\Lambda_{\le \ell+1}',\rm{major}}_{J_2}(\xi(t_{>\ell+1}) )\prod_{s=\ell+2}^{k-1}\left(L^{\Lambda_{\le s}',\rm{minor}}_{J_2}(\xi(t_{>s}) )+ L^{\Lambda_{\le s}',\rm{major}}_{J_2}(\xi(t_{>s}) )\right)\\
&\le L^{\Lambda_{\le \ell+1}',\rm{minor}}_{J_2}(\xi(t_{>\ell+1}) )+ L^{\Lambda_{\le \ell+1}',\rm{major}}_{J_2}(\xi(t_{>\ell+1}) )L^{\Lambda_{\le \ell+2}',\rm{minor}}_{J_2}(\xi(t_{>\ell+2}) )\\
&+\cdots+ L^{\Lambda_{\le k-2}',\rm{major}}_{J_2}(\xi(t_{>k-2}) )L^{\Lambda_{\le k-1}',\rm{minor}}_{J_2}(\xi(t_{>k-1}) )+ L^{\Lambda_{\le k-1}',\rm{major}}_{J_2}(\xi(t_{>k-1}) ).
\end{align*} 
We insert the last term of the above inequality into the  LHS of (\ref{718m}). Using  the bounds $ L^{\Lambda_{\le \ell}',\rm{major}}_{J_2}(\xi(t_{\ell+1},\cdots,t_k) )\le 1$ and  $L^{\Lambda_{\le k}',\rm{minor}}_{J_2}(\xi_{\Lambda'})=L^{\Lambda',\rm{minor}}_{J_2}(\xi_{\Lambda'})\le 1$, we obtain the desired bound.
\end{proof}
In view of \eqref{718m}, in order to prove \eqref{rara}, it suffices to establish the estimate for each $s=\ell,\dots,k-1$ and for every fixed choice of $t_{s+2},\dots,t_k$,
\begin{align}\label{44kk9}
\sum_{|t_{s+1}|\sim 2^{j_{s+1}}}
L^{\Lambda_{\le s}',\mathrm{major}}_{J_2}\bigl(\xi(t_{s+1},\dots,t_k)\bigr)
L^{\Lambda_{\le s+1}',\mathrm{minor}}_{J_2}\bigl(\xi(t_{s+2},\dots,t_k)\bigr)
\le C_{\Lambda},
\end{align}
  where $C_{\Lambda}$ depends only on $\Lambda$. 
 Before proving (\ref{44kk9}), for $\ell \le s\le k-1$ and
a given $m_{\le s}$, 
we  define the 1 dimensional fiber $$\Lambda_{s+1}(m_{\le s}):=\{m_{s+1}: m_{\le s+1}=(m_1,\cdots,m_s,m_{s+1})\in \Lambda_{\le s+1}\}.$$
Consider the vector polynomial  in \eqref{sk41},
\[
\xi(t_{s+1},\dots,t_k)
= \bigl(\xi_{m_{\le s}}(t_{s+1},\dots,t_k)\bigr)_{m_{\le s}\in \Lambda'_{\le s}}.
\]
Then for each fixed $(t_{s+2},\dots,t_k)$, we view this as a vector-valued polynomial in the variable $t_{s+1}$ whose exponents belong to $\Lambda_{s+1}(m_{\le s})$. More precisely,
\begin{align*}
\xi_{m_{\le s}}(t_{s+1},\dots,t_k)
&= \sum_{m_{s+1}\in \Lambda_{s+1}(m_{\le s})}
\xi_{m_{\le s}m_{s+1}}(t_{s+2},\dots,t_k)\, t_{s+1}^{m_{s+1}},
\end{align*}
where $m_{\le s}m_{s+1} = m_{\le s+1}$.
Then    denote the corresponding coefficient vector by
\[
\tilde{\xi} := (\tilde{\xi}_{m_{\le s}m_{s+1}}),
\quad \text{where } \tilde{\xi}_{m_{\le s}m_{s+1}}
= \xi_{m_{\le s}m_{s+1}}(t_{s+2},\dots,t_k),
\]
suppressing the dependence on the fixed variables $t_{s+2},\dots,t_k$. With this notation, we may write $
\xi(t_{s+1},\dots,t_k)$ in (\ref{sk41}) as the  vector polynomial $t_{s+1}$:
\begin{align*}
\tilde{\xi}(t_{s+1})
&:= \left(
\sum_{m_{s+1}\in \Lambda_{s+1}(m_{\le s})}
\tilde{\xi}_{m_{\le s}m_{s+1}}\, t_{s+1}^{m_{s+1}}
\right)_{m_{\le s}\in \Lambda'_{\le s}}.
\end{align*}
 
  Then to show (\ref{44kk9}), we claim that
\begin{align}\label{44kk99}
&\qquad \sum_{|t_{s+1}|\sim 2^{j_{s+1}}}  L^{\Lambda_{\le s}',\rm{major}}_{J_2}(\tilde{\xi}(t_{s+1}) )L^{\Lambda_{\le s+1}',\rm{minor}}_{J_2}(\tilde{\xi})\lesssim |\Lambda|,
\end{align}
where as in (\ref{i44}), there exists $\frac{b(t_{s+1})}{p(t_{s+1})}\in \mathbb{Q}^{\Lambda_{\le s}'}[1,2^{N^{4(s-\ell)}j_{\ell+1}}]$ such that
\begin{align*}
&L^{\Lambda_{\le s}',\rm{major}}_{J_2}(\tilde{\xi}(t_{s+1}) )
  = 
 \psi\!\left(
    \frac{\tilde{\xi}(t_{s+1})-\frac{b(t_{s+1})}{p(t_{s+1})}}{2^{-10K j_{\ell+1}+N^{4(s-\ell)}j_{\ell+1}}}
  \right).
\end{align*}
Suppose that $$\sum_{|t_{s+1}|\sim 2^{j_{s+1}}}  
   \psi\!\left(
    \frac{\tilde{\xi}(t_{s+1})-\frac{b(t_{s+1})}{p(t_{s+1})}}{2^{-10K j_{\ell+1}+N^{4(s-\ell)}j_{\ell+1}}}
  \right)< 2|\Lambda|,$$  then   (\ref{44kk99}) holds.  Hence to complete the proof of (\ref{44kk99}), there remains to show the following claim.

\begin{claim}\label{cl62} 
Let $\ell \le s\le k-1$. Suppose that
\begin{align*} 
&\sum_{|t_{s+1}|\sim 2^{j_{s+1}}}  
   \psi\!\left(
    \frac{\tilde{\xi}(t_{s+1})-\frac{b(t_{s+1})}{p(t_{s+1})}}{2^{-10K j_{\ell+1}+N^{4(s-\ell)}j_{\ell+1}}}
  \right)\ge 2|\Lambda|\ \text{where}\  \frac{b(t_{s+1})}{p(t_{s+1})}\in \mathbb{Q}^{\Lambda_{\le s}'}[1,2^{N^{4(s-\ell)}j_{\ell+1}}].
\end{align*}
Then one can find a rational vector $a/q$ such that
\begin{align*}
 \psi\left(\frac{\tilde{\xi}-\tilde{a}/\tilde{q}}{2^{-10Kj_{\ell+1}+ N^{4(s-\ell+1)}j_{\ell+1} }  }\right)= 1\ \text{where}\  \tilde{a}/\tilde{q}\in \mathbb{Q}^{\Lambda_{\le s+1}'}[1,2^{N^{4(s-\ell+1)}j_{\ell+1}}].
 \end{align*}
This implies $L^{\Lambda_{\le s+1}',\rm{minor}}_{J_2}(\tilde{\xi})=1-L^{\Lambda_{\le s+1}',\rm{major}}_{J_2}(\tilde\xi )=0$ in the left hand side of (\ref{44kk99}).
\end{claim}
\begin{proof}[Proof of  Claim \ref{cl62}]
 In Lemma  \ref{vari61}, let
 $\tilde{\xi}(t_{s+1})$ play the same role as $\xi(t_2)$ and replace the parameters 
 $$\ell',j_2 \ \text{and} \ p(t_2)\in [1,2^{N^{4\ell'}j_2}] \ \text{ and }\  q\in  [1,2^{N^{4(\ell'+1)}j_2}] \ \text{in Lemma \ref{vari61}}$$
respectively by
 $$(s-\ell), j_{\ell+1}  \ \text{and} \ p(t_{s+1})\in[1,2^{N^{4(s-\ell)}j_{\ell+1}}] \  \text{and}\   \tilde{q}\in [1, 2^{N^{4(s-\ell+1)} j_{\ell+1} }] \ \text{  in Claim \ref{cl62}.}$$    Then the desired conclusion follows.
\end{proof}
Therefore,   the   proof of Proposition \ref{prop5167b9} is complete.
 \end{proof}

\subsection{Averaged Gauss Sum, Sublevel Set, Oscillation Estimates}\label{5.3}
We now estimate the third term in~(\ref{479aa9}). To this end, we follow the argument employed in the proof of \eqref{283c} in Section~\ref{Sec5.4}.

 For $J\in Z_{\ell}(k)$ and $\xi(\mathfrak{t}_2)=(\xi_{m}(\mathfrak{t}_2))_{m\in \Lambda_1'}$,  set
	\begin{align*} 
		\mathcal{H}_{J_1}^{\Lambda'_1}(\xi(\mathfrak{t}_2) ) &:=\int_{|\mathfrak{t}_1|\sim 2^{J_1}} \frac{e^{2\pi i \sum_{m\in \Lambda'_1} \xi_{m}(\mathfrak{t}_2)\mathfrak{t}_1^{m}} }{\mathfrak{t}_1^{{\bf 1}_1}}.
	\end{align*} 
	Consider a rational vector: 
	$
	\frac{a}{q}=\frac{(a_{\mathfrak{m}})_{\mathfrak{m}\in \Lambda}}{q}  \ \text{where}  \ (a,q)=1.$ We then
	define a vector field, depending on $\mathfrak{t}_2=(t_{\ell+1},\dots,t_k)$ by
	\[
	\frac{a(\mathfrak{t}_2)}{q}:=\left( \frac{a_{m}(\mathfrak{t}_2)}{q} \right)_{m\in \Lambda_{1}'},
	\]
	where   for each $m=(m_1,\cdots,m_{\ell})\in \Lambda_1'$,  the layered polynomial is defined by
	\[
	a_{m}(\mathfrak{t}_2):=\sum_{n \in \Lambda'(m)} a_{mn}\,\mathfrak{t}_2^{n}, 
	\qquad \text{with } \ \mathfrak{t}_2^n=t_{\ell+1}^{m_{\ell+1}}\cdots t_k^{m_k}.
	\]
	
	\begin{lemma}\label{kw999}
Suppose that $\Lambda$ satisfies   the  condition (\ref{wep2}).  Fix $J_2=(j_{\ell+1},\ldots,j_k)$ and
$\mathfrak{t}_2\in\mathbb{Z}^{k-\ell}$ satisfying
$|\mathfrak{t}_2|\sim2^{J_2}$.
Let $\xi$ be fixed and let $a/q \in \mathbb{Q}^{\Lambda'}\!\left[1,2^{N^{4(k-\ell)} j_{\ell+1}}\right]$. 
For $J\in Z_{\ell}(k)$, assume the support condition
\begin{align}\label{um199}
\sum_{m\in \Lambda_1'}\left| 
\sum_{n\in\Lambda(m)}
\left( \xi_{mn}-\frac{a_{mn}}{q} \right)\mathfrak{t}_2^n \, 2^{m\cdot J_1} 
\right|
\lesssim 2^{j_{\ell}/10}.
\end{align}
Then one has the following approximation by a continuous multiplier:
\begin{align}\label{um1990}
H^{\Lambda'_{1}}_{J_1}( \xi(\mathfrak{t}_2) )
&=  S^{\Lambda'_{1}}\!\left( \left( \frac{a_{m}(\mathfrak{t}_2)}{q}\right)_{m\in \Lambda'_{1}}\right)  
\mathcal{H}^{\Lambda'_{1}}_{J_1}\!\left(\left(\xi_{m}(\mathfrak{t}_2)-\frac{a_{m}(\mathfrak{t}_2)}{q}\right)_{m\in \Lambda'_{1}}\right) \notag \\
&\quad + E_{J_1}(a/q,\xi,\mathfrak{t}_2),
\end{align}
where the error term satisfies
\[
\sum_{J_1} \ \sum_{a/q \in \mathbb{Q}^{\Lambda'}[1,2^{N^{4(k-\ell)} j_{\ell+1}}]}  
|E_{J_1}(a/q,\xi,\mathfrak{t}_2)|
= O\bigl(2^{-c j_{\ell+1}}\bigr)
\]
for some $c>0$.
\end{lemma}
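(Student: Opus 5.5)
The plan is to repeat the Poisson summation argument of Proposition~\ref{lemm29}(1), now in the $\ell$ variables $\mathfrak t_1$ with $\mathfrak t_2$ frozen. Write $b/p:=a(\mathfrak t_2)/q$, reduced to lowest terms if needed; the Gauss sum $S^{\Lambda'_1}$ is unchanged by this reduction, since it is a full-period average. Set $\beta_m:=\xi_m(\mathfrak t_2)-a_m(\mathfrak t_2)/q$. Then
\[
H^{\Lambda'_1}_{J_1}(\xi(\mathfrak t_2))=\sum_{\mathfrak t_1\in\mathbb Z^\ell}e^{2\pi i\sum_{m}\frac{a_m(\mathfrak t_2)}{q}\mathfrak t_1^m}F_{J_1,\beta}(\mathfrak t_1),
\qquad
F_{J_1,\beta}(\mathfrak t_1)=\frac{e^{2\pi i\sum_m\beta_m\mathfrak t_1^m}\chi_{J_1}(\mathfrak t_1)}{\mathfrak t_1^{{\bf 1}_1}}.
\]
The key steps, in order, are as follows.

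\begin{enumerate}
\item Substitute $\mathfrak t_1=q\mu+r$ with $r\in[q]^\ell$. This is legitimate because $a_m(\mathfrak t_2)\in\mathbb Z$, so the rational phase depends only on $\mathfrak t_1 \bmod q$.
\item Apply Poisson summation in $\mu$. The zero frequency gives exactly $S^{\Lambda'_1}(a(\mathfrak t_2)/q)\,\mathcal H^{\Lambda'_1}_{J_1}(\beta)$. The error is
\[
E_{J_1}=\frac1{q^\ell}\sum_{r}e^{2\pi i\sum_m \frac{a_m(\mathfrak t_2)}{q} r^m}\sum_{w\neq0}e^{2\pi i w\cdot r/q}\,\widehat{F_{J_1,\beta}}(w/q).
\]
\item Bound each $\widehat{F_{J_1,\beta}}(w/q)$ by integration by parts in every $s_\nu$, $\nu\le\ell$. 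The gradient of the phase is $w/q+(\sum_m\beta_m m_\nu s^{m-{\bf e}_\nu})_\nu$. By \eqref{um199}, $|\beta_m 2^{m\cdot J_1}|\lesssim 2^{j_\ell/10}$, so the second part is $O(2^{j_\ell/10-j_\nu})=O(2^{-\frac9{10}j_\nu})$. Meanwhile $|w_\nu/q|\ge 2^{-N^{4(k-\ell)}j_{\ell+1}}$.
\item Use the imbalance $j_\ell\gg_\Lambda j_{\ell+1}$, that is $j_\ell>20K j_{\ell+1}$ with $K=N^{10k}\gg N^{4(k-\ell)}$. This gives $q\le 2^{N^{4(k-\ell)}j_{\ell+1}}\le 2^{j_\ell/100}$, so for every $\nu\le\ell$ the linear term dominates and $|\nabla|\gtrsim |w_\nu|/q$. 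Integrating by parts four times yields
\[
|\widehat{F_{J_1,\beta}}(w/q)|\lesssim\prod_{\nu\le\ell}\bigl(1+|w_\nu|2^{j_\nu}/q\bigr)^{-4}.
\]
Summing over $w\ne0$ then gives $\lesssim 2^{-3j_\ell/4}$.
\item Interpolate with $\sum_{J_1}|\widehat{F_{J_1,\beta}}(w/q)|^{1/2}\lesssim1$, exactly as in \eqref{kp2}. This comes from Lemma~\ref{po3} applied to $\Lambda'_1$ enlarged by the unit vectors ${\bf e}_\nu$ with $w_\nu\neq0$. The enlarged set still satisfies \eqref{1009}, since $\Lambda'_1$ inherits \eqref{1009} by Remark~\ref{re3.1}, so \eqref{v16} holds.
\end{enumerate}

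Combining steps 4 and 5 gives $\sum_{J_1}\sum_{w\ne0}|\widehat F|\lesssim 2^{-cj_{\ell}}$ over the range with $j_\ell\ge 20Kj_{\ell+1}$. This is at most $2^{-c'j_{\ell+1}}q^{-2}$ in the relevant range of $q$; alternatively one keeps a factor $2^{-cj_\ell/2}$ and observes that $q\le 2^{j_\ell/100}$.

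The final step is summation over $a/q$, and this is where I expect the real difficulty. Unlike Proposition~\ref{lemm29}, the $a/q$ here are not automatically disjointly supported, because the support condition \eqref{um199} concerns only the sliced frequencies. I would handle this by counting. For each fixed $q$, the number of $a$ (modulo $q$ the sum is periodic in $\xi$, so only $a$ with $\xi$ near $a/q$ matter) compatible with \eqref{um199} is at most $q^{|\Lambda'|}$. Hence the total count of $a/q$ is at most $2^{(|\Lambda'|+1)N^{4(k-\ell)}j_{\ell+1}}$. This is absorbed by the decay $2^{-cj_\ell}\le 2^{-20cKj_{\ell+1}}$ provided $K\gg N^{4(k-\ell)}|\Lambda|$, which is exactly why $K=N^{10k}$ was chosen. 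If the constant $c$ in step 4 is too weak for this absorption, I would integrate by parts more times to make $c$ close to $1$; no other part of the argument needs to change.
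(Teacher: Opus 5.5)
Your proposal is correct and is the paper's own argument: the paper proves this lemma only by saying it follows as in the Poisson-summation proof of Proposition~\ref{lemm29}(1), run in the variables $\mathfrak t_1$ with $\mathfrak t_2$ frozen, which is what your steps 1--5 do. There is one harmless slip in step 4. Because $\chi_{j_\nu}$ has $O(1)$ derivatives on sets of length $O(1)$, four integrations by parts give roughly $2^{-j_\nu}(q/|w_\nu|)^{4}$ per variable with $w_\nu\neq0$, not your product bound, so extra integrations by parts would not improve the exponent. This still yields $\sum_{w\neq0}\sup_{J_1}|\widehat{F_{J_1,\beta}}(w/q)|^{1/2}\lesssim q^{2}2^{-10Kj_{\ell+1}}$ (supremum over the admissible $J_1$), which is all the interpolation in step 5 needs.

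Your counting treatment of the $a/q$-sum also works, provided $a$ is read modulo $q$. If $a$ ranged over all of $\mathbb{Z}^{|\Lambda'|}$ and some fiber $\Lambda(m)$ had two elements, infinitely many $a$ would share the sliced vector $a(\mathfrak t_2)/q$ and hence the same error term. The paper does not need this count: in every application the $a/q$-sum carries a cutoff forcing $\xi_{\Lambda'}$ to lie near $a/q$, which leaves only $O(1)$ admissible $a/q$.
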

	 \begin{proof}
The  result follows from the same reasoning as in the proof of part~(1) of Proposition~\ref{lemm29}.
\end{proof}  

We shall estimate the third term of (\ref{479aa9}) by showing the following proposition. 
\begin{proposition}\label{prop8139}
Suppose that $\Lambda$ satisfies   the  condition (\ref{wep2}). There  is $c>0$  independent of $J_2$ and $\xi$ such that
\begin{align*}
 \sum_{J_1: J\in Z_\ell(k)} \sum_{\mathfrak{t}_2\sim 2^{J_2}} \left|\frac{H^{\Lambda_{1}}_{J_1}( \xi(\mathfrak{t}_2)) }{2^{J_2\cdot {\bf 1}_2} }\right|  \left(\Psi_{J}^{\Lambda'_1,\mathrm{major}}(\xi(\mathfrak{t}_2)) L^{\Lambda',\rm{major}}_{J_2}(\xi_{\Lambda'})-\Psi^{\Lambda',\rm{major}}_{J}(\xi_{\Lambda'})  \right) \lesssim 2^{-cj_{\ell+1}}.
\end{align*}
\end{proposition}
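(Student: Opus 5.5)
We follow the two-parameter argument for \eqref{283c} in Subsection~\ref{Sec5.4}, with $t_2$ replaced by the block $\mathfrak t_2=(t_{\ell+1},\ldots,t_k)$, $j_2$ by $j_{\ell+1}$, and $\Lambda_1$ by $\Lambda'_1$. Since $|H^{\Lambda_1}_{J_1}(\xi(\mathfrak t_2))|=|H^{\Lambda'_1}_{J_1}(\xi(\mathfrak t_2))|$, the factor $e^{2\pi i\xi_0(\mathfrak t_2)}$ is irrelevant here and we work throughout with $\Lambda'$.

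\textbf{Step 1: removing $b/p$.} We first prove the analogue of Lemma~\ref{lem31}. On the support of the product $\Psi_J^{\Lambda'_1,\mathrm{major}}(\xi(\mathfrak t_2))L^{\Lambda',\mathrm{major}}_{J_2}(\xi_{\Lambda'})$ we have two bounds for each $m\in\Lambda'_1$:
\[
|\xi_m(\mathfrak t_2)-b_m/p|\le 2^{-m\cdot J_1}2^{j_k\rho/100}\le 2^{-19Kj_{\ell+1}},
\]
using $m\ne\mathbf 0$ and $j_\ell\gg_\Lambda j_{\ell+1}$, and
\[
|\xi_m(\mathfrak t_2)-a_m(\mathfrak t_2)/q|\lesssim 2^{Nj_{\ell+1}}2^{-10Kj_{\ell+1}+N^{4(k-\ell)}j_{\ell+1}}.
\]
The second bound is small because $K=N^{10k}$ dominates $N^{4(k-\ell)}$. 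Since $p\le 2^{j_k/10}$ and $q\le 2^{N^{4(k-\ell)}j_{\ell+1}}$, two distinct rationals with these denominators are separated by much more than the sum of the two errors. Hence $b/p=a(\mathfrak t_2)/q$, and the $b/p$-sum collapses to a sum over $a/q$.

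We then split that sum according to $q\ge 2^{j_k/10}$ or $q<2^{j_k/10}$. Next we compare with $\Psi^{\Lambda',\mathrm{major}}_J$, which, as in the $k=2$ case, equals itself multiplied by the coarse cutoff. This produces three error multipliers $E^{\mathrm{gauss}}_J$, $E^{\mathrm{sub}}_J$ and $E^{\mathrm{osc}}_J$, defined verbatim as before with products over $n\in\Lambda'(m)$, the scales $2^{-m\cdot J_1}2^{j_k\rho/100}$ and $2^{-J\cdot(m,n)}2^{j_k\rho/10}$, and $j_2\to j_{\ell+1}$.

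\textbf{Step 2: the three estimates.} In every summand the support condition \eqref{um199} holds, so Lemma~\ref{kw999} replaces $H^{\Lambda'_1}_{J_1}$ by $S^{\Lambda'_1}(a(\mathfrak t_2)/q)\,\mathcal H^{\Lambda'_1}_{J_1}(\xi(\mathfrak t_2)-a(\mathfrak t_2)/q)$ at a cost of $O(2^{-cj_{\ell+1}})$. By Remark~\ref{re3.1} and Lemma~\ref{po3}, $\sum_{J_1}|\mathcal H^{\Lambda'_1}_{J_1}|^{1/2}\lesssim1$ uniformly in $\mathfrak t_2$, and van der Corput gives $|\mathcal H^{\Lambda'_1}_{J_1}|\lesssim |(\xi_m(\mathfrak t_2)-a_m(\mathfrak t_2)/q)2^{m\cdot J_1}|^{-c}$.

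For $E^{\mathrm{gauss}}_J$, with $2^{j_k/10}\le q\le 2^{N^{4(k-\ell)}j_{\ell+1}}$, we use the averaged Gauss sum bound \eqref{aver}. It applies because $j_{\ell+1}\approx_\Lambda j_k$ and $N^{4(k-\ell)}\le N^{4k}$, and it yields $q^{-\delta_\Lambda}2^{-\delta_\Lambda j_{\ell+1}}$. The disjointness of the coarse cutoffs over lacunary $q$ then lets us sum over $a/q$.

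For $E^{\mathrm{osc}}_J$, the factor $1-\prod\psi$ forces $|(\xi_m(\mathfrak t_2)-a_m(\mathfrak t_2)/q)2^{m\cdot J_1}|\gtrsim 2^{j_k\rho/100}$ for some $m$. Interpolating the decay of $|\mathcal H|$ against the square-root summability gives $2^{-c\rho j_k}\lesssim 2^{-cj_{\ell+1}}$.

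For $E^{\mathrm{sub}}_J$, we define $R_{m,J_2}(\eta)=\sum_{n\in\Lambda'(m)}|\eta_{mn}|2^{J_2\cdot n}$ and dominate the support, independently of $J_1$, as in \eqref{43ut}:
\[
\sum_m\psi\!\left(\frac{\sum_n \eta_{mn}\mathfrak t_2^n/R_{m,J_2}(\eta)}{2^{j_k\rho(1/100-1/10)}}\right).
\]
We then sum in $J_1$ first, using Remark~\ref{yu1}, and apply the lattice sublevel estimate Lemma~\ref{eep1} in the $k-\ell$ variables $\mathfrak t_2$ with $r\approx \rho j_k\cdot 9/100$. This gives $2^{j_{\ell+1}+\cdots+j_k}2^{-cj_k}$, and after normalizing, $2^{-cj_{\ell+1}}$.

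\textbf{Main obstacle.} The delicate point is the bookkeeping of exponents, which must be consistent across the whole argument. The small parameter $\rho$ in the layered cutoffs must be large enough to give decay $2^{-c\rho j_k}$ in the oscillatory and sublevel terms. At the same time, the scales $2^{j_k\rho/10}$ must stay compatible with the uniqueness argument of Step~1 and with $J_1$-summability. The choice $\rho=(400K)^{-k}\delta_\Lambda$ should suffice, but each comparison uses $j_{\ell+1}\le 20^{k}K^{k}j_k$, and this needs checking carefully.
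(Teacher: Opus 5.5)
Your proposal is correct and follows the paper's proof essentially step for step. You collapse the $b/p$-sum to $b/p=a(\mathfrak t_2)/q$ by separation of rationals, and then majorize the difference by $E^{\rm gauss}_J+E^{\rm sub}_J+E^{\rm osc}_J$. These three terms are handled, as in the paper, by the averaged Gauss sum bound \eqref{aver}, by the lattice sublevel estimate of Lemma~\ref{eep1} after summing in $J_1$ via Remark~\ref{yu1}, and by the oscillatory decay of $\mathcal H^{\Lambda_1'}_{J_1}$ together with \eqref{po4}.

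The exponent bookkeeping you flag is harmless here. Nothing in this proposition uses the size of $\rho$ beyond $0<\rho\le 1$; its smallness matters only in Subsection~\ref{5.4}. The passage from $2^{-c\rho j_k}$ to $2^{-cj_{\ell+1}}$ uses only $j_{\ell+1}\le (20K)^{k}j_k$ on $Z_\ell(k)$.
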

 
\begin{proof}[Proof of Proposition \ref{prop8139}.]
Let $m=(m_1,\dots,m_{\ell})$ and $n=(m_{\ell+1},\dots,m_k)$.  
On the product of the following two summations:
\begin{align*}
\Psi_{J}^{\Lambda'_1,\mathrm{major}}\!\left(\xi(\mathfrak{t}_2)\right)
&= 
\sum_{b/p \in \mathbb{Q}^{\Lambda'_1}[1, 2^{j_k/10}]}  
\prod_{m\in \Lambda_{1}'}
\psi\!\left(
\frac{\xi_{m}(\mathfrak{t}_2)-b_{m}/p}{\,2^{-m\cdot J_1} 2^{j_k\rho/100}}
\right), \\
L^{\Lambda',\mathrm{major}}_{J_2}(\xi_{\Lambda'})
&=
\sum_{a/q \in \mathbb{Q}^{\Lambda'}[1,2^{N^{4(k-\ell)}\,j_{\ell+1}}]}
\psi\!\left(
\frac{\xi_{\Lambda'}-a/q}{\,2^{-10K j_{\ell+1} + N^{4(k-\ell)} j_{\ell+1}}}
\right),
\end{align*}
one can observe  that
 $1 \le p,q \le 2^{K j_{\ell+1}}$  and
\begin{align*}
\frac{b_{m}}{p}
- \frac{\sum_{n\in \Lambda(m)} a_{mn}\,\mathfrak{t}_2^{n}}{q} 
&=
\left(
\frac{b_{m}}{p}
-
\sum_{n\in \Lambda(m)} \xi_{mn}\mathfrak{t}_2^{n}
\right)
+
\sum_{n\in \Lambda(m)}
\left(\xi_{mn}-\frac{a_{mn}}{q}\right)\mathfrak{t}_2^{n}  \\
&= O(2^{-9K j_{\ell+1}}).
\end{align*}
Hence for every \(m \in \Lambda_1'\), the following identity holds
$
\frac{b_m}{p}
=
\frac{\sum_{n\in \Lambda(m)} a_{mn}\,\mathfrak{t}_2^{n}}{q}.
$
So,  
\begin{align*}
\Psi_{J}^{\Lambda'_1,\mathrm{major}}\!\left(\xi(\mathfrak{t}_2)\right)
\, &L^{\Lambda',\mathrm{major}}_{J_2}(\xi_{\Lambda'})=\sum_{a/q \in \mathbb{Q}^{\Lambda'}[1,2^{N^{4(k-\ell)}j_{\ell+1}}]}    \chi_{ \mathbb{Q}^{\Lambda'_1}[1, 2^{j_k/10}]  }(a(\mathfrak{t}_2)/q)\\
&\times \psi\!\left(
\frac{\xi_{\Lambda'}-a/q}{\,2^{-10K j_{\ell+1}+ N^{4(k-\ell)}j_{\ell+1}}}
\right)
\prod_{m\in \Lambda'_1}
\psi\!\left(
\frac{
\sum_{n\in\Lambda(m)}
\left( \xi_{mn}-\frac{a_{mn}}{q}\right)
\mathfrak{t}_2^{n}
}{
2^{-m\cdot J_1}\,2^{j_k\rho/100}
}
\right).
\end{align*}
As in Section \ref{Sec5.4},  by combining
$$
\Psi^{\Lambda',\rm{major}}_{J}(\xi_{\Lambda'})=\sum_{ a/q\in \mathbb{Q}^{\Lambda'}[1,2^{j_k/10}]} \prod_{(m,n)\in \Lambda'} \psi\left(  \frac{  \xi_{mn}-\frac{a_{mn}}{q}}{2^{-J\cdot (m,n)}2^{j_k\rho/10}} \right)\psi\left(\frac{\xi_{\Lambda'}-a/q}{2^{-10Kj_{\ell+1}+N^{4(k-\ell)}j_{\ell+1} }}\right) ,$$    majorize the LHS of the inequality in Proposition \ref{prop8139} into the following three terms:
\begin{align*}
	&\sum_{J_1: J\in Z_{\ell}(k)}  \sum_{|\mathfrak{t}_2|\sim 2^{J_2}}    \frac{|H^{\Lambda_1}_{J_1}(\xi(\mathfrak{t}_2))|}{|\mathfrak{t}_2^{{\bf 1}_2}|}  \left(\Psi^{\Lambda_1',\rm{major}}_{J}(\xi(\mathfrak{t}_2))L^{\Lambda',\rm{major}}_{J_2}(\xi_{\Lambda'})-\Psi^{\Lambda',\rm{major}}_{J}(\xi_{\Lambda'})\right) \\
	&\le \sum_{J_1: J\in Z_{\ell}(k)} E_J^{\rm{gauss}}(\xi)+E_J^{\rm{sub}}(\xi) +E_J^{\rm{osc}}(\xi) 
\end{align*}
where     we define $E_J^{\rm{gauss}}(\xi)$, $E_J^{\rm{sub}}(\xi)$ and $E_J^{\rm{osc}}(\xi)$ by 
\begin{align*}
	E^{\rm{gauss}}_J(\xi)&:= \sum_{ a/q\in \mathbb{Q}^{\Lambda'}[2^{j_k/10},2^{N^{4(k-\ell)} j_{\ell+1} }]}  \sum_{\mathfrak{t}_2\sim  2^{J_2}}    \frac{|H^{\Lambda_{1}}_{J_1}(\xi(\mathfrak{t}_2))|}{2^{J_2\cdot {\bf 1}_2}} \nonumber  \\
	&  \times \prod_{m\in\Lambda'_{1}}\psi\left( \frac{\sum_{n\in\Lambda(m)}\left( \xi_{mn}-\frac{a_{mn}}{q} \right)\mathfrak{t}_2^n }{2^{-m\cdot J_1} 2^{ j_k\rho/100}} \right)\psi\left(\frac{\xi_{\Lambda'}-a/q}{2^{-10Kj_{\ell+1}+N^{4(k-\ell)}j_{\ell+1}}}\right),\nonumber\\
	E^{\rm{sub}}_J(\xi)&:=  \sum_{ a/q\in \mathbb{Q}^{\Lambda'}[1,2^{j_k/10}]}   \sum_{\mathfrak{t}_2\sim  2^{J_2}}    \frac{|H^{\Lambda_{1}}_{J_1}(\xi(\mathfrak{t}_2))|}{2^{J_2\cdot {\bf 1}_2}}  \prod_{m\in\Lambda'_{1}}\psi\left( \frac{\sum_{n\in\Lambda(m)}\left( \xi_{mn}-\frac{a_{mn}}{q} \right)\mathfrak{t}_2^n }{2^{-m\cdot J_1} 2^{ j_k\rho/100}} \right)
	\nonumber \\
	&\times   \left(1-\prod_{(m,n)\in\Lambda'}\psi\left(  \frac{  \xi_{mn}-\frac{a_{mn}}{q}}{ 2^{-J\cdot (m,n)}2^{j_k\rho/10}} \right)\right)\psi\left(\frac{\xi_{\Lambda'}-a/q}{2^{-10Kj_{\ell+1}+N^{4(k-\ell)}j_{\ell+1} }}\right),\\
		E^{\rm{osc}}_J(\xi)&:=   \sum_{ a/q\in \mathbb{Q}^{\Lambda'}[1,2^{j_k/10}]}  \sum_{\mathfrak{t}_2\sim  2^{J_2}}    \frac{|H^{\Lambda_{1}}_{J_1}(\xi(\mathfrak{t}_2))|}{2^{J_2\cdot {\bf 1}_2}} \left(  \prod_{m\in\Lambda'_{1}}\psi\left( \frac{\sum_{n\in\Lambda(m)}\left( \xi_{mn}-\frac{a_{mn}}{q} \right)\mathfrak{t}_2^n }{2^{-m\cdot J_1} 2^{ j_k\rho/100}} \right) -1\right)\nonumber\\
	&\times \prod_{(m,n)\in \Lambda'} \psi\left(  \frac{  \xi_{mn}-\frac{a_{mn}}{q}}{ 2^{-J\cdot (m,n)}2^{j_k\rho/10}} \right)\psi\left(\frac{\xi_{\Lambda'}-a/q}{2^{-10Kj_{\ell+1}+N^4(k-\ell)j_{\ell+1} }}\right).
\end{align*}

Hence, Proposition \ref{prop8139} follows from  lemmas \ref{lem859}, \ref{lem57b9} and \ref{lem574b9} below, where we shall apply Gauss-sum estimate to $E^{\rm{gauss}}_J(\xi)$,  the sub-level set estimate to $E^{\rm{sub}}_J$ and the oscillatory integral estimate to $E^{\rm{osc}}_J$,   respectively.

 \begin{lemma}\label{lem859}[Estimate of $E^{\rm{gauss}}_J(\xi)$]
 Suppose that $\Lambda$ satisfies   the  condition (\ref{wep2}). There exists $c>0$ independent of $J_2$ and $\xi$ such that 
$$\sum_{J_1:J\in Z_{\ell}(k)} |E^{\rm{gauss}}_J(\xi)|\lesssim 2^{-cj_{\ell+1}}.$$
\end{lemma}
\begin{proof} 
Fix $\xi$. By using Lemma \ref{po3} and (\ref{um1990}),  one has up to $O(2^{-cj_{\ell+1}})$,
  \begin{align*}
\sum_{J_1:J\in Z_{\ell}(k)} E^{\rm{gauss}}_J(\xi)
&\lesssim \sum_{a/q\in  \mathbb{Q}^{\Lambda'}[2^{j_k/10}, 2^{N^{4(k-\ell)}j_{\ell+1} }] } \frac{1}{2^{J_2\cdot {\bf 1}_2}}\sum_{|\mathfrak{t}_2|\sim 2^{J_2}}\left|S^{\Lambda_{1}'}\left( \frac{a(\mathfrak{t}_2)}{q} \right)\right| \\
 &\times\sup_{\mathfrak{t}_2,\xi} \sum_{J_1:J\in Z_\ell(k)}\left|\mathcal{H}^{\Lambda_{1}'}_{J_1}\left( \xi(\mathfrak{t}_2)-\frac{a(\mathfrak{t}_2)}{q}  \right)\right|\psi\left(\frac{\xi_{\Lambda'}-\frac{a}{q}}{2^{-9Kj_{\ell+1}}}\right)\\
&\lesssim \sum_{a/q\in  \mathbb{Q}^{\Lambda'}[2^{j_k/10},2^{N^{4(k-\ell)}j_{\ell+1}} ]}  q^{-\delta_{\Lambda}}2^{-\delta_{\Lambda}j_{\ell+1}}\psi\left(\frac{\xi_{\Lambda'}-\frac{a}{q}}{2^{-9 Kj_{\ell+1}}}\right)  \lesssim   2^{-cj_{\ell+1}}.
 \end{align*}  
where the second inequality follows from the averaged Gauss sum estimate in \eqref{aver}. 
\end{proof}

\begin{lemma}\label{lem57b9}[Estimate of $E_J^{\rm{sub}}$] Suppose that $\Lambda$ satisfies   the  condition (\ref{wep2}). There is $c>0$ independent of $J_2$ and $\xi$ such that 
\begin{align*}
	\sum_{J_1:J\in Z_{\ell}(k)} |E^{\rm{sub}}_J(\xi)| \lesssim 2^{-cj_{\ell+1}}.
\end{align*}
\end{lemma}
\begin{proof}
Fix $\xi$. For each $m\in \Lambda_1'$,  we denote
	$$\eta_{mn}:=\xi_{mn}-\frac{a_{mn}}{q}\ \text{and}\ R_{m,J_2}(\eta):=\sum_{n\in\Lambda(m)} |\eta_{mn}|\, 2^{J_2 \cdot n}. $$ Then as in (\ref{43ut}), majorize the cutoff for $\mathfrak{t}_2$ depending on $J_1$ by  
	\begin{align*} 
		&\prod_{m\in\Lambda_1'}
		\psi\!\left(
		\frac{\sum_{n\in\Lambda(m)} \eta_{mn}\, \mathfrak{t}_2^n}
		{2^{-m\cdot J_1}\, 2^{j_{k}\rho /100}}
		\right)
		\left(
		1-
		\prod_{(m,n)\in\Lambda'}
		\psi\!\left(
		\frac{\eta_{mn}\, 2^{J_2\cdot n}}
		{2^{-m\cdot  J_1}\, 2^{j_k\rho /10}}
		\right)
		\right) \\
&\qquad \le
		\sum_{m\in\Lambda_1'}
		\psi\!\left(
		\frac{
			\displaystyle
			\frac{\sum_{n\in\Lambda(m)} \eta_{mn}\, \mathfrak{t}_2^n}
			{2R_{m,J_2}(\eta)}
		}
		{ 2^{j_k\rho  (1/100-1/10)}}
		\right),\nonumber
	\end{align*}
	which is independent of $J_1$. 
We now apply the lattice sublevel set estimate of Lemma \ref{eep1} to the $(k-\ell)$ parameter polynomial for  each  $m\in \Lambda_1'$. Note that $$ \sum_{n\in\Lambda(m)} \left|\frac{\eta_{mn}}{R_{m,J_2}(\eta)}\right| \, 2^{J_2\cdot n}=1.$$ Then,
	with $\epsilon=4\cdot 2^{j_k\rho  (1/100-1/10)} $  and $2^r\epsilon =1$ where $j_k\ge r=(1/10-1/100)\rho  j_k-2$ in Lemma \ref{eep1},  there exists $c'>0$ such that
	$$ \sum_{|\mathfrak{t}_2|\sim 2^{J_2}}   
	\psi\!\left(
	\frac{
		\sum_{n\in\Lambda(m)} \left(\frac{\eta_{mn}}{R_{m,J_2}(\eta)} \right)\mathfrak{t}_2^n
	}
	{ 2\cdot 2^{j_k \rho  (1/100-1/10)}}
	\right)\lesssim 2^{J_2\cdot {\bf 1}_2- c' j_k}. $$ Moreover, by Remark \ref{yu1}, one has $$ \sup_{|\mathfrak{t}_2|\sim 2^{J_2}} \sum_{J_1:J\in Z_\ell(k)}\left| H^{\Lambda_{1}'}_{J_{1}}\left(\xi(\mathfrak{t}_2) \right)\right|\lesssim 1.$$ Therefore,    we can obtain that
	\begin{align*}
	\sum_{J_1:J\in Z_{\ell}(k)} |E^{\rm{sub}}_J(\xi)| &\lesssim  \frac{1}{2^{J_2\cdot {\bf 1}_2}}
	 \sum_{ a/q\in \mathbb{Q}^{\Lambda'}[1,2^{j_k/10}]}  	\sum_{m\in\Lambda_1'} \sum_{|\mathfrak{t}_2|\sim  2^{J_2}}    
		\psi\!\left(
		\frac{
			\sum_{n\in\Lambda(m)} \left(\frac{\eta_{mn}}{R_{m,J_2}(\eta)} \right)\mathfrak{t}_2^n
		}
		{ 2\cdot 2^{j_k\rho  (1/100-1/10)}}
		\right)\\
&\times \psi\left(\frac{\xi_{\Lambda'}-\frac{a}{q}}{2^{-9Kj_{\ell+1}}}\right)
		 \sup_{|\mathfrak{t}_2|\sim 2^{J_2}} \sum_{J_1:J\in Z_\ell(k)}\left| H^{\Lambda_{1}'}_{J_{1}}\left(\xi(\mathfrak{t}_2) \right)\right|
		 \\
		& \lesssim 2^{-c'j_k}\sum_{ a/q\in \mathbb{Q}^{\Lambda'}[1,2^{j_k/10}]}\psi\left(\frac{\xi_{\Lambda'}-\frac{a}{q}}{2^{-9Kj_{\ell+1}}}\right)\lesssim 2^{-cj_{\ell+1}}. \nonumber
	\end{align*}
For the last inequality, we used the disjointness of the rationals
\( a/q\in \mathbb{Q}^{\Lambda'}[1,2^{j_k/10}] \)
from the support of
\( \psi\!\left(\frac{\xi_{\Lambda'}-\frac{a}{q}}{2^{-9Kj_{\ell+1}}}\right) \) and the comparability condition
\( j_{\ell+1}\approx_{\Lambda}\cdots \approx_{\Lambda} j_{k} \).
	This completes a proof of Lemma \ref{lem57b9}.\end{proof}

\begin{lemma}\label{lem574b9}[Estimate of $E^{\rm{osc}}_J(\xi)$] Suppose that $\Lambda$ satisfies   the  condition (\ref{wep2}). There is $c>0$ independent of $J_2$ and $\xi$ such that
$$\sum_{J_1:J\in Z_\ell(k)} |E^{\rm{osc}}_J(\xi)|\lesssim 2^{-cj_{\ell+1}}.$$
\end{lemma}
\begin{proof}  
 Note that for fixed $\xi$ and $a/q$ and $J\in Z_\ell(k)$, the identity $$\prod_{(m,n)\in \Lambda'} \psi\left(  \frac{  \xi_{mn}-\frac{a_{mn}}{q}}{ 2^{-J\cdot (m,n)}2^{j_k\rho/10}} \right)=1$$ implies that the support condition above  (\ref{um199}) is satisfied.
Hence, by applying  the integral approximation in Lemma \ref{kw999}, we have
\begin{align*} 
\sum_{J_1:J\in Z_{\ell}(k)} |E^{\rm{osc}}_{J}(\xi)| &\lesssim \sup_{|\mathfrak{t}_2|\sim 2^{J_2}} \sum_{ a/q\in \mathbb{Q}^{\Lambda'}[1,2^{j_k/10}]} \sum_{J_1:J\in Z_{\ell}(k)}       \prod_{(m,n)\in \Lambda'} \psi\left(  \frac{  \xi_{mn}-\frac{a_{mn}}{q}}{ 2^{-J\cdot (m,n)}2^{j_k\rho/10}} \right)  \nonumber \\
&\times \left(  \prod_{m\in\Lambda'_{1}}\psi\left( \frac{\sum_{n\in\Lambda(m)}\left( \xi_{mn}-\frac{a_{mn}}{q} \right)\mathfrak{t}_2^n }{2^{-m\cdot J_1} 2^{ j_k\rho/100}} \right) -1\right)\\
&\times \left|S^{\Lambda_{1}'}\left(\frac{a(\mathfrak{t}_2)}{q}\right)  \mathcal{H}^{\Lambda_{1}'}_{J_{1}}\left(\xi(\mathfrak{t}_2)-\frac{a(\mathfrak{t}_2)}{q}\right)\right| +O(2^{-cj_{\ell+1}}).\nonumber
\end{align*}
 By  using Lemma \ref{po3}  and the    oscillatory integral estimate together with   the support condition 
 $$ \sum_{m\in \Lambda_1'} \left|  \sum_{n\in\Lambda(m)}\left( \xi_{mn}-\frac{a_{mn}}{q} \right)\mathfrak{t}_2^n 2^{m\cdot J_1}    \right|\gtrsim  2^{ j_k\rho/100},$$ we have $c'>0$ such that $$ \sum_{J_1:j\in Z_{\ell}(k)} |E^{\rm{osc}}_{J}(\xi)|\lesssim 2^{-c'j_{k}}  \sum_{ a/q\in \mathbb{Q}^{\Lambda'}[1,2^{j_k/10}]}  \psi\left(\frac{\xi-\frac{a}{q}}{2^{-9Kj_{\ell+1}}}\right).$$    Combined with the bound $$ \sum_{ a/q\in \mathbb{Q}^{\Lambda'}[1,2^{j_k/10}]}  \psi\left(\frac{\xi-\frac{a}{q}}{2^{-9Kj_{\ell+1}}}\right)\lesssim 1,$$ we conclude that there exists $c>0$ such that
\begin{align*} 
\sum_{J_1:J\in Z_{\ell}(k)} |E^{\rm{osc}}_{J}(\xi)| \le 2^{-cj_{\ell+1}},
\end{align*}
under the condition $j_{\ell+1}\approx_{\Lambda}\cdots\approx_{\Lambda} j_k$.
This completes the proof of Lemma \ref{lem574b9}.
\end{proof}
Therefore, we have proved Proposition \ref{prop8139}.
\end{proof}

\subsection{Minor Arc Estimate for $\Lambda\setminus \Lambda'$}\label{5.4}
We  shall    treat the minor arc contribution associated with $\xi_0(\mathfrak{t}_2)=\sum_{(0,n)\in \Lambda\setminus \Lambda'} \xi_{0n}\mathfrak{t}_2^n$ to show   the fourth   term of (\ref{479aa9}). 
\begin{proposition}\label{prop7449}  
Suppose that $\Lambda$ satisfies   the  condition (\ref{wep2}). There exists $c>0$ independent of $J_2$ and  $\xi$ such that
$$\sum_{J_1:J\in Z_\ell(k)}\left|\sum_{|\mathfrak{t}_2|\sim 2^{J_2} }   \frac{H^{\Lambda_{1}}_{J_1}(\xi(\mathfrak{t}_2))}{\mathfrak{t}_2^{{\bf 1}_2} }  \left(\Psi^{\Lambda',\rm{major}}_{J}(\xi_{\Lambda'})-  \Psi^{\Lambda,\rm{major}}_{J}(\xi)  \right)\right|\lesssim 2^{-cj_{\ell+1}}.$$
\end{proposition}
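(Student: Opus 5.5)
Recall first that $\Psi^{\Lambda,\mathrm{major}}_J(\xi)$ is the full major-arc cutoff built with the scale $2^{j_k/10}$, while $\Psi^{\Lambda',\mathrm{major}}_J(\xi_{\Lambda'})$ localizes only the coordinates $\xi_{\Lambda'}$, and does so at the finer scale $2^{j_k\rho/10}$. Since $\rho<1$, on the support of $\Psi^{\Lambda,\mathrm{major}}_J$ the $\Lambda'$-coordinates automatically lie in the coarser neighbourhood $2^{j_k/10}$, but not necessarily in the $\rho$-neighbourhood. So I would not treat the difference as one signed object. I would write the difference as
\[
\Psi^{\Lambda',\mathrm{major}}_J(1-\Psi^{\Lambda,\mathrm{major}}_J)-\Psi^{\Lambda,\mathrm{major}}_J(1-\Psi^{\Lambda',\mathrm{major}}_J),
\]
and treat the two pieces separately.

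\textbf{Second piece.} Here $\xi$ lies in a full major arc $a/q$ with $q\le 2^{j_k/10}$. At the same time some $(m,n)\in\Lambda'$ has $|\xi_{mn}-a_{mn}/q|2^{J\cdot(m,n)}\gtrsim 2^{j_k\rho/10}$, since the rationals are disjoint and $a/q$ restricted to $\Lambda'$ is the only candidate. By Proposition~\ref{lemm29}, $H^\Lambda_J=S^\Lambda(a/q)\mathcal H^\Lambda_J(\xi-a/q)+E$. I would then use the oscillatory decay of $\mathcal H^\Lambda_J$ in the large coordinate, namely $|\mathcal H^\Lambda_J|\lesssim 2^{-cj_k\rho}$ via van der Corput in $t_1,\dots,t_\ell$. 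Interpolating this with the summability \eqref{po4} (the $|\cdot|^{1/2}$ version) gives $\sum_{J_1}\lesssim 2^{-cj_k}\lesssim 2^{-cj_{\ell+1}}$, using $j_{\ell+1}\approx_\Lambda j_k$.

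\textbf{First piece.} This is the main one. Here $\xi_{\Lambda'}$ is localized near some $a'/q'$ with $q'\le 2^{j_k/10}$ at scale $2^{j_k\rho/10}$, but $\xi$ is not in the full major arc. Two things can then happen. Either some $\xi_{0n}$ is far from every $a_{0n}/q$ with a common denominator $q\le 2^{j_k/10}$ compatible with $a'/q'$, or $\xi_{0n}$ is close to a rational with small denominator but its frequency exceeds $2^{-J\cdot(0,n)}2^{j_k/10}$.

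By Lemma~\ref{kw999}, on this support $H^{\Lambda'_1}_{J_1}(\xi(\mathfrak t_2))=S^{\Lambda'_1}(a'(\mathfrak t_2)/q')\,\mathcal H^{\Lambda'_1}_{J_1}(\xi(\mathfrak t_2)-a'(\mathfrak t_2)/q')+O$. Both factors depend on $\mathfrak t_2$ smoothly (the integral) or $q'$-periodically (the Gauss sum). I would split $\mathfrak t_2=q'\mu+r$ and pull the Gauss sum out per residue class. What remains is a weighted sum
\[
\sum_{\mu}e^{2\pi i\xi_0(q'\mu+r)}\,\frac{\mathcal H^{\Lambda'_1}_{J_1}(\cdots)}{(q'\mu+r)^{\mathbf 1_2}}.
\]
The weight has derivatives in $\mathfrak t_2$ controlled by $2^{-j_\nu}\cdot 2^{j_k\rho/10}$ per variable, because $|\beta'_{mn}|2^{J\cdot(m,n)}\le 2^{j_k\rho/10}$. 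Summation by parts in $\mu$ (the amplitude loses at most $2^{Ck\,j_k\rho}$) then reduces matters to the Weyl sum \eqref{weyl1} over $\Lambda_2=\Lambda(0)$ in the $(k-\ell)$ balanced variables, with frequency $\xi_0$ rescaled by $q'$.

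If $\xi_{\Lambda(0)}$, jointly with $a'/q'$, fails the $(1/2,1)$-major-arc condition for the $\mathfrak t_2$-problem, then \eqref{weyl1} yields $2^{-\delta_\Lambda j_{\ell+1}}$. This beats the $2^{Cj_k\rho}$ loss precisely because $\rho=(400K)^{-k}\delta_\Lambda$; I expect this to be why $\rho$ was chosen that way. Summing over $J_1$ then uses $\sum_{J_1}|\mathcal H^{\Lambda_1'}_{J_1}|\lesssim1$ from \eqref{po4}, applied through the amplitude.

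In the remaining case $\xi_{0n}$ is near $a_{0n}/q''$ with $q''\le 2^{j_k/20}$, but at a distance exceeding $2^{-J_2\cdot n}2^{j_k/10}$ for some $n$. I would then combine $a'/q'$ and $a_{0\cdot}/q''$ into a single rational with denominator $\le 2^{j_k/5}$. At that point the large-frequency condition in the $t_2$-direction gives oscillatory decay of the $\mathfrak t_2$-integral after Poisson summation, as in Step 1 of Proposition~\ref{lemm29}. Alternatively one can use Remark~\ref{yt} together with the van der Corput decay of $\mathcal H^\Lambda_J$.

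\textbf{Main obstacle.} I expect the hardest step to be the bookkeeping in this last case: matching the combined denominator with the disjointness of the cutoffs, and making the summation-by-parts losses uniform in $J_1$. I would handle it by summing over $J_1$ inside, before taking absolute values of the $\mathfrak t_2$-sum.
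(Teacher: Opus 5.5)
Your algebraic splitting is correct. The second piece $\Psi^{\Lambda,\mathrm{major}}_J(1-\Psi^{\Lambda',\mathrm{major}}_J)$ is also handled correctly, by the same mechanism as the paper's Claim~\ref{t7}: oscillatory decay from a $\Lambda'$-frequency $\gtrsim 2^{j_k\rho/10}$, summed via \eqref{po4}.

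The gap is in the first piece. There your only sources of decay are \eqref{weyl1} on residue classes and van der Corput decay from a large frequency, and neither is available in the following configuration. Take $\xi_{\Lambda'}$ within the fine scale of $a'/q'$, with $2^{j_k/10}/3<q'\le 2^{j_k/10}$ and $3\nmid q'$. Take $\xi_{\Lambda\setminus\Lambda'}$ near $a_0/3$ with some $a_{0n}\not\equiv0$. Choose every normalized offset $|\beta_{\mathfrak m}|2^{J\cdot\mathfrak m}$ (where $\beta=\xi-a/q$ for the combined rational $a/q$) to be of size about $1$.
\begin{itemize}
\item The combined rational has reduced denominator $3q'>2^{j_k/10}$. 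Hence $\Psi^{\Lambda',\mathrm{major}}_J=1$ and $\Psi^{\Lambda,\mathrm{major}}_J=0$, so your first piece equals $1$.
\item No frequency is large, so there is no van der Corput gain.
\item In every class $\mathfrak t_2\equiv r\pmod{q'}$, the polynomial $\mu\mapsto\xi_0(q'\mu+r)$ lies in a major arc of denominator dividing $3$, so \eqref{weyl1} gives nothing.
\end{itemize}
By Remark~\ref{yt}, $H^\Lambda_J(\xi)\approx S^\Lambda(a/q)\mathcal H^\Lambda_J(\xi-a/q)$ with $q=3q'$, and the only smallness is arithmetic. Your ``remaining case'' misdiagnoses this regime. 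It is not characterized by a frequency exceeding $2^{-J\cdot(0,n)}2^{j_k/10}$, but by a combined denominator lying in $(2^{j_k/10},2^{j_k/5}]$.

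The missing ingredient is Gauss-sum decay, which the paper uses in two places. For $q'\ge 2^{j_k/20}$ it uses the averaged estimate \eqref{aver} (Case~2 of Claim~\ref{ap90}), which needs no information about $\xi_0$. For rationals obtained by combining $a'/q'$ with an approximation $a_0/q_0$ of $\xi_{\Lambda\setminus\Lambda'}$, whose denominators then lie in $[2^{j_k/10},2^{j_k/5}]$, it uses $|S^\Lambda(a/q)|\lesssim q^{-\delta_\Lambda}$ together with Remark~\ref{yt} (Claim~\ref{to77}).

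Even where Weyl decay is the right tool, your residue-class split does not transfer the minor-arc hypothesis. That $\xi_{\Lambda\setminus\Lambda'}$ is minor with respect to denominators $\le 2^{j_k/10}$ does not make $\mu\mapsto\xi_0(q'\mu+r)$ minor, because its top coefficients are $q'^{|n|}\xi_{0n}$. For example, if $|n|\ge2$ and $\xi_{0n}=q'^{-|n|}$ with $q'^{|n|}>2^{j_k/10}$, the class $r\equiv0$ has integral phase, and you would have to count such bad classes.

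The paper avoids this. It expands $S^{\Lambda_1'}(a(\mathfrak s)/q)$ as an average over $\mathfrak t_1\in[q]^\ell$, restricted to $q\le 2^{j_k/20}$. The combined coefficients $\xi_{0n}+q^{-1}\sum_m a_{mn}\mathfrak t_1^m$ are then $(1/2,1)$-minor, since otherwise $\xi_{\Lambda\setminus\Lambda'}$ would lie in a major arc with denominator $\le 2^{j_k/10}$. This is \eqref{440}, and it feeds \eqref{weyl1} with no loss.
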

To prove Proposition \ref{prop7449}, we define
\begin{align}\label{dom1}
\Psi_J^{\Lambda,\mathrm{major}}(\xi)
&:= \sum_{\frac{a}{q} \in \mathbb{Q}^{\Lambda}[1,2^{j_k/10}]}
\prod_{(m,n)\in \Lambda} 
\psi\left(
\frac{\xi_{mn}-\frac{a_{mn}}{q}}{2^{-J\cdot (m,n)}\,2^{j_k/10}}
\right),\\
\Psi_J^{\Lambda\setminus\Lambda',\mathrm{major}}(\xi_{\Lambda\setminus\Lambda'})
&:= \sum_{\frac{a}{q} \in \mathbb{Q}^{\Lambda\setminus\Lambda'}[1,2^{j_k/10}]}
\prod_{(m,n)\in \Lambda\setminus\Lambda'} 
\psi\left(
\frac{\xi_{mn}-\frac{a_{mn}}{q}}{2^{-J\cdot (m,n)}\,2^{j_k/10}}
\right).\nonumber
\end{align}
We emphasize that the major arcs associated with $\Psi^{\Lambda',\rm{major}}_{J}(\xi_{\Lambda'})$, defined in (\ref{4022k9}), are more tightly localized than those associated with $\Psi_J^{\Lambda,\mathrm{major}}(\xi)$, owing to the smaller scaling factor $2^{j_k\rho/10}$,  where  $\rho$ chosen  as in (\ref{rho3}). 

Decompose
\begin{align*}
\Psi_J^{\Lambda',\mathrm{major}}(\xi_{\Lambda'})
- \Psi_J^{\Lambda,\mathrm{major}}(\xi)
&=
\Psi_{J}^{\Lambda',\mathrm{major}}(\xi_{\Lambda'})
\Bigl(1 - \Psi_J^{\Lambda\setminus\Lambda',\mathrm{major}}(\xi_{\Lambda\setminus\Lambda'})\Bigr) \\
&\quad
+ \Psi_J^{\Lambda\setminus\Lambda',\mathrm{major}}(\xi_{\Lambda\setminus\Lambda'})\,
\Psi_{J}^{\Lambda',\mathrm{major}}(\xi_{\Lambda'})
- \Psi_J^{\Lambda,\mathrm{major}}(\xi).
\end{align*}
Proposition~\ref{prop7449} then follows from Lemmas~\ref{lee44b9} and~\ref{e7559} below, which estimate the two terms above, respectively.

\begin{lemma}\label{lee44b9}
[Minor arc for $\Lambda\setminus \Lambda'$] Suppose that $\Lambda$ satisfies   the  condition (\ref{wep2}). There exists $c>0$ independent of $J_2$ and  $\xi$ such that
\begin{equation*}
  \sum_{J_1:J\in Z_{\ell}(k)}   \left| \sum_{|\mathfrak{t}_2|\sim 2^{J_2} }   \frac{e^{2\pi i \xi_0(\mathfrak{t}_2)}H^{\Lambda'_{1}}_{J_{1}}(\xi(\mathfrak{t}_2))}{\mathfrak{t}_2^{{\bf 1}_2} }\right|  \Psi_{J}^{\Lambda',\rm{major}}(\xi_{\Lambda'})
\left(1-\Psi_J^{\Lambda\setminus \Lambda',\rm{major}}(\xi_{\Lambda\setminus\Lambda'})\right) \lesssim 2^{-cj_{\ell+1}}.
\end{equation*}
\end{lemma}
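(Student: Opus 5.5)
On the support of $\Psi_{J}^{\Lambda',\rm{major}}(\xi_{\Lambda'})$ we will first replace each inner multiplier $H^{\Lambda'_1}_{J_1}(\xi(\mathfrak{t}_2))$ by its integral approximation, and only then exploit the oscillation of $e^{2\pi i\xi_0(\mathfrak{t}_2)}$ as a $(k-\ell)$-parameter Weyl sum in $\mathfrak{t}_2$. Since the cutoffs in $\Psi_{J}^{\Lambda',\rm{major}}$ are disjointly supported, we may fix the unique $a'/q'\in\mathbb{Q}^{\Lambda'}[1,2^{j_k/10}]$ with $|\xi_{mn}-a'_{mn}/q'|\le 2^{-J\cdot(m,n)}2^{j_k\rho/10}$. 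On this support, condition \eqref{um199} holds, so Lemma \ref{kw999} gives
\[
H^{\Lambda'_1}_{J_1}(\xi(\mathfrak{t}_2))=S^{\Lambda'_1}\!\Bigl(\tfrac{a'(\mathfrak{t}_2)}{q'}\Bigr)\mathcal{H}^{\Lambda'_1}_{J_1}\Bigl(\xi(\mathfrak{t}_2)-\tfrac{a'(\mathfrak{t}_2)}{q'}\Bigr)+E_{J_1},
\]
where the summed error is $O(2^{-cj_{\ell+1}})$. We then write $\mathfrak{t}_2=q'\mu+r$ with $r\in[q']^{k-\ell}$. The factor $S^{\Lambda'_1}(a'(\mathfrak{t}_2)/q')$ depends only on $r$. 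The continuous factor, viewed as a function of $\mathfrak{t}_2$, is smooth at scale $2^{J_2}$: its $\mathfrak{t}_2$-derivatives are controlled because $|\eta_{mn}|2^{J\cdot(m,n)}\le 2^{j_k\rho/10}$, which costs only a factor $2^{C j_k\rho}$ per derivative relative to $2^{-j_\nu}$. Summation by parts in $\mu$ therefore reduces the inner sum to incomplete Weyl sums of the form
\[
\sum_{\mu:\,q'\mu+r\sim 2^{J_2}} e^{2\pi i\,\xi_0(q'\mu+r)}.
\]

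Next we treat $\xi_{\Lambda\setminus\Lambda'}$, noting that $\Lambda\setminus\Lambda'=\{(0,n)\}$ and so $\xi_0$ is a polynomial in $\mathfrak{t}_2$ with exponent set $\Lambda_2^0:=\Lambda(0)$. Since $1-\Psi_J^{\Lambda\setminus\Lambda',\rm{major}}\neq0$, the frequency $(\xi_{0n})$ lies outside the major arcs with denominators up to $2^{j_k/10}$ and width $2^{-J_2\cdot n}2^{j_k/10}$. Because $J_2$ is balanced, $j_{\ell+1}\approx_\Lambda\cdots\approx_\Lambda j_k$, so the Weyl estimate \eqref{weyl1} applies to $\Lambda_2=\Lambda(0)$. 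Doing so requires passing from the $\mu$-sum with modulus $q'$ back to a sum over $\mathfrak{t}_2$: the phase $\xi_0(q'\mu+r)$ has coefficients $\xi_{0n}q'^{|n|}$ plus lower-order terms. The plan is to apply Proposition \ref{ws1} directly to these dilated polynomials. The key point is that $q'\le 2^{j_k/10}$, and more effectively that the $\rho$-localization makes the loss $q'^{C}2^{Cj_k\rho}$ tolerable. If $\xi_0(q'\cdot+r)$ lay on a major arc of level $2^{j_k/20}$, then $\xi_{0n}$ would lie on a major arc with denominator at most $q'^{C}2^{j_k/20}$. That would contradict $1-\Psi^{\Lambda\setminus\Lambda',\rm major}\ne0$ unless the denominator range is mismatched, and we absorb this mismatch by the same device as in Remark \ref{yt}, using an enlarged range $2^{j_k/5}$ with adjusted constants. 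Hence each $\mu$-sum gains $2^{-c j_k}$, and this beats the loss $2^{C\rho j_k}$ because $\rho=(400K)^{-k}\delta_\Lambda$ was chosen much smaller than $\delta_\Lambda$.

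The remaining step is summation over $J_1$. We will combine the $\mathfrak{t}_2$-uniform bound $\sum_{J_1}|\mathcal{H}^{\Lambda'_1}_{J_1}|^{1/2}\lesssim1$ (Lemma \ref{po3}, Remark \ref{re3.1}) with the trivial bound $|\mathcal{H}|\lesssim1$. Using $|X|\le|X|^{1/2}$ when $|X|\lesssim1$, the gain $2^{-cj_k}$ survives after interpolation, up to constants, and $j_k\approx_\Lambda j_{\ell+1}$ gives $2^{-cj_{\ell+1}}$. We expect the main obstacle to be the summation-by-parts step, which must control the $\mathfrak{t}_2$-derivatives of $\mathcal{H}^{\Lambda'_1}_{J_1}(\xi(\mathfrak{t}_2)-a'(\mathfrak{t}_2)/q')$ uniformly in a form still summable in $J_1$. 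We would first try to bound these derivatives by $2^{C\rho j_k}\,2^{-j_\nu}$ times a summable majorant obtained by differentiating under the integral sign, since differentiation brings down factors $\eta_{mn}\partial\mathfrak{t}_2^n\,\mathfrak{t}_1^m$ of size $\lesssim 2^{j_k\rho/10}2^{-j_\nu}$. This pointwise bound is not summable in $J_1$, so we would interpolate it with the $J_1$-summable bound from Lemma \ref{po3}.
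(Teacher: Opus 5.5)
Your route differs from the paper's, which keeps $S^{\Lambda'_1}(a(\mathfrak s)/q)$ inside the $\mathfrak s$-sum as the Gauss-weighted Weyl sum \eqref{kanj}. It has a genuine gap at the claim that every $\mu$-sum $\sum_\mu e^{2\pi i\xi_0(q'\mu+r)}$ gains $2^{-cj_k}$ for all $q'\le 2^{j_k/10}$.

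A major-arc approximation of the coefficients of the dilated polynomial only gives rational approximations of the $\xi_{0n}$ with denominators up to $q'^{d_0}2^{j_k/20}$, where $d_0:=\max\{|n|:n\in\Lambda(0)\}$. The $\rho$-localization in $\Psi^{\Lambda',\mathrm{major}}_J$ shrinks the widths of the arcs, not the range $q'\le 2^{j_k/10}$ of their denominators. So for large $q'$ such approximations are not excluded by $1-\Psi_J^{\Lambda\setminus\Lambda',\mathrm{major}}\neq0$, which only rules out denominators $\le 2^{j_k/10}$. Remark \ref{yt} does not help: it extends the integral approximation of $H^\Lambda_J$ to denominators up to $2^{j_k/5}$, but it does not strengthen the minor-arc hypothesis you are given.

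Concretely, take $\Lambda=\{(1,0),(2,1),(0,2)\}$ (which satisfies \eqref{wep2}) and $\ell=1$. Let $\xi_{\Lambda'}=a'/q'$ with $2^{j_2/10}/3<q'\le 2^{j_2/10}$ and $3\nmid q'$, and let $\xi_{02}=b/(3q')$ in lowest terms. Every $a/q$ with $q\le 2^{j_2/10}$ lies at distance $\ge (3qq')^{-1}\gg 2^{-2j_2}2^{j_2/10}$ from $\xi_{02}$, so $1-\Psi_J^{\Lambda\setminus\Lambda',\mathrm{major}}=1$. Yet
\[
\xi_{02}(q'\mu+r)^2=\tfrac{bq'}{3}\mu^2+\tfrac{2br}{3}\mu+\tfrac{br^2}{3q'}
\]
has denominator $3$ in $\mu$. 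Hence for every $r$ the $\mu$-sum is comparable to its number of terms, since a quadratic Gauss sum modulo $3$ has modulus $\sqrt3$.

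The missing ingredient is the paper's Case 2 in Claim \ref{ap90}. When $q'$ is not small, discard $e^{2\pi i\xi_0}$ altogether and take the decay from the averaged Gauss sum, $2^{-J_2\cdot{\bf 1}_2}\sum_{|\mathfrak t_2|\sim 2^{J_2}}|S^{\Lambda'_1}(a'(\mathfrak t_2)/q')|\lesssim d(q')(q'^{-c}+2^{-cj_k})$ (Proposition \ref{lem22p}, \eqref{aver}). In your notation, this says $q'^{-(k-\ell)}\sum_r|S^{\Lambda'_1}(a'(r)/q')|$ is small. With that case covering $q'\ge 2^{j_k/(20d_0)}$, your dilation argument can be made to work for $q'\le 2^{j_k/(20d_0)}$, where $q'^{d_0}2^{j_k/20}\le 2^{j_k/10}$.

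The paper's Case 1 is more economical. Expanding $S^{\Lambda'_1}(a(\mathfrak s)/q)$ over $\mathfrak t_1\in[q]^\ell$ only shifts the $\Lambda(0)$-coefficients by rationals of denominator $q$, so the loss is a factor $q$ (threshold $2^{j_k/20}$) rather than $q^{d_0}$. The exponent set also stays $\Lambda_2$, so \eqref{weyl1} applies with the constant $\delta_\Lambda$ against which $\rho$ was calibrated. Your dilated polynomials live on the downward closure of $\Lambda(0)$, and you would have to recheck that their Weyl exponent still beats $C\rho$.

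Finally, in the summation by parts the interpolation exponent on the $J_1$-summable factor must be tiny; the paper uses $\rho^2$, i.e.\ \eqref{po4} with $\epsilon=\rho^2$. That factor is summed over roughly $2^{J_V\cdot{\bf 1}_V}$ values of the differenced variables with no help from the pointwise bound. The exponent $1/2$ you mention would cost a factor of order $2^{J_V\cdot{\bf 1}_V/2}$, which no Weyl gain can absorb.
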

\begin{proof}[Proof of Lemma \ref{lee44b9}] 
Note that
\[
\Lambda\setminus \Lambda'
=
\{(m,n)\in \Lambda : m=0\}.
\]
Accordingly, we may write
\begin{align*}
\sum_{(m,n)\in\Lambda}\xi_{mn}\mathfrak{t}_1^m\mathfrak{t}_2^n
=
\sum_{m\in \Lambda_1'} \xi_m(\mathfrak{t}_2)\mathfrak{t}_1^m
+\xi_0(\mathfrak{t}_2),
\end{align*}
where $\Lambda\setminus \Lambda'$ is the set of the indices $(0,n)$ of the coefficients $\xi_{0n}$ in $\xi_0(\mathfrak{t}_2)$:
\begin{align}\label{0011}
\xi_m(\mathfrak{t}_2)
=
\sum_{n\in \Lambda(m)} \xi_{mn}\mathfrak{t}_2^n,
\qquad
\xi_0(\mathfrak{t}_2)
=
\sum_{n\in \Lambda(0)} \xi_{0n}\mathfrak{t}_2^n.
\end{align}

By the major arc condition on $(\xi_{mn})_{(m,n)\in\Lambda'}$, encoded in
$\Psi_{J}^{\Lambda',\mathrm{major}}(\xi_{\Lambda'})$, Lemma~\ref{kw999} gives an approximation of the discrete multiplier in the $\mathfrak{t}_1$-variables by its continuous counterpart. Therefore, it suffices to prove
\begin{align}\label{t8}
\sum_{\frac{a}{q}\in \mathbb{Q}^{\Lambda'}[1,2^{j_k/10}]}
\sum_{J_1:J\in Z_{\ell}(k)}
\Biggl|
\sum_{|\mathfrak{t}_2|\sim 2^{J_2}}
\frac{
e^{2\pi i \xi_0(\mathfrak{t}_2)}
S^{\Lambda_1'}\!\left(\frac{a(\mathfrak{t}_2)}{q}\right)
\mathcal{H}^{\Lambda_1'}_{J_1}\!\left(\xi(\mathfrak{t}_2)-\frac{a(\mathfrak{t}_2)}{q}\right)
}{
\mathfrak{t}_2^{{\bf 1}_2}
}
\Biggr|
\\
\qquad\qquad\times
\psi_J^{\Lambda'}\!\left(\frac{\xi_{\Lambda'}-a/q}{2^{j_k\rho/10}}\right)
\left(1-\Psi_J^{\Lambda\setminus \Lambda',\mathrm{major}}(\xi_{\Lambda\setminus\Lambda'})\right)
\lesssim 2^{-c j_{\ell+1}}.\notag
\end{align}

The key point is that the factor
\[
1-\Psi_J^{\Lambda\setminus \Lambda',\mathrm{major}}(\xi_{\Lambda\setminus\Lambda'})
\]
places $(\xi_{0n})_{(0,n)\in \Lambda\setminus\Lambda'}$ in the minor arcs. This allows us to exploit Weyl-type decay for variants of the exponential sum
\[
\sum_{|\mathfrak{t}_2|\sim 2^{J_2}} e^{2\pi i \xi_0(\mathfrak{t}_2)}.
\]
Combining this minor arc decay with the continuous approximation from Lemma~\ref{kw999}, we shall use Claims~\ref{ap90}, \ref{cl46}, and estimate~\eqref{t9} below to establish \eqref{t8}.

\begin{claim}\label{ap90} [Gauss-Weighted-Weyl Sum] 
Let  $J\in Z_{\ell}(k)$. For    $\frac{a}{q} \in  \mathbb{Q}^{\Lambda'}[1,2^{j_k/10}]$, and $\mathfrak{t}_2\sim 2^{J_2}$,  consider the following Gauss sum
$$ S^{\Lambda_1'}(a(\mathfrak{s})/q)=(1/q^{\ell})\sum_{t_1,\cdots,t_{\ell}=1}^q e^{2\pi i \sum_{m\in \Lambda_1'}\left(\sum_{n\in \Lambda(m)} \frac{a_{mn}}{q}\mathfrak{s}^n\right)\mathfrak{t}_1^{m}},$$  whose phase function depends on $\mathfrak{s}=(s_{\ell+1},\cdots,s_k)\sim 2^{J_2}$. Next, associated with the phase function  $ \xi_0(\mathfrak{s})=\sum_{(0,n)\in \Lambda} \xi_{0n}\mathfrak{s}^{n} $  in (\ref{0011}), we set the Gauss-weighted Weyl sum:
\begin{align}\label{kanj}
 W^S_{J_2,a/q,\xi_0}(\mathfrak{t}_2)& :=\sum_{s_k=2^{j_k-1}+1}^{t_k} \cdots \sum_{s_{\ell+1}=2^{j_{\ell+1}-1}+1}^{t_{\ell+1}} e^{2\pi i \xi_0(\mathfrak{s})}S^{\Lambda_{1}'}\left(\frac{a(\mathfrak{s})}{q}\right).
\end{align}
  Then we  claim that 
\begin{align}\label{sstt19}
\sup_{ \mathfrak{t}_2 \sim 2^{J_2}}\left| W^S_{J_2,a/q,\xi_0}(\mathfrak{t}_2)\left(1-\Psi_J^{\Lambda\setminus \Lambda',\rm{major}}(\xi_{\Lambda\setminus\Lambda'})\right)\right|\lesssim 2^{J_2\cdot {\bf 1}_2}2^{-k\rho j_{k}}.
\end{align}
\end{claim}

\begin{proof}[Proof of Claim \ref{ap90}]
   Express the phase function of (\ref{kanj}),  to focus on the coefficients of $s^n$ with $n\in \Lambda(0)$,
	\begin{align}\label{4vv}
		& \sum_{n\in \Lambda(0)} \xi_{0n}\mathfrak{s}^n+ \sum_{m\in \Lambda_1'}\left(\sum_{n\in \Lambda(m)} \frac{a_{mn}}{q}\mathfrak{s}^n\right)\mathfrak{t}_1^{m}  \nonumber \\
		&=\sum_{n\in \Lambda(0)} \left( \xi_{0n}+ \frac{\sum_{m\in \Lambda_1'} a_{mn}\mathfrak{t}_1^{m}}{q} \right)\mathfrak{s}^{n}+ \sum_{n\notin  \Lambda(0)} \left(\sum_{m\in \Lambda_1'}\frac{a_{mn}}{q}\mathfrak{t}_1^{m}  \right)\mathfrak{s}^n. 
	\end{align}
	Here  $n\in \Lambda(0)$ corresponds to the element of $\Lambda\setminus \Lambda'=\{(0,n)\in\Lambda\}$.\\
\textbf{	Case 1.} Let  $q\le 2^{ j_k/20}$.  Under the  condition  that  $1-\Psi_J^{\Lambda\setminus \Lambda',\rm{major}}\left( (\xi_{0n})_{(0,n)\in\Lambda\setminus\Lambda'} \right)\neq 0$, we first claim  that
\begin{align}\label{440}
1- \Psi_{J,(1/2,1)}^{\Lambda\setminus \Lambda',\rm{major}}\left(\left(\xi_{0n}+ \frac{\sum_{m\in \Lambda_1'} a_{mn}\mathfrak{t}_1^{m}}{q}\right)_{(0,n)\in \Lambda\setminus \Lambda'}\right)\neq 0,
\end{align}
where 
$$\Psi_{J,(1/2,1)}^{\Lambda\setminus \Lambda',\rm{major}}\left(  \eta_{\Lambda\setminus\Lambda'}  \right)=\sum_{\frac{a'}{q'} \in  \mathbb{Q}^{\Lambda\setminus\Lambda'}[1,2^{j_k/20}]}\prod_{(0,n)\in\Lambda\setminus \Lambda'} \psi\left(  \frac{ \eta_{0n}-a'_{0n}/q'}{  2^{-J\cdot (0,n)}2^{j_k /10}} \right),$$ as in (\ref{t12}).
To show (\ref{440}), we 
assume the contrary. Then there exists $$ \frac{(a_{0n}')_{(0,n)\in \Lambda\setminus\Lambda'}}{q'}\in  \mathbb{Q}^{\Lambda\setminus\Lambda'}[1,2^{j_k/20}] $$
such that
$$\left| \left(\xi_{0n}+ \frac{\sum_{m\in \Lambda_1'} a_{mn}\mathfrak{t}_1^{m}}{q}\right)- \frac{a'_{0n}}{q'}\right|< 2^{-J\cdot (0,n)} 2^{j_k/10}\ \text{for all $(0,n)\in \Lambda\setminus \Lambda'$}.$$
Here by the conditions $q\le 2^{j_k/20}$ and $q'\le 2^{j_k/20}$,
$$ \left( \frac{a'_{0n}}{q'}- \frac{\sum_{m\in \Lambda_1'} a_{mn}\mathfrak{t}_1^{m}}{q}\right)_{(0,n)\in \Lambda\setminus\Lambda'}\in \mathbb{Q}^{\Lambda\setminus\Lambda'}[1,2^{j_k/10}], $$
which implies $\Psi_J^{\Lambda\setminus \Lambda',\rm{major}}\left( (\xi_{0n})_{(0,n)\in\Lambda\setminus\Lambda'} \right)=1.$ This is a contradiction.  So we proved (\ref{440}).
Therefore, we are able to use the minor arc condition (\ref{440}) of the first term in (\ref{4vv})   to apply the Weyl sum estimate  in \eqref{weyl1}  to conclude that  
	\begin{align*}
		\left|\sum_{s_k=2^{j_k-1}+1}^{t_k} \cdots \sum_{s_{\ell+1}=2^{j_{\ell+1}-1}+1}^{t_{\ell+1}}  e^{2\pi i\left( \sum_{n\in \Lambda(0)} \xi_{0n}\mathfrak{s}^n+ \sum_{m\in \Lambda_1'}\left(\sum_{n\in \Lambda(m)} \frac{a_{mn}}{q}\mathfrak{s}^n\right)\mathfrak{t}_1^{m}  \right)}\right|
		\lesssim 2^{J_2\cdot {\bf 1}_2}2^{-\delta_{\Lambda} j_{\ell+1}}.
	\end{align*} under the condition $j_{\ell+1}\approx_{\Lambda} \cdots \approx_{\Lambda} j_{k}$.  Recall the definition  $\rho$ in  (\ref{rho3}). Then, the above Weyl sum estimate yields (\ref{sstt19}). 
	\\
	\textbf{Case 2.}  $2^{j_k/10}\ge q\ge 2^{ j_k/20}$. By (\ref{aver}),  we have
	$$\frac{1}{2^{J_2\cdot {\bf 1}_2}}\sum_{s_k=2^{j_k-1}+1}^{t_k} \cdots \sum_{s_{\ell+1}=2^{j_{\ell+1}-1}+1}^{t_{\ell+1}} |e^{2\pi i \xi_0(\mathfrak{s})}|\cdot |S^{\Lambda'_1}(a(\mathfrak{s})/q)|\lesssim 2^{-\delta_{\Lambda}j_{\ell+1}},$$  under the condition $j_{\ell+1}\approx_{\Lambda} \cdots \approx_{\Lambda} j_{k}$. 
	Therefore, by the definition  $\rho$, we  obtain the estimate (\ref{sstt19}).  
\end{proof}

For each $\nu\in \{\ell+1,\dots,k\}$, define the forward difference operator $D_\nu$ and its adjoint $\bar{D}_\nu$ by
\begin{align*}
\begin{cases}
D_\nu F(\mathfrak{t}_2)
:= F(\mathfrak{t}_2)-F(\mathfrak{t}_2-{\bf e}_\nu),\\
\bar{D}_\nu F(\mathfrak{t}_2)
:= F(\mathfrak{t}_2)-F(\mathfrak{t}_2+{\bf e}_\nu),
\end{cases}
\end{align*}
where ${\bf e}_\nu$ denotes the unit vector in the $\nu$-th coordinate direction.  
For a subset $V=\{\nu_1,\dots,\nu_r\}\subset \{\ell+1,\dots,k\}$, we write
\[
\prod_{\nu\in V}\bar{D}_\nu := \bar{D}_{\nu_1}\cdots \bar{D}_{\nu_r}.
\]

\begin{claim}[Summation by parts]\label{cl46}
Fix $\frac{a}{q}\in \mathbb{Q}^{\Lambda'}[1,2^{j_k/10}]$ and $J\in Z_\ell(k)$. Define
\begin{align*}
\mathcal{H}_{J_1,a/q,\xi}(\mathfrak{t}_2)
:=
\frac{\mathcal{H}^{\Lambda'_1}_{J_1}\!\left(\xi(\mathfrak{t}_2)-\frac{a(\mathfrak{t}_2)}{q}\right)}{\mathfrak{t}_2^{{\bf 1}_2}}.
\end{align*}
Then the main term in~\eqref{t8} can be written as
\begin{align}\label{t9}
&\sum_{J_1:J\in Z_\ell(k)}\sum_{\mathfrak{t}_2\sim 2^{J_2}}
e^{2\pi i \xi_0(\mathfrak{t}_2)}
S^{\Lambda_1'}\!\left(\frac{a(\mathfrak{t}_2)}{q}\right)
\mathcal{H}_{J_1,a/q,\xi}(\mathfrak{t}_2)\nonumber\\
&\quad=
\sum_{J_1:J\in Z_\ell(k)}
\sum_{\substack{U\cup V=\{\ell+1,\dots,k\}\\ U\cap V=\emptyset}}
\ \sum_{(t_\nu)_{\nu\in V}\in \prod_{\nu\in V}(2^{j_\nu-1},2^{j_\nu})}
W^S_{J_2,a/q,\xi_0}\!\big((2^{j_i})_{i\in U},(t_\nu)_{\nu\in V}\big) \\
&\quad\qquad\quad\qquad\quad\qquad\quad\qquad\quad\qquad \times
\left(\prod_{\nu\in V}\bar{D}_\nu\right)
\mathcal{H}_{J_1,a/q,\xi}\!\big((2^{j_i})_{i\in U},(t_\nu)_{\nu\in V}\big).\nonumber
\end{align}
\end{claim}

\begin{proof}[Proof of Claim \ref{cl46}]
Recall that $t_\nu\sim 2^{j_\nu}$ means
\[
t_\nu\in (2^{j_\nu-1},2^{j_\nu}]
:=\{2^{j_\nu-1}+1,\dots,2^{j_\nu}\}.
\]
Fix $\mathfrak{t}_2\in \prod_{\nu=\ell+1}^k (2^{j_\nu-1},2^{j_\nu}]$.
By applying the composition of difference operators $D_{\ell+1}\cdots D_k$ to $W^S_{J_2,a/q,\xi_0}$ (see \eqref{kanj}), we obtain
\begin{align*}
e^{2\pi i \xi_0(\mathfrak{t}_2)}
S^{\Lambda_1'}\!\left(\frac{a(\mathfrak{t}_2)}{q}\right)
= D_{\ell+1}\cdots D_k \, W^S_{J_2,a/q,\xi_0}(\mathfrak{t}_2),
\end{align*}
since
\[
W^S_{J_2,a/q,\xi_0}(\mathfrak{t}_2)\Big|_{t_\nu=2^{j_\nu-1}}=0
\quad \text{for all } \nu=\ell+1,\dots,k.
\]
Fix $\nu$ and freeze all variables except $t_\nu$. Write
\[
W(t_\nu):=W^S_{J_2,a/q,\xi_0}(\mathfrak{t}_2),
\qquad
\mathcal{H}(t_\nu):=\mathcal{H}_{J_1,a/q,\xi}(\mathfrak{t}_2).
\]
Then $W(2^{j_\nu-1})=0$, and a single-variable summation by parts yields
\begin{align*}
\sum_{t_\nu=2^{j_\nu-1}+1}^{2^{j_\nu}}
D_\nu W(t_\nu)\,\mathcal{H}(t_\nu)
&=
\sum_{t_\nu=2^{j_\nu-1}+1}^{2^{j_\nu}-1}
W(t_\nu)\,\bar{D}_\nu \mathcal{H}(t_\nu)
+ W(2^{j_\nu})\,\mathcal{H}(2^{j_\nu}).
\end{align*}
Applying this identity successively for $\nu=\ell+1,\dots,k$,
and keeping track of whether the boundary term is taken or not,
we obtain a decomposition indexed by subsets $U$ and $V$ with
$U\cup V=\{\ell+1,\dots,k\}$ and $U\cap V=\emptyset$.
This yields the desired formula.
\end{proof}

\begin{proof}[Proof of (\ref{t8})]
In \eqref{t9}, for any  $V\subset \{\ell+1,\dots,k\}$ and
$(t_{\nu})_{\nu\in V}\in \prod_{\nu\in V}(2^{j_\nu-1},2^{j_\nu})$,  
\begin{align}
\left|
\left(\prod_{\nu\in V}\bar{D}_\nu\right)
\mathcal{H}_{J_1,a/q,\xi}\!\big((2^{j_i})_{i\in U},(t_\nu)_{\nu\in V}\big)
\psi_J^{\Lambda'}\!\left(\frac{\xi_{\Lambda'}-a/q}{2^{j_k\rho/10}}\right)
\right|\label{ff44} \\
\lesssim \frac{1}{2^{J_2\cdot {\bf 1}_2}}
2^{|V| j_k\rho/10 - J_V\cdot {\bf 1}_V}. \nonumber
\end{align}

\begin{proof}[Proof of \eqref{ff44}]
Fix $V$ as above and freeze all variables except $(t_\nu)_{\nu\in V}$. Writing $\big((2^{j_i})_{i\in U},(t_\nu)_{\nu\in V}\big)=\tilde{\mathfrak{t}}_2$,
we bound the left-hand side of \eqref{ff44} by
\begin{align*}
\Bigg|
\int \left(\prod_{\nu\in V}\bar{D}_\nu\right)
\left(
\frac{
e^{2\pi i \sum_{m}\left(\sum_{n\in \Lambda(m)}
\left(\xi_{mn}-\frac{a_{mn}}{q}\right)\tilde{\mathfrak{t}}_2^n\right)
s_1^{m_1}\cdots s_\ell^{m_\ell}}
}{
\mathfrak{t}_2^{{\bf 1}_2}
}
\right)
\prod_{i=1}^\ell \frac{\chi_{j_i}(s_i)}{s_i}
\, ds_1\cdots ds_\ell
\Bigg|.
\end{align*}
Applying the mean value theorem repeatedly to the integrand yields
\begin{align*}
\Big| \left(\prod_{\nu\in V}\bar{D}_\nu\right)\left(\frac{e^{2\pi i (\cdot)} }{\mathfrak{t}_2^{{\bf 1}_2}}\right)\Big|
  &\lesssim
\frac{1}{2^{J_2\cdot {\bf 1}_2}}
\left(
\sum_{(m,n)}
\left|\xi_{mn}-\frac{a_{mn}}{q}\right|
2^{J\cdot(m,n)} + 1
\right)^{|V|}
2^{-J_V\cdot {\bf 1}_V}.
\end{align*}
On the support of
$\psi_J^{\Lambda'}\!\left(\frac{\xi_{\Lambda'}-a/q}{2^{j_k\rho/10}}\right)$,
we have
\[
\left|\xi_{mn}-\frac{a_{mn}}{q}\right|
\lesssim 2^{-J\cdot(m,n)} 2^{j_k\rho/10},
\]
which implies
\begin{align*}
\left(
\sum_{(m,n)}
\left|\xi_{mn}-\frac{a_{mn}}{q}\right|
2^{J\cdot(m,n)} + 1
\right)^{|V|}
\lesssim 2^{|V| j_k\rho/10}.
\end{align*}
Thus,
\[ \int_{\prod_{i=1}^{\ell} (2^{j_i-1} ,2^{j_{i}}] } \left(\prod_{\nu\in V}\bar{D}_\nu\right)
\left(\frac{e^{2\pi i (\cdot)} }{\mathfrak{t}_2^{{\bf 1}_2}}\right)
 \frac{ds_1\cdots ds_{\ell} }{s_1\cdots s_\ell}
\lesssim
\frac{2^{|V| j_k\rho/10 - J_V\cdot {\bf 1}_V}}{2^{J_2\cdot {\bf 1}_2}}.
\]
\end{proof}

On the support of
\begin{align*}
	\psi_J^{\Lambda'}\left(
	\frac{\xi_{\Lambda'}-a/q}{2^{j_k\rho/10}}
	\right)
	\left(
	1-\Psi_J^{\Lambda\setminus\Lambda',\mathrm{major}}
	(\xi_{\Lambda\setminus\Lambda'})
	\right),
\end{align*}
we first apply the geometric-mean estimate
$|\cdot|^{\rho^2}|\cdot|^{1-\rho^2}$ to
\begin{align*}
	\left|\left(\prod_{\nu\in V}\bar{D}_\nu\right)
	\mathcal{H}_{J_1,a/q,\xi}
	\left(
	(2^{j_i})_{i\in U},
	(t_\nu)_{\nu\in V}
	\right)\right|,
\end{align*}
using \eqref{ff44} to control the first term of the two factors. We then apply the Weyl-sum estimate \eqref{sstt19}, which gives the bound $2^{J_2\cdot{\bf 1}_2}2^{-\rho k j_k}.$
Consequently, the sum in \eqref{t9} is bounded by
\begin{align}\label{yu4}
 \lesssim  \frac{ 2^{J_2\cdot {\bf 1}_2} 2^{-\rho k j_k}}{2^{J_2\cdot {\bf 1}_2}}   \sum_{V\subset \{\ell+1,\cdots,k\}}
 \sum_{(t_\nu)_{\nu\in V}\in \prod_{\nu\in V}(2^{j_\nu-1},2^{j_\nu})}\left(2^{|V| \rho j_k /10-J_V\cdot {\bf 1}_{V}} \right)^{1-\rho^2}\notag\\
 \times
\sup_{\mathfrak{t}_2}\sum_{J_1:J\in Z} \left|\mathcal{H}^{\Lambda'_{1}}_{J_{1}}\left(\xi(\mathfrak{t}_2)-\frac{a(\mathfrak{t}_2)}{q}\right) \right|^{\rho^2}\notag\\
 \lesssim 2^{-\rho k j_k}  \sum_{V\subset \{\ell+1,\cdots,k\}} 2^{|V| \rho j_k(1-\rho^2) /10}  2^{\rho^2 J_V\cdot {\bf 1}_V}\lesssim 2^{-\rho k j_k} 2^{  \rho k j_k(1-\rho^2) /10}  2^{\rho^2 k j_{\ell+1}},
\end{align}
where the last  two inequalities in \eqref{yu4} follow   from $|V|\le k$ and the two bounds
\begin{align*}
&\sum_{(t_{j_\nu})_{\nu\in V}\in \prod_{\nu\in V}(2^{j_\nu-1},2^{j_\nu})} 1 \lesssim 2^{J_V\cdot {\bf 1}_V}\\
&\sup_{\mathfrak{t}_2}\sum_{J_1:J\in Z_{\ell}(k)}  \left|\mathcal{H}^{\Lambda'_{1}}_{J_{1}}\left(\xi(\mathfrak{t}_2)-\frac{a(\mathfrak{t}_2)}{q}\right) \right|^{\rho^{2}} \lesssim 1 \ \text{ in (\ref{po4}).}
\end{align*}
By the property that      $\rho   j_k\ge 10  \rho^2   j_{\ell+1} $ due to the smallness of $\rho$ defined  in   (\ref{rho3}) and the condition $j_{\ell+1}\approx_{\Lambda}\cdots \approx_{\Lambda} j_{k}$,  the last term in (\ref{yu4}) is bounded by  \begin{align}\label{y7}
	2^{-\rho k j_k} 2^{  \rho k j_k(1-\rho^2) /10}  2^{\rho^2 k j_{\ell+1}}\le	2^{-\rho k j_k} 2^{  \rho k j_k(1-\rho^2) /10}  2^{\rho k j_{k}/10}\le  2^{-\rho k j_k/2}.
\end{align}
Therefore, by (\ref{t9}), \eqref{yu4} and \eqref{y7}, there exists $c>0$ such that 
\begin{align*}
 \text{LHS of} \  (\ref{t8})\lesssim 2^{-\rho k j_k/2}  \sum_{\frac{a}{q}\in \mathbb{Q}^{\Lambda'}[1,2^{j_k/10}]} 
 \psi\left(\frac{\xi_{\Lambda'}-a/q}{1/q^2}\right) \lesssim 2^{-cj_{\ell+1}}
\end{align*}
under the condition that $j_{\ell+1}\approx_{\Lambda} \cdots \approx_{\Lambda} j_{k}$.
This completes the proof of (\ref{t8}).
 \end{proof}
Thus, we have proved   Lemma \ref{lee44b9}.
\end{proof}

\begin{lemma}\label{e7559}
Suppose that $\Lambda$ satisfies   the  condition (\ref{wep2}). Then there exists a constant $c>0$, independent of $J_2$ and $\xi$, such that
\begin{align*}
\sum_{J_1:\, J\in Z_\ell(k)}
\left|
\sum_{|\mathfrak{t}_2|\sim 2^{J_2}}
\frac{ H^{\Lambda_{1}}_{J_1}(\xi(\mathfrak{t}_2))}{\mathfrak{t}_2^{{\bf 1}_2}}
\left(
\Psi_J^{\Lambda\setminus\Lambda',\mathrm{major}}(\xi_{\Lambda\setminus\Lambda'})
\Psi_{J}^{\Lambda',\mathrm{major}}(\xi_{\Lambda'})
-
\Psi_J^{\Lambda,\mathrm{major}}(\xi)
\right)
\right|\\
 \lesssim 2^{-c j_{\ell+1}}.
\end{align*}
\end{lemma}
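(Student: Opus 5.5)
The plan is to compare the two cutoffs fraction by fraction and then use the full $k$-parameter integral approximation of Remark~\ref{yt}. Since the statement takes absolute values inside the $J_1$-sum, I will bound
\[
\sum_{J_1:J\in Z_\ell(k)}|H^\Lambda_J(\xi)|\,\bigl|\Psi_J^{\Lambda\setminus\Lambda',\mathrm{major}}\Psi_J^{\Lambda',\mathrm{major}}-\Psi_J^{\Lambda,\mathrm{major}}\bigr|.
\]
The inner $\mathfrak t_2$-sum is exactly $H^\Lambda_J(\xi)$, and the cutoffs do not depend on $\mathfrak t_2$, so this dominates the left-hand side of the lemma.

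\emph{Step 1: fraction bookkeeping.} Suppose the product $\Psi_J^{\Lambda\setminus\Lambda',\mathrm{major}}\Psi_J^{\Lambda',\mathrm{major}}$ is nonzero. Then there are $a/q\in\mathbb{Q}^{\Lambda'}[1,2^{j_k/10}]$ and $a'/q'\in\mathbb{Q}^{\Lambda\setminus\Lambda'}[1,2^{j_k/10}]$, each unique by the disjointness argument of Claim~\ref{cl31}. Concatenating them gives a single reduced $a''/q''\in\mathbb{Q}^{\Lambda}[1,2^{j_k/5}]$ with $q''=\operatorname{lcm}(q,q')$. For this $a''/q''$,
\[
|\xi_{\mathfrak m}-a''_{\mathfrak m}/q''|\lesssim 2^{-J\cdot\mathfrak m}2^{j_k/10}\qquad\text{for all }\mathfrak m\in\Lambda .
\]
Conversely, on the support of $\Psi_J^{\Lambda,\mathrm{major}}$ the fraction $a/q\in\mathbb{Q}^{\Lambda}[1,2^{j_k/10}]$ restricts to fractions in both sub-ranges. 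I therefore split the difference into three pieces:
\begin{itemize}
\item[(a)] the part of the product with $q''>2^{j_k/10}$;
\item[(b)] the part with $q''\le 2^{j_k/10}$ and $a''/q''$ equal to the fraction in $\Psi_J^{\Lambda,\mathrm{major}}$, where the two cutoffs differ only in their scales;
\item[(c)] the part of $\Psi_J^{\Lambda,\mathrm{major}}$ on which the restricted fractions are present but the $\Lambda'$ factor at scale $2^{j_k\rho/10}$ is not $1$.
\end{itemize}
Pieces (b) and (c) are handled together.

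\emph{Step 2: piece (a).} On the support I apply the integral approximation of Remark~\ref{yt} with the enlarged range $[1,2^{j_k/5}]$:
\[
H^\Lambda_J(\xi)=S^\Lambda(a''/q'')\,\mathcal H^\Lambda_J(\xi-a''/q'')+E_{J,a''/q''}(\xi).
\]
Summed over $J_1$ and over the fractions, the errors contribute $O(2^{-cj_k})$. For the main term, \eqref{gaus} gives $|S^\Lambda(a''/q'')|\lesssim (q'')^{-\delta_\Lambda}\le 2^{-\delta_\Lambda j_k/10}$, and Lemma~\ref{po3} makes $\sum_J|\mathcal H^\Lambda_J(\xi-a''/q'')|$ uniformly bounded. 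By disjointness at most one fraction contributes for each $J$, and as in the proof of \eqref{23mm}, lacunary counting in $q''$ keeps the number of fractions harmless. This yields $2^{-cj_k}\lesssim 2^{-cj_{\ell+1}}$, since $j_{\ell+1}\approx_\Lambda j_k$ on $Z_\ell(k)$.

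\emph{Step 3: pieces (b) and (c), the main obstacle.} Here the fraction $a/q$ is common with $q\le 2^{j_k/10}$, and the difference of cutoffs is supported where, for some $\mathfrak m\in\Lambda$,
\[
|\xi_{\mathfrak m}-a_{\mathfrak m}/q|\,2^{J\cdot\mathfrak m}\gtrsim 2^{j_k\rho/100}.
\]
This happens either for some $\mathfrak m\in\Lambda'$, where the two scales $2^{j_k\rho/10}$ and $2^{j_k/10}$ disagree, or on the transition region of $\psi$. On the other hand all these quantities are $\lesssim 2^{j_k/10}$, so Proposition~\ref{lemm29}(1) applies and $H^\Lambda_J\approx S^\Lambda(a/q)\,\mathcal H^\Lambda_J(\xi-a/q)$. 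The high-frequency lower bound then makes the $k$-parameter oscillatory integral $\mathcal H^\Lambda_J(\xi-a/q)$ small, of size $O(2^{-c\rho j_k})$, by van der Corput or the sublevel estimate as in \eqref{hees}.

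To sum over $J_1$ I split $|\mathcal H^\Lambda_J|=|\mathcal H^\Lambda_J|^{1/2}|\mathcal H^\Lambda_J|^{1/2}$. The factor $\sum_J|\mathcal H^\Lambda_J|^{1/2}\lesssim1$ comes from \eqref{po4}, and the other factor supplies the decay. The delicate point is that the decay must come from the $J$-dependent high-frequency condition. I expect to use the $\epsilon$-power form of \eqref{po4} precisely as in Lemma~\ref{lem574b9}, and to check that the gained $2^{-c\rho j_k}$ is still a power of $2^{-j_{\ell+1}}$ by the balance $j_{\ell+1}\approx_\Lambda\cdots\approx_\Lambda j_k$. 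Adding (a), (b) and (c) proves the lemma.
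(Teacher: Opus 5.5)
Your proposal is correct and follows the paper's proof in substance. The paper splits the difference into two pieces. The first is the scale discrepancy $\Psi_J^{\Lambda\setminus\Lambda',\mathrm{major}}\bigl(\Psi^{\Lambda',\mathrm{major}}_{J,(1,\rho)}-\Psi^{\Lambda',\mathrm{major}}_{J,(1,1)}\bigr)$, handled by the integral approximation, oscillatory decay of $|\mathcal H^\Lambda_J|^{1/2}$, and \eqref{po4}; this is your (b)/(c). The second is the denominator discrepancy over fractions with $2^{j_k/10}<q\le 2^{j_k/5}$, handled by Remark~\ref{yt} and Gauss-sum decay; this is your (a). You simply separate by denominator before scale rather than after, which is cosmetic.
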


\begin{proof}[Proof of Lemma \ref{e7559}]
Observe that
\begin{align*}
\sum_{|\mathfrak{t}_2|\sim 2^{J_2}}
\frac{ H^{\Lambda_{1}}_{J_1}(\xi(\mathfrak{t}_2))}{\mathfrak{t}_2^{{\bf 1}_2}}
= H^{\Lambda}_{J}(\xi).
\end{align*}
Moreover, recall the following definition in (\ref{t12})
\begin{align*}
		\Psi_{j,(c_1,c_2)}^{\Lambda,\mathrm{major}}(\xi)
		= \sum_{a/q=(a_{\mathfrak{m}})/q \in \mathbb{Q}^{\Lambda}[1,2^{c_1 j_k/10}]}
		\prod_{\mathfrak{m}\in \Lambda} 
		\psi\!\left(
		\frac{\xi_{\mathfrak{m}} - a_{\mathfrak{m}}/q}
		{2^{-\mathfrak{m}\cdot j}\, 2^{c_2 j_k/10}}
		\right),
	\end{align*}
and decompose
$
\Psi_J^{\Lambda\setminus\Lambda',\mathrm{major}}(\xi_{\Lambda\setminus\Lambda'})
\Psi_{J}^{\Lambda',\mathrm{major}}(\xi_{\Lambda'})
-
\Psi_J^{\Lambda,\mathrm{major}}(\xi)
$
as
\begin{align}\label{543p}
&\Psi_J^{\Lambda\setminus\Lambda',\mathrm{major}}(\xi_{\Lambda\setminus\Lambda'})
\Bigl(
\Psi_{J,(1,\rho)}^{\Lambda',\mathrm{major}}(\xi_{\Lambda'})
-
\Psi_{J,(1,1)}^{\Lambda',\mathrm{major}}(\xi_{\Lambda'})
\Bigr)\\
&\qquad
+
\Bigl(
\Psi_J^{\Lambda\setminus\Lambda',\mathrm{major}}(\xi_{\Lambda\setminus\Lambda'})
\Psi_{J,(1,1)}^{\Lambda',\mathrm{major}}(\xi_{\Lambda'})
-
\Psi_J^{\Lambda,\mathrm{major}}(\xi)
\Bigr).\nonumber
\end{align}
The contributions of the two terms in \eqref{543p} are estimated in Claims~\ref{t7} and~\ref{to77} below, respectively.

\begin{claim}\label{t7}
For each fixed $J_2$ and $\xi$, there exists $c>0$ such that
\begin{align*}
Y_{J_2}(\xi)
:=
\sum_{J_1:\, J\in Z_\ell(k)}
\left|
H^{\Lambda}_{J}(\xi)\,
\Psi_J^{\Lambda\setminus\Lambda',\mathrm{major}}(\xi_{\Lambda\setminus\Lambda'})
\left(
\Psi_{J,(1,\rho)}^{\Lambda',\mathrm{major}}(\xi_{\Lambda'})
-
\Psi_{J,(1,1)}^{\Lambda',\mathrm{major}}(\xi_{\Lambda'})
\right)
\right|
\lesssim 2^{-c j_{\ell+1}}.
\end{align*}
\end{claim}

\begin{proof}[Proof of Claim \ref{t7}]
In \eqref{dom1}, we relabel the rational approximations as follows:
\begin{itemize}
\item write $a'/q' \in \mathbb{Q}^{\Lambda'}[1,2^{j_k/10}]$ 
for the parameter appearing in 
$\Psi_{J,(1,1)}^{\Lambda',\mathrm{major}}(\xi_{\Lambda'})$,
\item write $a_0/q_0 \in \mathbb{Q}^{\Lambda\setminus\Lambda'}[1,2^{j_k/10}]$ 
for the parameter appearing in 
$\Psi_J^{\Lambda\setminus\Lambda',\mathrm{major}}(\xi_{\Lambda\setminus\Lambda'})$,
\end{itemize}
We combine these into a single rational vector
\[
\frac{a}{q}
:= \left(\frac{a'}{q'},\frac{a_0}{q_0}\right)
\in 
\mathbb{Q}^{\Lambda'}[1,2^{j_k/10}]\oplus\mathbb{Q}^{\Lambda\setminus\Lambda'}[1,2^{j_k/10}],
\]
where $(a,q)=1$ and $1\le q\le q'q_0\le 2^{j_k/5}$.

With this choice, $\xi$ lies in a slightly enlarged major arc, and hence
the integral approximation \eqref{b40} from Proposition \ref{lemm29}
applies. Consequently, up to an error of size $O(2^{-c j_k})$, we have
\begin{align*}
Y_{J_2}(\xi)
&\lesssim 
\sum_{J_1:\,J\in Z_\ell(k)}
\sum_{a/q}
|S^{\Lambda}(a/q)|
\,|\mathcal{H}^{\Lambda}_J(\xi-a/q)| \\
&\quad\times
\prod_{(m,n)\in \Lambda}
\psi\!\left(
\frac{\xi_{mn}-a_{mn}/q}{2^{-J\cdot(m,n)}\,2^{j_k/10}}
\right)
\left(1-
\prod_{(m,n)\in \Lambda'}
\psi\!\left(
\frac{\xi_{mn}-a_{mn}/q}{2^{-J\cdot(m,n)}\,2^{j_k\rho/10}}
\right)
\right),
\end{align*}
where the summation over $a/q$ runs over
\[
\mathbb{Q}^{\Lambda'}[1,2^{j_k/10}]\oplus\mathbb{Q}^{\Lambda\setminus\Lambda'}[1,2^{j_k/10}].
\]

On the support of the last factor, we have
\[
\sum_{(m,n)\in \Lambda'}
\left|
\left(\xi_{mn}-\frac{a_{mn}}{q}\right)
2^{J\cdot(m,n)}
\right|
\gtrsim 2^{j_k\rho/10}.
\]
Therefore, by the    oscillatory integral estimate, there exists $c'>0$ such that
\[
|\mathcal{H}^{\Lambda}_J(\xi-a/q)|^{1/2}
\lesssim 2^{-c'j_k}.
\]
We now sum   $|\mathcal{H}^{\Lambda}_J(\xi-a/q)|^{1/2}$ over $J$ using \eqref{po4}, and combine this with the Gauss sum estimate in (\ref{g2}) to obtain that
\[
Y_{J_2}(\xi)\lesssim 2^{-c' j_k}.
\]
Finally, under the assumption $j_{\ell+1}\approx_{\Lambda}\cdots\approx_{\Lambda} j_k$,
we conclude
\[
Y_{J_2}(\xi)\lesssim 2^{-c j_{\ell+1}}.
\]
\end{proof}

To complete the proof of Lemma \ref{e7559}, it remains to estimate the second term in \eqref{543p}.

\begin{claim}\label{to77}
For each fixed $J_2$ and $\xi$, there exists $c>0$ such that
\begin{align*}
\sum_{J_1:\, J\in Z_\ell(k)}
\left|
H^{\Lambda}_{J}(\xi)\,
\Big(
\Psi_J^{\Lambda\setminus\Lambda',\mathrm{major}}(\xi_{\Lambda\setminus\Lambda'})
\Psi_{J,(1,1)}^{\Lambda',\mathrm{major}}(\xi_{\Lambda'})
-
\Psi_J^{\Lambda,\mathrm{major}}(\xi)
\Big)
\right|
\lesssim 2^{-c j_{\ell+1}}.
\end{align*}
\end{claim}

\begin{proof}[Proof of Claim \ref{to77}]
Set
\[
A:=\mathbb{Q}^{\Lambda'}[1,2^{j_k/10}]\oplus\mathbb{Q}^{\Lambda\setminus \Lambda'}[1,2^{j_k/10}],
\qquad
B:=\mathbb{Q}^{\Lambda}[1,2^{j_k/10}].
\]
By the definition,
\[
A
=
\left\{
\left(\frac{a'}{q'},\frac{a_0}{q_0}\right):
\gcd(a_0,q_0)=\gcd(a',q')=1,\ 
1\le q_0,q'\le 2^{j_k/10}
\right\}.
\]
If $a/q\in B=\mathbb{Q}^{\Lambda}[1,2^{j_k/10}]$, then
\[
\frac{a}{q}
=
\left(\frac{a'}{q},\frac{a_0}{q}\right),
\]
and reducing each component to lowest terms yields an element of $A$.
Thus $B\subset A$. Hence, one sees that
\begin{align}
\Psi_{J}^{\Lambda\setminus \Lambda',\mathrm{major}}(\xi_{\Lambda\setminus\Lambda'})
\Psi_{J,(1,1)}^{\Lambda',\mathrm{major}}(\xi_{\Lambda'})
-
\Psi_{J}^{\Lambda,\mathrm{major}}(\xi)
&=
\left(\sum_{a/q\in A}-\sum_{a/q\in B}\right)
\prod_{(m,n)\in\Lambda}
\psi\left(
\frac{\xi_{mn}-a_{mn}/q}{2^{-J\cdot(m,n)}2^{j_k/10}}
\right)\notag \\
&=
\sum_{a/q\in A\setminus B}
\prod_{(m,n)\in\Lambda}
\psi\left(
\frac{\xi_{mn}-a_{mn}/q}{2^{-J\cdot(m,n)}2^{j_k/10}}
\right).\label{t6}
\end{align}
Moreover, if $(a,q)=1$ and $a/q\in A$, then
\[
\frac{a}{q}
=
\left(\frac{a'}{q'},\frac{a_0}{q_0}\right),
\qquad
1\le q_0,q'\le 2^{j_k/10},
\]
which implies
$q \le \mathrm{lcm}(q_0,q') \le 2^{j_k/5}.$
Hence \begin{align}\label{ii}
	A\subset \mathbb{Q}^{\Lambda}[1,2^{j_k/5}].
\end{align}

Therefore, by \eqref{t6} and \eqref{ii}, it suffices, in order to prove Claim~\ref{to77}, to estimate the following multiplier:
\begin{align*} 
	\sum_{J_1:\,J\in Z_\ell(k)} |H^{\Lambda}_J(\xi)| \sum_{a/q \in \mathbb{Q}^{\Lambda}[2^{j_k/10},\,2^{j_k/5}]} \prod_{(m,n)\in\Lambda} \psi\!\left( \frac{\xi_{mn}-a_{mn}/q}{2^{-J\cdot (m,n)}2^{j_k/10}} \right). 
\end{align*}
 In view of part~(1) of both Proposition~\ref{lemm29} and Remark~\ref{yt}, we  reduce the estimate to showing that there exists $c>0$ such that
\begin{align}\label{b2}
\sum_{J_1:\,J\in Z_\ell(k)}
\sum_{a/q \in \mathbb{Q}^{\Lambda}[2^{j_k/10},\,2^{j_k/5}]}
|S^{\Lambda}(a/q)|\,
|\mathcal{H}^{\Lambda}_J(\xi-a/q)|
\psi_J^{\Lambda}\!\left(
\frac{\xi-a/q}{2^{j_k/10}}
\right)
\lesssim 2^{-c j_k}.
\end{align}
Applying Lemma~\ref{po3}, together with the Gauss sum estimate~\eqref{gaus} and the lower bound $q\ge 2^{j_k/10}$, we  obtain \eqref{b2}. Under the assumption
$j_{\ell+1}\approx_{\Lambda}\cdots\approx_{\Lambda} j_k,$
estimate~\eqref{b2} yields the desired bound in Claim~\ref{to77}.

\end{proof}
 Therefore, we have proved Lemma \ref{e7559}.
\end{proof}

\section{Necessity Proof}\label{sec10}
In this section, we prove the necessity part of Main Theorem~\ref{mt1}. The proof proceeds through a decomposition into residue classes modulo $3$, Poisson summation, and delicate parity considerations that force cancellation in all but one dominant contribution.
\begin{lemma}\label{propnec}
Fix an integer $s\geq2$. Let  $0<\epsilon\le (s+1)^{-1}$. Let $(m_1,\cdots,m_s)\in \mathbb{N}^{s}$. For any sufficiently large number $N\gg1$, set $\xi_1=\frac{N^{\epsilon}}{N^{m_1+\cdots+m_s}}$. Then, there exists $c>0$ independent of $N$ such that
$$ \int_{1}^{N^{m_s}}  \cdots \int_{1}^{N^{m_1}}  \sin\left(2\pi \xi_1 \prod_{\nu=1}^s t_\nu \right) \frac{dt_1\cdots dt_s}{t_1\cdots t_s} \ge c(\log N)^{s-1}.$$
\end{lemma}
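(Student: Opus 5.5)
Rescale and reduce to a product-structure count. Substitute $t_\nu = N^{m_\nu} u_\nu$, so that $dt_\nu/t_\nu = du_\nu/u_\nu$ and the phase becomes $2\pi N^{\epsilon}\prod_\nu u_\nu$ with $u_\nu\in[N^{-m_\nu},1]$. Then pass to logarithmic variables $x_\nu=\log u_\nu\in[-m_\nu\log N,0]$. The integral becomes
\[
I=\int_{\prod_\nu[-m_\nu\log N,0]} \sin\bigl(2\pi N^{\epsilon}e^{x_1+\cdots+x_s}\bigr)\,dx .
\]
It depends only on $y=x_1+\cdots+x_s$, so we write $I=\int g(y)\,\rho(y)\,dy$. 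Here $g(y)=\sin(2\pi N^\epsilon e^{y})$, and $\rho$ is the density of the sum, i.e. the $(s-1)$-dimensional volume of the slice $\{x\in\prod[-m_\nu\log N,0]:\sum x_\nu=y\}$. This $\rho$ is a piecewise polynomial of degree $s-1$ supported on $[-M\log N,0]$ with $M=\sum m_\nu$. Near $y=0$ it equals $|y|^{s-1}/(s-1)!$ as long as $|y|\le \min_\nu m_\nu\log N$.

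Next, split the $y$-integral at $y_0=-\epsilon\log N-A$, where $A$ is a large fixed constant; this is where $N^{\epsilon}e^{y}$ becomes small. On $y\le y_0$ we have $0\le 2\pi N^\epsilon e^y\le 2\pi e^{-A}<\pi$, so $g(y)\ge c\,N^\epsilon e^y\ge0$. This part therefore contributes a positive amount bounded by $\int_{-\infty}^{y_0}N^\epsilon e^y\rho(y)\,dy\lesssim \rho_{\max}$ near $y_0$, which is about $(\epsilon\log N)^{s-1}$. That is the right size, but we need a lower bound, and for it we use the region $y\in[y_0-1,y_0]$. There $g\ge c e^{-A}$ and $\rho(y)\approx |y|^{s-1}/(s-1)!\approx(\epsilon\log N)^{s-1}$. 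This is valid because $\epsilon\le (s+1)^{-1}<\min m_\nu$, so $|y_0|$ stays inside the range where $\rho$ is the pure simplex polynomial. Hence the low-frequency part is at least $c_A(\epsilon\log N)^{s-1}$.

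The main obstacle is the high-frequency part $y\in[y_0,0]$, where $g$ oscillates and we must show it contributes $o((\log N)^{s-1})$ or at least less than the positive part. Substitute $v=N^{\epsilon}e^{y}\in[e^{-A},N^\epsilon]$, so $dy=dv/v$ and this part equals $\int_{e^{-A}}^{N^\epsilon}\sin(2\pi v)\,\rho(\log v-\epsilon\log N)\,dv/v$. On this range $\rho(\log v-\epsilon\log N)=(\epsilon\log N-\log v)^{s-1}/(s-1)!$ is a smooth, monotone, explicit polynomial in $\log v$. Integrate by parts once in $v$ against $\sin(2\pi v)$. The boundary term at $v=N^\epsilon$ vanishes because $\rho(0)=0$. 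The boundary term at $v=e^{-A}$ is bounded by $e^{A}(\epsilon\log N+A)^{s-1}$, and it comes with an explicit sign, $+\cos(2\pi e^{-A})/(2\pi)$ times positive factors. The remaining integral involves $\frac{d}{dv}[\rho/v]$, whose contributions are of size $v^{-2}(\log N)^{s-1}$ and $v^{-2}(\log N)^{s-2}$. It is controlled by $\int_{e^{-A}}^\infty v^{-2}\,dv\cdot(\log N)^{s-1}$, again of order $e^{A}(\log N)^{s-1}$.

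Because these bounds do not obviously beat the positive part, the cleaner route is to avoid the split. For fixed $y$-range, note that $\int_{0}^{\infty}\sin(2\pi v)\,\Phi(\log v)\,dv/v$ with $\Phi(z)=(L-z)_+^{s-1}$ and $L=\epsilon\log N$ expands in powers of $L$. The leading coefficient is $L^{s-1}\int_0^\infty\sin(2\pi v)\,dv/v=L^{s-1}\pi/2>0$, and the lower-order terms $L^{s-1-i}\int\sin(2\pi v)(\log v)^i\,dv/v$ converge and are $O(L^{s-2})$. So $I=\frac{\pi}{2}\frac{(\epsilon\log N)^{s-1}}{(s-1)!}+O((\log N)^{s-2})$. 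The only remaining check is that for $v$ near $0$ (that is, $y$ below $-\min m_\nu\log N$, where $\rho$ stops being the simplex polynomial) the discrepancy is at most $\int_0^{N^{-c}}v\,dv/v\cdot(\log N)^{s-1}=o(1)$. That gives the claim with $c=\frac{\pi}{4}\epsilon^{s-1}/(s-1)!$.
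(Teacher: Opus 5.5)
Your final argument is correct, but it takes a genuinely different route from the paper.

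The paper integrates out $t_1$ first, which turns the inner integral into the truncated sine integral $\int_{\xi_1P}^{N^{m_1}\xi_1P}\sin(2\pi t)\,dt/t$ with $P=t_2\cdots t_s$. It then splits the $(t_2,\dots,t_s)$-domain by the size of $P$ into three regions. On $A_1$ the sine integral is close to $\pi/2$. On the transition layer $A_2$ it is only bounded, but $A_2$ has logarithmic measure $O((\log N)^{s-2})$. On $A_3$ it is nonnegative because the upper limit is at most $K^{-1}$. The lower bound comes from the box $\prod_{i\ge 2}[N^{m_i-\epsilon^2},N^{m_i}]\subset A_1$, and this containment is exactly where $\epsilon\le (s+1)^{-1}$ is used.

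You instead collapse everything onto the single variable $y=\sum_\nu\log u_\nu$ through the density $\rho$ of the sum. You replace $\rho$ by the simplex polynomial $|y|^{s-1}/(s-1)!$ at an $o(1)$ cost and expand in powers of $L=\epsilon\log N$. This gives an actual asymptotic, $I=\frac{\pi}{2}\frac{(\epsilon\log N)^{s-1}}{(s-1)!}+O((\log N)^{s-2})$, and it only needs $\epsilon<\min_\nu m_\nu$. The price is that you rely on convergence of the log-moment integrals $\int_0^\infty\sin(2\pi v)(\log v)^i\,dv/v$, whereas the paper uses nothing beyond boundedness and positivity of the sine integral.

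Two details should be stated explicitly. First, $(L-z)_+^{s-1}$ vanishes for $z>L$, so your expanded integrals are really truncated at $v=e^{L}=N^{\epsilon}$. What you need is that $\int_0^X\sin(2\pi v)(\log v)^i\,dv/v$ is bounded uniformly in $X$ and that the $i=0$ integral tends to $\pi/2$; both follow from convergence. Second, the simplex polynomial grows like $|\log v|^{s-1}$ as $v\to0$. So the discrepancy is $O(N^{\epsilon-\min_\nu m_\nu}(\log N)^{s-1})$ rather than literally $(\log N)^{s-1}\int_0^{N^{-c}}dv$, though it is still $o(1)$.

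The split at $y_0$ in the first half of your proposal is unnecessary. An even shorter version of your route is to integrate by parts in $y$, giving $I=\int(-\rho'(y))\,\mathrm{Si}(2\pi N^{\epsilon}e^{y})\,dy$. Here $\mathrm{Si}>0$ and $-\rho'(y)=|y|^{s-2}/(s-2)!\ge 0$ on $[-\min_\nu m_\nu\log N,0]$, so positivity is visible directly.
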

\begin{proof}
 By the change of variable $t_1'=t_1 \xi_1 \prod_{\nu=2}^s t_\nu $, one has
\begin{align}\label{t4}
&\int_{1}^{N^{m_s}}  \cdots \int_{1}^{N^{m_1}}  \sin\left(2\pi \xi_1t_1 \prod_{\nu=2}^s t_\nu \right) \frac{dt_1\cdots dt_s}{t_1\cdots t_s}\notag\\
&\qquad=\int_{1}^{N^{m_s}}  \cdots \int_{1}^{N^{m_2}}\left(  \int_{\xi_1 \prod_{\nu=2}^s t_\nu  }^{N^{m_1}\xi_1 \prod_{\nu=2}^s t_\nu }\sin(2\pi t_1' ) \frac{dt_1'}{t_1'}\right) \frac{dt_2\cdots dt_s}{t_2\cdots t_s}.
\end{align} Note that  if $N\gg 1 $, then there exist  $$0<a\ll 1 \ \text{and} \  N^{\epsilon^{3}}\ge K\gg1$$ such  that  for any $a'<a$ and  any $b>K$,  $ \int_{a'}^{b }\sin(2\pi t_1 ) \frac{dt_1}{t_1}=\pi/2+o(1)>0$.
Then, we split  the integral in (\ref{t4}) as
 \begin{align*}
\sum_{i=1}^3 \int_{(t_2,\cdots,t_s)\in A_i}   \left( \int_{\xi_1 \prod_{\nu=2}^s t_\nu  }^{N^{m_1}\xi_1 \prod_{\nu=2}^s t_\nu }\sin(2\pi t_1 ) \frac{dt_1}{t_1}\right) \frac{dt_2\cdots dt_s}{t_2\cdots t_s},
\end{align*}
where we set
\begin{align*}
A_1&:=\left\{(t_2,\cdots, t_s)\in \prod_{\nu=2}^s[1,N^{m_\nu}]:   KN^{-\epsilon} N^{m_2+\cdots+m_s} \le t_2 \cdots t_s\le  N^{m_2+\cdots+m_s}\right\},\\
A_2&:=\left\{(t_2,\cdots, t_s)\in \prod_{\nu=2}^s[1,N^{m_\nu}]:  K^{-1}N^{-\epsilon}   N^{m_2+\cdots+m_s} \le t_2 \cdots t_s\le K N^{-\epsilon} N^{m_2+\cdots+m_s}\right\},\\
A_3&:=\left\{(t_2,\cdots, t_s)\in \prod_{\nu=2}^s[1,N^{m_\nu}]:   1\le t_2 \cdots t_s\le  K^{-1}N^{-\epsilon} N^{m_2+\cdots+m_s}\right\}.
\end{align*}
For any $(t_2,\cdots,t_s)\in A_1$, one can observe that 
$$  \xi_1 \prod_{\nu=2}^s t_\nu\le \frac{N^{\epsilon}  N^{m_2+\cdots+m_s} }{N^{m_1+\cdots+m_s}}=\frac{N^{\epsilon}}{N^{m_1}} \ll 1 \ll K= \frac{N^{\epsilon} K N^{-\epsilon} N^{m_1+m_2+\cdots+m_s}}{N^{m_1+\cdots+m_s}}\le 
N^{m_1} \xi_1 \prod_{\nu=2}^s t_\nu,   $$ which implies that $$\int_{\xi_1 \prod_{\nu=2}^s t_\nu  }^{N^{m_1}\xi_1 \prod_{\nu=2}^s t_\nu }\sin(2\pi t_1 ) \frac{dt_1}{t_1}\gtrsim 1 .$$
Suppose that $N^{m_i-\epsilon^2}\le t_i\le N^{m_i}$ for each $i=2,\dots,s.$ Then,
$$N^{m_2+\cdots+m_s-(s-1)\epsilon^2}
\le t_2\cdots t_s
\le N^{m_2+\cdots+m_s}.$$ Since $0<\epsilon\le (s+1)^{-1}$, we have
$\epsilon\ge (s-1)\epsilon^2+ \epsilon^3.$ Consequently, using $N^{\epsilon^3}\ge K$, we obtain
$$t_2\cdots t_s
\ge N^{m_2+\cdots+m_s-(s-1)\epsilon^2}
\ge K N^{-\epsilon}N^{m_2+\cdots+m_s}.$$ This means that  the rectangular region
$
\prod_{i=2}^{s}[N^{m_i-\epsilon^2},N^{m_i}]
$
is contained in $A_1$ which implies that  $$\int_{A_1}\frac{d t_2\cdots dt_s}{t_2\cdots t_s}> \int_{N^{m_2}N^{-\epsilon^2}}^{N^{m_2}}\cdots\int_{N^{m_s}N^{-\epsilon^2}}^{N^{m_s}}  \frac{dt_2\cdots dt_s}{t_2\cdots t_s}  \gtrsim (\log N)^{s-1}.$$ Therefore, we obtain that
\begin{align}\label{hj}
	 \int_{(t_2,\cdots,t_s)\in A_1}   \left( \int_{\xi_1 \prod_{\nu=2}^s t_\nu  }^{N^{m_1}\xi_1 \prod_{\nu=2}^s t_\nu }\sin(2\pi t_1 ) \frac{dt_1}{t_1}\right) \frac{dt_2\cdots dt_s}{t_2\cdots t_s}\gtrsim (\log N)^{s-1}.
\end{align}
On the other hand, by the support condition of $A_2$ and $A_3$, we have
\begin{align}
&\left| \int_{(t_2,\cdots,t_s)\in A_2}   \left( \int_{\xi_1 \prod_{\nu=2}^s t_\nu  }^{N^{m_1}\xi_1 \prod_{\nu=2}^s t_\nu }\sin(2\pi t_1 ) \frac{dt_1}{t_1}\right) \frac{dt_2\cdots dt_s}{t_2\cdots t_s}\right|\lesssim (\log N)^{s-2},\label{hj2}\\
& \int_{(t_2,\cdots,t_s)\in A_3}   \left( \int_{\xi_1 \prod_{\nu=2}^s t_\nu  }^{N^{m_1}\xi_1 \prod_{\nu=2}^s t_\nu }\sin(2\pi t_1 ) \frac{dt_1}{t_1}\right) \frac{dt_2\cdots dt_s}{t_2\cdots t_s}\ge 0.\label{hj3}
\end{align}
We briefly verify the two inequalities above. For $(t_2,\dots,t_s)\in A_2$, we have
$$
K^{-1}\le N^{m_1}\xi_1\prod_{\nu=2}^s t_\nu\le K,
$$
and hence
$$
\left|
\int_{\xi_1\prod_{\nu=2}^s t_\nu}^{N^{m_1}\xi_1\prod_{\nu=2}^s t_\nu}
\frac{\sin(2\pi t_1)}{t_1},dt_1
\right|
\lesssim 1.
$$
Moreover, for fixed $t_2,\dots,t_{s-1}$, setting $Q=t_2\cdots t_{s-1}$, the defining condition of $A_2$ yields
$$
\frac{K^{-1}N^{-\epsilon}N^{m_2+\cdots+m_s}}{Q}
\le t_s\le
\frac{KN^{-\epsilon}N^{m_2+\cdots+m_s}}{Q}.
$$
Consequently,
$$
\int_{{t_s:(t_2,\dots,t_s)\in A_2}}\frac{dt_s}{t_s}
\le 2\log K.
$$ Therefore,
$$
\int_{A_2}\frac{dt_2\cdots dt_s}{t_2\cdots t_s}
\lesssim (\log N)^{s-2},
$$
which yields
$$
\left|
\int_{A_2}
\left(
\int_{\xi_1\prod_{\nu=2}^s t_\nu}^{N^{m_1}\xi_1\prod_{\nu=2}^s t_\nu}
\frac{\sin(2\pi t_1)}{t_1},dt_1
\right)
\frac{dt_2\cdots dt_s}{t_2\cdots t_s}
\right|
\lesssim (\log N)^{s-2}.
$$

On the other hand, for $(t_2,\dots,t_s)\in A_3$,
$$
0<
\xi_1\prod_{\nu=2}^s t_\nu
\le
N^{m_1}\xi_1\prod_{\nu=2}^s t_\nu
\le K^{-1}.
$$
By $K\gg 1$, we have 
$$
\int_{A_3}
\left(
\int_{\xi_1\prod_{\nu=2}^s t_\nu}^{N^{m_1}\xi_1\prod_{\nu=2}^s t_\nu}
\frac{\sin(2\pi t_1)}{t_1},dt_1
\right)
\frac{dt_2\cdots dt_s}{t_2\cdots t_s}
\ge0.
$$

Therefore, the above three integrals (\ref{hj}), (\ref{hj2}) and (\ref{hj3}) yield the desired lower bound.
\end{proof}

We now prove the necessity part of (ii) in Main Theorem \ref{mt1}.  
To this end, we assume that 
$\Lambda$ contains an odd subset, but fails the condition \eqref{1009}.
The assumption  means that there exists a subset $\Omega\subset \Lambda$ such that
\begin{align}\label{101}
\text{$\Omega$ is odd and contains a vector $\mathfrak{m}$ with at least two odd components.}
\end{align}
Among all such subsets, choose $\Omega$ with minimal cardinality, say $|\Omega|=n$. Then, by minimality, for every non-empty proper subset $\Omega'\subsetneq \Omega$, we have
\begin{align}\label{102}
\sum_{\mathfrak{m}\in \Omega'} \mathfrak{m} \neq (\text{odd},\cdots,\text{odd}).
\end{align}
\begin{proof}[Proof of \eqref{102}]
Suppose, to the contrary, that
	$\Omega'$ is odd. By the minimality of $\Omega$, the set $\Omega'$ cannot contain a vector
	having at least two odd components. Hence every
	$\mathfrak m\in\Omega'$ has at most one odd component. Choose
	$\mathfrak m^{*}
	=
	(m_1^{*},\ldots,m_k^{*})
	\in\Omega$
	having at least two odd components. Necessarily,
	$\mathfrak m^{*}\notin\Omega'$, since otherwise $\Omega'$ would be an
	odd subset of smaller cardinality satisfying \eqref{101}.

	Let
	\[
	I:=\{i\in[k]:m_i^{*}\ \text{is even}\}.
	\]
	Since $\mathfrak m^{*}$ has at least two odd components, we have
	$|I|\leq k-2.$
	Because $\Omega'$ is odd, for each $i\in I$ there exists
	$\mathfrak m^{(i)}\in\Omega'$ whose $i$-th component is odd.
	Moreover, since every vector in $\Omega'$ has at most one odd component,
	all the remaining components of $\mathfrak m^{(i)}$ are even.

	Now set
	\[
	\widetilde{\Omega}
	:=
	\{\mathfrak m^{*}\}
	\cup
	\{\mathfrak m^{(i)}:i\in I\}.
	\]
	Then
	\[
	\sum_{\mathfrak m\in\widetilde{\Omega}}\mathfrak m
	=
	(\text{odd},\cdots,\text{odd}).
	\]
	Thus, $\widetilde{\Omega}$ is odd and contains the vector
	$\mathfrak m^{*}$, which has at least two odd components. On the other
	hand,
	\[
	|\widetilde{\Omega}|
	=
	1+|I|
	\leq k-1.
	\]
	Since $\Omega'$ is odd and every vector in $\Omega'$ has at most one
	odd component, $\Omega'$ must contain at least one distinct vector
	corresponding to each of the $k$ coordinates. Therefore,
	\[
	|\Omega'|\geq k,
	\]
	and hence
	\[
	|\widetilde{\Omega}|
	\leq k-1
	<
	|\Omega'|
	<
	|\Omega|=n.
	\]
	This contradicts the minimality of $\Omega$. Thus, we have \eqref{102}.
\end{proof}
Write
\begin{align}\label{t0}
\Omega=\{\mathfrak{m}_1,\cdots,\mathfrak{m}_n\}\subset \Lambda,
\end{align}
satisfying \eqref{101}--\eqref{102}. By \eqref{101}, there exists an element of  $ \Omega$ with at least two odd components. Without loss of generality, we may assume that for some $s\ge 2$,
\begin{align}\label{103}
\mathfrak{m}_1=(m_{11},\cdots,m_{1s},m_{1(s+1)},\cdots,m_{1k})
=(\text{odd},\cdots,\text{odd},\text{even},\cdots,\text{even}).
\end{align}
Fix such an $s$.
For each $\mathfrak{m}_j=(m_{j1},\cdots,m_{jk})\in \Omega$, define the projections
\[
[\mathfrak{m}_j]_{\le s}=(m_{j1},\cdots,m_{js})\in \mathbb{Z}_+^s,
\qquad
[\mathfrak{m}_j]_{>s}=(m_{j(s+1)},\cdots,m_{jk})\in \mathbb{Z}_+^{k-s}.
\]
Then, by the first condition of \eqref{101} and \eqref{103}, we have
\begin{align}
\sum_{j=2}^n[\mathfrak{m}_j]_{\le s}&=(\text{even},\cdots,\text{even}),\label{104} \\
\sum_{j=2}^n[\mathfrak{m}_j]_{>s}&=(\text{odd},\cdots,\text{odd}).\label{1044}
\end{align}
Moreover, by   \eqref{102} and (\ref{103}),  for any non-empty subset $V\subsetneq \{2,\cdots,n\}$, we obtain
\begin{align}\label{105}
\sum_{j\in V } [\mathfrak{m}_j]_{\le s} \neq (\text{even},\cdots,\text{even})
\quad \text{or} \quad
\sum_{j\in V } [\mathfrak{m}_j]_{> s} \neq (\text{odd},\cdots,\text{odd}).
\end{align}

Using these properties, we establish the divergence
$$
\sup_{\vec N,\xi}\bigl|H_{\vec N}^{\Lambda}(\xi)\bigr|=\infty
$$
through the following proposition.

\begin{proposition}\label{h450}
 Fix  $0<\epsilon\le (k+1)^{-1}$. Let $\Omega=\{\mathfrak{m}_1,\cdots,\mathfrak{m}_n\}\subset \Lambda$ be as in \eqref{t0}, with $\mathfrak{m}_1$ satisfying \eqref{103}.  Then, for all sufficiently large integers $N\gg 1$, define
\[
\xi_1=\frac{N^{\epsilon}}{N^{m_{11}+\cdots+m_{1s}}},
\qquad
\xi_2=\cdots=\xi_n=\frac{1}{3}.
\]
There exists a constant $c>0$, independent of $N$, such that
\begin{align}\label{0487}
\left|\sum_{\substack{|t_{s+1}|,\cdots,|t_k|=1 \\ 1\le |t_1|,\cdots,|t_s|\le N}}
e^{2\pi i \sum_{j=1}^n \xi_j t^{\mathfrak{m}_j}}
\frac{1}{t_1\cdots t_k}\right|
\ge c\log N.
\end{align}
\end{proposition}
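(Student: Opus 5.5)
The plan is to reduce \eqref{0487} to the continuous lower bound of Lemma~\ref{propnec} in three steps. First a parity expansion isolates a single term. Then a residue-class decomposition modulo $3$ turns the frequencies $\xi_j=1/3$ into a constant factor. Finally a discrete-to-continuous comparison in the slowly oscillating variables brings in the integral. Throughout, we take $\xi_{\mathfrak m}=0$ for $\mathfrak m\in\Lambda\setminus\Omega$.

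\emph{Step 1: parity.} Write $e^{2\pi i\xi_jt^{\mathfrak m_j}}=\cos(2\pi\xi_jt^{\mathfrak m_j})+i\sin(2\pi\xi_jt^{\mathfrak m_j})$ and expand the product over $j\in[n]$ as a sum over subsets $T\subset[n]$ of sine factors, with cosines on the complement. As in the proof of \eqref{10099}, the $T$-term divided by $t_1\cdots t_k$ is odd in $t_\nu$ whenever the $\nu$-th component of $\sum_{j\in T}\mathfrak m_j$ is even. Every range in \eqref{0487} is symmetric, so such a term cancels. Hence only odd $T$ survive, and by the minimality property \eqref{102} the only one is $T=\Omega$. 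The sum in \eqref{0487} therefore equals $i^n\sum\prod_{j=1}^n\sin(2\pi\xi_jt^{\mathfrak m_j})/(t_1\cdots t_k)$. This summand is even in every variable, so the sum is $2^k$ times its restriction to $t_{s+1}=\cdots=t_k=1$ and $1\le t_1,\dots,t_s\le N$. Writing $\mathfrak n_j=[\mathfrak m_j]_{\le s}$, we are left with
\[
\Sigma_N:=\sum_{1\le t_1,\dots,t_s\le N}\sin(2\pi\xi_1t^{\mathfrak n_1})\prod_{j=2}^n\sin\bigl(\tfrac{2\pi}{3}t^{\mathfrak n_j}\bigr)\frac{1}{t_1\cdots t_s}.
\]

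\emph{Step 2: residues mod $3$.} The factor $G(t)=\prod_{j\ge2}\sin(\frac{2\pi}{3}t^{\mathfrak n_j})$ depends only on $r=t \bmod 3$. For $r\in\{1,2\}^s$ we have $2\equiv-1$, so $r^{\mathfrak n_j}\equiv\prod_i\varepsilon_i^{n_{ji}}$ with $\varepsilon_i=\pm1$. Since $\sin(\pm2\pi/3)=\pm\sqrt3/2$, we get $G(r)=(\sqrt3/2)^{n-1}\prod_i\varepsilon_i^{\sum_{j\ge2}n_{ji}}=(\sqrt3/2)^{n-1}$ by \eqref{104}: a nonzero constant, independent of $r$. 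If some $r_i=0$, we would like $G(r)$ to vanish, and this is where I would invoke \eqref{105} and \eqref{1044}. If some $j\ge2$ has $n_{ji}>0$, that sine factor vanishes. Otherwise coordinate $i$ appears in no $\mathfrak n_j$ with $j\ge2$, and then $t_i\equiv0$ contributes with sign symmetry or is harmless; I would check this subcase carefully. So, up to this check, $\Sigma_N=(\sqrt3/2)^{n-1}\sum_{r\in\{1,2\}^s}\sum_{t\equiv r}\sin(2\pi\xi_1t^{\mathfrak n_1})/(t_1\cdots t_s)$.

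\emph{Step 3: comparison with the integral (main obstacle).} I will compare each residue-class sum with $3^{-s}\int_{[1,N]^s}\sin(2\pi\xi_1t^{\mathfrak n_1})\frac{dt}{t_1\cdots t_s}$, one variable at a time, using Euler--Maclaurin on progressions. For $f(t_i)=\sin(\phi)/t_i$ with $\phi=\xi_1t^{\mathfrak n_1}$, the error is $O(\sup|f|+\int|f'|)$. The boundary term at $t_i=1$ is small because $\phi$ is small there, and $\int|\phi'|/t_i\,dt_i\lesssim\sup_{t_i}\phi/t_i$, which is harmless since $\phi\le N^{\epsilon}$ while $t_i$ is large wherever $\phi$ is large. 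The danger is that an $O(1)$ error in one variable, multiplied by the $\log N$ factors of the remaining sums, has the same order as the main term. I would therefore aim to show each comparison error is $O((\log N)^{s-2})$, splitting into the regions $A_1,A_2,A_3$ of Lemma~\ref{propnec}, where the phase is tiny, bounded, or large. The substitution $u_i=t_i^{n_{1i}}$ (each $n_{1i}$ is odd and positive) converts the main integral into Lemma~\ref{propnec} with $s$ variables, ranges $[1,N^{n_{1i}}]$ and the chosen $\xi_1$, giving $\gtrsim(\log N)^{s-1}\ge\log N$ since $s\ge2$. This yields \eqref{0487}.
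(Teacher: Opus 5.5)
Your proof is correct in substance, and it takes a genuinely different route from the paper.

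\textbf{Comparison with the paper.} The paper keeps all $n$ phases and inserts a smooth cutoff $\psi_N$. It writes $t_\nu=3w_\nu+\ell_\nu$ and applies Poisson summation in $w$, replacing every residue-class sum by $3^{-s}$ times one full-line integral plus $O(1)$. Only then does it run the cosine/sine parity expansion, on the finite sum $A$ over $\ell\in\{0,\pm1\}^k$, using (\ref{104})--(\ref{105}) to show $|A|\gtrsim1$.

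You run the parity expansion first, on the whole sum, as in the proof of (\ref{10099}), with (\ref{102}) singling out $T=[n]$. This reduces the mod-$3$ factor to the explicit constant $(\sqrt3/2)^{n-1}$ on admissible classes and puts you in the positive octant. A first-order sum-to-integral comparison then replaces Poisson summation.

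Your route is more elementary and gives explicit constants and an $o(1)$ error. It also avoids controlling $\sum_{w\neq0}\widehat{G_{\ell,N}}(w)$ uniformly in $N$. For $w$ with some $w_\mu=0$, integrating by parts only in the oscillating directions leaves factors $\int|\psi_N(t_\mu)/t_\mu|\,dt_\mu\sim\log N$, so that step needs more than the gradient bound quoted there. In exchange, the paper's route displays the sum as a Gauss-type factor times a continuous multiplier, mirroring Proposition~\ref{lemm29}.

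\textbf{Two repairs.}

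\emph{Step 2.} The subcase you left open is not a vanishing case. Suppose $m_{ji}=0$ for all $j\ge2$, as for $\Omega=\{(1,1)\}$ or $\Omega=\{(1,1,0),(0,0,1)\}$. Then $G$ simply does not depend on $t_i$, and the residue $0$ must be kept in that coordinate. Set $I_1:=\{i\le s:\ m_{ji}>0 \text{ for some } j\ge2\}$. The correct identity is
\[
\Sigma_N=(\sqrt3/2)^{n-1}\sum_{t\in[1,N]^s,\ 3\nmid t_i\ (i\in I_1)}\frac{\sin(2\pi\xi_1t^{\mathfrak n_1})}{t_1\cdots t_s}.
\]
No appeal to (\ref{105}) or to sign symmetry is needed. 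Your sum over $r\in\{1,2\}^s$ is wrong in this subcase, but fixing it only changes the density constant from $(2/3)^s$ to $(2/3)^{|I_1|}$.

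\emph{Step 3.} The ``main obstacle'' is not there. Write $\phi=2\pi\xi_1t^{\mathfrak n_1}=\Phi\,t_i^{n_{1i}}$ with $\Phi$ independent of $t_i$. Then $\phi/t_i\le2\pi N^{\epsilon-1}$ on $[1,N]^s$. So for $f=\sin\phi/t_i$ you get $\sup|f|\le2\pi N^{\epsilon-1}$, and
\[
\int_1^N|f'|\,dt_i\le(n_{1i}+1)\int_1^N\Phi\,t_i^{n_{1i}-2}\,dt_i\lesssim N^{\epsilon-1}\log N.
\]
The $\log N$ appears only when $n_{1i}=1$; that is also where your bound $\int|\phi'|/t_i\lesssim\sup\phi/t_i$ is off by a factor $\log N$.

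Hence each one-variable replacement of a residue-class sum by the corresponding multiple of the integral costs $O(N^{\epsilon-1}\log N)$, uniformly in the other variables. Summing or integrating against the remaining $\prod_{\mu\neq i}dt_\mu/t_\mu$ gives a total error $O(N^{\epsilon-1}(\log N)^s)=o(1)$. No splitting into $A_1,A_2,A_3$ is required. The main term then follows from Lemma~\ref{propnec} via $u_i=t_i^{n_{1i}}$, since $\epsilon\le(k+1)^{-1}\le(s+1)^{-1}$, exactly as you say.
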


\begin{proof}
For each $N\in\mathbb{N}$, let
\[
\psi_N:\mathbb{R}\to[0,1]
\]
be a smooth even function satisfying the following properties:
\begin{itemize}
	\item[(1)]
	$\operatorname{supp}(\psi_N)
	\subset
	\left\{
	u\in\mathbb{R}:
	\frac{1}{2}\leq |u|\leq N+\frac{1}{2}
	\right\}.$
	\item[(2)]
$\psi_N(u)=1$ whenever
	$1\leq |u|\leq N.$
	\item[(3)]
	For every integer $r$ with $1\leq r\leq s+1$, there exists
	a constant $C_r>0$, independent of $N$, such that
	$\left\|
	\psi_N^{(r)}
	\right\|_{L^\infty(\mathbb{R})}
	\leq C_r.$
	In particular, we may take
$\left\|
	\psi_N'
	\right\|_{L^\infty(\mathbb{R})}
	\leq 2.$
	\item[(4)]
	For every integer $r$ with $1\leq r\leq s+1$,
$\operatorname{supp}\!\left(\psi_N^{(r)}\right)
	\subset
	\left\{
	u\in\mathbb{R}:
	\frac{1}{2}<|u|<1
	\right\}
	\cup
	\left\{
	u\in\mathbb{R}:
	N<|u|<N+\frac{1}{2}
	\right\}.$
\end{itemize}
Then the sum appearing inside the absolute value on the left-hand side of \eqref{0487} can be rewritten as
\begin{align}\label{04870}
& \sum_{|t_{s+1}|,\cdots,|t_k|=1}\sum_{1\le |t_1|,\cdots,|t_s|\le N} e^{2\pi i \xi_1 t^{\mathfrak{m}_1}  } e^{\frac{2}{3}\pi i 
 \sum_{j=2}^n t^{\mathfrak{m}_j} } \frac{1}{t_1\cdots t_k}\nonumber \\
&= \sum_{|t_{s+1}|,\cdots,|t_k|=1}\sum_{(t_1,\cdots,t_s)\in \mathbb{Z}^s} e^{2\pi i \xi_1 t^{\mathfrak{m}_1}  } e^{\frac{2}{3}\pi i 
 \sum_{j=2}^n t^{\mathfrak{m}_j} }\left(\prod_{\nu=s+1}^k t_{\nu}\right) \prod_{\nu=1}^s\frac{\psi_N(t_\nu)}{t_\nu}. 
\end{align}
By the change of variables $t_\nu=3w_\nu+\ell_\nu$  with $\ell_\nu=0,-1,1$ and $w_\nu\in\mathbb{Z}$ for $\nu=1,\cdots,s$,   we can express the term in (\ref{04870}) as
\begin{align*}
& \sum_{|t_{s+1}|,\cdots,|t_k|=1}  \sum_{(\ell_1,\cdots,\ell_s)\in\{0,1,-1\}^s} \sum_{(w_1,\cdots,w_s)\in \mathbb{Z}^s} e^{2\pi i \xi_1  \prod_{\nu=1}^s(3w_\nu+\ell_\nu)^{m_{1\nu}} \prod_{\nu=s+1}^k t_\nu^{m_{1\nu}}  }\\
&\times e^{\frac{2}{3}\pi i 
 \sum_{j=2}^n \prod_{\nu=1}^s(3w_\nu+\ell_\nu)^{m_{j\nu}}
 \prod_{\nu=s+1}^k t_\nu^{m_{j\nu}} }\left(\prod_{\nu=s+1}^k t_{\nu} \right)\prod_{\nu=1}^s\frac{\psi_N(3w_\nu+\ell_\nu)}{3w_\nu+\ell_\nu}.
\end{align*}
Here, one can observe that
\begin{itemize}
\item for each $\nu=s+1,\cdots,k$, $m_{1\nu}=even$ from (\ref{103}), which implies that  $\prod_{\nu=s+1}^k t_\nu^{m_{1\nu}}=1$ whenever $|t_{s+1}|,\cdots,|t_k|=1$  on the first line,
\item $(3w_\nu+\ell_\nu)^{m_{j\nu}}\equiv \ell_\nu^{m_{j\nu}} \pmod{3}$ on the second line.
\end{itemize}
Hence, one can rewrite the above as
\begin{align*}
&\sum_{|t_{s+1}|,\cdots,|t_k|=1}  \sum_{(\ell_1,\cdots,\ell_s)\in\{0,1,-1\}^s} e^{\frac{2}{3}\pi i 
 \sum_{j=2}^n \prod_{\nu=1}^s \ell_\nu^{m_{j\nu}} \prod_{\nu=s+1}^k t_\nu^{m_{j\nu}}  }\left(\prod_{\nu=s+1}^k t_{\nu}\right)\\
&\qquad\qquad\qquad\qquad\qquad\qquad\qquad\qquad\qquad \times\sum_{(w_1,\cdots,w_s)\in \mathbb{Z}^s}
 G_{\ell,N}(w_1,\cdots,w_s),
\end{align*}
where we set
\begin{align}
G_{\ell,N}(w_1,\cdots,w_s)&:= e^{2\pi i \xi_1 \prod_{\nu=1}^s(3w_\nu+\ell_\nu)^{m_{1\nu}} } \prod_{\nu=1}^s\frac{\psi_{N}(3w_\nu+\ell_\nu)}{3w_\nu+\ell_\nu}.\nonumber
\end{align}
		By the Poisson-summation formula, it holds that
\begin{align*}
&\sum_{w\in \mathbb{Z}^s} G_{\ell,N}(w_1,\cdots,w_s)=\sum_{w\in \mathbb{Z}^s} \widehat{G_{\ell,N}}(w_1,\cdots,w_s),\end{align*}
where for $w\in \mathbb{Z}^s$ we define
\begin{align*}
\widehat{G_{\ell,N}}(w)&:=\int e^{2\pi i w\cdot (t_1,\cdots,t_s)} G_{\ell,N}(t_1,\cdots,t_s) dt_1\cdots dt_s\\
& = \int e^{2\pi i w\cdot (t_1,\cdots,t_s)}  e^{2\pi i \xi_1 \prod_{\nu=1}^s(3t_\nu+\ell_\nu)^{m_{1\nu}} } \prod_{\nu=1}^s\frac{\psi_{N}(3t_\nu+\ell_\nu)}{3t_\nu+\ell_\nu}dt_1\cdots dt_s.
		\end{align*}
	We split
	$$\sum_{w\in \mathbb{Z}^s} \widehat{G_{\ell,N}}(w)=\widehat{G_{\ell,N}}(0,\cdots,0)+\sum_{w\ne (0,\cdots,0)} \widehat{G_{\ell,N}}(w_1,\cdots,w_s)$$
	and observe that 
	$$\sum_{w\ne (0,\cdots, 0)} \widehat{G_{\ell,N}}(w_1,\cdots,w_s)=O(1),$$
since $ \widehat{G_{\ell,N}}(w_1,\cdots,w_s)=O(1/|w|^{s+1})$. This follows from the derivative conditions 
$$\left|\nabla_{t} \left((w_1,\cdots,w_s)\cdot (t_1,\cdots,t_s)+ \xi_1 \prod_{\nu=1}^s(3t_\nu+\ell_\nu)^{m_{1\nu}}\right)\right|\approx |w|\ \text{if $w\ne \textbf{0}$},$$
because $\xi_1=\frac{N^{\epsilon}}{N^{m_{11}+\cdots+m_{1s}}}$, $m_{1\nu}=odd\ne 0$ for all $\nu=1,\cdots,s$, and the support condition $|3t_\nu+\ell_\nu|\le N+1/2$. So, we have
\begin{align*}
	&\sum_{w\in \mathbb{Z}^s} \widehat{G_{\ell,N}}(w_1,\cdots,w_s)=\widehat{G_{\ell,N}}({\bf 0})+O(1)\\
	&=\frac{1}{3^s}\int   e^{2\pi i \xi_1 \prod_{\nu=1}^s t_\nu^{m_{1\nu}} } \prod_{\nu=1}^s \frac{\psi_{N}(t_\nu)}{t_\nu}   dt_1\cdots dt_s+O(1).
\end{align*}
Hence,
the LHS of (\ref{0487}) is $|AB|$ where
\begin{align*}
A&:=\sum_{|t_{s+1}|,\cdots,|t_k|=1}  \sum_{(\ell_1,\cdots,\ell_s)\in\{0,1,-1\}^s} e^{\frac{2}{3}\pi i 
 \sum_{j=2}^n \prod_{\nu=1}^s \ell_\nu^{m_{j\nu}} \prod_{\nu=s+1}^k t_\nu^{m_{j\nu}}  }\left(\prod_{\nu=s+1}^k t_{\nu}\right),\\
B&:= \frac{1}{3^s}\int   e^{2\pi i \xi_1 \prod_{\nu=1}^s t_\nu^{m_{1\nu}} } \prod_{\nu=1}^s \frac{\psi_{N}(t_\nu)}{t_\nu}   dt_1\cdots dt_s+O(1).
\end{align*}
By Lemma \ref{propnec} combined  with coordinate changes $t_\nu^{m_{1\nu}}\rightarrow t_\nu$ where $m_{1\nu}=odd$, one can obtain that
\begin{align*}
  &\left|  \frac{1}{3^s}\int   e^{2\pi i \xi_1 \prod_{\nu=1}^s t_\nu^{m_{1\nu}} } \prod_{\nu=1}^s \frac{\psi_{N}(t_\nu)}{t_\nu}   dt_1\cdots dt_s\right| \\
 &=\left| \frac{1}{3^s}  \int   \sin \left(2\pi  \xi_1 \prod_{\nu=1}^s t_\nu^{m_{1\nu}}\right)   \prod_{\nu=1}^s \frac{\psi_{N}(t_\nu)}{t_\nu}   dt_1\cdots dt_s\right|\gtrsim  \log N.
\end{align*}

Using $\prod_{\nu=s+1}^k t_{\nu}=0$ if $t_\nu=0$ in a factor, and $t_{\nu}=\pm 1 $ for $\nu=s+1,\cdots,k$, one can observe that
\begin{align*}
A& =  \sum_{(\ell_1,\cdots,\ell_k)\in\{0,1,-1\}^k} e^{\frac{2}{3}\pi i 
 \sum_{j=2}^n \prod_{\nu=1}^s \ell_\nu^{m_{j\nu}} \prod_{\nu=s+1}^n \ell_\nu^{m_{j\nu}}  }\left(\prod_{\nu=s+1}^k \ell_{\nu}\right) \\
& =  \sum_{(\ell_1,\cdots,\ell_k)\in\{0,1,-1\}^k} \prod_{j=2}^n e^{\frac{2}{3}\pi i 
  \ell^{\mathfrak{m}_j}   }\left(\prod_{\nu=s+1}^k \ell_{\nu}\right)\\
& =  \sum_{(\ell_1,\cdots,\ell_k)\in\{0,1,-1\}^k} \prod_{j=2}^n\left( \cos \left(\frac{2}{3}\pi 
  \ell^{\mathfrak{m}_j}\right)+i  \sin \left(\frac{2}{3}\pi 
  \ell^{\mathfrak{m}_j}\right)\right)\left(\prod_{\nu=s+1}^k \ell_{\nu}\right)\\
&=\sum_{U,V: U\ne\emptyset, \ U\cup V=\{2,\cdots,n\}}\sum_{(\ell_1,\cdots,\ell_k)\in\{0,1,-1\}^k}  \prod_{j\in U} \cos \left(\frac{2}{3}\pi 
  \ell^{\mathfrak{m}_j}\right)\prod_{j\in V}i \sin \left(\frac{2}{3}\pi 
  \ell^{\mathfrak{m}_j}\right)\left(\prod_{\nu=s+1}^k \ell_{\nu}\right)\\
  &+\sum_{(\ell_1,\cdots,\ell_n)\in\{0,1,-1\}^k}  \prod_{j=2}^ni \sin \left(\frac{2}{3}\pi 
  \ell^{\mathfrak{m}_j}\right)\left(\prod_{\nu=s+1}^k\ell_{\nu}\right).
\end{align*}
Given $V\subset\{2,\cdots,n\}$ and $\nu\in\{1,\cdots,k\}$,  we consider the mapping defined by
\begin{align}
\ell_\nu\rightarrow F_\nu(\ell_\nu):=\prod_{j\in V} \sin \left(\frac{2}{3}\pi 
  \ell^{\mathfrak{m}_j}\right)\left(\prod_{\nu'=s+1}^k \ell_{\nu'}\right).
\end{align}
Then one can observe the following symmetry properties of the functions $F_\nu$:
\begin{itemize}
\item For any  $\nu=1,\cdots,s$,
$\ell_\nu\rightarrow F_\nu(\ell_\nu)$ is  odd  if and only if $\sum_{j\in V} m_{j\nu}$ is odd
\item For any  $\nu=s+1,\cdots,k$,
$\ell_\nu\rightarrow F_\nu(\ell_\nu)$ is odd if and only  if $\sum_{j\in V}m_{j\nu}$ is even.
\end{itemize}
For every $\nu=1,\cdots,k$,
$$\ell_\nu\rightarrow  \prod_{j=2}^n \sin \left(\frac{2}{3}\pi 
  \ell^{\mathfrak{m}_j}\right)\left(\prod_{\nu=s+1}^k \ell_{\nu}\right)$$ is an even function  because of (\ref{104}) and (\ref{1044}). So,  we have
$$\left|\sum_{(\ell_1,\cdots,\ell_k)\in\{0,1,-1\}^k} \prod_{j=2}^n \sin \left(\frac{2}{3}\pi 
  \ell^{\mathfrak{m}_j}\right)\left(\prod_{\nu=s+1}^k \ell_{\nu}\right)\right|\gtrsim 1,$$
where the lower bound in RHS is computed by taking the sum over only $(\ell_1,\cdots,\ell_k)\in \{1,-1\}^k$.

However, in view of (\ref{105}), if  $U\ne \emptyset$ and $U\cup V=\{2,\cdots,n\}$ which means that $V\subsetneq \{2,\cdots,n\}$, then there exists  $\nu=1,\cdots,k$ such that
$\ell_\nu\rightarrow \prod_{j\in V} i \sin \left(\frac{2}{3}\pi 
  \ell^{\mathfrak{m}_j}\right)\left(\prod_{\nu=s+1}^k \ell_{\nu}\right)$ is an odd function. So, for any pair of subsets $(U,V)$ satisfying $U\ne\emptyset$ and  $U\cup V=\{2,\cdots,n\}$ both, one has
\begin{align}
\sum_{(\ell_1,\cdots,\ell_k)\in\{0,1,-1\}^k}  \left(\prod_{j\in U} \cos \left(\frac{2}{3}\pi 
  \ell^{\mathfrak{m}_j}\right)\right)\prod_{j\in V}i \sin \left(\frac{2}{3}\pi 
  \ell^{\mathfrak{m}_j}\right)\left(\prod_{\nu=s+1}^k \ell_{\nu}\right)=0.\end{align}
 Therefore, there exists a constant $c'>0$, independent of $N$, such that $|A|\ge c'$, which implies that $$LHS\  of\  (\ref{0487}) \gtrsim \log N.$$
 
Thus, we have proved Proposition \ref{h450}.
\end{proof}

\section{$\ell^p$ Boundedness}\label{sec6}
In this section, for $1<p<\infty$, we prove the $\ell^p$ boundedness of  the discrete multiple Hilbert transform
  ${\bf H}^{\Lambda}$ defined in (\ref{s2}), under the condition  (\ref{wep2}).
This establishes  (ii) of Main Theorem~\ref{mt2}. The proof proceeds by induction on the number of parameters $k$, using the Ionescu--Wainger $\ell^p$ theory developed in \cite{IW}.

Recall the following definition in (\ref{s2}) 	$${\bf H}^{\Lambda}(f)(x) := \lim_{\min\{N_1, \dots, N_k\} \to \infty} {\bf H}^{\Lambda}_{(N_1, \dots, N_k)}(f)(x) \ \text{for $f\in c_{00}(\mathbb{Z}^{|\Lambda|})$}$$
where $ c_{00}(\mathbb{Z}^{|\Lambda|})$ denotes the space of finitely supported  functions on  $\mathbb{Z}^{|\Lambda|}$.  
For $f\in c_{00}(\mathbb{Z}^{|\Lambda|}) \subset \ell^2(\mathbb{Z}^{|\Lambda|})$ and $J\in \mathbb{Z}_+^k$, in view of  Fourier inversion,  we define
$$  (H_J^{\Lambda})^{\vee}*f(x):=\int_{\mathbb{T}^{|\Lambda|}} e^{2\pi i x\cdot \xi}H^{\Lambda}_J(\xi)\widehat{f}(\xi)d\xi\ \text{with}\ H^{\Lambda}_J(\xi):=\sum_{|\mathfrak{t}|\sim 2^{J}} \frac{e^{2\pi i \sum_{\mathfrak{m}\in \Lambda}\xi_{\mathfrak{m}} t^{\mathfrak{m}}}}{t_1\cdots t_k}. $$
In Section 4-6, we have shown that  there exists $C>0$ independent of $\xi=(\xi_{\mathfrak{m}})_{\mathfrak{m}\in \Lambda}\in \mathbb{R}^{|\Lambda|}$ such that
$$\sum_{J\in \mathbb{Z}_+^{k}} \left|\sum_{|t|\sim 2^J}\frac{  e^{2\pi i \sum_{\mathfrak{m}\in\Lambda} \xi_{\mathfrak{m}}t^\mathfrak{m}} }{t_1\cdots t_k}\right|\le C,$$
under the condition  (\ref{wep2}).  Consequently, for every \(f\in c_{00}(\mathbb{Z}^{|\Lambda|})\), the dominated convergence theorem gives  $${\bf H}^{\Lambda}f(x)=\sum_{J\in \mathbb{Z}_+^{k}} (H_J^{\Lambda})^{\vee}*f(x).$$ 

Therefore, after usual reduction to the sector \(J\in Z(k)\) in the summation, in order to prove (ii) of Main Theorem~\ref{mt2},
we shall establish the following truncated estimate  that for any $1<p<\infty$ and any $\sigma\in\mathbb{N}$ and any  $f\in c_{00}(\mathbb{Z}^{|\Lambda|})$,  
\begin{align}\label{wep}
	\|\mathbf{H}_{\ge \sigma}^{\Lambda} f\|_{\ell^p(\mathbb{Z}^{|\Lambda|})}
	\leq
	C_{\Lambda,p}
	\|f\|_{\ell^p(\mathbb{Z}^{|\Lambda|})},
\end{align} 
under the assumption (\ref{wep2}).  Here,  for any $\sigma\in\mathbb{N}$, we set \[
\mathbf{H}_{\ge \sigma}^{\Lambda} f
\,:=\,
\sum_{\substack{J\in Z(k); j_k\ge \sigma}}
\bigl[ H_J^{\Lambda} \bigr]^{\vee} * f.
\]
The truncation parameter $\sigma$  is introduced to carry out the induction argument on $k$ under this restriction $J\in Z(k)\subset \mathbb Z_+^k$.

For the base case $k=1$, the estimate \eqref{wep} is known to hold for every $1<p<\infty$ by the result of Ionescu and Wainger~\cite{IW}.
 So, we assume the following hypothesis.
\\

\noindent  {\bf Induction Hypothesis.}\    Let $k_0\in[k-1].$ If  $\Omega \subset \mathbb{Z}_+^{k_0}$ contains an odd subset and 
 every $\mathfrak{m} \in \Omega$ has at most one odd component as in (\ref{1009}),  then for every $p \in (1,\infty)$ and for any $\sigma\in\mathbb{N}$, 
there exists a constant $C_{\Omega,p} > 0$ such that
\begin{align}\label{ind}
	\| {\bf H}_{\ge \sigma}^{\Omega} \|_{\ell^p(\mathbb{Z}^{|\Omega|}) \to \ell^p(\mathbb{Z}^{|\Omega|})}
	\le C_{\Omega,p}.
\end{align}
Here, $C_{\Omega,p}$ may depend only on $\Omega$ and $p$. Hence to prove (ii) of Main Theorem 2 in this section, we shall prove (\ref{wep}) under the assumptions of   (\ref{wep2}) and  (\ref{ind}).

\subsection{Application of Induction Hypothesis with Norm Invariances}
Let $\Omega \subset \mathbb{Z}_+^{k_0}$ be a finite set satisfying condition~\eqref{wep2}. 
For $J_0=(j_1,\ldots,j_{k_0})$, 
$\xi=(\xi_{\mathfrak{m}})_{\mathfrak{m}\in\Omega}\in\mathbb{R}^{|\Omega|}$,
$\mathfrak{t}=(t_1,\ldots,t_{k_0})$, and
$\alpha=(\alpha_{\mathfrak{m}})_{\mathfrak{m}\in\Omega}
\in(\mathbb{Z}\setminus\{0\})^{|\Omega|}$, define
\begin{align*}
	H_{J_0}^{\Omega,\alpha}(\xi)
	:=
	\sum_{|\mathfrak{t}|\sim 2^{J_0}}
	\frac{
		e^{2\pi i\sum_{\mathfrak{m}\in\Omega}
			\xi_{\mathfrak{m}}\alpha_{\mathfrak{m}}\mathfrak{t}^{\mathfrak{m}}}
	}{
		t_1\cdots t_{k_0}
	}.
\end{align*}
The associated convolution operator is given by
\begin{align*}
	[H_{J_0}^{\Omega,\alpha}]^\vee*f(x)
	:=
	\sum_{|\mathfrak{t}|\sim 2^{J_0}}
	\frac{
		f\!\left(
		(x_{\mathfrak{m}}
		-\alpha_{\mathfrak{m}}\mathfrak{t}^{\mathfrak{m}})
		_{\mathfrak{m}\in\Omega}
		\right)
	}{
		t_1\cdots t_{k_0}
	},
	\qquad x\in\mathbb{Z}^{|\Omega|}.
\end{align*}
 For $f\in c_{00}(\mathbb{Z}^{|\Omega|})$ and $\sigma\in\mathbb{N}$, define
\begin{align}\label{42cc}
	\mathbf{H}_{\geq \sigma}^{\Omega,\alpha}f
	:=
	\sum_{\substack{J_0\in Z(k_0)\\ j_{k_0}\geq \sigma}}
	[H_{J_0}^{\Omega,\alpha}]^\vee*f.
\end{align}
When $\alpha_{\mathfrak{m}}=1$ for every $\mathfrak{m}\in\Omega$, we write
$\mathbf{H}_{\geq \sigma}^{\Omega}$ in place of
$\mathbf{H}_{\geq \sigma}^{\Omega,\alpha}$.

We begin with introducing norm invariance under dilation and norm invariance regarding quasi-translation in Lemma \ref{c1} and \ref{lem:shear}. We omit the proof since it is well-known.

 \begin{lemma}\label{c1}
Suppose that for $p \in (1,\infty)$,
\[
\| {\bf H}_{\ge \sigma}^{\Omega} \|_{\ell^p(\mathbb{Z}^{|\Omega|}) \to \ell^p(\mathbb{Z}^{|\Omega|})}
\le C_{\Omega,p}.
\]
Then for any $\alpha=(\alpha_{\mathfrak m})_{\mathfrak m\in\Omega}\in (\mathbb{Z}\setminus\{0\})^{|\Omega|}$, 
\begin{align}\label{ind1}
\| {\bf H}_{\ge \sigma}^{\Omega,\alpha} \|_{\ell^p(\mathbb{Z}^{|\Omega|}) \to \ell^p(\mathbb{Z}^{|\Omega|})}
\le C_{\Omega,p}.
\end{align}
Here, $C_{\Omega,p}$ is the same constant as above.
\end{lemma}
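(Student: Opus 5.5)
The plan is to reduce \eqref{ind1} to the hypothesis on ${\bf H}_{\ge\sigma}^{\Omega}$ by a transference argument along the sublattice $\prod_{\mathfrak m\in\Omega}\alpha_{\mathfrak m}\mathbb{Z}\subset\mathbb{Z}^{|\Omega|}$. First, fix $\alpha$ and decompose $\mathbb{Z}^{|\Omega|}$ into the cosets $r+\prod_{\mathfrak m}\alpha_{\mathfrak m}\mathbb{Z}$, where $r=(r_{\mathfrak m})$ ranges over $\prod_{\mathfrak m}\{0,1,\dots,|\alpha_{\mathfrak m}|-1\}$. The operator ${\bf H}^{\Omega,\alpha}_{\ge\sigma}$ only shifts the coordinate $x_{\mathfrak m}$ by $\alpha_{\mathfrak m}\mathfrak t^{\mathfrak m}$, which lies in $\alpha_{\mathfrak m}\mathbb{Z}$. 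Hence each coset is invariant under every translation appearing in \eqref{42cc}, and the operator acts on each coset separately.

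Next, for a fixed $r$ and $f\in c_{00}(\mathbb{Z}^{|\Omega|})$, set $f_r(y):=f\big((r_{\mathfrak m}+\alpha_{\mathfrak m}y_{\mathfrak m})_{\mathfrak m}\big)$ for $y\in\mathbb{Z}^{|\Omega|}$. This is finitely supported. A direct substitution in the definition of $[H^{\Omega,\alpha}_{J_0}]^\vee*f$ gives
\[
\big({\bf H}^{\Omega,\alpha}_{\ge\sigma}f\big)\big((r_{\mathfrak m}+\alpha_{\mathfrak m}y_{\mathfrak m})_{\mathfrak m}\big)=\big({\bf H}^{\Omega}_{\ge\sigma}f_r\big)(y).
\]
The signs of the $\alpha_{\mathfrak m}$ cause no trouble, because $y_{\mathfrak m}\mapsto r_{\mathfrak m}+\alpha_{\mathfrak m}y_{\mathfrak m}$ is a bijection of $\mathbb{Z}$ onto the coset in either case. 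The sum over $J_0\in Z(k_0)$ with $j_{k_0}\ge\sigma$ is the same on both sides, and convergence is unproblematic: for finitely supported $f$ the identity holds term by term in $J_0$, and summing is justified exactly as in the definition \eqref{42cc}.

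Finally, summing the $p$-th powers over all cosets gives
\[
\|{\bf H}^{\Omega,\alpha}_{\ge\sigma}f\|_p^p=\sum_r\|{\bf H}^{\Omega}_{\ge\sigma}f_r\|_p^p\le C_{\Omega,p}^p\sum_r\|f_r\|_p^p=C_{\Omega,p}^p\|f\|_p^p.
\]
This is \eqref{ind1} with the same constant, and density of $c_{00}$ extends it to all of $\ell^p$. The only step needing care is the bookkeeping in the coset identity, namely checking that the shifts $\alpha_{\mathfrak m}\mathfrak t^{\mathfrak m}$ preserve each coset. Once that is verified, the argument is routine and involves no analytic obstacle.
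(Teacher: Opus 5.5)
Your proof is correct. It is the standard residue-class (sublattice transference) argument that the paper has in mind when it omits the proof as well known: the identity $\bigl({\bf H}^{\Omega,\alpha}_{\ge\sigma}f\bigr)\bigl((r_{\mathfrak m}+\alpha_{\mathfrak m}y_{\mathfrak m})_{\mathfrak m}\bigr)=\bigl({\bf H}^{\Omega}_{\ge\sigma}f_r\bigr)(y)$, combined with summing $p$-th powers over the cosets, gives the bound with exactly the same constant $C_{\Omega,p}$.
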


\begin{lemma}\label{lem:shear} Let $d\ge 1$ and let $1\le s\le d$. Suppose that $ \ell_1+\cdots+\ell_s=d$ and, for each $1\le i\le s$, let $\mathfrak{c}_i = (c_{i,1},\ldots,c_{i,\ell_i}) \in\mathbb{Z}^{\ell_i}$ with $ c_{i,1}=0.$ Set  $ \mathfrak{c} = (\mathfrak{c}_i)_{i=1}^s\in\mathbb{Z}^d,$ and let $M^{\mathfrak{c}}$ be the $d\times d$ block-diagonal matrix whose $i$-th block is the $\ell_i\times\ell_i$ shear matrix \[ M_i^{\mathfrak{c}_i} = \bigl( {\bf e}_1+\mathfrak{c}_i \mid {\bf e}_2 \mid\cdots\mid {\bf e}_{\ell_i} \bigr), \] where $\{{\bf e}_j\}_{j=1}^{\ell_i}$ denotes the standard basis of $\mathbb{R}^{\ell_i}$. Then $M^{\mathfrak{c}}\in GL_d(\mathbb{Z})$. In particular, for every function $f:\mathbb{Z}^d\to\mathbb{C}$ and every $1\le p<\infty$, \[ \sum_{x\in\mathbb{Z}^d} \left|f(M^{\mathfrak{c}}x)\right|^p = \sum_{x\in\mathbb{Z}^d} |f(x)|^p. \] 
\end{lemma}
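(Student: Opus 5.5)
The statement to prove is Lemma~\ref{lem:shear}: the block shear matrix $M^{\mathfrak c}$ lies in $GL_d(\mathbb Z)$, and composing with it preserves $\ell^p$ norms. The plan is to reduce everything to a single block, compute that block's determinant directly, and then use that a lattice automorphism permutes $\mathbb Z^d$.

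First I would treat a single block $M_i^{\mathfrak c_i}$. By construction its first column is ${\bf e}_1+\mathfrak c_i$ and its remaining columns are ${\bf e}_2,\dots,{\bf e}_{\ell_i}$. Since $c_{i,1}=0$, the first column has entry $1$ in position $1$ and integer entries $c_{i,2},\dots,c_{i,\ell_i}$ below it. So $M_i^{\mathfrak c_i}$ is lower unitriangular with integer entries, and $\det M_i^{\mathfrak c_i}=1$. Its inverse can be written down explicitly: it is the shear with first column ${\bf e}_1-\mathfrak c_i$. A direct check works, since $M_i^{\mathfrak c_i}({\bf e}_1-\mathfrak c_i)={\bf e}_1+\mathfrak c_i-\mathfrak c_i={\bf e}_1$; here I use that $\mathfrak c_i$ has zero first coordinate, so $M_i^{\mathfrak c_i}\mathfrak c_i=\mathfrak c_i$. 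This inverse also has integer entries.

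Next I pass to the full matrix. $M^{\mathfrak c}$ is block diagonal, so $\det M^{\mathfrak c}=\prod_i\det M_i^{\mathfrak c_i}=1$, and its inverse is the block-diagonal matrix of the block inverses, which is integral. Hence $M^{\mathfrak c}\in GL_d(\mathbb Z)$. It follows that the map $x\mapsto M^{\mathfrak c}x$ sends $\mathbb Z^d$ into itself, and its inverse $(M^{\mathfrak c})^{-1}$ does the same. So it is a bijection of $\mathbb Z^d$.

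Finally, for the norm identity I reindex the sum by $y=M^{\mathfrak c}x$. Because the map is a bijection of $\mathbb Z^d$, the sum $\sum_x|f(M^{\mathfrak c}x)|^p$ is a rearrangement of $\sum_y|f(y)|^p$. A sum of nonnegative terms does not depend on the order of summation, so the two agree, including the case where both are infinite. There is no real obstacle here; the only point needing care is the hypothesis $c_{i,1}=0$, which is what guarantees unit determinant.
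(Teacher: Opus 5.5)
Your proof is correct and complete. The paper omits the proof of this lemma as well known, and your argument is the standard one it alludes to: each block is lower unitriangular with integer entries (this is exactly where $c_{i,1}=0$ is used); the block inverse is the shear with first column ${\bf e}_1-\mathfrak c_i$, which is integral; and composing with a bijection of $\mathbb Z^d$ only rearranges a sum of nonnegative terms.
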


Using the above two norm invariances together with the induction hypothesis \eqref{ind}, we aim to establish the $\ell^p$ estimate for the sum over $(j_1,\cdots,j_{k-1})$ for each fixed $j_k$. For any $f\in c_{00}(\mathbb{Z}^{|\Lambda|})$, note that
\[
\sum_{(j_1,\cdots,j_{k-1}):\, J\in Z(k)}[H_J^{\Lambda}]^{\vee}*f(x)
=
\sum_{(j_1,\cdots,j_{k-1}):\, J\in Z(k)}
\sum_{|\mathfrak{t}|\sim 2^{J}}
\frac{ f\big((x_{\mathfrak{m}}-\mathfrak{t}^{\mathfrak{m}})_{\mathfrak{m}\in \Lambda} \big)}{t_1\cdots t_k}.
\]
Decompose the above expression as
\begin{align}\label{9nn}
\sum_{|t_k|\sim 2^{j_k}} \frac{1}{t_k}
\left(
\sum_{(j_1,\cdots,j_{k-1}):\, J\in Z(k)}
{\bf H}_{t_k,(j_1,\cdots,j_{k-1})}f(x)
\right),
\end{align}
where for each fixed $|t_k|\sim 2^{j_k}$,
\[
{\bf H}_{t_k,(j_1,\cdots,j_{k-1})}f(x)
:=
\sum_{|(t_1,\cdots,t_{k-1})|\sim 2^{(j_1,\cdots,j_{k-1})}}
\frac{
f\big((x_{\mathfrak{m}}-t_k^{m_k} t_1^{m_1}\cdots t_{k-1}^{m_{k-1}})_{\mathfrak{m}\in \Lambda}\big)
}{t_1\cdots t_{k-1}}.
\]
 The desired estimate will be established in the following lemma.

\begin{lemma}[$k\!-\!1$ Parameter Discrete Hilbert Transforms]\label{ii11}
Suppose that $\Lambda$ satisfies the condition   \eqref{wep2}. For any $|t_k|\sim 2^{j_k}$ and any $f\in c_{00}(\mathbb{Z}^{|\Lambda|})$, define
\begin{align}\label{34m}
[{\bf H}_{t_k}]^{\Lambda}_{\ge \sigma}f(x)
:=
\sum_{\substack{(j_1,\cdots,j_{k-1})\in Z(k-1):\\ j_{k-1}\ge \sigma}}
{\bf H}_{t_k,(j_1,\cdots,j_{k-1})}f(x).
\end{align}
Under the inductive hypothesis \eqref{ind}, for each fixed  $|t_k|\sim 2^{j_k}$, there exists a constant $C_{\Lambda,p}$ such that
\begin{align}\label{dji}
\left\| [{\bf H}_{t_k}]^{\Lambda}_{\ge \sigma}f \right\|_{\ell^p(\mathbb{Z}^{|\Lambda|})}
\le C_{\Lambda,p}\|f\|_{\ell^p(\mathbb{Z}^{|\Lambda|})}.
\end{align}
Consequently, taking $\sigma=j_k$ in (\ref{dji}), we obtain
\begin{align}\label{9nn2}
\left\|   
\sum_{(j_1,\cdots,j_{k-1}):\, J\in Z(k)}
[H_J^{\Lambda}]^{\vee}*f
\right\|_{\ell^p(\mathbb{Z}^{|\Lambda|})}
\le C_{\Lambda,p} \|f\|_{\ell^p(\mathbb{Z}^{|\Lambda|})}.
\end{align}
\end{lemma}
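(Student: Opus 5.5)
The plan is to reduce \eqref{dji}, for each frozen $t_k$, to the induction hypothesis \eqref{ind} for the projected index set, using the two norm invariances Lemma~\ref{c1} and Lemma~\ref{lem:shear}. Fix $|t_k|\sim 2^{j_k}$. Let $\Omega:=\pi_{\le k-1}(\Lambda)\subset\mathbb{Z}_+^{k-1}$ be the projection onto the first $k-1$ coordinates, with fibers $\Lambda(m)=\{m_k:(m,m_k)\in\Lambda\}$. By Remark~\ref{re3.1}, $\Omega$ satisfies \eqref{1009} with $k$ replaced by $k-1$. If $\Omega$ contains no odd subset, then Main Theorem~\ref{mt2}(i) (in $k-1$ parameters) gives that the corresponding operator vanishes, and so does $[{\bf H}_{t_k}]^{\Lambda}_{\ge\sigma}$; the estimate is then trivial. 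Otherwise \eqref{ind} applies to $\Omega$ with $k_0=k-1$.

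\textbf{Step 1 (remove the zero fiber).} Indices $(\mathbf 0,m_k)\in\Lambda$ contribute $x_{(\mathbf 0,m_k)}-t_k^{m_k}$, which is a fixed translation independent of $t_1,\dots,t_{k-1}$. Translation is an isometry of $\ell^p$, so we may drop these coordinates; we may also assume $\mathbf 0\notin\Omega$.

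\textbf{Step 2 (shear the coordinates).} For each $m\in\Omega$, the coordinates $(x_{(m,m_k)})_{m_k\in\Lambda(m)}$ are all translated by the same monomial $\mathfrak t_1^m$ (where $\mathfrak t_1=(t_1,\dots,t_{k-1})$), scaled by $t_k^{m_k}$. Order $\Lambda(m)=\{n_1<\dots<n_r\}$, so the translation vector in this block is $\mathfrak t_1^m(t_k^{n_1},\dots,t_k^{n_r})$. I would dilate the first coordinate of the block by $t_k^{n_1}$ and write the remaining entries as integer multiples $c_i=t_k^{n_i-n_1}$ of it. Then a block shear $M^{\mathfrak c}$ of the form in Lemma~\ref{lem:shear}, with $\mathfrak c=(0,c_2,\dots,c_r)$, maps $g(y)$ with $y=(y_1,y_2,\dots)$ to the function in which only the first coordinate is translated. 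More precisely, set $F(x)=f(M^{\mathfrak c}x)$. Then $f(x-\mathfrak t_1^m t_k^{n_1}(1,c_2,\dots))$ becomes $F(x'-\mathfrak t_1^m t_k^{n_1}\mathbf e_1)$ with $x'=(M^{\mathfrak c})^{-1}x$. Since $M^{\mathfrak c}\in GL(\mathbb{Z})$, this is an $\ell^p$-isometry.

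\textbf{Step 3 (reduce to the dilated operator).} After the shear, only one coordinate per $m\in\Omega$ moves, by $\alpha_m\mathfrak t_1^m$ with $\alpha_m=t_k^{n_1(m)}\in\mathbb{Z}\setminus\{0\}$. The other coordinates are parameters, so by Fubini the operator is ${\bf H}^{\Omega,\alpha}_{\ge\sigma}$ of \eqref{42cc} acting in the $|\Omega|$ moving variables, applied slice by slice. Lemma~\ref{c1} combined with \eqref{ind} then gives a bound $C_{\Omega,p}$ that is uniform in $\alpha$, and hence uniform in $t_k$. This is the crucial point.

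\textbf{Consequence \eqref{9nn2}.} Take $\sigma=j_k$, so that the sum over $(j_1,\dots,j_{k-1})\in Z(k-1)$ with $j_{k-1}\ge j_k$ is exactly the $J\in Z(k)$ sum in \eqref{9nn}. Then use Minkowski over $|t_k|\sim 2^{j_k}$ with weights $1/|t_k|$; there are $\approx 2^{j_k}$ terms of size $2^{-j_k}$, so the total weight is $O(1)$.

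\textbf{Expected obstacle.} The delicate point is Step 2: checking that the shear and dilation bookkeeping really produces a $GL(\mathbb{Z})$ change of variables. Dilation by $t_k^{n_1}$ is not invertible on $\mathbb{Z}$, so I would instead absorb it into $\alpha$ rather than into the change of variables. A second concern is making sure that the bound from Lemma~\ref{c1} does not depend on $\alpha$.
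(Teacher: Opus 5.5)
Your proposal is correct and uses the same ingredients as the paper: the block shear of Lemma~\ref{lem:shear}, the dilation invariance of Lemma~\ref{c1} combined with \eqref{ind}, and the triangle inequality over $|t_k|\sim 2^{j_k}$ with weights $1/|t_k|$ to get \eqref{9nn2}. The difference is only in the order of the two changes of variables. The paper first shears with $\mathfrak c_i=(0,1,\ldots,1)$ for $t_k=1$ and then dilates every coordinate by $\alpha_{\mathfrak m}=t_k^{m_k}$. You instead use the $t_k$-dependent integer shear $c_i=t_k^{n_i-n_1}$ and dilate only the one moving coordinate per block. Your ordering has the minor advantage that Lemma~\ref{c1} is applied exactly as stated (one coordinate per monomial), and it handles the zero fiber explicitly, which the paper leaves implicit.
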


\begin{proof}
We first consider the case $t_k=1$ in (\ref{dji}).
Denote $\underline{\mathfrak{m}}=(m_1,\cdots,m_{k-1})$ for $\mathfrak{m}=(\underline{\mathfrak{m}},m_k)\in \Lambda$. 
Decompose $\Lambda$ as
\[
\Lambda=\bigcup_{i=1}^s\Lambda_i,
\qquad
\Lambda_i=\{(\underline{\mathfrak{m}}^i,m_{k,\nu}^i)\in \Lambda\}_{\nu=1}^{\ell(i)},
\]
so that each $\Lambda_i$ shares the same $(k-1)$-dimensional component $\underline{\mathfrak{m}}^i=(m_1^{i},\cdots,m_{k-1}^{i})$.

Let $M^{\mathfrak{c}}$ be the shear matrix associated with $\mathfrak{c}=(\mathfrak{c}_i)_{i=1}^s$, where $\mathfrak{c}_i=(0,1,\cdots,1)\in \mathbb{Z}^{\ell(i)}$. Then,  writing $\underline{t}=(t_1,\cdots,t_{k-1})$ with $t_k=1$ below  (\ref{9nn}), we have
\begin{align*}
f\big((x_{\mathfrak{m}}- \underline{t}^{\underline{\mathfrak{m}}})_{\mathfrak{m}\in \Lambda}\big)
=
f\!\left(
M^{\mathfrak{c}}\!\left(
[M^{\mathfrak{c}}]^{-1}x
-
\big((\underline{t}^{\underline{\mathfrak{m}}^i},0,\cdots,0)\big)_{i=1}^s
\right)
\right).
\end{align*}
Therefore,  by combining the $\ell^p$ norm invariance   under this transformation, and  the induction hypothesis \eqref{ind}, we obtain \eqref{dji} in the case $t_k=1$.

For general $t_k$, define $\alpha=(\alpha_{\mathfrak{m}})$ by
\[
\alpha_{\mathfrak{m}}=t_k^{m_k}, \qquad \mathfrak{m}=(m_1,\cdots,m_{k-1},m_k).
\]
 Then in \eqref{34m}, we can write  $[{\bf H}_{t_k}]^{\Lambda}_{\ge \sigma}$ as the dilated version of the case $t_k=1$:
\[
[{\bf H}_{t_k}]^{\Lambda}_{\ge \sigma}
=
[{\bf H}_{1}]^{\Lambda,\alpha}_{\ge \sigma}
\]
as those appeared in   (\ref{42cc}).
Hence, the desired estimate \eqref{dji} follows from \eqref{ind1}.
Therefore, applying the triangle inequality in \eqref{9nn}, we bound the left-hand side of \eqref{9nn2} by
\[
\frac{1}{2^{j_k}} \sum_{|t_k|\sim 2^{j_k}}
\left\| [{\bf H}_{t_k}]^{\Lambda}_{\ge j_k}f\right\|_{\ell^p(\mathbb{Z}^{|\Lambda|})}
\le
C_{\Lambda,p}\left\| f\right\|_{\ell^p(\mathbb{Z}^{|\Lambda|})},
\]
which proves \eqref{9nn2}.
\end{proof}

 The proof of the inequality \eqref{wep} proceeds in two steps. We first establish a refined asymptotic approximation (Theorem~\ref{IW3}) and then combine it with the operator decomposition in Lemma~\ref{IW2}. 
The proof of Lemma~\ref{IW2} relies crucially on the Ionescu--Wainger theorem stated in Theorem~\ref{lem82}.

\subsection{Asymptotic Approximation}
Define
\[
\psi_{h{\bf 1}}(\xi):= \prod_{\mathfrak{m}\in \Lambda}
\psi\!\left(  \xi_{\mathfrak{m}}\, 2^{h {\bf 1}\cdot \mathfrak{m}}   \right).
\] In this section, we redefine  	\begin{align*} 
	\ \ \mathbb{Q}^{\Lambda}[2^{\alpha},2^{\beta}]:=  \left\{\frac{a}{q}\in   (\mathbb{Q}\cap \mathbb{T})^{|\Lambda|}: 2^{\alpha}\le q \le 2^{\beta}\ \text{and}\  \text{gcd}(q,a)=1\right\}.
\end{align*}
By exploiting (\ref{bb330}), we further refine the approximation in (\ref{b40}).

\begin{theorem}\label{IW3}  
Suppose that $\Lambda$ satisfies the condition   \eqref{wep2}. Set $s=\delta_{\Lambda}$ where $\delta_{\Lambda}$ is the constant appearing in \eqref{gaus}.
Given any integer $h,\sigma,D \ge 1$,  there exists a constant $C_D>0$ such that
\begin{align}\label{IW5}
\sum_{J\in Z(k):\, j_k\ge \max\{h,\sigma\}} H^{\Lambda}_{J}(\xi)
= \sum_{a/q\in \mathbb{Q}^{\Lambda}[1,h^{D/s}]} S^{\Lambda}\!\left(\frac{a}{q}\right) m_{h}( \xi -a/q)  +E_{h}(\xi),
\end{align}
where
\begin{align*}
\left|E_{h}(\xi)\right| \leq C_{D} h^{-D},
\qquad 
m_{h}( \xi-a/q )
:=  \sum_{J \in Z(k):\, j_k\ge \max\{h,\sigma\}}  
\psi_{h{\bf 1}}\!\left(\frac{\xi-a/q}{2^{h/10}}\right)\mathcal{H}^\Lambda_{J}(\xi-a/q).
\end{align*}
\end{theorem}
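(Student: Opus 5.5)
Starting from the left-hand side of \eqref{IW5}, we will split each dyadic piece by its major-arc cutoff,
\[
H^\Lambda_J(\xi)=H^\Lambda_J(\xi)\Psi^{\Lambda,\mathrm{major}}_J(\xi)+H^\Lambda_J(\xi)\bigl(1-\Psi^{\Lambda,\mathrm{major}}_J(\xi)\bigr).
\]
By \eqref{bb330} of Theorem~\ref{th41}, the sum of the minor parts over $J\in Z(k)$ with $j_k\ge\max\{h,\sigma\}$ is $O(2^{-ch})$, and this is $\le C_Dh^{-D}$. For the major parts we apply part~(1) of Proposition~\ref{lemm29}. It gives $\sum_{a/q\in\mathbb{Q}^\Lambda[1,2^{j_k/10}]}S^\Lambda(a/q)\mathcal{H}^\Lambda_J(\xi-a/q)\psi^\Lambda_J\bigl(\frac{\xi-a/q}{2^{j_k/10}}\bigr)$, with an error that is summable in $(j_1,\dots,j_{k-1})$ and bounded by $2^{-j_k/40}$. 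Summing over $j_k\ge h$ therefore contributes $O(2^{-h/40})$.

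Next we truncate the denominators. Terms with $q>h^{D/s}$ are bounded using \eqref{gaus}, $|S^\Lambda(a/q)|\lesssim q^{-s}\le h^{-D}$. To control the sum over $J$ we use \eqref{po4} in the form $\sum_J|\mathcal{H}^\Lambda_J|\lesssim 1$ uniformly. We handle the number of $a/q$ by disjointness of supports: for fixed $q$, at most one $a$ contributes, and we sum over dyadic blocks of $q$. Spending a slightly smaller power of $q$ (working with $q^{-s}$ summed lacunarily, or replacing $D$ by $D+1$) makes the dyadic sum over $q\ge h^{D/s}$ bounded by $C_Dh^{-D}$. The remaining terms have $q\le h^{D/s}\le 2^{h/10}\le 2^{j_k/10}$ once $h$ is large; small $h$ is absorbed into $C_D$. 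In that range, every $a/q\in\mathbb{Q}^\Lambda[1,h^{D/s}]$ appears for every $J$ with $j_k\ge h$, so we may interchange the sums over $a/q$ and $J$.

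The main step is replacing the $J$-dependent cutoff $\psi^\Lambda_J\bigl(\frac{\xi-a/q}{2^{j_k/10}}\bigr)$ by the $J$-independent cutoff $\psi_{h\mathbf{1}}\bigl(\frac{\xi-a/q}{2^{h/10}}\bigr)$. We write the difference as $\psi^\Lambda_J(\cdot)-\psi_{h\mathbf{1}}(\cdot)$ and consider the two regimes where it is nonzero.
\begin{itemize}
\item Where $\psi_{h\mathbf{1}}\ne 0$ but $\psi^\Lambda_J\ne 1$, we have $|\beta_{\mathfrak m}|2^{J\cdot\mathfrak m}\gtrsim 2^{j_k/10}$ for some $\mathfrak m$, with $\beta=\xi-a/q$. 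Oscillatory-integral decay of $\mathcal{H}^\Lambda_J$ (van der Corput, as in the proof of Lemma~\ref{prop41}) gives $|\mathcal{H}^\Lambda_J(\beta)|\lesssim 2^{-cj_k}$. Geometric averaging with $\sum_J|\mathcal{H}^\Lambda_J|^{1/2}\lesssim1$ from \eqref{po4} then yields a total bound $2^{-ch}$.
\item Where $\psi^\Lambda_J\ne0$ but $\psi_{h\mathbf{1}}\ne1$, we have $|\beta_{\mathfrak m}|2^{h|\mathfrak m|}\gtrsim 2^{h/10}$ for some $\mathfrak m$, while $|\beta_{\mathfrak m}|\le 2^{-J\cdot\mathfrak m+j_k/10}$. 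Since $J\cdot\mathfrak m\ge h|\mathfrak m|$, these are incompatible unless $j_k$ is of order $h$. In that case we again have $|\beta_{\mathfrak m}|2^{J\cdot\mathfrak m}\gtrsim 2^{h/10}$, so oscillatory decay gives $2^{-ch}$ after the same averaging.
\end{itemize}
In both regimes we multiply by $q^{-s}$ and sum over the $a/q$ with $q\le h^{D/s}$, using disjointness of supports again, to get an error $O(2^{-ch})$.

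We expect this cutoff replacement to be the main obstacle. One must check that the van der Corput decay really applies uniformly in the unbalanced sectors. If the second regime does not close cleanly, we would first try using the smaller cutoff $2^{h/10}$ on both sides, and absorb the discrepancy with a further application of \eqref{bb330} together with the estimates already used for the major parts.
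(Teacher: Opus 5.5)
Your proposal is correct and uses the same ingredients as the paper's proof: minor arcs via \eqref{bb330}, the integral approximation of Proposition~\ref{lemm29}, Gauss-sum decay \eqref{gaus} to discard large $q$, and oscillatory decay of $\mathcal{H}^\Lambda_J$ together with \eqref{po4} to trade the $J$-dependent cutoff for $\psi_{h\mathbf 1}$. The only difference is the order: you truncate at $q\le h^{D/s}$, and you apply Proposition~\ref{lemm29} while the cutoffs are still the $J$-dependent ones, which is cleaner than the paper's term-by-term treatment of $E^4_h$, whose cutoff $\psi_{h\mathbf 1}$ is not contained in the $J$-major arcs. In the truncation step, the count you need is one reduced $a/q$ per dyadic block $q\sim Q$, uniformly in $J$: the arcs have radius at most $2^{-9j_k/10}\le Q^{-9}$, while distinct such rationals are at least $(4Q^2)^{-1}$ apart, which gives $\sum_{Q\gtrsim h^{D/s}\ \mathrm{dyadic}}Q^{-s}\lesssim h^{-D}$; ``one $a$ per $q$'' with $D$ replaced by $D+1$ would not suffice, since $s=\delta_\Lambda<1$.
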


\begin{proof}
 In this proof, we only consider the case where $h\gg 1$. We decompose
\[
\sum_{J\in Z(k):\, j_k\ge \max\{h,\sigma\}}  H^{\Lambda}_{J}(\xi)
=E^1_h(\xi)+E^2_h(\xi)+E^3_h(\xi)+E^4_h(\xi),
\]
where
\begin{align*}
E^1_h(\xi)& = \sum_{J:\, j_k\ge \max\{h,\sigma\}} 
H^{\Lambda}_J(\xi)\bigl(1- \Psi^{\Lambda,\mathrm{major}}_{J}(\xi)\bigr),\\
E^2_h(\xi)&=\sum_{J:\, j_k\ge \max\{h,\sigma\}} 
\sum_{a/q\in \mathbb{Q}^{\Lambda}[2^{h/10}, 2^{j_k/10} ] }  
H^{\Lambda}_J(\xi)\prod_{\mathfrak{m}\in \Lambda}
\psi\!\left(\frac{(\xi_{\mathfrak{m}}-a_{\mathfrak{m}}/q)\, 2^{J\cdot \mathfrak{m}}}{2^{j_k/10}}\right),\\
E^3_h(\xi)&=\sum_{J:\, j_k\ge \max\{h,\sigma\}}  
\sum_{a/q\in \mathbb{Q}^{\Lambda}[1,2^{h/10}]}  
H^{\Lambda}_J(\xi) 
\left(\prod_{\mathfrak{m}\in \Lambda}
\psi\!\left(\frac{(\xi_{\mathfrak{m}}-a_{\mathfrak{m}}/q)\, 2^{J\cdot \mathfrak{m}}}{2^{j_k/10}}\right)
-\psi_{h{\bf 1}}\!\left(\frac{\xi-a/q}{2^{h/10}}\right)\right),\\
E^4_h(\xi)&=\sum_{J:\, j_k\ge \max\{h,\sigma\}}    
\sum_{a/q\in \mathbb{Q}^{\Lambda}[1,2^{h/10}]}  
H^{\Lambda}_J(\xi)\,
\psi_{h{\bf 1}}\!\left(\frac{\xi-a/q}{2^{h/10}}\right).
\end{align*}

Due to (\ref{bb330}), there exists $c>0$ such that $E^1_h(\xi)=O(2^{-ch})$. Next, observe that the supports of $E^i_h(\xi)$ for $i=2,3,4$ are contained in the major arc. 
By Proposition~\ref{lemm29}, on each $E^i_h(\xi)$, we have the following approximation
\begin{align*}
H_J^{\Lambda}(\xi)
= S^{\Lambda}(a/q)\, \mathcal{H}^{\Lambda}_J(\xi-a/q)
+ E_J(\xi,a,q),
\end{align*}
where
\[
\sum_{J:\, j_k\ge \max\{h,\sigma\}} 
\sum_{a/q\in \mathbb{Q}^{\Lambda}[1,2^{j_k/10}]}
|E_J(\xi,a,q)|
=O(2^{-ch}).
\] Moreover, for
\(
J
\)
in the summation range
\(
j_k\ge \max\{h,\sigma\},
\)
the condition
\begin{align*}
	\prod_{\mathfrak{n}\in\Lambda}
	\psi\!\left(
	\frac{(\xi_{\mathfrak{n}}-a_{\mathfrak{n}}/q)\,
		2^{J\cdot\mathfrak{n}}}
	{2^{j_k/10}}
	\right)
	-
	\psi_{h\mathbf{1}}\!\left(
	\frac{\xi-a/q}{2^{h/10}}
	\right)
	\neq 0
\end{align*}
implies that, for some
\(
\mathfrak{m}\in\Lambda,
\)
\begin{align*}
	\bigl|2^{J\cdot\mathfrak{m}}
	(\xi_{\mathfrak{m}}-a_{\mathfrak{m}}/q)\bigr|\gtrsim \bigl|2^{h\mathbf{1}\cdot\mathfrak{m}}
	(\xi_{\mathfrak{m}}-a_{\mathfrak{m}}/q)\bigr|
	\gtrsim 2^{h/10}.
\end{align*}
Therefore, we obtain that
\begin{align*}
E^2_h(\xi)&=O(2^{-ch}) 
\qquad \text{since } S^{\Lambda}(a/q)=O(q^{-\delta_{\Lambda}})=O(2^{-ch/10}) \ \text{when} \  q\ge 2^{h/10},\\
E^3_h(\xi)&=O(2^{-ch})
\qquad \text{since } 
|\mathcal{H}_{J}^{\Lambda}(\xi-a/q)|
\lesssim |2^{J\cdot \mathfrak{m}} (\xi_{\mathfrak{m}}-a_{\mathfrak{m}}/q)|^{-c}
\le 2^{-ch}.
\end{align*}

Finally, up to an error of $O(2^{-ch})$, we further decompose
\begin{align*}
E^4_h(\xi)
&=
\sum_{J:\, j_k\ge \max\{h,\sigma\}} 
\sum_{a/q\in \mathbb{Q}^{\Lambda}[1,h^{D/s}]}  
S^{\Lambda}(a/q)\, \mathcal{H}^{\Lambda}_J(\xi-a/q)
\psi_{h{\bf 1}}\!\left(\frac{\xi-a/q}{2^{h/10}}\right)\\
&\quad+
\sum_{J:\, j_k\ge \max\{h,\sigma\}} 
\sum_{a/q\in \mathbb{Q}^{\Lambda}[h^{D/s},2^{h/10}]}  
S^{\Lambda}(a/q)\, \mathcal{H}^{\Lambda}_J(\xi-a/q)
\psi_{h{\bf 1}}\!\left(\frac{\xi-a/q}{2^{h/10}}\right).
\end{align*}
The second term is bounded by $O(h^{-\delta_{\Lambda} D/s})=O(h^{-D})$ by choosing $s=\delta_{\Lambda}$, together with the estimate $S^{\Lambda}(a/q)=O(q^{-\delta_{\Lambda}})$. 
Taking the first term as the main contribution $ 
 \sum_{a/q\in \mathbb{Q}^{\Lambda}[1,h^{D/s}]} S^{\Lambda}\!\left(\frac{a}{q}\right) m_{h}( \xi-a/q )$, we obtain the desired asymptotic formula \eqref{IW5}.
\end{proof}

\subsection{Proof of $\ell^{p}$ Boundedness}
We shall prove (\ref{wep}) under the assumptions of   (\ref{wep2}) and  (\ref{ind}).
To establish the $\ell^p$ boundedness, we shall employ the arguments of \cite{IW}. 
Set $d=|\Lambda|$. Let $m:\mathbb{R}^{d}\to\mathbb{C}$ be a bounded function supported in the cube $[-1/2,1/2]^d$. 
Assume that for every $p\in(1,\infty)$,
\begin{align}\label{IW1}
\left\|(m \cdot \widehat{g})^{\vee}\right\|_{L^{p}(\mathbb{R}^{d})}
\leq B_{p}\|g\|_{L^{p}(\mathbb{R}^{d})}
\end{align}
holds for all Schwartz functions $g:\mathbb{R}^{d}\to\mathbb{C}$.

Let $Y\subset \mathbb{N}$. Define the set of reduced rational vectors with denominators in $Y$ by
\[
\mathbb{Q}^{[d]}(Y)
:=\left\{\frac{a}{q}:\ q\in Y,\ a=(a_1,\dots,a_d)\in\mathbb{Z}^{d},\ \gcd(a,q)=1\right\}.
\]
Given such a set $Y$ and $\eta\in(0,1]$, define
\begin{align}\label{91k}
m_{\eta,Y}(\xi)
:=\sum_{a/q\in \mathbb{Q}^{[d]}(Y)} m\!\left(\frac{\xi-a/q}{\eta}\right).
\end{align}
Then $m_{\eta,Y}$ is $\mathbb{Z}^d$-periodic, i.e.
\[
m_{\eta,Y}(\xi+n)=m_{\eta,Y}(\xi)\quad \text{for all } n\in\mathbb{Z}^d.
\]
Denote $Z_N:=\mathbb{Z}\cap[1,N]$.

\begin{theorem}\label{lem82}
$(\text{Ionescu and Wainger Discrete Multiplier Theory \cite{IW}.})$ 
For every $\delta>0$ and $p\in(1,\infty)$, there exist constants
$A_{\delta}>0$ and $C_{p,\delta}>0$ with the following property.
For every $N\ge A_{\delta}$, there exists a set
$Y_N=Y_{N,\delta}$ of integers satisfying
\begin{align*}
	\mathbb{Z}_N
	\subseteq Y_N
	\subseteq \mathbb{Z}_{e^{N^\delta}}.
\end{align*}
Moreover, for every $\eta\le e^{-N^{2\delta}}$, the operator
$T_N=T_{N,\eta}$ defined by the Fourier multiplier
$m_{\eta,Y_N}$ in~\eqref{91k} extends to a bounded operator on
$\ell^p(\mathbb{Z}^d)$ and satisfies
\begin{align*}
	\|T_N f\|_{\ell^p(\mathbb{Z}^d)}
	\le
	C_{p,\delta}(\ln N)^{2/\delta}
	\|f\|_{\ell^p(\mathbb{Z}^d)}.
\end{align*}
The constant $A_{\delta}$ may depend only on $\delta$ and $d$,
whereas $C_{p,\delta}$ may depend only on $\delta$, $d$, $p$,
and the constant $B_p$ appearing in~\eqref{IW1}.
\end{theorem}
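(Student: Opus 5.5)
The plan is to follow the scheme of Ionescu and Wainger \cite{IW}, with the Magyar--Stein--Wainger sampling principle as the basic transference tool. The proof has three stages: construct $Y_N$ by hand, prove the estimate for a single ``periodic'' family of rationals, and then reduce the multiplier $m_{\eta,Y_N}$ to a bounded number (up to $(\ln N)^{2/\delta}$) of such families.

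\textbf{Construction of $Y_N$.} Let $P_N$ be the set of primes $\le N$. Fix an integer $D\approx 1/\delta$. I would take $Y_N$ to be the set of integers of the form $q=\prod_{p}p^{\alpha_p}$ subject to two conditions:
\begin{itemize}
\item each prime-power factor satisfies $p^{\alpha_p}\le N$, or belongs to a controlled ``large'' part built from at most $D$ factors;
\item the number of distinct large factors is at most $D$.
\end{itemize}
Every $q\le N$ is admissible, so $\mathbb Z_N\subseteq Y_N$. Counting factors gives $q\le N^{O(N/\ln N)}\cdot N^{D}\le e^{N^{\delta}}$ once the small-prime part is bundled correctly. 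Concretely, I would group small primes into blocks whose products are $\le e^{N^{\delta/2}}$, and put $Q_N=\mathrm{lcm}$ of the block products. The key bookkeeping property is that every $q\in Y_N$ factors uniquely as $q=q_0q_1\cdots q_r$ with $r\le D$. Here $q_0\mid Q_N$, and the $q_i$ ($i\ge1$) are pairwise coprime prime powers from a family $\mathcal P$.

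\textbf{Single periodic family.} Fix $Q\le e^{N^{\delta}}$ and consider
$$\sum_{a\in\mathbb Z^d/Q}m\Big(\frac{\xi-a/Q}{\eta}\Big),$$
taken over \emph{all} residues. When $\eta\le Q^{-2}$ (in particular for $\eta\le e^{-N^{2\delta}}$), the sampling principle of Magyar--Stein--Wainger transfers \eqref{IW1} to an $\ell^p(\mathbb Z^d)$ bound $\lesssim B_p$, uniformly in $Q$. The hypothesis $\eta\le e^{-N^{2\delta}}$ is used exactly here, to keep the translates $a/Q+\eta\,\mathrm{supp}\,m$ disjoint and well separated.

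\textbf{Reduction of $m_{\eta,Y_N}$ to periodic families.} Using the Chinese remainder theorem, $a/q=\sum_i a_i/q_i \pmod 1$ uniquely. The multiplier restricted to a fixed ``shape'' (a fixed set of $r$ large factors) can then be written as a composition or product of:
\begin{itemize}
\item a full periodic multiplier at level $Q_N$ (controlled by the single-family step), and
\item projection-type multipliers that remove the non-reduced fractions.
\end{itemize}
Removing non-reduced fractions is done by M\"obius inversion over divisors. For the small part, inversion is confined to divisors of $Q_N$ and is harmless because the inclusion--exclusion telescopes into differences of full periodic multipliers.

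\textbf{Main obstacle: the large factors.} The difficult step is controlling the sum over the large factors $q_1,\dots,q_r$ ranging over $\mathcal P$. A naive triangle inequality loses $|\mathcal P|^r$, which is far too much. Instead I would run the Ionescu--Wainger induction on $r$. Split $\mathcal P$ dyadically by size into $O(\ln N)$ classes. Within each class, the supports of the pieces are disjoint and separated at scale $\eta$, so one can use a vector-valued (Marcinkiewicz--Zygmund / square function) extension of the single-family sampling bound. Combining this with a Rademacher-type orthogonality argument and interpolation between $\ell^2$ (where disjointness gives the bound for free) and $\ell^{p_0}$ for $p_0$ near $1$ or $\infty$ then handles one level of large factors at the cost of a factor $O(\ln N)$. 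Iterating over the at most $D\approx 1/\delta$ levels, and applying the argument on both the $p<2$ and $p>2$ sides, yields the total loss $C_{p,\delta}(\ln N)^{2/\delta}$.

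I expect this induction to be the delicate part, in particular keeping the constants independent of $N$ apart from the logarithmic factor. I would first try to reproduce it verbatim from \cite{IW}, with the dimension $d$ entering only through $A_\delta$ and $C_{p,\delta}$.
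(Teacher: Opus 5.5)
The paper gives no proof of this statement. It is quoted from \cite{IW} and used as a black box in the proof of Lemma~\ref{IW2}, so the citation is the paper's entire argument. Read as a reconstruction of \cite{IW}, your outline has two concrete gaps.

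First, the set $Y_N$ you describe does not satisfy $Y_N\subseteq\mathbb{Z}_{e^{N^\delta}}$. Your $Q_N$ is the lcm of the block products over all primes $\le N$, so $Q_N\ge\prod_{p\le N}p=e^{(1+o(1))N}$, and $q_0$ ranges over its divisors. The count $N^{O(N/\ln N)}N^{D}\le e^{N^\delta}$ is false, since $N^{N/\ln N}=e^{N}$. The split into small and large primes must be made at a $\delta$-dependent threshold. For instance, take $Q_N=\prod_{p\le N^{\delta/2}}p^{\lfloor \log N/\log p\rfloor}$, which satisfies $Q_N\le N^{\pi(N^{\delta/2})}\le e^{N^{\delta}/2}$ for $N$ large. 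Then every $q\le N$ is a divisor of $Q_N$ times a product of fewer than $2/\delta$ primes (counted with multiplicity) exceeding $N^{\delta/2}$. It is this threshold that bounds the number of large factors by $O(1/\delta)$. Also, with $q_0$ ranging over all divisors of $Q_N$, the reduced fractions with denominator dividing $Q_N$ already form the full group $Q_N^{-1}\mathbb{Z}^d/\mathbb{Z}^d$, so no M\"obius inversion is needed there. Inversion over the $2^{\omega(Q_N)}$ squarefree divisors of $Q_N$ would not be harmless.

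Second, the step handling the large factors is not an argument, and the mechanism you name cannot produce a logarithmic loss. The Magyar--Stein--Wainger sampling principle, scalar or vector-valued, requires all frequencies to lie in one lattice $Q^{-1}\mathbb{Z}^d$ with $Q\lesssim\eta^{-1}$. The primes in a dyadic class $(2^s,2^{s+1}]$ have product $e^{(1+o(1))2^s}$. For $2^s$ near $N$ this is far beyond $e^{N^{2\delta}}$, the only lower bound you have for $\eta^{-1}$. So there is nothing periodic to sample within a class, and a square-function estimate over pieces with different large denominators is of the same depth as the theorem itself. Disjointness of supports gives only the $\ell^2$ bound. Interpolating it against a trivial $\ell^{p_0}$ bound (the number of summands, a power of $N$) loses $N^{c(p)}$ at fixed $p\neq 2$, not $O(\ln N)$.

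The missing ingredient is arithmetic. By the Chinese remainder theorem, fractions whose large prime factors come from disjoint sets of primes add independently: the sum determines the summands. Moreover, sums of boundedly many fractions from $Y_N$ have denominators $\le e^{O(N^\delta)}$, hence are separated by $e^{-O(N^\delta)}\gg\eta$. The extra room in the hypothesis $\eta\le e^{-N^{2\delta}}$ is what makes such separation arguments available; your single-family step needs only $\eta\lesssim e^{-N^\delta}$. The argument of \cite{IW} (and, more transparently, the adelic formulation in \cite{Tao}) is built on this decoupling across disjoint sets of large primes. Since your sketch defers precisely this step to \cite{IW}, it establishes nothing beyond the citation the paper already makes.
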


\begin{remark}
Theorem~\ref{lem82} was later extended in a more general setting in \cite{KMPW} and \cite{Tao}.
For our purposes, however, the original formulation in \cite{IW} suffices.
\end{remark}

By the argument on p.~380 of \cite{IW}, once Lemma~\ref{IW2} below is established, it immediately implies \eqref{wep} for every $1<p<\infty$.  Therefore, it remains only to prove Lemma~\ref{IW2}.

\begin{lemma}\label{IW2}
Suppose that $\Lambda$ satisfies   the  condition (\ref{wep2}). Set $s=\delta_{\Lambda}$.		Given   $\epsilon \in(0,1]$ and $\lambda \in(0, \infty)$, we can choose the linear operators $A_{\lambda, \epsilon}$ and $B_{\lambda, \epsilon}$ such that ${\bf H}_{\ge\sigma}^{\Lambda} =A_{\lambda, \epsilon} +B_{\lambda, \epsilon} $ satisfying that for some constants $C_{\Lambda,\epsilon}$ and $C_{\Lambda,r,\epsilon}>0$,
\begin{align*}
	\left\|A_{\lambda, \epsilon}\right\|_{\ell^{2}(\mathbb{Z}^d) \rightarrow \ell^{2}(\mathbb{Z}^d) } \leq \frac{C_{\Lambda,\epsilon}}{\lambda}\ \text{and}\ 
	\left\|B_{\lambda, \epsilon}\right\|_{\ell^{r}(\mathbb{Z}^d)  \rightarrow \ell^{r}(\mathbb{Z}^d) } \leq C_{\Lambda,r,\epsilon} \lambda^{\epsilon} 
\end{align*}
for any $r \in[2, \infty)$. 
\end{lemma}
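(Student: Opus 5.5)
We follow the Ionescu--Wainger scheme with the dyadic scale $h$ tied to $\lambda$. Given $\lambda$, pick $h\approx \lambda^{\epsilon'}$-type parameter: set $N:=\lambda^{\epsilon/C}$ (or so), $\delta$ small depending on $\epsilon$, and $h$ with $2^{h/10}\approx e^{N^{2\delta}}$ so that the cutoff scale $\eta=2^{-h\cdot(\text{degree})}2^{h/10}$ satisfies $\eta\le e^{-N^{2\delta}}$. We split
\[
{\bf H}^{\Lambda}_{\ge\sigma}=\sum_{J\in Z(k),\,j_k\ge\sigma,\,j_k< h}[H_J^\Lambda]^\vee*\,\cdot\;+\;\sum_{J\in Z(k),\,j_k\ge \max\{h,\sigma\}}[H_J^\Lambda]^\vee*\,\cdot .
\]
The first piece, a sum over at most $h$ values of $j_k$, is bounded on every $\ell^r$ by Lemma~\ref{ii11} (estimate \eqref{9nn2}) applied for each fixed $j_k$, giving norm $\lesssim h\lesssim \lambda^{\epsilon}$ once $h$ is polylogarithmic or a small power of $\lambda$; put it in $B_{\lambda,\epsilon}$. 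For the second piece apply Theorem~\ref{IW3} with $D$ large: the error $E_h$ has multiplier norm $\le C_Dh^{-D}$, so on $\ell^2$ it is $\le \lambda^{-1}$ provided $h\ge \lambda^{1/D}$; put it in $A_{\lambda,\epsilon}$.

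The main term $\sum_{a/q\in\mathbb{Q}^\Lambda[1,h^{D/s}]}S^\Lambda(a/q)m_h(\xi-a/q)$ is further split by denominators. Take $N:=h^{D/s}$ and let $Y_N\supseteq\mathbb{Z}_N$ be the Ionescu--Wainger set from Theorem~\ref{lem82}. Write $S^\Lambda(a/q)$ and decompose the denominators dyadically, $q\sim 2^u$, $2^u\le N$; for the part with $q\ge \lambda^{\kappa}$ (suitable $\kappa$), the Gauss sum bound \eqref{gaus} gives $\ell^2$ norm $\lesssim \lambda^{-\kappa\delta_\Lambda}$ using disjointness of supports of $\psi_{h\mathbf 1}((\xi-a/q)/2^{h/10})$ and $\sup_\xi|m_h|\lesssim 1$ by Lemma~\ref{po3}; choosing $\kappa=1/\delta_\Lambda$ places it in $A$. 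For the low-denominator part $q\le\lambda^{1/\delta_\Lambda}$, one uses the standard IW device: write $S^\Lambda(a/q)$ as $q^{-k}\sum_{\ell\in[q]^k}e(a\cdot\ell^{\mathfrak m}/q)$, so the multiplier is a composition of translations (bounded on $\ell^r$ uniformly) with the periodized multiplier $m_{\eta,Y}$ built from $m(\xi)=\sum_{J}\psi_{h\mathbf 1}(\xi/2^{h/10})\mathcal H^\Lambda_J(\xi)$ rescaled; the hypothesis \eqref{IW1} holds for this $m$ since it is a smooth cutoff times a partial sum of continuous multiple Hilbert multipliers, bounded on $L^p(\mathbb R^d)$ by \eqref{poo4} of Lemma~\ref{po3} together with the anisotropic scaling invariance. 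Theorem~\ref{lem82} then yields $\ell^r$ norm $\lesssim (\ln N)^{2/\delta}\cdot(\text{number of } q\text{-pieces})\lesssim \lambda^{\epsilon}$, placed in $B$.

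The main obstacle is bookkeeping of the parameters: the IW theorem needs $\eta\le e^{-N^{2\delta}}$ with $N=h^{D/s}$, while the anisotropic scale of the cutoff is $2^{-h|\mathfrak m|}2^{h/10}$; one must rescale coordinates ($\xi_{\mathfrak m}\mapsto 2^{h|\mathfrak m|}\xi_{\mathfrak m}$, isotropizing to the cube) and check that $2^{-h}\le e^{-h^{2\delta D/s}}$, which forces $\delta<s/(2D)$, and that the resulting $(\ln N)^{2/\delta}\approx(\log h)^{C}$ together with $h\approx\lambda^{1/D}$ still gives $\lambda^\epsilon$. Also the Gauss-sum factor is not separable from $m$ by a single translation, so I would expand $S^\Lambda(a/q)$ via the $\ell$-sum as in \cite{IW} and control it uniformly, which is where the main care is needed.
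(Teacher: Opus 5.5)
Your outer structure agrees with the paper. The range $\sigma\le j_k<\max\{h,\sigma\}$ goes into $B_{\lambda,\epsilon}$ with norm $\lesssim h\le\lambda^{\epsilon}$ via \eqref{9nn2}, and the error $E_h$ of Theorem~\ref{IW3} goes into $A_{\lambda,\epsilon}$. Your final parameter relations ($h\approx\lambda^{1/D}$ with $D\ge 1/\epsilon$, $N=h^{D/s}$, $\delta<s/(2D)$) are compatible with the paper's choices $h=[\lambda^{\epsilon}]$, $D=3/(2\epsilon)$, $\delta=s/(4D)$. Your opening choice $N=\lambda^{\epsilon/C}$, $2^{h/10}\approx e^{N^{2\delta}}$ contradicts them, but that is cosmetic. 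The treatment of the main term $\sum_{a/q\in\mathbb{Q}^{\Lambda}[1,N]}S^{\Lambda}(a/q)\,m_h(\xi-a/q)$, however, has two genuine gaps.

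\textbf{Use of Theorem~\ref{lem82}.} The theorem controls only the periodized multiplier summed over \emph{all} $a/q\in\mathbb{Q}^{[d]}(Y_N)$. It gives nothing for the subsum over $q\le\lambda^{1/\delta_\Lambda}$, nor for a dyadic block $q\sim 2^u$. So the bound ``$(\ln N)^{2/\delta}\cdot(\text{number of }q\text{-pieces})$'' is not available: its conclusion depends on the special structure of $Y_N$, which is why $Z_N$ is enlarged to $Y_N$ in the first place.

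Moreover, with your choices $N=h^{D/s}\approx\lambda^{1/\delta_\Lambda}$. Hence the ``high-denominator part'' $\lambda^{1/\delta_\Lambda}\le q\le N$ that you send to $A$ is essentially empty. The denominators that genuinely must be controlled, $q\in Y_N\setminus Z_N$ (i.e.\ $N<q\le e^{N^{\delta}}$), never enter your argument.

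The missing step is to write the sum over $Z_N$ as the sum over $Y_N$ minus the sum over $Y_N\setminus Z_N$, and to place the latter in $A$. This works for three reasons:
\begin{enumerate}
\item For $q>N$, \eqref{gaus} gives $|S^{\Lambda}(a/q)|\lesssim N^{-s}\approx h^{-D}\lesssim\lambda^{-1}$.
\item The supports of $\psi_{h{\bf 1}}((\xi-a/q)/2^{h/10})$ have radius $\lesssim 2^{-9h/10}$ and are pairwise disjoint for $q\le e^{N^{\delta}}$, because $e^{-2N^{\delta}}\gg 2^{-9h/10}$.
\item $\sup|m_h|\lesssim 1$ by \eqref{po4}.
\end{enumerate}
Plancherel then gives the $\ell^2$ bound. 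No dyadic decomposition in $q$ is needed.

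\textbf{The Gauss-sum factor is not actually separated.} Expanding $S^{\Lambda}(a/q)=q^{-k}\sum_{\ell\in[q]^k}e^{2\pi i\sum_{\mathfrak m}a_{\mathfrak m}\ell^{\mathfrak m}/q}$ does not exhibit the multiplier as (average of translations)$\circ$(periodized multiplier). These phases are constants attached to each arc, not translation symbols. The averaging length $q$ also varies from arc to arc, so no single operator factors out.

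The paper instead replaces $S^{\Lambda}(a/q)$ by one function of $\xi$,
$v_h(\xi)=[2^{h/2}]^{-k}\sum_{t\in\{1,\dots,[2^{h/2}]\}^k}e^{2\pi i\sum_{\mathfrak m}\xi_{\mathfrak m}t^{\mathfrak m}}$,
which is an average of translations with norm $\le 1$ on every $\ell^r$. The main term then splits as follows.
\begin{itemize}
\item The piece $v_h(\xi)\sum_{a/q\in\mathbb{Q}^{[d]}(Y_N)}m_h(\xi-a/q)$ is $V_h\circ U_N$. It is bounded by Theorem~\ref{lem82} with $\eta=2^{-4h/5}\le e^{-N^{2\delta}}$ and goes to $B$.
\item The remainder $\sum_{a/q\in\mathbb{Q}^{[d]}(Y_N)}(S^{\Lambda}(a/q)-v_h(\xi))\,m_h(\xi-a/q)$ goes to $A$. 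On the support, $|\xi_{\mathfrak m}-a_{\mathfrak m}/q|\,|t^{\mathfrak m}|\lesssim 2^{-2h/5}$ for $t\le 2^{h/2}$. Also $q\le e^{N^{\delta}}\ll 2^{h/2}$ (with $\delta=s/(4D)$ one has $N^{\delta}\sim h^{1/4}$). Tiling $\{1,\dots,[2^{h/2}]\}^k$ by boxes of side $q$ therefore gives $v_h(\xi)-S^{\Lambda}(a/q)=O(2^{-h/10})$.
\end{itemize}
This uniform averaging window---long compared with every $q\in Y_N$, short compared with the inverse arc width---is the idea your last paragraph points to but does not supply.
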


\begin{proof}[Proof of Lemma \ref{IW2}]
	Fix a sufficiently large constant $C>0$. Suppose first that
	$0<\lambda<C$. In this case, we simply set 
	$A_{\lambda,\epsilon}:={\bf H}_{\ge \sigma}^{\Lambda},
	\qquad
	B_{\lambda,\epsilon}:={\bf 0}.$

We now assume that $\lambda\ge C$. We shall construct the decomposition
	\[
	A_{\lambda,\epsilon}
	=
	A_{\lambda,\epsilon}^{1}
	+
	A_{\lambda,\epsilon}^{2}
	+
	A_{\lambda,\epsilon}^{3},
	\qquad
	B_{\lambda,\epsilon}
	=
	B_{\lambda,\epsilon}^{1}
	+
	B_{\lambda,\epsilon}^{2},
	\]
	proceeding in the order
	\(A_{\lambda,\epsilon}^{1}\),
\(B_{\lambda,\epsilon}^{1}\),
	\(A_{\lambda,\epsilon}^{2}\),
	\(B_{\lambda,\epsilon}^{2}\),
	and finally
	\(A_{\lambda,\epsilon}^{3}\).
	Let $h$ be the largest integer less than or equal to $\lambda^{\epsilon}$, i.e.,
\[
h := [\lambda^{\epsilon}]\ \text{and}\  N:=[h^{D/s}]\ \text{where $D=3/(2\epsilon)$}
\]
where  $[c]$ denotes the greatest integer less than or equal to $c$. This notation is used only here and should not be confused with the earlier set notation $[c]=\{1,\dots,c\}$.  	Choose $$\delta=\frac{s}{4D}\ \text{so that}\ N^{2\delta}\sim (h^{\frac{D}{s}})^{2\delta}=h^{1/2},$$  
which implies \begin{align}\label{hh}
  2^{-h/10}\ll e^{-N^{2 \delta}} \text{since $h\gg1$}.
\end{align}

By (\ref{IW5}), the multiplier $m(\xi)$ of the operator $ {\bf H}_{\ge\sigma}^{\Lambda}$ is
\begin{align*} 
\sum_{J\in Z(k):\, j_k\ge  \sigma} H^{\Lambda}_{J}(\xi)&=\sum_{J\in Z(k):\, \sigma\le j_k< \max\{h,\sigma\} } H^{\Lambda}_{J}(\xi)+\sum_{J\in Z(k):\, j_k\ge \max\{h,\sigma\}} H^{\Lambda}_{J}(\xi)\\
&= \sum_{J\in Z(k):\, \sigma\le j_k< \max\{h,\sigma\} } H^{\Lambda}_{J}(\xi)+\sum_{a/q\in \mathbb{Q}^{\Lambda}[1,N]} S^{\Lambda}\!\left(\frac{a}{q}\right) m_{h}( \xi -a/q)  +E_{h}(\xi).
\end{align*}
 Choose  the operator $A_{\lambda, \epsilon}^{1}$ whose multiplier is $$m_{A^1_{\lambda,\epsilon}}(\xi):=E_{h}(\xi)\ \text{satisfying}\ \left\|A_{\lambda, \epsilon}^{1}\right\|_{\ell^{2} \rightarrow \ell^{2}}\lesssim  \frac{1}{h^{D}}\lesssim\lambda^{-1} \  \text{from Theorem \ref{IW3}}.$$
Choose
	$B_{\lambda, \epsilon}^{1}$ as an operator whose multiplier is
	$$m_{B_{\lambda,\epsilon}^1}(\xi):=\sum_{J\in Z(k):\, \sigma\le j_k< \max\{h,\sigma\} }  H^{\Lambda}_{J}(\xi) \ \text{where
		$ B_{\lambda,\epsilon}^1f=\sum_{J\in Z(k):\, \sigma\le j_k< \max\{h,\sigma\} }  [H^{\Lambda}_{J}]^{\vee}*f$.}
	$$  
Fix $ \sigma\le j_k< \max\{h,\sigma\}  $. By   
applying (\ref{9nn2}) in Lemma \ref{ii11}, we have $$ \left\|\sum_{(j_1,\cdots,j_{k-1}): (j_1,\cdots,j_{k-1},j_k)\in Z(k)} [H^{\Lambda}_J]^{\vee}*f \right\|_{\ell^r}\lesssim \|f\|_{\ell^r}.$$
 By this with the  triangular inequality,  one can obtain that $$\left\|B_{\lambda, \epsilon}^{1}\right\|_{\ell^{r} \rightarrow \ell^{r}} \lesssim h\le\lambda^{\epsilon}.$$
	Hence, there remains to control
	\begin{align}\label{aa}
		\sum_{a/q\in \mathbb{Q}^{[d]}(Z_N)} S^{\Lambda} (a/ q) m_{h}( \xi -a/q),
	\end{align}
	where  $m_h$ is defined below (\ref{IW5}).
	
	  Consider the set $Y_{N}=Y_{N, \delta}$ in Theorem \ref{lem82}.
Then, we split (\ref{aa}) as
	\begin{align}\label{aa1}
		 \sum_{a/q \in \mathbb{Q}^{[d]}(Y_N)}S^{\Lambda} (a/ q) m_{h}( \xi -a/q)-\sum_{a/q \in \mathbb{Q}^{[d]}(Y_N\setminus Z_N)}  S^{\Lambda} (a/ q) m_{h}( \xi -a/q)
	\end{align}
	To treat   the second term,
	define the operator $A_{\lambda, \epsilon}^{2}$ whose  multiplier is
	\begin{align*}
		m_{A_{\lambda, \epsilon}^{2}}(\xi)=	-\sum_{a/q \in \mathbb{Q}^{[d]}(Y_N\setminus Z_N)} S^{\Lambda}(a/ q) m_{h}   (\xi-a/q).
	\end{align*}
	Then from $q\ge N\gtrsim h^{\frac{D}{s}}$, we have  $|S^{\Lambda}(a / q)|=O(q^{-s})=O(q^{-s/3}h^{-2D/3})=O(q^{-s/3}\lambda^{-1})$ since $[\lambda^{\epsilon}]=h$ and $  D=3/(2\epsilon)$. 
	Combining this with \eqref{po4} in Lemma~\ref{po3} and the disjointness of the  arcs indexed by
	$a/q\in\mathbb{Q}^{[d]}(Y_N)$, which follows from \eqref{hh} and the support properties of the cutoff functions
$\psi_{h{\bf 1}}\!\left(\frac{\xi-a/q}{2^{h/10}}\right),$
	we obtain
	\[
	\left\|A_{\lambda,\epsilon}^{2}\right\|_{\ell^{2}\to\ell^{2}}
	\lesssim
	\lambda^{-1}.
	\]

	Next, to treat the first term of (\ref{aa1}), take
	$$v_{h}(\xi):=\frac{1}{[2^{h/2}]^k} \sum_{t\in \{1,\cdots,[2^{h/2}]\}^k} e^{2 \pi i \sum_{\mathfrak{m}\in\Lambda} \xi_{\mathfrak{m}} \mathfrak{t}^{\mathfrak{m}}},$$ and split it as the sum of the following two  multipliers:
	\begin{align}
		&m_{B_{\lambda, \epsilon}^{2} }(\xi):=\sum_{a/q \in \mathbb{Q}^{[d]}(Y_N)}  v_{h}(\xi)m_{h} (\xi -a/q), \label{IW11}
		\\
		&m_{A_{\lambda, \epsilon}^{3} }(\xi):=\sum_{a/q \in \mathbb{Q}^{[d]}(Y_N)}( S^{\Lambda}(a / q) -v_h(\xi))m_{h}(\xi-a/q).\label{pol2}
	\end{align}
	\begin{proof}[Proof of $\ell^r$ boundedness for $B_{\lambda, \epsilon}^{2}$]
		We can write   $B_{\lambda, \epsilon}^{2}=U_{N} \circ V_{h}$ in (\ref{IW11}) where the operator $V_h$ has the bounded multiplier $v_h(\xi)$  such that $\left\|V_{h}\right\|_{\ell^{r} \rightarrow \ell^{r}} \lesssim_{r} 1$ and
		the operator $U_{N}$ has  the multiplier:
		\begin{align*}
			m_{U_N}(\xi):=\sum_{a/q \in \mathbb{Q}^{[d]}(Y_N)}m_{h}  (\xi-a/q).
		\end{align*}
		Then, we can express
		\begin{align*}
			m_{U_N}(\xi)= \sum_{a/q \in \mathbb{Q}^{[d]}(Y_N)} m \left(\frac{\xi-a/q}{\eta}\right),
		\end{align*}
		where  $\eta=2^{-(4/5)h}$ and $m$ is given by
		\begin{align*}
			\sum_{J\in Z(k); j_k\ge \max\{h,\sigma\}} \psi_{h{\bf 1}}\left(\frac{2^{-(4/5)h}\xi}{2^{h/10}}\right)\mathcal{H}^{\Lambda}_{J}(2^{-(4/5)h}\xi ).
		\end{align*}
Since Lemma~\ref{po3} yields the $L^p$-boundedness of the operator associated with the Fourier multiplier $m$, and since $\eta=2^{-4h/5}\leq e^{-N^{2\delta}}$ by \eqref{hh},  we can apply Theorem \ref{lem82} to the multiplier $m_{U_N}(\xi)$. Therefore, we  obtain
		\begin{align}\label{IW10}
			\left\|U_{N}\right\|_{\ell^{r} \rightarrow \ell^{r}}\lesssim_{r,\epsilon} (\ln  N)^{2/\delta}\lesssim_{r,\epsilon}(\ln \lambda)^{2 / \delta} .
		\end{align}
		Due to (\ref{IW10}) and $\left\|V_{h}\right\|_{\ell^{r} \rightarrow \ell^{r}} \lesssim_{r} 1$, we have
		$ 
		\ \|B_{\lambda, \epsilon}^{2} \|_{\ell^{r} \rightarrow \ell^{r}} \lesssim_{r,\epsilon}  \lambda^{\epsilon}.
		$ 
	\end{proof}
	
	\begin{proof}[Proof of $\ell^2$ boundedness for $A_{\lambda, \epsilon}^{3}$]
	For $\xi$ and $q$ satisfying the conditions in \eqref{pol2},  observe that \begin{itemize} \item For every $\mathbf{m}\in\Lambda$ and every
		$\mathbf{t}=(t_1,\ldots,t_k)\in\{1,\ldots,2^{h/2}\}^k$
		appearing in the exponential factor
		\[
		e^{2\pi i\sum_{\mathbf{m}\in\Lambda}
			\xi_{\mathbf{m}}\mathbf{t}^{\mathbf{m}}}
		\]
		defining $v_h(\xi)$, one has
		\[
		\left|
		\xi_{\mathbf{m}}-\frac{a_{\mathbf{m}}}{q}
		\right|
		\left|\mathbf{t}^{\mathbf{m}}\right|
		\leq
		C\,2^{-h\mathbf{1}\cdot\mathbf{m}}
		2^{h/10}
		2^{(h/2)\mathbf{1}\cdot\mathbf{m}}
		\lesssim
		2^{-2h/5}.
		\], 
		 \item  $ 1\leq q\leq e^{N^\delta}\ll e^{h^{1/2}} $  since $q\in Y_N\subset Z_{e^{N^\delta}}.$ \end{itemize}	
		Using the above size   after dividing $\{1,\cdots,2^{h/2}\}^k$ into smaller $q\times\cdots\times q$ sized boxes,  \begin{align*} 
			v_{h}(\xi)-S^{\Lambda}(a / q)=O(2^{-h/10})\ \text{leading} \  \left\|A_{\lambda, \epsilon}^{3}\right\|_{\ell^{2} \rightarrow \ell^{2}} \lesssim_{\epsilon}1/\lambda 
		\end{align*}  
in (\ref{pol2})
		from Plancherel's theorem. 
	\end{proof}
	Therefore, we proved that
	$$
	\ \sum_{i=1}^3\|A_{\lambda, \epsilon}^{i}\|_{\ell^2\rightarrow \ell^2}\le  C_{\epsilon} / \lambda\ \text{and}\ \sum_{i=1}^2\| B_{\lambda, \epsilon}^{i} \|_{\ell^r\rightarrow \ell^r}\le   C_{r,\epsilon}\lambda^{\epsilon}. $$
	So, we  finish the proof of Lemma \ref{IW2}.
\end{proof}  

This completes the proof of \eqref{wep} under the assumptions in \eqref{wep2}, and hence establishes (ii) of Main Theorem~\ref{mt2}.

\section{Proofs of Preliminary Lemmas}\label{PPL}
\subsection{Multiple Hilbert Transform}\label{MHT}

\begin{proof}[Proof of Lemma \ref{po3}]
	Let $\Omega'\subset \Omega$ and $J\in \mathbb{Z}_+^k$.  We set the operator $ \mathcal{H}^{\Omega'}_J$ by defining $\widehat{\mathcal{H}^{\Omega'}_J(f)}(\xi_{\Omega})=\mathcal{H}^{\Omega'}_{J}(\xi_{\Omega}) \widehat{f}(\xi_{\Omega})$ where $\xi_{\Omega}\in \mathbb{R}^{|\Omega|}$.
	Let $\{ {\bf e}_{\mathfrak{n}}\}_{\mathfrak{n}\in \Omega}$ be standard basis in $\mathbb{R}^{|\Omega|}$. Then for each $\mathfrak{n}\in \Omega'$, let
	$$R^{\mathfrak{n}} \mathcal{H}^{\Omega'}_J = \mathcal{H}_J^{\Omega'\setminus \{\mathfrak{n}\}}\ \text{and}\ D^{\mathfrak{n}}\mathcal{H}^{\Omega'}_J  = \mathcal{H}^{\Omega'}_J- R^{\mathfrak{n}} \mathcal{H}^{\Omega'}_J$$
	so that $ ( D^{\mathfrak{n}}+ R^{\mathfrak{n}}) \mathcal{H}^{\Omega'}_J=\mathcal{H}^{\Omega'}_J.$ 
	On the multiplier side,  we can also denote 
	\begin{align}\label{0bb}
		D^{\mathfrak{n}}\mathcal{H}^{\Omega'}_J (\xi)=\mathcal{H}^{\Omega'}_J (\xi)-\mathcal{H}^{\Omega'\setminus \{\mathfrak{n}\}}_J (\xi)\ \text{so that}\ 
		D^{\mathfrak{n}}\mathcal{H}^{\Omega'}_Jf= \left([D^{\mathfrak{n}}\mathcal{H}^{\Omega'}_J](\cdot)\widehat{f}\right)^{\vee}.\end{align}
	Moreover, for   $J $ and $\mathfrak{n}\in \Omega$, set the two projection operators:
	$$P_J^{\mathfrak{n}}f(x)=\int e^{2\pi i x\cdot \xi} \psi\left(2^{J\cdot \mathfrak{n}} \xi_{\mathfrak{n}} \right) \widehat{f}\left((\xi_{\mathfrak{m}} )_{\mathfrak{m}\in \Omega} \right) d\xi \ \text{and}\    Q_J^{\mathfrak{n}}=I-P_J^{\mathfrak{n}}.$$
	For finitely many operators $T_1,\cdots,T_r$, denote their composition $T_1\circ \cdots \circ T_r= \prod_{i=1}^r T_i $.
	
	 By applying $I=\prod_{\mathfrak{n}\in \Omega} (P_J^{\mathfrak{n}} +  Q_J^{\mathfrak{n}}) $ where $I$ is the identity operator, express 
	\begin{align}
		\mathcal{H}^{\Omega}_J=\mathcal{H}^{\Omega}_J \sum_{(U,V): U\cap V=\emptyset \  \text{and}\ U\cup V=\Omega} \left( \prod_{\mathfrak{m}\in U}P_J^{\mathfrak{m}}  \prod_{\mathfrak{n}\in V}Q_J^{\mathfrak{n}} \right).
	\end{align}
	Combined with $\mathcal{H}^{\Omega}_J= \prod_{\mathfrak{n}\in U}( D^{\mathfrak{n}} + R^{\mathfrak{n}})\mathcal{H}^{\Omega}_J $, decompose  
	\begin{align}\label{ss33}
		\mathcal{H}^{\Omega}_J=  \sum_{(U,V): U\cap V=\emptyset \  \text{and}\ U\cup V=\Omega} \ \sum_{A\subset U} \left(\prod_{\mathfrak{n}\in   A} D^{\mathfrak{n}}\prod_{\mathfrak{n}\in U\setminus A} R^{\mathfrak{n}}\right)\mathcal{H}^{\Omega}_J \left(\prod_{\mathfrak{m}\in U}  P_J^{\mathfrak{m}}  \prod_{\mathfrak{n}\in V}Q_J^{\mathfrak{n}}\right). 
	\end{align}
	Fix $A\subset U$ and $V$ in the above.  Then using (\ref{0bb}), we have the Fourier multiplier of $$\left(\prod_{\mathfrak{n}\in   A} D^{\mathfrak{n}}\prod_{\mathfrak{n}\in U\setminus A} R^{\mathfrak{n}}\right)\mathcal{H}^{\Omega}_J \left(\prod_{\mathfrak{m}\in U}  P_J^{\mathfrak{m}}  \prod_{\mathfrak{n}\in V}Q_J^{\mathfrak{n}}\right),$$ given by
	\begin{align*}
		\left( \prod_{\mathfrak{n}\in   A} D^{\mathfrak{n}} \right)\mathcal{H}_J^{A\cup V} \left((\xi_{\mathfrak{m}} )_{\mathfrak{m}\in \Omega} \right) \prod_{\mathfrak{m}\in U}  \psi \left(2^{J\cdot \mathfrak{m}} \xi_{\mathfrak{m}} \right)    \prod_{\mathfrak{m}\in V}\psi^c\left(2^{J\cdot \mathfrak{m}} \xi_{\mathfrak{m}} \right).  
	\end{align*}
	
	If $\text{rank}(A\cup V)<k$, then $A\cup V$ is an even set, which implies $\mathcal{H}_J^{A\cup V} \left((\xi_{\mathfrak{m}} )_{\mathfrak{m}\in \Omega} \right) \equiv 0$.
	Let $\text{rank}(A\cup V)=k$. Then by applying the mean value properties and the van der Corput lemma, there exists  $c>0$ such that
	\begin{align}\label{abb1}
		\left| \left( \prod_{\mathfrak{n}\in   A} D^{\mathfrak{n}} \right)\mathcal{H}_J^{A\cup V} \left((\xi_{\mathfrak{m}} )_{\mathfrak{m}\in \Omega} \right)\right|\lesssim \min\{ |\xi_{\mathfrak{m}}2^{J\cdot \mathfrak{m}}|, |\xi_{\mathfrak{n}}2^{J\cdot \mathfrak{n}}|^{-c}: \mathfrak{m}\in A, \mathfrak{n}\in V\}.
	\end{align}
This, together with the rank condition $\text{rank}(A\cup V)=k$ and the support condition yields that
	\begin{align*}
		&\sum_{J}\left| \left( \prod_{\mathfrak{n}\in   A} D^{\mathfrak{n}} \right)\mathcal{H}_J^{A\cup V} \left((\xi_{\mathfrak{m}} )_{\mathfrak{m}\in \Omega} \right) \prod_{\mathfrak{m}\in U}  \psi \left(2^{J\cdot \mathfrak{m}} \xi_{\mathfrak{m}} \right)    \prod_{\mathfrak{m}\in V}\psi^c\left(2^{J\cdot \mathfrak{m}} \xi_{\mathfrak{m}} \right)  \right|\\
		& \lesssim  \sum_J   \min\{ |\xi_{\mathfrak{m}}2^{J\cdot \mathfrak{m}}|, |\xi_{\mathfrak{n}}2^{J\cdot \mathfrak{n}}|^{-c}: \mathfrak{m}\in A, \mathfrak{n}\in V\}\prod_{\mathfrak{m}\in U}  \psi \left(2^{J\cdot \mathfrak{m}} \xi_{\mathfrak{m}} \right)    \prod_{\mathfrak{m}\in V}\psi^c\left(2^{J\cdot \mathfrak{m}} \xi_{\mathfrak{m}} \right)\le C.
	\end{align*}
	This holds for all $A\subset U$ and $V$ in (\ref{ss33}).  Therefore, we have $\sum_{J \in \mathbb{Z}_+^k}  |\mathcal{H}^{\Omega}_{J}(\xi_{\Omega}) |  \le C. $ Here we can include the case $A=\emptyset$ where we only utilize the decay property of (\ref{abb1}) since $\text{rank}(V)= k$ for the case. Similarly, we can obtain 	\begin{align*}
		\sum_{J\in \mathbb{Z}_+^k}   |\mathcal{H}^{\Omega}_{J}(\xi_{\Omega}) |^{\epsilon} \le C. 
	\end{align*} 
	
	 Set $\varphi(\cdot):=\psi(\cdot/2)-\psi(\cdot)$.  Applying the Littlewood-Paley inequality associated with $\prod_{\mathfrak{m}\in A}  \varphi \left(2^{J\cdot \mathfrak{m}} \xi_{\mathfrak{m}} /2^{\ell_{\mathfrak{m}}}\right)    \prod_{\mathfrak{m}\in V}\varphi \left(2^{J\cdot \mathfrak{m}} \xi_{\mathfrak{m}}/2^{\ell_{\mathfrak{m}}} \right)$
	and the decay estimate $2^{-c\sum_{\mathfrak{m}\in A\cup V} |\ell_{\mathfrak{m}}|}$ of  (\ref{abb1})    on its  support, one has  the $L^p$ boundedness for
	\begin{align}
		\left\|\sum_{J} \left(\prod_{\mathfrak{n}\in   A} D^{\mathfrak{n}}\prod_{\mathfrak{n}\in U\setminus A} R^{\mathfrak{n}}\right)\mathcal{H}^{\Omega}_J \left(\prod_{\mathfrak{m}\in U}  P_J^{\mathfrak{m}}  \prod_{\mathfrak{n}\in V}Q_J^{\mathfrak{n}}\right) \right\|_{L^p(\mathbb{R}^{|\Omega|})\rightarrow L^p(\mathbb{R}^{|\Omega|})}\le C,
	\end{align}
	where $1<p<\infty$. This   yields (\ref{poo4}).
\end{proof}

\subsection{Averaged Gauss Sum Estimates} \label{ag}
In this subsection, we shall prove Proposition \ref{lem22p}.

\noindent\textbf{Anisotropic Solution Counting.}
We state the following multi-parameter extension of   Theorem 1 of Konyagin \cite{Kon} for polynomial congruences over
anisotropic multi-scale boxes
$
\prod_{\nu=1}^k [1,2^{j_\nu}] \subset \mathbb{Z}^k
$,
whose proof proceeds via standard induction on the parameter $k$ and valuation-level partitioning and is therefore omitted.  
\begin{lemma}[Konyagin-type Theorem]\label{mkl1}
Fix $q\in \mathbb{N}$.	Let $\Omega \subset \mathbb{Z}_+^k \setminus \{\mathbf{0}\}$ be a finite set  and consider $P(\mathfrak{t})=\sum_{\mathfrak{m}\in \Omega} a_{\mathfrak{m}}t^{\mathfrak{m}}$ in $k$ parameters $\mathfrak{t} = (t_1, \dots, t_k)$ having total degree $$d_{\Omega} = \max\{|\mathfrak{m}| : \mathfrak{m} \in \Omega\}\ge 2\  \text{and}\ \gcd((a_{\mathfrak{m}})_{\mathfrak{m} \in \Omega}, q) = 1.$$
	Let $j=(j_1,\cdots,j_{k})\in Z(k)$.  Then, there exist  constants $C, c>0$, depending only on $k$ and $d_{\Omega}$, such that
	\begin{equation}\label{kya3}
		|\{ \mathfrak{t} \in  \prod_{\nu=1}^k [1, 2^{j_\nu}]: P(\mathfrak{t}) \equiv 0 \pmod q \}| \le C \, 2^{j_1 + \dots + j_k} (2^{-cj_k}+q^{-c}).
	\end{equation}
\end{lemma}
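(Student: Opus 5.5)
The plan is to prove \eqref{kya3} by induction on $k$, starting from Konyagin's one-variable theorem. The base case $k=1$ is that theorem: if $P\in\mathbb{Z}[t]$ has degree $d\ge 2$ and content coprime to $q$, then the number of roots of $P$ modulo $q$ is at most $C_d q^{1-1/d}$. Covering $[1,2^{j_1}]$ by about $2^{j_1}/q+1$ complete residue systems then gives the count $\lesssim (2^{j_1}q^{-1}+1)q^{1-1/d}\le 2^{j_1}(q^{-1/d}+q^{1-1/d}2^{-j_1})$. Split according to the size of $q$. If $q\le 2^{j_1/2}$, the bound is $\lesssim 2^{j_1}q^{-1/d}$. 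If $q>2^{j_1/2}$, use instead the trivial divisor-type bound $\lesssim 2^{j_1}\cdot$(roots per residue)$/q+$ \dots; more simply, replace $q$ by a divisor $q'\mid q$ of size about $2^{j_1/2}$ whose content condition survives. This last step requires a valuation argument: for prime powers $p^\alpha\| q$, truncate the exponents. That argument yields $2^{j_1}2^{-cj_1}$. A degree-one monomial contributing to $P$ is handled by noting that linear congruences have at most $2^{j_1}/q+1$ solutions per fixed content. However, $d_\Omega\ge2$ concerns the total degree, so in one variable the linear case needs separate care via the gcd of the leading coefficients.

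For the inductive step, write $P(\mathfrak t)=\sum_{r} A_r(t_1,\dots,t_{k-1})t_k^{r}$, grouping by the power of $t_k$ (or of whichever variable ensures a nonlinear piece). Let $g(\underline t)=\gcd(q,A_r(\underline t):r)$. Partition the outer variables $\underline t\in\prod_{\nu<k}[1,2^{j_\nu}]$ according to the dyadic size of $g(\underline t)$, which is the valuation-level partition. On the level $g\sim 2^{\beta}$, the inner polynomial in $t_k$ has content $g$. The congruence then reduces to one modulo $q/g$ with content coprime to $q/g$, and the one-variable bound gives $\lesssim 2^{j_k}((q/g)^{-c}+2^{-cj_k})$ solutions in $t_k$. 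It remains to show that few $\underline t$ have large $g$. The condition $g(\underline t)\ge Q$ forces, for each prime-power part, some fixed nonzero coefficient polynomial $A_{r_0}$ (one with content coprime to that part of $q$) to vanish modulo a divisor of size $\gtrsim Q^{1/|\Omega|}$. The induction hypothesis in $k-1$ variables, or a direct linear count if $A_{r_0}$ is linear, bounds the number of such $\underline t$ by $2^{j_1+\dots+j_{k-1}}(2^{-cj_{k-1}}+Q^{-c})$. Summing over the dyadic levels $\beta$ and balancing the two factors gives the claimed bound, with $j_k\le j_{k-1}$ ensuring that $2^{-cj_{k-1}}\le 2^{-cj_k}$.

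The main obstacle is the bookkeeping of content across levels. The gcd condition $\gcd(a,q)=1$ must propagate to at least one coefficient polynomial $A_{r}$ separately for each prime $p\mid q$, and different primes may select different $r$. This forces a factorization $q=\prod p^{\alpha}$ with a multiplicative (CRT) treatment, and the divisor-count losses must be absorbed into $q^{-c}$ by shrinking $c$. I would first try to handle this by using the CRT factorization together with the pigeonhole principle on which $r$ is selected, paying a factor of $d(q)\lesssim_\epsilon q^{\epsilon}$.

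A further subtlety is the degree-$\ge 2$ hypothesis. The inner one-variable polynomial may be linear in $t_k$ even though $d_\Omega\ge2$. In that case I would choose the sliced variable to be one in which some monomial of total degree at least $2$ still appears with coprime coefficient, or treat linear slices via the count $2^{j_k}/(q/g)+1$. The latter count is acceptable because $q/g$ is then large on the relevant levels.
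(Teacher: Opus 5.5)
Your architecture is the induction the paper alludes to; the paper omits its own proof, saying only that it uses induction on $k$ with valuation-level partitioning. That is: slice off $t_k$, divide each fiber polynomial $\sum_r A_r(\underline t)t_k^r$ by $g(\underline t)=\gcd(q,A_r(\underline t):r)$ (the quotient is indeed primitive modulo $q/g$), apply the one-variable bound on fibers, and control $\{\underline t: g(\underline t)\ \text{large}\}$ by the $(k-1)$-variable case.

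However, the step that counts $\{\underline t: g(\underline t)\ge G\}$ does not close as written. The divisor of $q$ that divides $A_{r_0}(\underline t)$ depends on $\underline t$. To invoke the induction hypothesis you must therefore take a union over all divisors $\tilde q\mid q$ with $\tilde q\ge G^{1/|\Omega|}$, which gives
\[
\#\{\underline t: g(\underline t)\ge G\}\lesssim 2^{j_1+\cdots+j_{k-1}}\, d(q)\,\bigl(2^{-cj_{k-1}}+G^{-c/|\Omega|}\bigr).
\]
The factor $d(q)$ multiplies $2^{-cj_{k-1}}$ too. This cannot be ``absorbed into $q^{-c}$ by shrinking $c$'': $q$ is arbitrary, so $d(q)$ is not bounded by any power of $2^{j_k}$. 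As written, your argument proves only the bound with an extra factor $d(q)$. That is exactly the form in which the paper's related argument, the proof of Proposition~\ref{lem22p}, concludes.

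The missing idea is a preliminary reduction of the modulus. The count for $q$ is at most the count for any divisor $q'\mid q$, and primitivity modulo $q$ passes to $q'$. Choose $q'$ as follows:
\begin{itemize}
\item if some prime $p\mid q$ has $p\ge 2^{c_0j_k}$, take $q'=p$; then $g\in\{1,p\}$ and no union over divisors is needed;
\item if $q<2^{c_0j_k}$, keep $q'=q$;
\item otherwise, multiply prime factors of $q$ to obtain $q'\in[2^{c_0j_k},2^{2c_0j_k})$.
\end{itemize}
In each case $q'^{-c}\lesssim q^{-c}+2^{-cc_0j_k}$ and $d(q')=2^{o(j_k)}$, so the divisor loss is harmless against $2^{-cj_{k-1}}\le 2^{-cj_k}$. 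Your base case has a milder version of the same blind spot. If $q$ has a prime factor $p>2^{j_1/2}$, no divisor of size about $2^{j_1/2}$ need exist. You then need the separate fact that there are at most $d_\Omega$ roots modulo $p$, which gives a count $\lesssim 2^{j_1}/p+1$.

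Second, the induction hypothesis as stated does not apply to the coefficient polynomials. For $r\ge1$, $A_r(\underline t)$ acquires a constant term whenever $(0,\dots,0,r)\in\Omega$, and it may have degree $0$ or $1$. So the hypotheses $\mathbf 0\notin\Omega$ and $d_\Omega\ge 2$ are not inherited. You should run the induction for all polynomials of degree at most $d$, with arbitrary constant term and content coprime to $q$. Degree $0$ gives no solutions when $q>1$, and degree $1$ is a direct count. With this formulation, the fiber bound $\lesssim 2^{j_k}\bigl((q/g)^{-c}+2^{-cj_k}\bigr)$ holds whatever the degree of the fiber in $t_k$. There is then no need to choose the sliced variable so as to keep a monomial of degree $\ge 2$.

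Finally, you can replace the CRT/pigeonhole over which $A_r$ is primitive at each prime by Claim~\ref{hc1}. It gives an integer combination $\widetilde A=\sum_r c_rA_r$ that is primitive modulo $q$. Since $g(\underline t)\mid \widetilde A(\underline t)$ for every $\underline t$, this also removes the loss $G\mapsto G^{1/|\Omega|}$.
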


\noindent \textbf{Proof of the Averaged  Gauss Sum Estimate.} 
We combine the Gauss sum estimate (\ref{g2}) with the counting estimate (\ref{kya3}) to prove Proposition \ref{lem22p}.

 \begin{proof}[Proof of Proposition \ref{lem22p}] To keep the notation familiar, we set $\Omega=\Lambda$ throughout this proof.
	We split the averaging domain into the two subsets
	\begin{align*}
		A_{\rm gauss}
		&:=
		\left\{
		\mathfrak{t}_2\in
		\prod_{\nu=\ell+1}^k[1,2^{j_\nu}]
		:
		\bigl((a_m(\mathfrak{t}_2))_{m\in\Lambda_1'},q\bigr)
		<
		q^{2/3}
		\right\},\\
		A_{\rm sub}
		&:=
		\left\{
		\mathfrak{t}_2\in
		\prod_{\nu=\ell+1}^k[1,2^{j_\nu}]
		:
		\bigl((a_m(\mathfrak{t}_2))_{m\in\Lambda_1'},q\bigr)
		\ge
		q^{2/3}
		\right\}.
	\end{align*}
	
	We first consider the contribution from $A_{\rm gauss}$.
	For every $\mathfrak t_2\in A_{\rm gauss}$, we may write
	\[
	\frac{a(\mathfrak t_2)}{q}
	=
	\frac{\widetilde a}{\widetilde q},
	\qquad
	(\widetilde a,\widetilde q)=1,
	\]
	where
	\[
	q^{1/3}< \widetilde q\le q.
	\]
By	 Lemma~\ref{el22}, for every $\mathfrak t_2\in A_{\rm gauss}$, there exists $c>0$ such that
	\begin{align*}
		\left|
		S^{\Lambda_1}
		\left(
		\frac{a(\mathfrak t_2)}{q}
		\right)
		\right|
		=
		\frac1{q^\ell}
		\frac{q^\ell}{\widetilde q^\ell}
		\left|
		\sum_{\mathfrak t_1\in[1,\widetilde q]^\ell}
		e\!\left(
		\sum_{m\in\Lambda_1}
		\frac{\widetilde a_m}{\widetilde q}
		\mathfrak t_1^m
		\right)
		\right|   
		\lesssim q^{-c}.
	\end{align*}
	Therefore,
	\begin{align}\label{4040}
		\frac1{2^{j_{\ell+1}+\cdots+j_k}}
		\sum_{\mathfrak t_2\in A_{\rm gauss}}
		\left|
		S^{\Lambda_1}
		\left(
		\frac{a(\mathfrak t_2)}{q}
		\right)
		\right|
		\lesssim
	q^{-c}.
	\end{align}
	
	Next we estimate the contribution from $A_{\rm sub}$.
	For every divisor $\widetilde q\mid q$, let
	\[
	A_{\widetilde q}
	:=
	\left\{
	\mathfrak t_2:
	\widetilde q
	\mid
	a_m(\mathfrak t_2)
	\text{ for all }
	m\in\Lambda_1'
	\right\}.
	\]
	Since the lower bound of common divisors $\widetilde q$  in $ A_{\rm sub}$ satisfies:
	$q^{2/3}\le\widetilde q\le q$, 
	\[
	|A_{\rm sub}|
	\le
	\sum_{\substack{\widetilde q\mid q\\
			q^{2/3}\le\widetilde q\le q}}
	|A_{\widetilde q}|.
	\]
	
	\begin{claim}[Primitive Projection]\label{hc1}
		Let $\Lambda\subset \mathbb Z^k$ be finite. Write
		\begin{align*}
			\Lambda_1
			&=
			\bigl\{
			m\in\mathbb{Z}^{\ell}
			:
			(m,n)\in\Lambda
			\text{ for some } n\in\mathbb{Z}^{k-\ell}
			\bigr\},\\
			\Lambda_2
			&=
			\bigl\{
			n\in\mathbb{Z}^{k-\ell}
			:
			(m,n)\in\Lambda
			\text{ for some } m\in\mathbb{Z}^{\ell}
			\bigr\}.
		\end{align*}
		For $(m,n)\notin\Lambda$, set $a_{mn}=0$. Assume that for an integer $q\ge 2$, 
		\[
		\bigl(q,(a_{mn})_{(m,n)\in\Lambda}\bigr)=1.
		\]
		Then there exists an  integer vector $(c_m)_{m\in \Lambda_1}$ such that
		\[
		\!\left(
		q,
		\left(\sum_{m\in \Lambda_1} c_m a_{mn}\right)_{n\in\Lambda_2}
		\right)=1.
		\]
	\end{claim}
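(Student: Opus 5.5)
The claim is a statement about the $|\Lambda_1|\times|\Lambda_2|$ integer matrix $M=(a_{mn})_{m\in\Lambda_1,\,n\in\Lambda_2}$ modulo $q$. The hypothesis says the entries of $M$ have no common prime factor with $q$. The conclusion asks for one integer combination of the rows, $c^{T}M$, whose entries likewise have no common prime factor with $q$.

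The plan is to work one prime at a time and then glue with the Chinese remainder theorem. Let $p_1,\dots,p_r$ be the distinct primes dividing $q$. For a fixed $p=p_i$, the hypothesis $(q,(a_{mn}))=1$ means some entry $a_{m_0n_0}$ is not divisible by $p$. So the row indexed by $m_0$, read modulo $p$, is a nonzero vector in $\mathbb F_p^{|\Lambda_2|}$. Take $c^{(i)}:=\mathbf e_{m_0}\in\mathbb Z^{\Lambda_1}$. Then $\sum_m c^{(i)}_m a_{mn_0}=a_{m_0n_0}\not\equiv 0\pmod{p_i}$.

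By the Chinese remainder theorem, choose an integer vector $c=(c_m)_{m\in\Lambda_1}$ with $c\equiv c^{(i)}\pmod{p_i}$ componentwise for every $i$. For each $i$, the vector $\bigl(\sum_m c_m a_{mn}\bigr)_n$ is congruent modulo $p_i$ to $\bigl(\sum_m c^{(i)}_m a_{mn}\bigr)_n$, and that vector has an entry not divisible by $p_i$. Hence no prime divisor of $q$ divides all entries, which is exactly $\bigl(q,(\sum_m c_m a_{mn})_{n\in\Lambda_2}\bigr)=1$.

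The only point to check is that every $(m,n)\in\Lambda$ actually appears in the matrix indexed by $\Lambda_1\times\Lambda_2$; this holds by the definitions of the projections, with the zero convention for $(m,n)\notin\Lambda$. There is no real obstacle. The one subtlety is that a single row need not work for all primes simultaneously, which is why CRT is used rather than one fixed row. Powers of primes are irrelevant, because coprimality to $q$ only depends on the primes dividing $q$.
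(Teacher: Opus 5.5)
Your proof is correct and follows essentially the same route as the paper. For each prime $p_i \mid q$ you produce a row combination that is nonzero modulo $p_i$, and you then glue these choices with the Chinese remainder theorem. The only cosmetic difference is that you take $c^{(i)}=\mathbf{e}_{m_0}$ explicitly, whereas the paper chooses some $c^{(i)}$ with $T_i(c^{(i)})\neq 0$ for the nonzero linear map $T_i:\mathbb{F}_{p_i}^{|\Lambda_1|}\to\mathbb{F}_{p_i}^{|\Lambda_2|}$.
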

	Since $(\widetilde q,(a_{mn})_{(m,n)\in \Lambda})=1$ when $(q,a)=1$ and $\widetilde q\mid q$,
	Claim~\ref{hc1} yields an integer vector
	$(c_m)_{m\in\Lambda_1}$ such that
\begin{align}\label{yr}
		(\widetilde q,
	(\mathfrak a_{n})_{n\in\Lambda_2})
	=1,\ \text{where
		$
		\mathfrak a_{n}
		=
		\sum_{m\in\Lambda_1}
		c_m a_{mn}.
		$}
\end{align}
	Define
	\[
	\widetilde a(\mathfrak t_2)
	:=
	\sum_{n\in\Lambda_2}
	\mathfrak a_{n}
	\mathfrak t_2^{\mathfrak n}.
	\]
	Then, observe that 
	\begin{align}\label{yr1}
	\widetilde q
	\mid
	a_m(\mathfrak t_2)
	\quad
	\text{for every }
	m\in\Lambda_1 \Rightarrow \widetilde q
	\mid
	\widetilde a(\mathfrak t_2).	
	\end{align}
	Hence, by (\ref{kya3}) in Lemma~\ref{mkl1} combined with (\ref{yr}) and (\ref{yr1}), there exists $c'>0$ such that
	\begin{align*}
		|A_{\widetilde q}|
		\le
		\left|
		\left\{
		\mathfrak t_2
		:
		\widetilde a(\mathfrak t_2)
		\equiv0
		\pmod{\widetilde q}
		\right\}
		\right|  
		\lesssim 
		2^{j_{\ell+1}+\cdots+j_k}
		(2^{-c'j_k}
		+\widetilde q^{-c'}).
	\end{align*}
Therefore,	since $\widetilde q\ge q^{2/3}$, there exists $c>0$ such that
	\begin{align*}
		\frac1{2^{j_{\ell+1}+\cdots+j_k}}
		\sum_{\mathfrak t_2\in A_{\rm sub}}
		\left|
		S^{\Lambda_1}
		\left(
		\frac{a(\mathfrak t_2)}{q}
		\right)
		\right|   
		&\le
		\sum_{\substack{\widetilde q\mid q\\
				q^{2/3}\le\widetilde q\le q}}
		\frac{|A_{\widetilde q}|}
		{2^{j_{\ell+1}+\cdots+j_k}}    \\
		&\lesssim d(q)
		\left(
		2^{-cj_k}
		+
		q^{-c}
		\right).
	\end{align*}
	This combined with (\ref{4040}) yields the desired bound of (\ref{po10}). 
\end{proof}

\begin{proof}[Proof of Claim~\ref{hc1}]
	Write the prime factorization of $q$ as
	\begin{align*}
		q=p_1^{\alpha_1}\cdots p_M^{\alpha_M},
	\end{align*}
	where $p_1,\ldots,p_M$ are distinct primes. Since
	\begin{align*}
		\gcd\bigl(q,(a_{mn})_{(m,n)\in\Lambda}\bigr)=1,
	\end{align*}
	for each $1\le i\le M$, there exists
	$(m,n)\in\Lambda$ such that
	\begin{align*}
		a_{mn}\not\equiv 0\pmod{p_i}.
	\end{align*}
	
	Consider the linear map
	\begin{align*}
		T_i:\mathbb{F}_{p_i}^{\,|\Lambda_1|}
		&\longrightarrow
		\mathbb{F}_{p_i}^{\,|\Lambda_2|},\\
		(c_m)_{m\in\Lambda_1}
		&\longmapsto
		\left(
		\sum_{m\in\Lambda_1}c_ma_{mn}
		\right)_{n\in\Lambda_2}.
	\end{align*}
	The preceding observation shows that $T_i$ is not the zero map.
	Therefore, one may choose
	\begin{align*}
		c^{(i)}
		=
		(c_m^{(i)})_{m\in\Lambda_1}
		\in\mathbb{F}_{p_i}^{\,|\Lambda_1|}
	\end{align*}
	such that
	\begin{align*}
		T_i(c^{(i)})\neq 0.
	\end{align*}
	
	By applying the Chinese remainder theorem separately to each
	$m\in\Lambda_1$, we can choose integers $c_m$ such that
	\begin{align*}
		c_m\equiv c_m^{(i)}\pmod{p_i}
		\qquad
		(1\le i\le M).
	\end{align*}
	For this choice, we have
	\begin{align*}
		\left(
		\sum_{m\in\Lambda_1}c_ma_{mn}
		\right)_{n\in\Lambda_2}
		\equiv
		T_i(c^{(i)})
		\not\equiv 0
		\pmod{p_i}
	\end{align*}
	for every $1\le i\le M$. Here, the vector being nonzero modulo
	$p_i$ means that at least one of its coordinates is nonzero modulo
	$p_i$.
	
	Consequently, no prime divisor $p_i$ of $q$ divides all the integers
	\begin{align*}
		\sum_{m\in\Lambda_1}c_ma_{mn},
		\qquad n\in\Lambda_2.
	\end{align*}
	It follows that
	\begin{align*}
		\gcd\left(
		q,
		\left(
		\sum_{m\in\Lambda_1}c_ma_{mn}
		\right)_{n\in\Lambda_2}
		\right)=1.
	\end{align*}
	This completes the proof.
\end{proof}

Thus, we have proved   Proposition \ref{lem22p}.

\end{document}